\documentclass [11pt]{amsart}
\usepackage{xcolor,bm}
\usepackage{amssymb,amsmath,amsthm,amsfonts}
\usepackage{enumerate}
\usepackage{setspace}
\usepackage{empheq}
\usepackage[all]{xy}
\usepackage{verbatim}
\usepackage[normalem]{ulem}
\usepackage{todonotes}

\usepackage{hyperref}  
\hypersetup{linkcolor=blue,citecolor=blue,filecolor=blue,urlcolor=blue} 

\newcommand{\blue}{}

\numberwithin{equation}{section}
\numberwithin{figure}{section}

\newtheorem{theorem}{Theorem}[section]
\newtheorem{proposition}[theorem]{Proposition}

\newtheorem{lemma}[theorem]{Lemma}

\newtheorem{corollary}[theorem]{Corollary}

\newtheorem{lem}{Lemma}[section]

\theoremstyle{definition}
\newtheorem{definition}[theorem]{Definition}

\newtheorem{remark}[theorem]{Remark}

\usepackage[
backend=biber,
style=alphabetic,
]{biblatex}

\definecolor{myblue}{rgb}{0.6, 0.9, 1}

\makeatletter

\newcommand{\Rmnum}[1]{\expandafter\@slowromancap\romannumeral #1@}
\makeatother

\definecolor{myblue}{rgb}{0.6, 0.9, 1}
\definecolor{mygreen}{rgb}{0,0,1}
\definecolor{purple}{rgb}{0.6,0.2,1}
\definecolor{orange}{rgb}{0.8,0,0.2}

\newcommand{\eps}{\varepsilon}

\newcommand{\la}{\lambda}

\newcommand{\Res}{\operatorname{Res}}

\newcommand{\Pic}{\operatorname{Pic}}

\newcommand{\Z}{\mathbb{Z}}

\newcommand{\bb}{\mathbb}

\newcommand{\Ph}{\operatorname{Ph}}
\newcommand{\cal}{\mathcal}
\newcommand{\ovl}{\overline}
\newcommand{\Fal}{\mathrm{Fal}}
\newcommand{\h}{\widehat{h}}
\newcommand{\Spec}{\operatorname{Spec}}
\newcommand{\Rat}{\operatorname{Rat}}

\newcommand{\trdeg}{\operatorname{trdeg}}
\newcommand{\Supp}{\operatorname{Supp}}
\newcommand{\vphi}{\varphi}

\newcommand{\GL}{\operatorname{GL}}
\newcommand{\berkP}{\mathbb{P}^{1,\operatorname{an}}}

\newcommand{\rat}{\operatorname{rat}}

\newcommand{\wh}{\widehat}

\newcommand{\triv}{\operatorname{triv}}
\newcommand{\End}{\operatorname{End}}
\newcommand{\gal}{\operatorname{gal}}
\newcommand{\Div}{\operatorname{Div}}
\newcommand{\crit}{\operatorname{crit}}

\newcommand{\Prep}{\operatorname{Prep}}

\newcommand{\diam}{\operatorname{diam}}
\newcommand{\Zar}{\operatorname{Zar}}

\newcommand{\divv}{\operatorname{div}}

\newcommand{\Lyap}{\mathrm{Lyap}}
\newcommand{\id}{\operatorname{id}}
\newcommand{\an}{\operatorname{an}}

\theoremstyle{definition} 

\begin{document}
\title{Non-Archimedean Rigidity and the Uniform Dynamical Manin--Mumford for $\left(\bb{P}^1\right)^n$}

\author{Zhuchao Ji}
\address{Institute for Theoretical Sciences, Westlake University, 600 Dunyu Road, Hangzhou, Zhejiang 310030, China}
\email{jizhuchao@westlake.edu.cn}

\author{Jit Wu Yap}
\address{Department of Mathematics, National University of Singapore, 10 Lower Kent Ridge Road, Singapore 119076}
\email{yapjitwu@nus.edu.sg}

\date{}

\begin{abstract}
We prove a uniform form of the dynamical Manin–Mumford conjecture for split endomorphisms $\Phi = \left(f_1,\ldots,f_n\right): \left(\mathbb{P}^1\right)^n \to \left(\mathbb{P}^1\right)^n$ over $\mathbb{C}$, where each $f_i$ has degree $d \ge 2$ and at most one $f_i$ is exceptional. Specifically, we bound the number of irreducible components of $\overline{\Prep\left(\Phi\right)\cap V\left(\bb{C}\right)}^{\mathrm{Zar}}$ 
uniformly, depending only on $\left(n,d,\deg V\right)$. We also obtain uniform statements for the dynamical Bogomolov conjecture. Our approach combines a degeneration argument via ultrafilters with a new non-archimedean rigidity theorem, along with the quantitative dynamical Zhang's inequality by Mavraki and the second author.
\end{abstract}

	\maketitle

\section{Introduction}
\subsection{The main results}
Let $\Phi = \left(f_1,\ldots,f_n\right): \left(\mathbb{P}^1\right)^n \to \left(\mathbb{P}^1\right)^n$ be a split endomorphism of $\left(\bb{P}^1\right)^n$ over $\mathbb{C}$ with $\deg f_i = d \geq 2$. {\blue Assume that at most one of the maps $f_i$ is exceptional.} {\blue Here non-exceptional means} not conjugate to a monomial, Chebyshev or Latt{\`e}s map. Then the dynamical Manin--Mumford  conjecture \cite{Zha06, GTZ11}, proven in this setting by Ghioca--Nguyen--Ye \cite{GNY18, GNY19}, with a gap fixed by  Mavraki--Schmidt--Wilms \cite{MSW23}, states that any subvariety $V \subseteq \left(\bb{P}^1\right)^n$ with a Zariski dense set of preperiodic points for $\Phi$ must itself be preperiodic for $\Phi$. The reader may also see \cite{DF17, DFR23} for some cases of the dynamical Manin--Mumford conjecture for maps over $\bb{P}^2$. 
\par 
Ghioca--Nguyen--Ye's proof follows the strategy introduced by Ullmo \cite{Ulm98} and Zhang \cite{Zha98} in their proofs of the Bogomolov conjecture. Applying Yuan's equidistribution theorem for small points \cite{Yua08}, they show  that if $C \subseteq \left(\bb{P}^1\right)^2$ is a curve with infinitely many preperiodic points, then 
\[\left(\deg \pi_2\right) \pi_1^*\mu_{f_1} = \left(\deg \pi_1\right) \pi_2^* \mu_{f_2}\] 
as measures on $C\left(\bb{C}\right)$, where $\mu_{f_i}$ is the equilibrium measure of $f_i$. The proof is then completed by establishing that equality of measures forces $C$ to be preperiodic under $\left(f_1,f_2\right)$. This is a rigidity statement for the equilibrium measures $\mu_{f_1},\mu_{f_2}$, belonging to a wider class of rigidity phenomena studied by various authors \cite{Ere89,Lev90,LP97,Din00,Ye15, DFG23,JX25, MS25}.
\par 
Our first main result is a non-archimedean analogue of Ghioca--Nguyen--Ye's rigidity theorem. 

\begin{theorem} \label{IntroRigidity1}
Let $f,g: \bb{P}^1 \to \bb{P}^1$ be two rational maps of degree $d \ge 2$ defined over $K$ where $K$ is an algebraically closed complete valued non-archimedean field of residue characteristic $0$. Let $C \subseteq \bb{P}^1 \times \bb{P}^1$ be an irreducible curve over $K$ that projects dominantly to both coordinates. Assume that $\Lyap\left(f\right) > 0$. Then if $\left(\deg \pi_2\right) \pi_1^*\mu_f = \left(\deg \pi_1\right) \pi_2^*\mu_g$, the curve $C$ must be preperiodic for $\left(f,g\right)$.  
\end{theorem}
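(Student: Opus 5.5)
The plan is to follow the Ghioca--Nguyen--Ye strategy. First I would convert the equality of measures into an equality of potentials. Second, I would use the dynamics near repelling cycles, where the hypothesis $\Lyap(f)>0$ enters, to show that $C$ has an infinite family of dynamical "local symmetries". Third, I would turn these symmetries into a uniform bound on the degrees of the forward images $(f,g)^n(C)$, which forces $C$ to be preperiodic.

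\emph{Step 1 (potentials).} Let $\widetilde C\to C$ be the normalization and $\pi_1,\pi_2$ the two projections. Equip $\mathcal{O}(1)$ with the canonical invariant metrics of $f$ and $g$. Then $(\deg\pi_2)\pi_1^*\overline{\mathcal O(1)}_f$ and $(\deg\pi_1)\pi_2^*\overline{\mathcal O(1)}_g$ are semipositive continuous metrics on line bundles of the same degree on $\widetilde C^{\an}$. By hypothesis they have the same curvature measure. Uniqueness in the non-archimedean Calabi problem on curves (Thuillier's potential theory on Berkovich curves: a continuous subharmonic function with zero Laplacian on a compact curve is constant) shows that the two metrics differ by a constant. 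Concretely, for local lifts $F,G$ and escape rates $G_F,G_G$ this reads
\[
(\deg\pi_2)\,G_F(\tilde x)=(\deg\pi_1)\,G_G(\tilde y)+c
\]
along $C$. The same identity holds for every image curve $C_n=(f^n,g^n)(C)$, since pushing forward under $(f,g)$ preserves the relation up to the factor $d$.

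\emph{Step 2 (repelling points and local rigidity).} Since the residue characteristic is $0$ and $\Lyap(f)>0$, $f$ does not have potentially good reduction. By the non-archimedean equidistribution of repelling periodic points, $\mu_f$-almost every point of the support can be approximated by type I repelling periodic points. By Step 1, $\pi_2$ pushes $\pi_1^*\mu_f$ onto a multiple of $\mu_g$, so $\Lyap(g)>0$ as well, and the Julia sets correspond through $C$.

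Pick a repelling periodic point $p$ of $f$ in $J_f$ such that $C$ is smooth over $p$ and unramified for $\pi_1$ there. Let $q$ be a corresponding point with $(p,q)\in C$. I would argue that $q$ is also repelling periodic for $g$, after possibly moving $p$ slightly using density. In residue characteristic $0$, Koenigs linearization works, so there are local coordinates in which $f^k$ is $z\mapsto\lambda z$ and $g^l$ is $w\mapsto\eta w$. The Green-function identity, read through the linearizations, says that near $(p,q)$ the branch of $C$ is a germ $w=h(z)$ intertwining two homogeneous potentials. Iterating the expansions gives a relation between the multipliers, $|\lambda|^{a}=|\eta|^{b}$ for fixed integers $a,b$ coming from $\deg\pi_1,\deg\pi_2$. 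From this I would show that a suitable iterate of $(f^k,g^l)$ maps the germ of $C$ at $(p,q)$ into $C$, by a rescaling argument: blow up by the linearizations and take limits of $(\lambda^{-n}\cdot,\eta^{-m}\cdot)$-rescaled branches. Rigidity of potentials forces the limit branch to be one of finitely many germs.

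\emph{Step 3 (conclusion).} Step 2 shows that $C$ and some $(f^{N},g^{M})(C)$ share a germ, hence are equal by irreducibility. Choosing many repelling points, one can also arrange $N$ and $M$ compatible, so that the degree of $C\to C_n$ grows like $d^n$. Then the bidegrees of the $C_n$ stay bounded, the family $\{C_n\}$ is finite, and $C$ is preperiodic.

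The main obstacle is Step 2. The local rescaling argument of Levin/Ye in the complex case relies on normal families and on the real structure of $|\cdot|$, and both must be replaced here by non-archimedean analytic arguments. A further difficulty is ensuring that the point $q$ over a repelling $p$ is itself repelling periodic. I would first try to obtain this by transporting the potential identity of Step 1 along the branch of $C$ through $(p,q)$ and comparing the local Lyapunov exponents.
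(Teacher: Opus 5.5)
\textbf{There is a genuine gap at Step 2, and Step 2 is where the whole theorem lives.}

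\emph{The choice of $(p,q)$.} You need $(p,q)\in C$ with $p$ repelling periodic for $f$ and $q$ repelling periodic for $g$, and nothing in the hypotheses provides such a point. Density of repelling cycles of $f$ and of $g$ separately says nothing about whether the second coordinate over a periodic $p$ is periodic. Moving $p$ to a nearby cycle just moves $q$ continuously. Comparing Lyapunov exponents along the branch yields growth rates along typical orbits, never periodicity. Such a point is essentially a consequence of the conclusion: once $C$ is preperiodic, the $C$-correspondents of $f$-periodic points are $g$-preperiodic. This is the kind of input Ghioca--Nguyen--Ye could only get from the extra hypothesis that $C$ carries infinitely many preperiodic points. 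The measure equality alone does not supply it.

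\emph{The rescaling.} Even granting $(p,q)$, you give no mechanism for two claims: that the $(\lambda^{-n}\cdot,\eta^{-m}\cdot)$-rescaled branches have limits, and that ``rigidity of potentials'' leaves only finitely many limit germs. Since $K$ is not locally compact, Montel fails. The coefficients of the rescaled power series can stay uniformly separated, and nothing you wrote rules out infinitely many distinct limit germs.

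\emph{A smaller issue in Step 1.} On $\widetilde C$ the bundles $\pi_1^*\mathcal{O}(\deg\pi_2)$ and $\pi_2^*\mathcal{O}(\deg\pi_1)$ are only numerically equivalent. So the two metrics do not globally ``differ by a constant'', and your escape-rate identity is not well posed. What holds is that the difference is a flat metrized bundle, i.e.\ the difference of local potentials is harmonic.

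\textbf{How the paper fills both gaps.} It uses the finite mass of the equilibrium measure as the compactness device, and it never needs a common periodic point.

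\begin{itemize}
\item It works at a $\mu_f$-generic point $x$, with $y=h(x)$ on a local analytic branch of $C$.
\item Using Favre--Rivera-Letelier's uniform-size inverse branches along typical orbits, it builds injective maps $h_n=f^{\theta(n)}\circ h^{-1}\circ g_{-n}$. These send a fixed disc $D$ onto discs of radius bounded below and scale measures by factors bounded below.
\item If the Taylor coefficients of the $h_n$ failed to converge along a subsequence, the images of a small disc of positive $\mu_g$-mass would be pairwise disjoint with masses bounded below. This contradicts $\mu_f$ being a probability measure (Proposition~\ref{Convergence1}).
\item The same disjointness argument handles finiteness. It is run in a Koenigs coordinate at a repelling periodic point of $g$ alone, and uses $|mb|=|b|$ (residue characteristic $0$) to exclude translation symmetries. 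It shows that $\mu_g$-symmetries $\sigma_k\to\id$ are eventually the identity (Lemmas~\ref{lem:move} and~\ref{lem:move2}).
\item Hence $h_{n_i}=h_{n_j}$, so two forward images $(f^{\theta(n_j)},g^{n_j})(C)$ coincide, and $C$ is preperiodic.
\end{itemize}
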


{\blue In fact, we prove a stronger statement in Theorem \ref{Rigidity1}: the assumption $\deg f=\deg g$ is unnecessary, and it suffices that $\pi_1^*\mu_f$ and $\pi_2^*\mu_g$ are proportional on an open subset $U\subseteq C^{\an}$ with $\left(\pi_1^*\mu_f\right)\left(U\right)>0$. The conclusion is that $C$ is preperiodic under $\left(f^a,g^b\right)$ for some positive integers $a,b$ satisfying $\left(\deg f\right)^a=\left(\deg g\right)^b$.}
\par
The condition that $\Lyap\left(f\right) > 0$ is necessary for Theorem \ref{IntroRigidity1}. If $\Lyap\left(f\right) = 0$, and $C = \Delta$ is the diagonal, the classification in \cite{FRL25} imply that any $g$ which is small enough perturbation of $f$ will satisfy $\mu_f = \mu_g$.  
\par 
When $f$ and $g$ are polynomials and $C = \Delta$, Theorem \ref{IntroRigidity1} was recently proved by Chen Gong and the second-named author \cite{GY26} using a relatively elementary argument based on a non-archimedean analogue of the polynomial trees constructed by DeMarco and McMullen \cite{DM08}. Our approach is different: it relies on ergodic properties of the equilibrium measures $\mu_f,\mu_g$ which were established recently only by Favre and Rivera-Letelier \cite{FRL25}. 

\par  A non-archimedean dynamical rigidity theorem for a place $v$ may be viewed as a rigidity theorem for families of dynamical systems, when viewed through the local geometry determined at $v$. Using ultrafilter degeneration techniques by Luo \cite{Luo21, Luo22} and Favre--Gong \cite{FG24}, we show that Theorem \ref{IntroRigidity1} implies the uniform dynamical Manin-Mumford conjecture for $\left(\bb{P}^1\right)^2$.  
\begin{theorem} \label{IntroUniformCurve1}
Let $f,g: \bb{P}^1 \to \bb{P}^1$ be two rational maps of degree $d \ge 2$ over $\mathbb{C}$ and let $C \subseteq \left(\bb{P}^1\right)^2$ be an irreducible curve of degree $D$. Then there exists a constant $B = B\left(d,D\right) > 0$ so that either 
\[C \text{ is } \left(f,g\right)\text{-special or } \left|\Prep\left(f,g\right) \cap C\right| \leq B.\]
\end{theorem}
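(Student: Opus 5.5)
We argue by contradiction and degenerate along an ultrafilter. Suppose no such $B$ exists. Then there is a sequence $(f_k,g_k,C_k)$ with $\deg f_k=\deg g_k=d$, $C_k$ irreducible of degree $D$ and not $(f_k,g_k)$-special, and $|\Prep(f_k,g_k)\cap C_k|\to\infty$. Since the parameter spaces $\Rat_d\times\Rat_d\times\{\text{curves of degree } D\}$ are quasi-projective, we can assume all data lie over a finitely generated field. The idea is to produce a single non-archimedean limit object to which Theorem~\ref{IntroRigidity1} applies.

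\textbf{Step 1: the case of bounded moduli.} If, up to passing to a subsequence and to simultaneous conjugation, the triples $(f_k,g_k,C_k)$ stay in a compact part of moduli space, we argue archimedean-ly. Curves with many preperiodic points have many small points for the canonical height. By the quantitative dynamical Zhang inequality of Mavraki--Yap, the essential minimum of the canonical height on $C_k$ is bounded below by a constant depending continuously on the data. More concretely, if $C_k$ had infinitely many small points, equidistribution would force $(\deg\pi_2)\pi_1^*\mu_{f_k}=(\deg\pi_1)\pi_2^*\mu_{g_k}$, and the complex rigidity of Ghioca--Nguyen--Ye would make $C_k$ special. Passing to a limit and using continuity, a convergent subsequence with $|\Prep\cap C_k|\to\infty$ gives a limit where the pairing $\langle C,C\rangle$ of the canonical adelic line bundle vanishes in the relevant place-by-place sense. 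One then has to show that this forces the non-special $C_k$ to have bounded preperiodic counts, via a uniform Bogomolov-type gap. I expect the honest version to be that Mavraki--Yap gives $|\{\text{points of height}<\epsilon\}|\le B$ provided a certain measure-discrepancy quantity $\int(\ldots)$ is bounded below uniformly.

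\textbf{Step 2: degeneration.} If the moduli diverge, choose a non-principal ultrafilter $\omega$ and use the Luo / Favre--Gong construction. Rescale by $\epsilon_k\to 0$, namely the logarithm of a suitable "size" of $(f_k,g_k,C_k)$ after optimal conjugation. This gives a complete algebraically closed non-archimedean field $K$ of residue characteristic $0$ (an ultraproduct of $\bb{C}$ with rescaled absolute values), together with limit maps $f,g$ over $K$ of degree $d$ and a limit curve $C$. The key compatibilities are the following.
\begin{enumerate}[(i)]
\item Equilibrium measures and their pullbacks to curves converge, in the sense that $\omega$-limits of the rescaled potential differences $\epsilon_k\log|\cdot|$ give the non-archimedean potentials.
\item The discrepancy $\Delta_k=(\deg\pi_2)\pi_1^*\mu_{f_k}-(\deg\pi_1)\pi_2^*\mu_{g_k}$, measured in a mutual-energy norm as in Mavraki--Yap, converges after rescaling to the non-archimedean discrepancy.
\end{enumerate}
If the latter is nonzero, the Mavraki--Yap inequality gives a uniform lower bound on heights of all but $B$ points of $C_k$, contradicting the growing number of preperiodic points, which have height $0$. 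If it is zero, Theorem~\ref{IntroRigidity1} (in its stronger form, Theorem~\ref{Rigidity1}) makes $C$ preperiodic for $(f,g)$, provided $\Lyap(f)>0$ or $\Lyap(g)>0$. When both Lyapunov exponents are $0$, the maps have potentially good reduction-type behavior, and we swap to a coarser rescaling so that at least one side degenerates.

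\textbf{Step 3: lift speciality back.} This is the main obstacle. Preperiodicity of $C$ is a closed algebraic condition, $\Phi^{m+p}(C)=\Phi^m(C)$, but with $m,p$ a priori unbounded. So we need a uniform bound on $(m,p)$ for special curves of degree $D$. This can be obtained from a degree count: the degrees of the images $\Phi^j(C)$ are bounded in terms of $d,D$ for periodic-type curves, via the Medvedev--Scanlon classification. Given such a bound, $\omega$-almost all $C_k$ satisfy the same algebraic relation. More precisely, failure for all of them would give a positive discrepancy in the limit, which is a contradiction.

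Overall, the delicate points are keeping the non-archimedean limit nondegenerate, so that $C$ still projects dominantly and the Lyapunov hypothesis holds, and the uniformity of the Zhang-type inequality across the rescaling.
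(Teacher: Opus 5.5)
There is a genuine gap. Your argument never produces the \emph{global} height inequality, which is the only input Theorem~\ref{QuantitativeZhang1} can turn into a uniform count. Mavraki--Yap requires $\h_{(f,g)}(C)\geq\delta\max\{h(f,g),1\}$ for data over $\overline{\mathbb{Q}}$. A nonzero discrepancy at one (rescaled) archimedean place gives nothing of the sort. Even for algebraic data, a lower bound at one embedding enters only with weight $n_v$, which can be arbitrarily small, while the Galois-conjugate triples at the other places may lie arbitrarily close to the special loci. For the same reason your Step~1 cannot close: non-special $C_k$ can converge to a preperiodic curve, where the local pairing is $0$, so a vanishing limit is no contradiction.

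The paper handles this in three layers:
\begin{enumerate}
\item It reduces from $\mathbb{C}$ to $\overline{\mathbb{Q}}$ by specialization (Proposition~\ref{DynamicalGapFF1}, using Silverman specialization and induction on transcendence degree, with Lemma~\ref{WeaklySpecialSpecialization1} to preserve non-speciality).
\item It proves the height gap of Theorem~\ref{HeightGapCurve1} through $\h_{(f,g)}(C)\geq (f,g)_C/(d_1d_2(d_1+d_2))$ with $(f,g)_C=\sum_v n_v\langle f,g\rangle_{C,v}$ (Proposition~\ref{GlobalPairing1}).
\item It bounds the local pairings from below on a set of places carrying a fixed fraction of $h_{\partial Z_k}(s)$ (Lemma~\ref{UniCurveLemma1}).
\end{enumerate}
The ultrafilter degenerations are performed for a fixed valued field, or for residue characteristics tending to infinity (Propositions~\ref{UniformEnergy1} and~\ref{UniformEnergy2}). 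The continuity of $s\mapsto\langle f,g\rangle_{C,s}$ on $Z_k^{\an}\setminus Z_k^{\triv}$, which you simply assert in (i)--(ii), is itself a result (Proposition~\ref{FlatContinuity1}). It is proved via Deligne pairings after stratifying the parameter space so that a simultaneous resolution exists.

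Your Steps~2--3 also cannot secure the two hypotheses needed to apply Theorem~\ref{Rigidity1} to the limit and reach a contradiction.

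\textbf{Positivity of $\Lyap(f_\omega)$.} At a single place this can fail for every rescaling. For Latt\`es degenerations the complex Lyapunov exponent is constant, and the limit Latt\`es map has bad reduction and $\Lyap=0$ without having potentially good reduction. So your claim that $\Lyap=0$ means ``potentially good reduction-type behavior'' is false, and rigidity genuinely fails when $\Lyap=0$. The paper obtains positivity only globally. For non-Latt\`es $f$ it uses $h_{\Lyap}(f)\geq c_1h_{\rat_d}(f)-c_2$ (Theorem~\ref{GlobalLyapunov1}, via Ingram and McMullen), and then discards the places where $\Lyap(f_v)<\delta\lambda_{\partial Z_k,v}(s)$. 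Latt\`es $f$ is treated separately through Proposition~\ref{Rigidity2}.

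\textbf{Non-preperiodicity of $C_\omega$.} Your Step~3 runs in the wrong direction. The statement $s_\omega\in V_{m,p}$ only says $\epsilon_k\lambda_{V_{m,p}}(s_k)\to\infty$ along $\omega$, not $s_k\in V_{m,p}$. Hence a limit of non-special curves can be preperiodic, with no contradiction. Moreover, the uniform bound on $(m,p)$ you invoke is not available: Medvedev--Scanlon treats polynomials, and the paper stresses that the preperiodic locus is only known to be a countable union $V_1,V_2,\ldots$. The paper instead imposes $\lambda_{V_i}(s)\leq c_i\lambda_{\partial Z_k}(s)$ for all $i$, with $c_i$ growing like $10^i$. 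This forces $s_\omega\notin V_i$ for every $i$. The global bounds $h_{V_i}\ll_i h_{\partial Z_k}$ then show that the excluded places carry little weight.

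\textbf{Remaining cases.} The bounded-height case is handled by the same weighted-places argument at archimedean places, combined with Mavraki--Schmidt rigidity (Proposition~\ref{BoundedHeight1}). Finally, you do not address pairs of exceptional maps. There special curves include torsion cosets, and the paper uses Bombieri--Zannier, Gao--Ge--K\"uhne or K\"uhne, and a torsion-point count on $G_1\times G_2$.
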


When $f,g$ are non-exceptional, special curves are simply the preperiodic curves. {\blue The definition of an $\left(f,g\right)$-special curve is given in Definition~\ref{def:SpecialCurves}. } Under the same philosophy, in the polynomial case, Gong--Yap  \cite{GY26} showed that their non-archimedean rigidity theorem implies a uniform bound on the number of common preperiodic points.

{\blue Taking $C$ to be the diagonal in Theorem~\ref{IntroUniformCurve1}, we resolve the conjecture of DeMarco--Krieger--Ye \cite[Conjecture 1.4]{DKY21} on a uniform bound for common preperiodic points.
\begin{theorem}\label{IntroCommonPreperiodic1}
For every integer $d\geq2$, there exists a constant $B=B(d)>0$ such that, for any two rational maps $f,g:\bb{P}^1\to\bb{P}^1$ of degree $d$ over $\bb{C}$, either
\[
\Prep(f)=\Prep(g)
\quad\text{or}\quad
\left|\Prep(f)\cap\Prep(g)\right|\leq B.
\]
\end{theorem}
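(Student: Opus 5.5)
We deduce Theorem~\ref{IntroCommonPreperiodic1} from Theorem~\ref{IntroUniformCurve1} with $C=\Delta\subseteq\left(\bb{P}^1\right)^2$, which is irreducible of bidegree $(1,1)$. A point $(x,x)\in\Delta$ lies in $\Prep(f,g)$ exactly when $x\in\Prep(f)\cap\Prep(g)$, so $\left|\Prep(f,g)\cap\Delta\right|=\left|\Prep(f)\cap\Prep(g)\right|$. Theorem~\ref{IntroUniformCurve1} gives $B=B(d,D)$ with $D=\deg\Delta$ fixed. Since the degree of $\Delta$ does not depend on $f,g$, this is a constant $B(d)$. Hence either $\left|\Prep(f)\cap\Prep(g)\right|\le B(d)$, or $\Delta$ is $(f,g)$-special.

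It remains to show that if $\Delta$ is $(f,g)$-special, then $\Prep(f)=\Prep(g)$. We split into cases according to Definition~\ref{def:SpecialCurves}.

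\textbf{Non-exceptional case.} Here special means preperiodic, so $(f,g)^{m+k}(\Delta)=(f,g)^m(\Delta)$ for some $m\ge0$, $k\ge1$. Let $C'=(f,g)^m(\Delta)$. The image of $\Delta$ under $(f^m,g^m)$ is a curve projecting dominantly onto both factors, and $C'$ is invariant under $\Psi=(f^k,g^k)$. We use the standard fact that on an invariant irreducible curve dominating both factors, the points that are $\Psi$-preperiodic are exactly those whose first coordinate is $f$-preperiodic, and also exactly those whose second coordinate is $g$-preperiodic. This follows by restricting $\Psi$ to the normalization of $C'$ and comparing canonical heights: $\hat h_{f}\circ\pi_1$ and $\hat h_g\circ\pi_2$ are both canonical heights for the restricted map. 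Over $\bb{C}$ one runs this over a finitely generated field, or argues via finiteness of fibers. Now take $x\in\Prep(f)$. Then $f^m(x)$ is $f$-preperiodic, so $(f^m(x),g^m(x))\in C'$ is $\Psi$-preperiodic. Therefore $g^m(x)$, and hence $x$, is $g$-preperiodic. The argument is symmetric in $f$ and $g$, so $\Prep(f)=\Prep(g)$.

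\textbf{Exceptional case.} The special curves are described in Definition~\ref{def:SpecialCurves} as images of invariant or torsion-translate curves under the semiconjugacies $\mathbb{G}_m\to\bb{P}^1$ or $E\to\bb{P}^1$. When $\Delta$ is special in this sense, the same height comparison on the curve shows that points of $\Delta$ are preperiodic for $f$ exactly when they are preperiodic for $g$. Equivalently, $\hat h_f=\hat h_g$ on $\Delta$, which again yields $\Prep(f)=\Prep(g)$.

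\textbf{Main obstacle.} The delicate point is making sure Definition~\ref{def:SpecialCurves} really implies equality of preperiodic sets in the exceptional and mixed cases, for example when $f$ is exceptional but $g$ is not. I would handle this with the height characterization on special curves. If needed, I would instead pass to the equivalent measure-theoretic characterization $\mu_f=\mu_g$ for $\Delta$ special, and then invoke the classical result that $\mu_f=\mu_g$ implies $\Prep(f)=\Prep(g)$ (Levin--Przytycki, Ye).
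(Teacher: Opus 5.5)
\textbf{The gap is the torsion-coset case.} Your reduction to Theorem~\ref{IntroUniformCurve1} with $C=\Delta$ is the same as the paper's. Your preperiodic-case argument is also correct, and the finiteness-of-fibres version needs no heights. If $\Psi(C')=C'$ and $C'$ dominates both factors, a point of $C'$ with $f$-preperiodic first coordinate has its $\Psi$-orbit inside finitely many finite fibres of $\pi_1|_{C'}$. This argument never uses non-exceptionality, so it covers every case where $\Delta$ is preperiodic. In particular the ``mixed'' case you flag is a non-issue: by Definition~\ref{def:SpecialCurves}, a curve can be special without being preperiodic only when \emph{both} $f$ and $g$ are exceptional.

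The remaining case is $\Delta\cap\operatorname{Im}(\vphi_f,\vphi_g)=(\vphi_f,\vphi_g)(H)$ with $H$ a torsion coset. It genuinely occurs without $\Delta$ being preperiodic. For example, take the degree-$5$ Latt\`es maps on $y^2=x^3-x$ induced by $[2+i]$ and $[2-i]$: here $H=\Delta_E$, but the curves $(f,g)^n(\Delta)$ are pairwise distinct because $(2-i)/(2+i)$ is not a root of unity. For this case you give no argument. ``The same height comparison'' is not a proof, and the asserted identity $\h_f=\h_g$ on $\Delta$ is neither justified nor what is needed.

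Your fallback fails exactly where you need it. For exceptional maps, $\mu_f=\mu_g$ does not imply $\Prep(f)=\Prep(g)$. Take $f(z)=z^2$ and $g(z)=e^{-i\theta}z^2$ with $\theta/\pi\notin\bb{Q}$. Both have Haar measure on the unit circle as equilibrium measure, yet $\Prep(f)\cap\Prep(g)=\{0,\infty\}$. The measure identity forgets precisely the torsion condition on the coset.

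\textbf{A direct repair.} Write $H=t+H_0$ with $t$ torsion.
\begin{itemize}
\item Since $\Delta$ dominates both factors, $H_0$ is one-dimensional, and both projections $H_0\to G_f$ and $H_0\to G_g$ have finite kernel. Hence for $(h_1,h_2)\in H$, $h_1$ is torsion if and only if $h_2$ is.
\item Since the $\psi$'s are isogenies of degree $d\ge 2$, $\Prep(f)\cap\operatorname{Im}\vphi_f=\vphi_f(G_{f,\mathrm{tors}})$, and likewise for $g$.
\item Together these give $\Prep(f)=\Prep(g)$ on $\operatorname{Im}\vphi_f\cap\operatorname{Im}\vphi_g$.
\item The finitely many omitted points, namely the exceptional points of power or Chebyshev maps, still need a separate check.
\end{itemize}

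\textbf{The paper's route.} The paper avoids all of this by arguing contrapositively. If $\Prep(f)\neq\Prep(g)$, Baker--DeMarco \cite[Theorem 1.2]{BD11} makes $\Prep(f)\cap\Prep(g)$ finite. Every special curve carries infinitely many preperiodic points: torsion points are dense in a torsion coset, and for a preperiodic curve your fibre argument applies. So $\Delta$ is not special, and Theorem~\ref{IntroUniformCurve1} gives the bound $B(d,2)$.

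Your fibre argument has the merit of avoiding \cite{BD11} in the preperiodic case. In the torsion-coset case, however, you must either carry out the group-theoretic computation above or invoke that theorem.
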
}

Using ideas of David--Philippon \cite{DP99} in the setting of $\bb{G}_m^n$ and specialization arguments by Mavraki--Yap \cite{MY25}, we can extend Theorem \ref{IntroUniformCurve1} to the case of $\left(\bb{P}^1\right)^n$ for general $n \geq 2$. {\blue This gives us the following theorem, which can be viewed as a uniform dynamical Manin--Mumford on $\left(\bb{P}^1\right)^n$ for general $n\geq2$, allowing at most one exceptional coordinate map.}  

\begin{theorem} \label{IntroTheorem1}
Let $\Phi = \left(f_1,\ldots,f_n\right)$ be a split endomorphism of $\left(\bb{P}^1\right)^n$ defined over $\mathbb{C}$ such that each $f_i$ is of the same degree $d \ge 2$. {\blue Assume that at most one of the maps $f_i$ is exceptional.} Let $V \subseteq \left(\bb{P}^1\right)^n$ be an irreducible subvariety, defined over $\mathbb{C}$, of degree $D$. Then there exists $N = N\left(n,d,D\right) > 0$ such that 
\[\ovl{\Prep\left(\Phi\right) \cap V\left(\bb{C}\right)}^{\Zar} \text{ is a union of at most } N \text{ irreducible preperiodic subvarieties.}\]
\end{theorem}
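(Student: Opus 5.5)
The proof goes by induction on $\dim V$ and on $n$. The engine is a uniform gap principle for heights together with a degeneration/specialization argument in the style of David--Philippon and Mavraki--Yap. The curve case, Theorem \ref{IntroUniformCurve1}, serves as the base input.

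\textbf{Step 1: a uniform Bogomolov-type gap.} We first aim for the following statement. There are constants $\epsilon = \epsilon(n,d,D)>0$ and $M=M(n,d,D)$ with this property: for every $\Phi$ as in the statement and every irreducible $V$ of degree $D$ that is not preperiodic, the set
\[
\{x\in V(\bb{C}) : \h_\Phi(x) \le \epsilon\cdot h(\Phi)\}
\]
is contained in a proper Zariski closed subset $Z\subsetneq V$ that has at most $M$ components, each of degree at most $M$. Here the heights are taken after spreading $\Phi$ and $V$ out over a finitely generated field, and $h(\Phi)$ is a moduli height.

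We argue by contradiction using the ultrafilter degeneration of Luo and Favre--Gong. Suppose there is a sequence $(\Phi_k,V_k)$ for which the gap fails. Passing to an ultralimit produces a non-archimedean field $K$ of residue characteristic $0$, a limit split map $\Phi_\omega$, and a limit subvariety $V_\omega$.

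\emph{Degenerate case.} If the family degenerates, some coordinate $f_i$ has $\Lyap>0$ in the limit. In that case equidistribution of small points forces an equality of the pulled-back equilibrium measures on curves in $V_\omega$. We then apply the strengthened rigidity statement, Theorem \ref{Rigidity1}, to each pair of coordinates. Its local version, requiring only proportionality on an open set, is what lets us conclude that $V_\omega$ is preperiodic. This contradicts the fact that non-preperiodicity is preserved along the ultrafilter by boundedness of degrees.

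\emph{Bounded case.} If the family stays in a compact part of moduli, we reduce to the archimedean rigidity of Ghioca--Nguyen--Ye plus a compactness argument. The quantitative Zhang inequality of Mavraki--Yap supplies the uniform $\epsilon$.

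\textbf{Step 2: from the gap to counting components.} Given Step 1, we run the standard descending induction. Every point of $\Prep(\Phi)\cap V$ has height $0$, so it lies in $Z$. We apply the induction hypothesis to each component of $Z$; each has degree at most $M$ and dimension less than $\dim V$. The components of $\ovl{\Prep(\Phi)\cap V}^{\Zar}$ are unions of those obtained from the components of $Z$, so the bound $N(n,d,D)$ follows recursively. Components $W$ that are themselves preperiodic are kept and counted, and this recursion terminates after $\dim V$ steps.

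For the David--Philippon-style reduction in Step 1, we project $V$ to coordinate subproducts $(\bb{P}^1)^m$. This reduces to the case where $V$ is not contained in a fibre of a preperiodic relation between two coordinates. The case of "special" pairs is handled separately. Because at most one $f_i$ is exceptional, any coupling relation between two coordinates involves at least one non-exceptional map, so the relevant special curves are just preperiodic curves, as in Theorem \ref{IntroUniformCurve1}.

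\textbf{Main obstacle.} The hard part is Step 1 in the degenerate regime for $\dim V\ge 2$. There the equality of measures is obtained only on $V_\omega$ as a whole, and it must be upgraded to preperiodicity. I would first restrict to generic curves $C\subseteq V_\omega$ cut out by hyperplane sections of bounded degree. On these, pairwise measure proportionality on an open set holds, and Theorem \ref{Rigidity1} makes each pair of coordinates satisfy a preperiodic relation. I would then assemble these pairwise relations into preperiodicity of $V_\omega$, following the combinatorics of Ghioca--Nguyen--Ye and Mavraki--Schmidt--Wilms. The remaining difficulty is keeping all degrees bounded uniformly, so that Łoś's theorem lets us transfer the conclusion back along the ultrafilter.
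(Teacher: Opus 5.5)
Your skeleton matches the paper's: a height gap, then the quantitative Zhang inequality, then descending induction, with non-archimedean rigidity fed in through ultrafilter degeneration. But the degeneration step in Step 1 does not work as written.

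\textbf{The degeneration step.} You push the failure of a \emph{global} height inequality into a single non-archimedean field $\mathcal{H}(\omega)$ and then invoke ``equidistribution of small points'' there. Equidistribution is an asymptotic statement for a \emph{fixed} variety over a global field. It gives nothing uniform along a varying sequence $(\Phi_k,V_k)$, and in the ultralimit there is no global height and there are no small points. The paper stresses that it cannot use equidistribution or bigness in families at all, because the locus where $C$ is preperiodic is only known to be a countable union of closed sets.

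The missing idea is a \emph{local} quantity that survives degeneration and controls the global height. This is the energy pairing $\langle f,g\rangle_{C,v}=-\ovl{N}^2$ built from flat metrics. It is $\ge 0$, with equality iff $d_2\pi_1^*\mu_f=d_1\pi_2^*\mu_g$ (Proposition~\ref{Pairing1}). It is continuous on $Z_k^{\an}\setminus Z_k^{\triv}$ (Proposition~\ref{FlatContinuity1}). It satisfies $\h_{(f,g)}(C)\ge (f,g)_C/\bigl(d_1d_2(d_1+d_2)\bigr)$ (Proposition~\ref{GlobalPairing1}).

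The ultrafilter argument is run on this pairing one place at a time. It yields $\langle f,g\rangle_{C,v}\ge c\,\lambda_{\partial Z_k,v}(s)$ only under two hypotheses. You assert both for free, and both are false in general.
\begin{enumerate}
\item Degeneration does not produce a coordinate with $\Lyap>0$ in the limit. The degeneration may come only from the curve. Or it may come from a Latt\`es coordinate, whose limit has $\Lyap=0$ so that Theorem~\ref{Rigidity1} is unavailable, while the other map acquires good reduction. The paper handles this as follows.
\begin{itemize}
\item It reduces to $h(C)\ll\max\{h_{\rat_d}(f),1\}$.
\item It enforces $\Lyap(f_v)\ge\delta\lambda_{\partial Z_k,v}(s)$ on most places via Ingram's comparison $h_{\Lyap}\gg h_{\rat_d}$ off the Latt\`es locus, which rests on McMullen (Theorem~\ref{GlobalLyapunov1}).
\item It treats Latt\`es versus good reduction separately by Proposition~\ref{Rigidity2}.
\item It handles bounded height with Mavraki--Schmidt's measure-only rigidity, since the Ghioca--Nguyen--Ye version needs infinitely many preperiodic points on $C$. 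This is combined with a count of archimedean places lying in a compact set (Proposition~\ref{BoundedHeight1}).
\end{itemize}
\item Non-preperiodicity is not preserved in the ultralimit. The preperiodic locus is $\bigcup_{m,n}V_{m,n}$, an infinite disjunction to which {\L}o\'s's theorem does not apply, and a non-preperiodic sequence can converge into some $V_i$. The paper imposes $\lambda_{V_i}(s)\le c_i\lambda_{\partial Z_k}(s)$ for every $i$, with $c_i$ growing geometrically. It then shows via global heights that the violating places carry little total weight (Lemma~\ref{UniCurveLemma1}).
\end{enumerate}

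\textbf{The higher-dimensional part.} Step 1 is false as stated. Take $V=\{p\}\times\bb{P}^1$ with $0<\h_{f_1}(p)\le\epsilon h(\Phi)$. The points $(p,y)$ with $y\in\Prep(f_2)$ are Zariski dense in $V$ and lie below the threshold. Cylinders fail similarly, since $h(\Phi)$ includes the height of the forgotten coordinate. Weakly special varieties must be excluded.

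More importantly, the ``main obstacle'' you identify, assembling pairwise rigidity on curve sections of $V_\omega$, is neither carried out nor needed; the paper uses rigidity only for curves. It proves $\h_\Phi(H)\ge c\max\{h_{\rat_d}(\Phi),1\}$ for non-weakly-special hypersurfaces by induction on $n$, using David--Philippon slicing:
\begin{itemize}
\item take the fibres $H_z$ over $z\in f_n^{-m}(1)$ and the graph embedding $z\mapsto(z,\Phi^m(z))$;
\item apply arithmetic B\'ezout (Proposition~\ref{ArithmeticBezout1}) to get $\sum_{f_n^m(z)=1}\h_\Phi(H_z)\le(d^m+1)\deg H\,\h_\Phi(H)+O(1)$, with the error independent of $m$;
\item show that only finitely many slices are weakly special (Lemma~\ref{Hypersurface1}).
\end{itemize}
A general non-preperiodic $V$ is reduced to a hypersurface by projecting to $\dim V+1$ coordinates (Proposition~\ref{UniBogomolov2}). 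Only then is quantitative Zhang applied.

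\textbf{Passing to $\bb{C}$.} Going from $\ovl{\bb{Q}}$ to $\bb{C}$ is not just ``spreading out''. The paper proves the gap over function fields (Proposition~\ref{DynamicalGapFF1}) by induction on transcendence degree, using Silverman specialization and Lemma~\ref{WeaklySpecialSpecialization1}.
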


As is the usual case with Arakelov-theoretic methods, we obtain analogous uniform Bogomolov statements. In particular, we obtain a uniform height gap for hypersurfaces $H \subseteq \left(\bb{P}^1\right)^n$, analogous to the new gap principle by Gao--Ge--K{\"u}hne \cite{GGK21}. Given a split endomorphism $\Phi: \left(\bb{P}^1\right)^n \to \left(\bb{P}^1\right)^n$ of rational maps of degree $d \ge 2$, we say that $V$ is weakly $\Phi$-special if any of the following holds:

\begin{enumerate}
\item $V$ is preperiodic for $\Phi$;
\item $V$ does not project dominantly to one of the coordinates;
\item $V$ is of the form $\pi^{-1}\left(W\right)$ where $\pi: \left(\bb{P}^1\right)^n \to \left(\bb{P}^1\right)^{n-1}$ is the projection away from a coordinate and $W$ a closed subvariety of $\left(\bb{P}^1\right)^{n-1}$.
\end{enumerate}

\begin{theorem} \label{IntroTheorem2}
Let $\Phi = \left(f_1,\ldots,f_n\right)$ be a split endomorphism of $\left(\bb{P}^1\right)^n$ defined over $\ovl{\bb{Q}}$ such that {\blue each $f_i$ has degree $d \geq 2$ and at least one $f_i$ is non-exceptional}. Let $H \subseteq \left(\bb{P}^1\right)^n$ be {\blue an irreducible hypersurface of degree $D$} which is not weakly $\Phi$-special that is defined over $\ovl{\bb{Q}}$. Then there exists $c = c\left(n,d,D\right) > 0$ such that 
\[\h_{\Phi}\left(H\right) \geq c \max\left\{h_{\rat_d}\left(\Phi\right),1\right\}.\]    
\end{theorem}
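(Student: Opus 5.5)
Theorem \ref{IntroTheorem2} is proved by contradiction with a degeneration argument, using Theorem \ref{IntroRigidity1} (in the stronger form of Theorem \ref{Rigidity1}) as the non-archimedean input.

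Suppose no such $c$ exists. Then there are sequences $\Phi_k=(f_{1,k},\ldots,f_{n,k})$ and hypersurfaces $H_k$ of degree $D$, none weakly $\Phi_k$-special, with
\[\h_{\Phi_k}(H_k)/\max\{h_{\rat_d}(\Phi_k),1\}\to 0.\]

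\textbf{Step 1: reduce to points.} Using the successive minima (Zhang's inequality), a small height of $H_k$ gives a Zariski dense set of points $x\in H_k$ with
\[\h_{\Phi_k}(x)\le \epsilon_k\max\{h_{\rat_d}(\Phi_k),1\},\qquad \epsilon_k\to0.\]
The quantitative dynamical Zhang's inequality of Mavraki and the second author makes this explicit. It also lets us pass to generic points of large fibered powers $H_k^{[m]}$, for $m$ depending only on $(n,d,D)$.

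\textbf{Step 2: degenerate and equidistribute.} Split into cases.
\begin{enumerate}[(a)]
\item If $h_{\rat_d}(\Phi_k)$ stays bounded, the maps range over finitely many Galois orbits of bounded height. We then apply the non-uniform argument for a fixed $\Phi$: Yuan equidistribution of small points on $H$ gives equality of the measures $\pi_i^*\mu_{f_i}$ restricted to curves in $H$. Ghioca--Nguyen--Ye rigidity then forces $H$ to be preperiodic, or one of the degenerate cases (2)--(3).
\item If $h_{\rat_d}(\Phi_k)\to\infty$, take a non-principal ultrafilter and form the Luo / Favre--Gong limit. This yields a non-archimedean field $K$ of residue characteristic $0$, a limit split map $\Phi_\omega$ over $K$, and a limit hypersurface $H_\omega$ of degree $D$. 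The height normalization guarantees that some coordinate map, say $f_{1,\omega}$, has $\Lyap>0$, since it is the non-archimedean shadow of the height growth. The small points, transported to the Berkovich space via the local height decomposition at the places carrying most of $h_{\rat_d}$, equidistribute to the canonical measure of $H_\omega$.
\end{enumerate}

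\textbf{Step 3: cut to curves.} Slice $H_\omega$ by general curves $C$ that project dominantly to two coordinates $i,j$. The minimality of the small points on the hypersurface should give that $\pi_i^*\mu_{f_i}$ and $\pi_j^*\mu_{f_j}$ are proportional on an open subset of $C^{\an}$ of positive mass. Theorem \ref{Rigidity1} then makes each such $C$ preperiodic under $(f_i^a,f_j^b)$. A family of such preperiodic curves sweeping out $H_\omega$ forces $H_\omega$ either to be a pullback from a lower-dimensional product (case (3)) or to be preperiodic. Finally, we transfer this special structure back to $H_k$ for $\omega$-most $k$. This uses that preperiodicity of bounded degree is a closed algebraic condition, together with the Mavraki--Yap specialization arguments. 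This contradicts the assumption that $H_k$ is not weakly special.

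\textbf{Main obstacle.} I expect Step 2(b)--3 to be the hardest. The difficulty is obtaining a genuine measure identity on the non-archimedean limit from Archimedean-plus-finite height smallness. In particular, one needs to show that the proportion of the height carried by the degenerating place is bounded below, so that the limit measures are nontrivial and the condition $\Lyap>0$ holds. I would handle this with the Favre--Gong comparison of $h_{\rat_d}$ with local Lyapunov/ultrafilter norms. I would first attempt the case $n=2$ (where $H$ is a curve and Theorem \ref{IntroRigidity1} applies directly), and then induct on $n$ via the David--Philippon-type slicing.
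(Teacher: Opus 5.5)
Your outline names several of the right ingredients (ultrafilter degeneration, Theorem \ref{Rigidity1}, David--Philippon slicing), but Steps 2--3 have genuine gaps.

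\textbf{Positivity of the Lyapunov exponent.} $\Lyap>0$ is not a consequence of height growth. Take $n=2$, $f_{1,k}$ Latt\`es with $h_{\rat_d}(f_{1,k})\to\infty$, and $f_2$ fixed and non-exceptional. The limit $f_{1,\omega}$ is a Latt\`es map without potentially good reduction. Its equilibrium measure lives on a segment of $\mathbb{H}$, so $\Lyap(f_{1,\omega})=0$, and the other limit has good reduction. The comparison you need is not Favre--Gong's but Ingram's (critical height $\gtrsim h_{\rat_d}$ off the Latt\`es locus, via McMullen). Combined with the Gauthier--Okuyama--Vigny local bound, it gives $h_{\Lyap}(f)\ge c_1h_{\rat_d}(f)-c_2$ for non-Latt\`es $f$ (Theorem \ref{GlobalLyapunov1}). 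This is only a global comparison. So one must isolate the places where $\Lyap(f_v)\ge\delta\lambda_{\partial Z_k,v}(s)$ and show they carry a fixed share of the height. When a Latt\`es coordinate dominates the height, a separate statement (Proposition \ref{Rigidity2}) is needed.

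\textbf{The measure identity.} Nothing in your sketch produces the measure identity on the limit. Equidistribution of small points in families needs a bigness input that the paper explicitly cannot supply. Also, the height of $(\Phi_k,H_k)$ is spread over many places of varying number fields, so there is no single place to degenerate at. The paper bypasses small points entirely; your Step 1 is unnecessary, and quantitative Zhang is used only afterwards, for Manin--Mumford. Instead it uses the local energy pairing $\langle f,g\rangle_{C,v}=-\ovl{N}^2$ built from flat metrics. This pairing is $\ge0$, with equality iff $d_2\pi_1^*\mu_f=d_1\pi_2^*\mu_g$ (Proposition \ref{Pairing1}). It is continuous on $Z_k^{\an}\setminus Z_k^{\triv}$ via Yuan--Zhang Deligne pairings (Proposition \ref{FlatContinuity1}). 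And it satisfies $\h_{(f,g)}(C)\ge (f,g)_C/(d_1d_2(d_1+d_2))$ (Proposition \ref{GlobalPairing1}). The ultrafilter degeneration is applied to \emph{local} data: points $s_n\in Z_k(K)$ over one valued field, or over fields of residue characteristic tending to infinity, rescaled by $\lambda_{\partial Z_k}(s_n)^{-1}$. This gives $\langle f,g\rangle_{C,v}\ge c\,\lambda_{\partial Z_k,v}(s)$ (Propositions \ref{UniformEnergy1} and \ref{UniformEnergy2}), which is then summed over the good places.

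\textbf{Transfer back to $H_k$.} The transfer back in Step 3 fails. That $s_\omega$ lies on a closed $V_m$ only means $\lim_\omega\epsilon_k\lambda_{V_m}(s_k)=+\infty$, not $s_k\in V_m$. Moreover, the preperiodic locus is only a countable union $\bigcup_iV_i$ with no bound on the preperiod (Proposition \ref{WeaklySpecial1}), so preperiodicity is not a closed condition. The paper argues contrapositively. It imposes $\lambda_{V_i,v}(s)\le c_i\lambda_{\partial Z_k,v}(s)$ for all $i$, with $c_i$ chosen so that $\sum_i(a_ic_i)^{-1}$ is small. Then the limit avoids every $V_i$, and Theorem \ref{Rigidity1} gives a contradiction. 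Globally, $s\notin V_i$ gives $h_{V_i}(s)\le a_i$ times an ample height. Hence the places violating these conditions carry only a summably small share of the height (Lemma \ref{UniCurveLemma1}).

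\textbf{Bounded height.} In Step 2(a), Northcott needs bounded degree. So there are infinitely many $(\Phi,H)$ of bounded height, and equidistribution for a fixed $\Phi$ gives no uniform constant. The paper instead uses three things: compactness of a region of $Z_k(\mathbb{C})$ cut out by archimedean local height bounds, Mavraki--Schmidt's archimedean rigidity, and a pigeonhole argument over archimedean places (Proposition \ref{BoundedHeight1}).

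\textbf{The case $n\ge3$.} There is no reason general curve slices of $H_\omega$ should carry proportional measures, and the paper never degenerates $H$ in this case. It inducts on $n$ instead. By Lemma \ref{Hypersurface1}, only finitely many $z\in O^-_{f_n}(1)$ have every component of $\pi(H_z)$ weakly special; this uses Ghioca--Nguyen--Ye and the non-exceptional coordinate. Arithmetic B\'ezout bounds $\sum_{f_n^m(z)=1}h(\Phi_m(H_z))$ by $h(\Phi_m(H))+\deg(\Phi_m(H))\log 2$. The scalings of Proposition \ref{Degrees1} then kill the error as $m\to\infty$. Your closing remark points this way, but it has to be the main route, not an afterthought.
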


Here, $\h_{\Phi}$ denotes the canonical height of $\Phi$ with respect to the line bundle $L$ and $h_{\rat_d}\left(\Phi\right) = \sum_{i=1}^{n} h_{\rat_d}\left(f_i\right)$, where by abuse of notation $h_{\rat_d}$ is a Weil height for the moduli space $\rat_d$ of degree $d$ rational maps up to equivalence \cite{Sil98}. 
\par 
\begin{remark}
Shortly after we proved Theorem \ref{IntroRigidity1}, DeMarco and Mavraki \cite{DM26b} informed us that they have a strategy to deduce non-degeneracy results from our non-archimedean rigidity theorem, and in particular obtain different proofs of Theorems \ref{IntroUniformCurve1} and \ref{IntroTheorem1}, as well as relative versions. As our strategies for obtaining uniformity is different from theirs, we decided to write our results separately. 
\end{remark} 

\begin{remark}
One major technical difficulty in implementing the strategy of \cite{GY26} to the setting of Theorem \ref{IntroUniformCurve1} is the lack of understanding of the loci of triplets $\left(f,g,C\right)$ where $C$ is preperiodic for $\left(f,g\right)$. In \cite{GY26}, the authors only deal with the case where $(f,g)$ are polynomials and $C = \Delta$ is the diagonal, and in particular this loci is known to be almost always a Zariski closed set. However in our setting, we a priori only know that it is a countable union of Zariski closed sets. This makes the ultrafilter degeneration method much more technical. In \cite{DM26b}, DeMarco and Mavraki are able to use their non-degeneracy results to characterize the loci \cite[Theorem 1.1]{DM26}. As an example, \cite[Theorem 1.2]{DM26b} states that the loci of pairs of non-exceptional rational maps $(f,g)$ in $\Rat_d \times \Rat_d$ such that $\Prep(f) = \Prep(g)$ is Zariski closed. 
\end{remark}

\subsection{Proof Strategy for Theorem \ref{IntroCommonPreperiodic1}} 
There have been many partial results towards Theorem \ref{IntroTheorem1}. The case of $n = 2$ and $V$ being the diagonal $\Delta \subseteq \left(\bb{P}^1\right)^2$ was first studied by DeMarco--Krieger--Ye \cite{DKY20, DKY21} for the quadratic unicritical family and the Legendre family of Latt{\`e}s maps. Poineau \cite{Poi24a, Poi24b} subsequently extended their results to cover all pairs of Latt{\`e}s maps. In this case, one is looking for a uniform bound on the size of $\Prep\left(f,g\right) \cap \Delta = \Prep\left(f\right) \cap \Prep\left(g\right)$. For general families of rational maps, DeMarco--Mavraki \cite{DM24} showed the existence of a Zariski open $U \subseteq \Rat_d \times \Rat_d$ and $N = N\left(d\right) > 0$ depending only on $d$ so that $\left|\Prep\left(\Phi\right) \cap \Delta \right| \leq N$ for all $\Phi \in U$. Gauthier \cite{Gau24} extended DeMarco--Mavraki results to the case of different degrees. For one-parameter families and $n = 2$, Theorem \ref{IntroTheorem1} was proven by Mavraki--Schmidt \cite{MS25}. 
\par 
Among these previous results, one can summarize two different approaches to proving cases of Theorem \ref{IntroTheorem1} when $n = 2$. By a specialization argument following Mavraki and the second author \cite{MY25} (see Section \ref{sec: Specialization}), it suffices to handle the case where $\Phi$ and $C$ are defined over $\ovl{\bb{Q}}$. The first is to study an adelic energy pairing $\left(f,g\right)$ between two rational maps of degree $d \ge 2$ over $\ovl{\bb{Q}}$, first defined by Favre--Rivera-Letelier \cite{FRL06}, see also Fili \cite{Fil17}. The adelic energy pairing $\left(f,g\right)$ can be written as a sum of local energy pairings 
\[\left(f,g\right) = \sum_{v \in M_K} n_v \left(f,g\right)_v\]
where $K$ is a number field where $f,g$ are defined over and $n_v = \left[K_v:\bb{Q}_p\right]/\left[K:\bb{Q}\right]$. The local energy pairing $\left(f,g\right)_v$ is non-negative and is zero if and only if $\mu_{f,v} = \mu_{g,v}$, where $\mu_{f,v},\mu_{g,v}$ are the equilibrium measure of $f$ and $g$, supported on $\bb{P}_v^{1,\an}$, see \cite{BR10} and \cite{Ben19} for more details of dynamics on $\berkP_v$ and this measure.  
\par 
By the work of DeMarco--Krieger--Ye \cite{DKY20,DKY21}, generalized by Gauthier \cite{Gau24} and Ang--Yap \cite{AY23}, it is known that to obtain uniform bounds on $\left|\Prep\left(f\right) \cap \Prep\left(g\right)\right|$, it suffices to prove that if $\left(f,g\right) \not = 0$, then 
\begin{equation} \label{eq: IntroIneq1}
\left(f,g\right) \geq c \max\left\{h\left(f,g\right),1\right\}
\end{equation}
for some $c = c\left(d\right) > 0$ depending only on the degree $d$. Here, $h\left(f,g\right)$ can be taken as $h\left(f\right) + h\left(g\right)$ for an ample Weil height $h$ on $\Rat_d$. DeMarco--Krieger--Ye shows that \eqref{eq: IntroIneq1} holds if $\left(f,g\right)$ belong to the quadratic unicritical family \cite{DKY21}, or to the family of Latt{\`e}s map associated to the Legendre family \cite{DKY20}. This is also the approach taken by Poineau \cite{Poi24a, Poi24b} for the case of general Latt{\`e}s maps $\left(f,g\right)$.  
\par 
The second approach is to study the non-degeneracy of subvarieties with respect to a bifurcation current. Given a family of polarized dynamical systems $\left(X \to S, \Phi, \cal{L}\right)$ and a closed subvariety $Y \subseteq X$ of codimension $r$ that defines a flat family over $S$, we say that the triple $\left(X,Y,\Phi\right)$ is non-degenerate if we have the non-vanishing
\[\hat{T}_{\Phi}^{k-r+ \dim S} \wedge \left[Y\right] \not = 0\]
where $\hat{T}_{\Phi}$ is a dynamical $\left(1,1\right)$-current associated to our family. One may consult \cite[Section 4]{DM24} for more details. Studying and proving instances of non-degeneracy has been very successful in resolving the uniform Mordell--Lang conjecture \cite{GH19, DGH21, Kuh21, GGK21} and also in dynamical settings \cite{GTV23, DM24, DMY24, MS25, AZ26}.   
\par 
We follow the first approach in directly establishing uniform lower bounds for the global energy pairing $\left\langle f ,g \right\rangle$. In the more general setting of bounding $\left|\Prep\left(f,g\right) \cap C\right|$, we may define a generalization of the adelic energy pairing using the notion of a flat metric \cite{YZ17}. We write it as $\left(f,g\right)_C$ and there is also a local decomposition 
\[\left(f,g\right)_C = \sum_{v \in M_K} n_v \left\langle f , g \right\rangle_{C,v}.\]
Then using the quantitative dynamical Zhang's inequality by Mavraki--Yap \cite{MY25}, if $C$ is of bidegree $\left(d_1,d_2\right)$, it suffices to prove the existence of $c = c\left(d_1,d_2,d\right) > 0$ depending only on $\left(d_1,d_2,d\right)$ such that 
\[\left(f,g\right)_C \geq c \max\left\{ h\left(f,g\right),1\right\}.\]
Observe that the quantitative Zhang's inequality is essential in our approach as we do not establish any form of bigness for the associated line bundles. This is because we are unable to understand when does the exceptional locus, where $C$ is preperiodic for $\left(f,g\right)$, lie in a proper Zariski closed set. In particular, we are unable to apply the height inequalities or equidistribution results from Yuan--Zhang \cite{YZ24} to deduce uniformity. Instead, we have to rely on a pointwise approach which can be handled by the quantitative Zhang's inequality. This is also why we only obtain uniform bounds and not any sort of relative results as in \cite{MS25} and \cite{DM26b}.  

\subsection{Degenerations and Non-Archimedean Rigidity} To prove uniform lower bounds on $\left(f,g\right)_C$, we will prove uniform lower bounds on the local energy pairing $\left\langle f , g \right\rangle_{C,v}$. Let $S$ be a quasi-projective variety over $\ovl{\bb{Q}}$ parametrizing a family of triples $\left(f,g,C\right)$ where $f,g$ are rational maps of degree $d \ge 2$ and $C$ is a degree $\left(d_1,d_2\right)$ curve. In Section \ref{sec: Flat}, we prove that the local energy pairing $\left\langle f , g \right\rangle_{C,s}$ defines a function 
\[\left\langle f , g \right\rangle_{C,s} : S^{\an} \setminus S^{\triv} \to \bb{R}_{\geq 0}\]
where $S^{\an}$ is the Berkovich analytification of $S$ and $S^{\triv}$ is the set of points $s \in S^{\an}$ whose residue field $\cal{H}\left(s\right)$ is trivially valued. Furthermore using properties of the Deligne pairing \cite[Section 4]{YZ24}, the function $\left\langle f , g \right\rangle_{C,s}$ is continuous under some technical hypothesis which we are able to assume in our argument. Now, the locus of points $\left(f,g,C\right) \in S$ for which $C$ is preperiodic for $\left(f,g\right)$ can be given by a countable union of closed subvarieties $V_1,V_2,\ldots$. Ideally, this should be contained in a proper Zariski closed subvariety but it is not clear a priori. This makes the argument more technical than in \cite{GY26}. 
\par 
We now let $\lambda_{V_i}$ be local height functions for $V_i$ (see Section \ref{sec: LocalSubSec}). Let $\lambda_{\partial S}$ be a local height function for the boundary of $S$ and let $\Lyap\left(f\right)$ be the Lyapunov exponent of $f$, which is well-defined for any rational map $f$ of degree $d \ge 2$ over a non-trivially valued field $K$. We then have the following uniform bound (Proposition \ref{UniformEnergy1}):

\begin{proposition} \label{IntroUniformEnergy1}
Assume that the local energy pairing $\left\langle f , g \right\rangle_{C,s}$ is continuous on $S^{\an} \setminus S^{\triv}$. Let $\left(K,\left| \cdot \right|\right)$ be an complete non-trivially valued field of characteristic $0$. Let $\delta, c_1,c_2,\ldots$ be positive constants. Then there exists constants $N > 0$ and $c > 0$ depending on $K,\delta,c_1,c_2,\ldots$ such that if $s = \left(f,g,C\right) \in S\left(K\right)$ satisfies:
\begin{enumerate}
\item $\lambda_{V_i}\left(s\right) \leq c_i \lambda_{\partial S}\left(s\right)$ for all $i \geq 1$;

\item $\Lyap\left(f\right) \geq \delta \lambda_{\partial S}\left(s\right)$;

\item $\lambda_{\partial S}\left(s\right) \geq N$.
\end{enumerate}
Then we have the uniform lower bound 
\[\left\langle f,g \right\rangle_C \geq c \lambda_{\partial Z_k}\left(s\right).\]
\end{proposition}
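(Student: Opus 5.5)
We argue by contradiction, using the ultrafilter degeneration method of Luo and Favre--Gong. Suppose no such $N,c$ exist. Then there are points $s_m=(f_m,g_m,C_m)\in S(K)$ satisfying (1)--(3) with $\lambda_{\partial S}(s_m)=:t_m\to\infty$, but with $\langle f_m,g_m\rangle_{C_m}/t_m\to 0$.

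\textbf{Step 1: pass to a limit.} Fix a non-principal ultrafilter $\omega$ on $\mathbb{N}$. Rescale the absolute value on $K$ at the $m$-th coordinate to $|\cdot|^{1/t_m}$ and form the ultraproduct field $K_\omega$ with the induced (asymptotic) non-archimedean valuation. This field is complete and algebraically closed after completion, of residue characteristic $0$. Equivalently, in $S^{\an}$ we consider the rescaled points $s_m^{1/t_m}$. These lie in a compact set, since $\lambda_{\partial S}=1$ after rescaling and $\lambda_{\partial S}$ is proper on $S^{\an}\setminus S^{\triv}$ modulo scaling. Hence they have an $\omega$-limit $s_\omega\in S^{\an}$ with $\lambda_{\partial S}(s_\omega)=1$.

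In particular $s_\omega\notin S^{\triv}$, because a trivially valued point has boundary local height $0$ or $\infty$ under scaling. The point $s_\omega$ corresponds to a triple $(f_\omega,g_\omega,C_\omega)$ over the non-archimedean field $\cal{H}(s_\omega)$. We use the following homogeneity properties under rescaling:
\begin{itemize}
\item the Lyapunov exponent scales linearly;
\item the local energy pairing scales linearly;
\item the local heights $\lambda_{V_i}$ scale linearly.
\end{itemize}
Using the continuity of $\langle f,g\rangle_{C,s}$ on $S^{\an}\setminus S^{\triv}$ (assumed), and continuity of $\Lyap$ in families (De Marco--Okuyama / Favre), we obtain
\[
\Lyap(f_\omega)\ge\delta>0,\qquad \langle f_\omega,g_\omega\rangle_{C_\omega,s_\omega}=0,\qquad \lambda_{V_i}(s_\omega)\le c_i<\infty \text{ for all } i .
\]
The last condition uses lower semicontinuity of $\lambda_{V_i}$ toward $V_i$, namely that $\lambda_{V_i}=+\infty$ exactly on $V_i^{\an}$. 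It therefore says $s_\omega\notin V_i^{\an}$ for every $i$, i.e. $C_\omega$ is \emph{not} preperiodic for $(f_\omega,g_\omega)$. Note we need each $V_i$ handled simultaneously; this is why the $c_i$ are fixed in advance, and it is the key place where the countable-union issue is absorbed.

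\textbf{Step 2: vanishing pairing gives proportional measures.} From Section~\ref{sec: Flat}, vanishing of the local pairing $\langle f,g\rangle_{C,v}$ is equivalent to $(\deg\pi_2)\pi_1^*\mu_f=(\deg\pi_1)\pi_2^*\mu_g$ on $C^{\an}$. This is the analogue of the statement that $(f,g)_v=0$ iff $\mu_{f,v}=\mu_{g,v}$, proved via positivity of the Deligne pairing of differences of the two flat metrics. This step also requires that $C_\omega$ projects dominantly to both factors, which holds after removing the degenerate components of the parameter space, or it is built into $S$.

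\textbf{Step 3: apply rigidity.} Apply Theorem~\ref{IntroRigidity1} (in the stronger form Theorem~\ref{Rigidity1}) over the completed algebraic closure of $\cal{H}(s_\omega)$, which has residue characteristic $0$ since $K$ has characteristic $0$ and the rescaling keeps us over $\mathbb{Q}$. Since $\Lyap(f_\omega)>0$, the theorem gives that $C_\omega$ is preperiodic for $(f_\omega,g_\omega)$. Hence $s_\omega$ lies in some $V_i^{\an}$, contradicting Step 1. Therefore $N,c$ exist.

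\textbf{Main obstacle.} I expect the main difficulty to be the rigorous limiting argument in Step 1. One must show that the ultrafilter limit genuinely defines a point over a non-trivially valued field at which all three quantities (energy, Lyapunov exponent, and the $\lambda_{V_i}$) pass to the limit correctly. In particular, we need $\lambda_{V_i}(s_\omega)<\infty$ to certify non-preperiodicity for \emph{all} $i$ at once. I would handle this via compactness of $\{\lambda_{\partial S}=1\}$ in the hybrid/Berkovich space and a uniform-in-$i$ use of the bounds $c_i$.

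A secondary subtlety is the dependence of $N,c$ on $K$. This arises because the rescaled limit depends on the value group and residue field of $K$, so the argument is run with $K$ fixed.
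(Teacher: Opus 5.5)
Your proposal is correct and follows the paper's argument. The paper realizes your $\omega$-limit of rescaled points as the image of $\omega$ under the continuous map $\mathcal{M}(A^{\eps})\to Z_k^{\an}$ coming from the degeneration of Proposition~\ref{DegenerationExistence2} with $\eps_n=\lambda_{\partial Z_k}(s_n)^{-1}$; it then passes $\Lyap$, the pairing and each $\lambda_{V_i}$ to the limit and concludes via Proposition~\ref{Pairing1} and Theorem~\ref{Rigidity1} as you do (it gets non-triviality from $\Lyap(f_\omega)\ge\delta$ rather than from $\lambda_{\partial S}(s_\omega)=1$, and either works). One small correction: the limit has residue characteristic $0$ because $|n|^{1/t_m}\to 1$ when $t_m\to\infty$, not because $\operatorname{char}K=0$ (consider $K=\bb{C}_p$).
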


To prove Proposition \ref{IntroUniformEnergy1}, we use a degeneration arguments by ultrafilters, see Section \ref{sec: DegenUltra}. Degeneration arguments have been previously used to prove uniform bounds in \cite{DKY20} and \cite{Poi24b}. We remark the methods of Poineau \cite{Poi24a, Poi24b} allow for an alternative way to obtain Proposition \ref{IntroUniformEnergy1} given Theorem \ref{IntroRigidity1}. The second author thanks Poineau for sharing this with him. Degenerations of rescaling limits by ultrafilters was first introduced in complex dynamics by Luo \cite{Luo21, Luo22} and formalized using Berkovich spaces by Favre--Gong \cite{FG24}. The second author has also used these technique to obtain uniform bounds on $K$-rational small points for both dynamical systems and abelian varieties \cite{Yap25, LY26}. After degenerating, it suffices to prove the following non-archimedean rigidity statement. We note that using non-archimedean/tropical objects to understand degenerations has shown up in various settings in both dynamics and Diophantine geometry, for example \cite{DF14, Kiw15, DF16, dJ19, Fav20, Fal21, Wil21}. 
\par 
The archimedean analogue of Theorem \ref{IntroRigidity1} was proven by Mavraki--Schmidt \cite{MS25}, building on ideas of Levin--Przytycki \cite{LP97}. A weaker version played a crucial role in the proof of dynamical Manin--Mumford for $\left(\bb{P}^1\right)^n$ by Ghioca--Nguyen--Ye \cite{GNY18,GNY19}. However there are significant difficulties in adapting the archimedean proof to the non-archimedean setting due to the failure of Montel's theorem, see \cite{FKT12} for more details. We are able to overcome this in Section \ref{sec: Rigidity}, adapting the proof of Ji--Xie \cite{JX25} in the archimedean case. On the way, we have to use many facts about non-archimedean dynamics which were recently proven by Favre and Rivera-Letelier \cite{FRL25}. 
\par 
{\blue Using Proposition \ref{IntroUniformEnergy1} and other similar inequalities, we establish Theorem \ref{IntroTheorem2} for $n = 2$ in Section \ref{sec: Curves} and deduce Theorem \ref{IntroUniformCurve1} in Section \ref{sec: Specialization}.} One crucial ingredient is Ingram's result \cite{Ing18} that critical heights are comparable to an ample height on $\rat_d$ outside the Latt{\`e}s locus. This result relies on McMullen's rigidity theorem \cite{McM87} that multipliers of periodic points give rise to a finite morphism of $\rat_d$ to affine space outside of the Latt{\`e}s locus. As a consequence, we are able to prove that the Lyapunov exponents cannot be too small for non-Latt{\`e}s maps, see Theorem \ref{GlobalLyapunov1}. Having such a lower bound on the Lyapunov exponents is necessary for us as Theorem \ref{IntroRigidity1} only applies to maps with positive Lyapunov exponent. 
\par 
To obtain Theorem \ref{IntroTheorem2} for $n > 2$, we will rely on a method used by David--Philippon in their work for uniform bounds on $\bb{G}_m^n$ and $E^g$ \cite{DP02, DP07} that allows us to reduce Theorem \ref{IntroTheorem2} to the case of $n = 2$. This is carried out in Section \ref{sec: HigherDim} and relies on the arithmetic B{\'e}zout theorem. An argument with the quantitative dynamical Zhang's inequality \cite{MY25} gives us Theorem \ref{IntroTheorem1} over $\ovl{\bb{Q}}$. To obtain the case of $\mathbb{C}$, we follow the specialization strategy in \cite{MY25} that was used to deduce the geometric uniform Bogomolov in characteristic $0$ from the case of number fields. This is carried out in Section \ref{sec: Specialization} and finishes the proof of Theorem \ref{IntroTheorem1}.  

\begin{remark}
We make a remark that there is a theory of globally valued fields \cite{Sza23, YDHS24} that axiomatizes the notion of a field $K$ with a global height function for which $\ovl{\bb{Q}}$ with the absolute Weil height is an example of. This is related to the theory of adelic curves of Chen--Moriwaki \cite{CM19, CM21} and also to the theory of Yuan--Zhang \cite{YZ24}. The theory of globally valued fields allows one to degenerate via ultrafilters on sequences of globally valued fields, similar to how we degenerate on sequences of valued fields. The reader may see \cite{Hul26, LLY26} for applications to (semi)-abelian varieties. One can use this theory to obtain an alternative argument for Theorem \ref{IntroTheorem2} in the case of $n  = 2$ using Theorem \ref{IntroRigidity1} through the theory of adelic curves and globally valued fields.
\end{remark}

{\blue \subsection{Organization of the paper}
In Section~\ref{sec: CanonicalHeights}, we recall canonical heights of subvarieties and the quantitative dynamical Zhang inequality. In Section~\ref{sec: LocalHeightsDegenerations}, we recall local height functions and degenerations by using  ultrafilter. In Section~\ref{sec: Rigidity}, we prove the non-archimedean rigidity theorem, including the stronger local form in Theorem~\ref{Rigidity1}, and deduce Theorem~\ref{IntroRigidity1}. In Section~\ref{sec: Flat}, we construct flat metrics in families and establish continuity of local energy pairings. In Section~\ref{sec: EnergyPairing}, we prove uniform local energy bounds, including Proposition~\ref{IntroUniformEnergy1}. In Section~\ref{sec: Curves}, we establish the height gap for curves in Theorem~\ref{HeightGapCurve1}. In Section~\ref{sec: HigherDim}, we prove Theorem~\ref{IntroTheorem2} and deduce uniform dynamical Manin--Mumford over $\ovl{\bb{Q}}$. In Section~\ref{sec: Specialization}, we use a specialization argument to prove Theorem~\ref{IntroTheorem1}, and then deduce Theorems~\ref{IntroUniformCurve1} and~\ref{IntroCommonPreperiodic1}.}

\subsection{Acknowledgements} The authors would like to thank Laura DeMarco, Chen Gong, Myrto Mavraki, Jérôme Poineau, Junyi Xie for helpful discussions regarding the paper. The second author would especially like to thank Myrto Mavraki for inspiring many of the ideas in the proof.  Substantial progress on this work was made during the second author’s visit to Westlake University in December 2025. The hospitality provided during that visit is greatly appreciated. 

\subsection{Funding}
Zhuchao Ji is supported by the National Key R\&D Program of China (No. 2025YFA\\1018300), NSFC Grant (No.12401106), and ZPNSF Grant (No. XHD24A0201). 

\subsection{AI disclosure} 
{\blue The writing of this paper began in January 2026. All ideas and proofs in this paper are due to the authors. GPT was used to check and edit the paper after a draft was written.}

\section{Canonical Heights of Subvarieties} \label{sec: CanonicalHeights}
Let $K$ be a number field and let $X$ be a projective variety over $K$. Let $L$ be an ample line bundle on $X$ and $f: X \to X$ an endomorphism such that $f^*L \simeq L^d$ for some $d \ge 2$. Then $\left(X,f,L\right)$ is a polarized dynamical system and hence by \cite[Section 2]{Zha95b}, there exists an adelic line bundle $\ovl{L}_f$ with underlying line bundle $L$ such that $f^* \ovl{L}_f \simeq \ovl{L}_f^d$. 
\par 
Using the arithmetic intersection theory of Gillet--Soul{\'e} \cite{GS90}, we can define the canonical height of any irreducible subvariety $V \subseteq X_{\ovl{K}}$ as follows. Let $K'$ be any finite extension such that $V$ is well-defined over, and view $\ovl{L}_f$ as an adelic line bundle over $K'$. We then set 
\[h_f\left(V\right) = \frac{\ovl{L}_f^{\dim V + 1} \cdot V}{\left[K':\bb{Q}\right]} \in \bb{R}\]
and the normalized canonical height as 
\[\h_{f}\left(V\right) = \frac{h_f\left(V\right)}{\deg_L\left(V\right)}.\]
This quantity is well-defined independent of the finite extension $K'$. Another way to define $\h_f\left(V\right)$ is to let $V_{\gal}$ be the closed subvariety of $X_K$ corresponding to $V$. We then set 
\[\h_f\left(V\right) = \frac{1}{\left[K:\bb{Q}\right]} \frac{\ovl{L}_f^{\dim V + 1} \cdot V_{\gal}}{\deg_L\left(V_{\gal}\right) }.\]

When $V = x \in X\left(\ovl{K}\right)$ is a point, this recovers the usual definition of the canonical height \cite{CS93} given by 
\[\h_{f}\left(x\right) = \lim_{n \to \infty} \frac{h_L\left(f^n\left(x\right)\right)}{d^n}\]
where $h_L$ is an absolute Weil height for our ample line bundle $L$,  Hence we may call the quantity $\h_f\left(V\right)$ the canonical height of $V$. It satisfies the formula $\h_f\left(f\left(V\right)\right) = d \h_f\left(V\right) \text{ and } \h_f\left(V\right) = 0 \text{ if } V \text{ is preperiodic.}$
\par 
There is also an alternative way to define heights using Chow forms instead of arithmetic intersection theory, developed by Philippon \cite{Phi91, Phi94, Phi95}. We refer readers to \cite[Section 2.2]{MY25} for a summary of the theory. When $X = \bb{P}^n$ is projective space, we let $h_{\Ph}\left(V\right)$ denote the Philippon height of the subvariety $V$ and 
\[\h_{\Ph}\left(V\right) = \frac{h_{\Ph}\left(V\right)}{\deg V}\]
be the normalized Philippon height. If $f: \bb{P}^n \to \bb{P}^n$ is an endomorphism of degree $d \ge 2$ defined over $K$, then it polarizes $L = O\left(1\right)$ and we obtain a canonical height $\h_f$. Then \cite[Equation (2.8) and Proposition 4.1]{MY25} imply that for any subvariety $V$ of $\bb{P}^n$, we have
\begin{equation} \label{eq: Compare1} 
\left|\h_f\left(V\right) - \h_{\Ph}\left(V\right)\right| \leq c \left(\dim V + 1\right)
\end{equation}
for some $c = c\left(f\right) > 0$ depending only on $f$. If $V$ is not geometrically irreducible, we define $h_{\Ph}\left(V\right)$ to be the sum of the heights of its geometrically irreducible components. Similarly $h_f\left(V\right)$ extends to a general subvariety $V$ and \eqref{eq: Compare1} still holds. 
\par 
As the unnormalized canonical height $h_f$ comes from an adelic line bundle $\ovl{L}_f$, it satisfies Zhang's fundamental inequality \cite{Zha92, Zha95, Zha95b}. For an irreducible subvariety $V \subseteq X_{\ovl{K}}$, we let $e_1(\h_f,V)$ denote Zhang's first essential minimum with respect to $\h_f$, i.e. 
\[
e_1({\blue \h_f},V)
\;=\;
\sup_{U\subseteq V}\;\inf_{x\in U(\overline{K})} \h_f(x),
\]
where we take the supremum over all {\blue nonempty} Zariski open subsets $U$ of $V$. Then we have the inequality 
{\blue \[e_1(\h_f,V) \geq \frac{1}{\dim V+1} \h_f(V),\]}
i.e. for any {\blue $0<\delta < \frac{1}{\dim V+1}$}, the set of points
\[\{x \in V(\ovl{K}) \mid \h_f(x) {\blue <} \delta \h_f(V)\}\]
is contained in a proper Zariski closed subset $W$ of $V$. The main result of \cite{MY25} is to give quantitative control of $\deg\left(W\right)$ in terms of the ratio $\frac{\h_f\left(V\right)}{\max\left\{h\left(f\right),1\right\}}$, where $h\left(f\right)$ is some Weil height function on $f$. To be precise, we may assume that $X$ embeds into $\bb{P}^N$ via $L$ and $f$ extends to an endomorphism $f: \bb{P}^N \to \bb{P}^N$ of degree $d \ge 2$. Then we let $h\left(f\right)$ be the standard Weil height on $\End^N_d$. We have the following uniform control which can be deduced from \cite[Theorem 1.1]{MY25}. 

\begin{theorem} \label{QuantitativeZhang1}
Let $\delta > 0$ be a constant so that 
\[\h_f\left(V\right) \geq \delta \max\left\{h\left(f\right),1\right\}.\]
Then there exists a constant $M = M\left(\delta,N,d,\deg V\right)$ and $\eps = \eps\left(\delta,N,d,\deg V\right)$ such that the set of points 
\[\left\{x \in V\left(\ovl{K}\right) \mid \h_f\left(x\right) < \eps \h_f\left(V\right)\right\}\]
is contained in a proper Zariski closed subvariety $W$ of degree at most $M$. 
\end{theorem}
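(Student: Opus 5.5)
This is a direct specialization of \cite[Theorem 1.1]{MY25}, so the plan is to recall that theorem in the form it is stated there and match hypotheses. As I recall it, \cite[Theorem 1.1]{MY25} says the following. Let $f:\bb{P}^N \to \bb{P}^N$ be an endomorphism of degree $d \ge 2$ over $\ovl{\bb{Q}}$ and $V \subseteq \bb{P}^N$ an irreducible subvariety. Then there are effective constants $c_1,c_2,c_3$, depending only on $N,d,\deg V$, such that the set of $x \in V(\ovl{K})$ with
\[\h_f(x) \le c_1 \h_f(V) - c_2 \max\{h(f),1\}\]
lies in a proper Zariski closed subset of $V$ of degree at most $c_3$. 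If the statement there is instead phrased through the Philippon height $\h_{\Ph}(V)$ and a Weil height of points, I would first convert it to canonical heights. For this I use \eqref{eq: Compare1}, but with the explicit dependence of its constant on $f$ from \cite[Proposition 4.1]{MY25}, namely $c(f) \le c'(N,d)\max\{h(f),1\}$. I would also use the standard bound $|\h_f - h| \le c'(N,d)\max\{h(f),1\}$ on points.

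First I reduce to $\bb{P}^N$. Since $X \hookrightarrow \bb{P}^N$ via $L$ and $f$ extends to $\bb{P}^N$ with $f^*O(1) = O(d)$, the canonical heights $\h_f$ on $X$ and on $\bb{P}^N$ agree on points and on subvarieties of $X$. Also, $\deg V$ is computed in $\bb{P}^N$. So we may assume $X = \bb{P}^N$.

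Next I absorb the error term. Under the hypothesis $\h_f(V) \ge \delta \max\{h(f),1\}$ we have $\max\{h(f),1\} \le \delta^{-1}\h_f(V)$. Hence
\[c_1\h_f(V) - c_2\max\{h(f),1\} \ge (c_1 - c_2\delta^{-1})\h_f(V).\]
This is useful only if $c_1 > c_2/\delta$, which fails for small $\delta$. That is the main obstacle: the error term must be made small relative to $\h_f(V)$ for arbitrary $\delta$.

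To get around it, I would exploit the flexibility built into \cite[Theorem 1.1]{MY25]}. There the comparison error can be made $\eta\max\{h(f),1\}$ for any $\eta>0$, at the cost of raising the degree bound to $M(\eta,N,d,\deg V)$. This comes from iterating, replacing $f$ by $f^k$: we have $\h_{f^k} = d^{k}\h_f$ on points and on subvarieties, while $h(f^k) \le C(N,d)\,d^{k}\cdot(\text{something growing only polynomially in } k)\max\{h(f),1\}$, or, more precisely, the error in comparing $h_{\Ph}$ with $\h_f$ decreases by averaging over $f^k$. If this mechanism is not available directly, I would take the version where the error is uniform with an arbitrary threshold constant $\eta$. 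Then I choose $\eta = c_1\delta/2$ and set $\eps = c_1/2$, giving
\[\{\h_f(x) < \eps\h_f(V)\} \subseteq \{\h_f(x) \le c_1\h_f(V) - \eta\max\{h(f),1\}\},\]
which lies in $W$ with $\deg W \le M(\delta,N,d,\deg V)$. Finally, note that $\eps$ and $M$ depend only on $(\delta,N,d,\deg V)$, as required.
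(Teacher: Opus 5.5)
Your framing matches the paper's: embed $X\subseteq\bb{P}^N$ via $L$, extend $f$, and read the statement off \cite[Theorem 1.1]{MY25}. The paper gives no argument beyond this citation. It also notes, just before the statement, that the main theorem of \cite{MY25} already controls $\deg W$ in terms of the ratio $\h_f(V)/\max\{h(f),1\}$, so after your reduction to $\bb{P}^N$ there is nothing left to absorb.

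The gap is in how you handle small $\delta$. You argue from a fixed-constant version, with exceptional set $\{\h_f(x)\le c_1\h_f(V)-c_2\max\{h(f),1\}\}$, and you correctly observe that it yields the claim only when $\delta>c_2/c_1$. The step meant to handle small $\delta$ is then wrong:
\begin{itemize}
\item Replacing $f$ by $f^k$ does not rescale canonical heights. The maps $f$ and $f^k$ have the same invariant adelic metric, so $\h_{f^k}=\h_f$ on points and on subvarieties, not $d^k\h_f$.
\item Meanwhile $h(f^k)$ is in general much larger than $h(f)$; for $z^2+c$ with $h(c)$ large it is about $2^{k-1}h(c)$. Every constant also now depends on $\deg f^k=d^k$. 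So the ratio in your hypothesis gets worse, and even with your claimed scaling it would not improve.
\item Your fallback, ``take the version where the error is $\eta\max\{h(f),1\}$ for arbitrary $\eta$,'' assumes exactly the strengthening that has to be proved.
\end{itemize}
As written, the proposal does not cover small $\delta$.

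If you only had the fixed-constant version, the fix is to iterate the variety rather than the map. Use $\h_f(f^k(V))=d^k\h_f(V)$, $\h_f(f^k(x))=d^k\h_f(x)$ and $\deg f^k(V)\le d^{k\dim V}\deg V$. Take $\eps=c_1/2$ and choose $k$ with $d^k\ge 2c_2/(c_1\delta)$.
\begin{itemize}
\item For $x\in V$ with $\h_f(x)<\eps\h_f(V)$ you get $\h_f(f^k(x))<c_1\h_f(f^k(V))-\tfrac{c_1}{2}d^k\delta\max\{h(f),1\}\le c_1\h_f(f^k(V))-c_2\max\{h(f),1\}$. So $f^k(x)$ lies in the exceptional set $W_k\subsetneq f^k(V)$ given by the fixed-constant version applied to $f^k(V)$.
\item Choose a form $P$ of degree at most $\deg W_k$ that vanishes on $W_k$ but not on $f^k(V)$, and let $F$ be a homogeneous lift of $f$. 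Then $W=V\cap\{P\circ F^k=0\}$ is a proper closed subset of $V$ containing all such $x$.
\item By B\'ezout, $\deg W\le d^k(\deg P)\deg V$, which is bounded in terms of $(\delta,N,d,\deg V)$.
\end{itemize}
This is the correct form of the `averaging over $f^k$' you allude to. It requires $c_1,c_2$ not to deteriorate as $\deg V$ grows. That holds when they come only from comparisons such as $|\h_f-h|\le c(N,d)\max\{h(f),1\}$.
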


We remark that the cases of $\bb{G}_m^n$ and abelian varieties were due to David and Philippon \cite{DP99, DP02}. For split endomorphisms $(f,g)$ on $(\bb{P}^1)^2$ and $V = \Delta$ is the diagonal, such results can be deduced from Favre--Rivera-Letelier's quantitative equidistribution framework \cite{FRL06}, for example see the paper of Ang and the second author \cite{AY23} or of Gauthier \cite{Gau24}.

\section{Local Height Functions and Degenerations} \label{sec: LocalHeightsDegenerations}
We will develop a theory of local height functions for closed subschemes on $X^{\an}$ when $X$ is a quasi-projective variety over a number field $K$. For function fields, this was developed in \cite[Section 2]{LY26}. The same arguments largely port over, except we have to be a little more careful when dealing with errors caused by different choices in constructing the local height functions. 

\subsection{Berkovich Spaces} Let $\left(B, \left\lVert  \cdot \right\rVert \right)$ be a commutative Banach ring, i.e. $B$ is a commutative ring with unity $1$ and $\left\lVert  \cdot \right\rVert  : B \to \bb{R}_{\geq 0}$ a norm for which $B$ is complete. Furthermore, we have the triangle inequality $\left\lVert a+b\right\rVert  \leq \left\lVert a\right\rVert  + \left\lVert b\right\rVert $ and we will assume that our norm $|| \cdot ||$ is multiplicative. 
\par 
A multiplicative seminorm $\left| \cdot \right|: B \to \bb{R}_{\geq 0}$ is said to be bounded if $\left|b\right| \leq \left\lVert b\right\rVert $ for all $b \in B$. We define the Berkovich spectrum $\cal{M}\left(B\right)$ to be the set of all bounded multiplicative seminorms on $B$. We endow $\cal{M}\left(B\right)$ with the Gelfand topology, which is the weakest topology such that for all $b \in B$, the function $\left|b\right|: \cal{M}\left(B\right) \to \bb{R}_{\geq 0}$ is continuous. 
\par 
Now given a scheme $X$ over $k$, not necessary of finite type, we may define $X^{\an}$ following \cite[Section 3.1.1]{YZ24}. If $X = \Spec A$ is affine, then we define $X^{\an}$ as $\cal{M}\left(A/B\right)$, the set of multiplicative seminorms on $A$ that restricts to an element of $\cal{M}\left(B\right)$ on $B$. In general if $X$ is given by an open cover $U_i = \Spec A_i$, we may glue $U_i^{\an}$ to obtain $X^{\an}$. 
\par 
In our setting, we will take our base Banach ring to be $\left(\bb{Z}, \left| \cdot \right|_{\infty}\right)$, where $\left| \cdot \right|_{\infty}$ is the Euclidean norm on $\bb{Z}$. The description of $\cal{M}\left(\bb{Z}\right)$ is given in \cite[1.4.3]{Ber90}. Then any scheme $X$ can be viewed canonically as a scheme over $\bb{Z}$ and we define $X^{\an}$ as the Berkovich analytification over $\left(\bb{Z}, \left| \cdot \right|_{\infty}\right)$. 
\par 
If $X$ is an irreducible scheme, then any rational function $f$ on $X$ induces a continuous function $f: X^{\an} \to \bb{R} \cup \left\{\pm \infty\right\}$.

\subsection{Local Height Functions} \label{sec: LocalSubSec}
Let $X$ be an irreducible projective variety over a number field $K$ and let $Z$ a closed subscheme of $X$. Following Silverman \cite[Theorem 2.1]{Sil87} (see also Matsuzawa \cite{Mat20}), we will define a local height function $\lambda_Z : X^{\an} \setminus Z^{\an} \to \bb{R} \cup \left\{+\infty\right\}$. These functions will be well-defined up to a function that is bounded above by $\log^+ \max\left\{\left|p_1\right|,\ldots,\left|p_m\right|\right\}$ for some $p_i \in K^{\times}$. 
\par 
First, let $D$ be a Cartier divisor of $X$. Let $L,M$ be basepoint free line bundles on $X$ for which $\mathcal{O}\left(D\right) \simeq L \otimes M^{-1}$. Fix a meromorphic section $s_D$ of $\mathcal{O}\left(D\right)$. Picking bases of global sections $s=\left\{s_0,\ldots,s_n\right\}$ for $L$ and $t=\left\{t_0,\ldots,t_m\right\}$ for $M$, we may define the local height $\lambda_D: X^{\an} \to $ relative to the presentation $\cal{D} = \left(s_D,L,s,M,t\right)$ to be 
\[\lambda_{\cal{D}}\left(p\right) = \max_{k} \min_{l} \log \left|\frac{s_k}{t_l s_D}\left(p\right)\right|\]
where $\frac{s_k}{t_l s_D}$ is a rational function on $X$. This gives us a well-defined continuous function $\lambda_{\cal{D}}: X^{\an} \to \bb{R} \cup \left\{+\infty\right\}$. If $D$ is an effective Cartier divisor, then this is real-valued if $x \in X^{\an} \setminus D^{\an}$. 
\par 
For an affine variety $U$ with an embedding $\iota: U \xhookrightarrow{} \bb{A}^M$, we say that a point $x \in U\left(K\right)$ has coordinates all $\leq 1$ if $x = \left(x_1,\ldots,x_M\right)$ with $\left|x_i\right| \leq 1$ under the embedding $\iota$.
\begin{proposition} \label{BoundedSubset1}
There exists a finite collection of affine opens $U_1,\ldots,U_m$ with embeddings $\iota_i: U_i \xhookrightarrow{} \bb{A}^{M_i}$ such that for any $x \in X\left(K\right)$ for a valued field $K$, there exists $i$ such that the coordinates of $x$ in $U_i$ are all $\leq 1$. 
\end{proposition}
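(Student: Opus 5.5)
We reduce to projective space and use the standard covering of $\bb{P}^N$ by affine charts, refined so that every point lands in a chart where its coordinates are bounded by $1$. Since $X$ is projective over $K$, fix a closed embedding $X \xhookrightarrow{} \bb{P}^N$. For a point $x = [x_0:\cdots:x_N] \in X(K)$ over a valued field $K$, choose an index $j$ with $|x_j| = \max_k |x_k|$. This maximum exists because there are finitely many coordinates and $|\cdot|$ is a multiplicative absolute value on $K$. Then $x$ lies in the standard chart $\{X_j \neq 0\}$, and its affine coordinates $x_k/x_j$ all have absolute value $\leq 1$.

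We take $U_j = X \cap \{X_j \neq 0\}$ for $j = 0,\ldots,N$. Each is a closed subvariety of the affine space $\bb{A}^N_{(j)} \simeq \{X_j \neq 0\}$, with coordinates $X_k/X_j$ for $k \neq j$. The embedding $\iota_j: U_j \xhookrightarrow{} \bb{A}^{N}$ is this closed immersion, so $M_j = N$. The $U_j$ cover $X$, and the argument above shows that for every $x \in X(K)$ there is a $j$ with $x \in U_j(K)$ and $|\iota_j(x)_k| \leq 1$ for all $k$.

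A few points need care.
\begin{enumerate}
\item The statement should hold for any valued field $K$ containing the base number field, archimedean or not. The argument uses only that $|\cdot|$ is multiplicative, so that $|x_k/x_j| = |x_k|/|x_j| \leq 1$. No ultrametric property is needed.
\item If $U_j$ is empty for some $j$, we simply discard it. A maximizing index $j$ for a point of $X$ always gives a nonempty $U_j$ containing that point.
\item The statement speaks of "coordinates under the embedding $\iota$" intrinsically on $U_i$. So we should check that the closed immersion $U_j \to \bb{A}^N$ is defined over the number field, which holds because the projective embedding is. The coordinates are then evaluated at $K$-points through the map from the base field to $K$.
\end{enumerate}

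I do not expect a genuine obstacle. The only subtle point is that "valued field" should be interpreted to include points of $X^{\an}$, that is, seminorms with residue fields $\cal{H}(x)$. The same argument works verbatim for $x \in X^{\an}$. Choose $j$ maximizing $|X_k/X_l|(x)$; equivalently, choose $j$ with $x \in \{X_j \neq 0\}^{\an}$ and $|X_k/X_j|(x) \leq 1$ for all $k$. Such a $j$ exists since the $|X_k/X_j|$ satisfy the multiplicative cocycle relations at $x$. This gives the version of Proposition \ref{BoundedSubset1} needed later for local height functions.
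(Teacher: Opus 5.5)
Your proposal is correct and follows the same route as the paper. You embed $X$ into $\mathbb{P}^N$, take the standard charts $U_j = X \cap \{X_j \neq 0\}$, and for a given point choose an index $j$ with $|x_j|$ maximal, so that every affine coordinate $x_k/x_j$ has absolute value at most $1$. Your extra remarks are accurate refinements of this same argument: only multiplicativity of $|\cdot|$ is used, and the argument extends to points of $X^{\an}$ by passing to $\mathcal{H}(x)$.
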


\begin{proof}
Embedding $X$ into $\bb{P}^N$, it suffices to prove it for $\bb{P}^N$. In this case, if $U_0,\ldots,U_N$ are the $N+1$ basic affine opens where $U_i$ is given by $\left(\frac{z_0}{z_i},\frac{z_1}{z_i},\ldots, \frac{z_N}{z_i}\right)$, then the coordinates of $x = \left[x_0 : \cdots : x_N\right]$ in $U_i$ are $\leq 1$ if $\left|x_i\right|$ is maximal. Since $\left|x_i\right|$ will be maximal for some $0 \leq i \leq N$, we are done. 
\end{proof}

This implies the following. 

\begin{proposition} \label{BoundedSubset2}
Let $\cal{D}$ and $\cal{D}'$ be two presentations of $D$, Then there exists $p_1,\ldots,p_m \in K^{\times}$ so that 
\[\left|\lambda_{\cal{D}} - \lambda_{\cal{D}'}\right| \leq \log^+\max\left\{\left|p_1\right|,\ldots,\left|p_m\right|\right\}\]
on $X^{\an}$. 
\end{proposition}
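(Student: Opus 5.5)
The plan is to first reduce to comparing presentations that differ in one ingredient at a time, and then bound each rational function that appears using Proposition \ref{BoundedSubset1}. Write $\cal{D} = \left(s_D,L,s,M,t\right)$ and $\cal{D}' = \left(s_D',L',s',M',t'\right)$.

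\textbf{Changing the section.} The ratio $s_D'/s_D = u$ is a rational function whose divisor is $0$. Since $X$ is an irreducible projective variety, $u$ is a nonzero constant in $K^{\times}$. So changing $s_D$ shifts $\lambda$ by exactly $\log|u|^{-1}$. This is bounded in absolute value by $\log^+\max\{|u|,|u^{-1}|\}$.

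\textbf{Changing the bases.} Suppose we replace the basis $s$ of $L$ by $s'$. Each $s'_j=\sum a_{jk}s_k$ with $a_{jk}\in K$. Hence at any point $p\in X^{\an}$ (a multiplicative seminorm on $\bb{Z}$, possibly archimedean) we get $|s'_j/(t_l s_D)|(p)\le C\max_k |s_k/(t_ls_D)|(p)$. Here $C$ is either $\sum|a_{jk}|$ in the archimedean case or $\max|a_{jk}|$ in the non-archimedean case. In both cases $\log C$ is bounded by $\log^+\max|p_i|$ plus $\log$ of an integer (the number of summands), and we include that integer among the $p_i$. The reverse inequality uses the inverse matrix. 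The same argument works for $t$, with min in place of max.

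\textbf{Changing the pair $(L,M)$.} This is the main step. If $L\otimes M^{-1}\simeq L'\otimes M'^{-1}$, then $L\otimes M'\simeq L'\otimes M$. We compare both presentations with the one built from $(L\otimes M',\, M\otimes M')$, whose basis consists of the products $s_kt'_{l'}$ and $t_lt'_{l'}$. It therefore suffices to treat the case $L_2=L\otimes A$, $M_2=M\otimes A$ with $A$ basepoint free and product bases $s_ku_a$, $t_lu_a$. The difference
\[\max_{k,a}\min_{l,b}\log\left|\frac{s_ku_a}{t_lu_bs_D}\right| - \max_k\min_l\log\left|\frac{s_k}{t_ls_D}\right|\]
is controlled by $\max_{a,b}\bigl|\log|u_a/u_b|\bigr|$ restricted to the indices where $|u_a|$ is maximal. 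That is, we need a lower bound for $\max_a|u_a/u_b|$. Since $A$ is basepoint free, $\max_a|u_a/u_b|\ge 1$ when we take $b$ to be an index where $|u_b|$ is maximal. The upper bound is tautological at the maximizing index.

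\textbf{Obstacle: comparing $L$ with $L\otimes A$.} After passing to products, we still have to compare the original presentation with the product one. Identifying sections of $L$ with sections of $L\otimes A$ forces a comparison of maxima of the form $\max_k|s_k/t_l|$. This requires bounding rational functions uniformly over $X^{\an}$. Here we use Proposition \ref{BoundedSubset1}. Cover $X$ by the finitely many affines $U_i$, and on each one express all the finitely many relevant regular quotients (for instance $t_l/u_b$ where these are regular, using basepoint freeness to express $1=\sum c_{j}\,\cdot\,$ monomials) as polynomials in the coordinates with coefficients in $K$. On the region where every coordinate has absolute value at most $1$, each such polynomial is bounded by $\log^+$ of the maximum of its coefficients, up to an integer count of monomials. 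Every point of $X^{\an}$ lies in one of these regions, by the proof of Proposition \ref{BoundedSubset1} applied pointwise to $p\in X^{\an}$, choosing the coordinate $|x_i|$ of maximal size.

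Collecting all the coefficients and integers that appear gives the finite list $p_1,\ldots,p_m\in K^{\times}$. The main care needed is at archimedean points, where sums cost the integer counts. This is why the bound takes the form $\log^+\max|p_i|$ rather than being $0$.
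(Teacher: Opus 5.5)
Your preliminary steps are fine: changing $s_D$ costs a unit constant, and a linear change of basis costs only coefficient sizes and integer counts. Your product computation is in fact exact, because $\log\left|s_k/(t_l s_D)\right|$ splits as a function of $k$ minus a function of $l$. The same splitting is what justifies passing from your pointwise inequalities to $\max_k\min_l$.

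The gap is in what the product step leaves over. After multiplying $\mathcal{D}$ by $(M',\{t'_{l'}\})$ and $\mathcal{D}'$ by $(M,\{t_l\})$, the two presentations share the bundle $M\otimes M'$ and the family $\{t_lt'_{l'}\}$. On $L\otimes M'\cong L'\otimes M$, however, they carry $\{s_kt'_{l'}\}$ and $\{s'_{k'}t_l\}$. These are merely generating families and need not span the same subspace of global sections, so no linear change of variables relates them. Your claim that it suffices to treat $L_2=L\otimes A$, $M_2=M\otimes A$ therefore does not describe what is left. What is left is the whole content of the proposition: for two families $\{a_\alpha\}$, $\{b_\beta\}$ generating the same line bundle $N$, the difference $\max_\beta\log|b_\beta|-\max_\alpha\log|a_\alpha|$, computed in any local trivialization, must be bounded in absolute value by some $\log^+\max_i|p_i|$ on all of $X^{\an}$. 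This is the substance of the Bombieri--Gubler theorem the paper cites. There it is proved on bounded subsets of affine opens and then globalized by a finite covering, which here is Proposition~\ref{BoundedSubset1}.

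Your sketch of this step does not go through as written. The functions you propose to expand (ratios $b_\beta/a_\alpha$, or coefficients of sections of $N$ in a trivialization) are not regular on the charts $U_i$ of Proposition~\ref{BoundedSubset1}, and $N$ need not be trivial there. So there is no identity $1=\sum c_j\cdot(\text{monomials})$ on $U_i$, and restricting to the opens where they are regular loses the coordinate bounds. The missing idea is short:
\begin{itemize}
\item Since $U_i$ is affine and the $a_\alpha$ generate $N$, the $\mathcal{O}(U_i)$-module $N(U_i)$ is generated by the $a_\alpha|_{U_i}$.
\item Hence $b_\beta|_{U_i}=\sum_\alpha g_{\beta\alpha}\,a_\alpha|_{U_i}$ with $g_{\beta\alpha}\in\mathcal{O}(U_i)$, i.e.\ polynomials in the chart coordinates with coefficients in $K$.
\item On the part of $U_i$ where all coordinates have absolute value at most $1$, this gives $|b_\beta|\le C\max_\alpha|a_\alpha|$ pointwise, with $\log C$ bounded by $\log^+$ of the coefficients and monomial counts.
\item Exchanging the roles of the two families gives the reverse inequality, and Proposition~\ref{BoundedSubset1} covers $X^{\an}$.
\end{itemize}
Alternatively, refine to affines on which the ratios are regular. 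Then use the partition-of-unity argument from the proof of Proposition~\ref{BoundedSubset3} to cover each unit-coordinate region by bounded-coordinate regions of the refinement.
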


\begin{proof}
This follows from the proof of \cite[Theorem 2.2.11 and Remark 2.2.13]{BG06}.     
\end{proof}

We also have a formula for $\lambda_{\cal{D}}$ when $D$ is locally cut out by a function $f$. 

\begin{proposition} \label{BoundedSubset3}
Let $U$ be an affine open and assume that $D = \divv\left(f\right)$ locally on $U$. Let $\cal{D}$ be a presentation for $D$, where we set $D \simeq L \otimes M^{-1}$. Fix an embedding $U \xhookrightarrow{} \bb{A}^N$. Then there exists $p_1,\ldots,p_m \in K^{\times}$ and $A > 0$ so that for any $x \in X^{\an}$ with coordinates $\left|x_i\right| \leq 1$ in $U$, we have 
\[\left| \log \left|f\left(x\right)\right|^{-1}  - \lambda_{\cal{D}}\left(x\right) \right| \leq A \log^+ \max\left\{\left|p_1\right|,\ldots,\left|p_m\right|\right\}.\]
\end{proposition}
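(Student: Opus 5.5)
We compare $\lambda_{\cal{D}}(x)=\max_k\min_l \log\left|\frac{s_k}{t_l s_D}(x)\right|$ with $\log|f(x)|^{-1}$ by factoring out the local equation of $D$. On $U$, $D=\divv(f)$, so $s_D$ and $f^{-1}$ differ by a unit on $U$: write $s_D = f^{-1}\cdot \sigma$ with $\sigma$ a nowhere vanishing local section of $\cal{O}(D)$ on $U$ (shrinking $U$ to an affine open trivializing $L$ and $M$ if needed, and covering $U$ by finitely many such pieces). Fix trivializations $e_L,e_M$ of $L,M$ on $U$ with $\sigma = e_L\otimes e_M^{-1}$. Then $s_k = a_k e_L$ and $t_l = b_l e_M$ with $a_k,b_l\in\cal{O}(U)$, and
\[\lambda_{\cal{D}}(x) - \log|f(x)|^{-1} = \max_k \log|a_k(x)| - \max_l \log|b_l(x)| .\]
It therefore suffices to bound $\max_k\log|a_k(x)|$ and $\max_l \log|b_l(x)|$ above and below by $\pm A\log^+\max|p_i|$ on the set of $x$ with coordinates $\leq 1$.

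\emph{Upper bounds.} Since $a_k,b_l\in \cal{O}(U)=K[x_1,\ldots,x_N]/I$, each is a polynomial in the coordinates with coefficients in $K$. For $|x_i|\le 1$, each monomial has absolute value at most $1$. The triangle inequality, in the form $|a+b|\le 2\max(|a|,|b|)$ for the archimedean part, bounds $|a_k(x)|$ by (number of monomials)$\cdot\max|\text{coeff}|$. Both quantities are $\log^+$ of absolute values of finitely many elements of $K^\times$; the integer count is itself an element $p\in K^\times$ with $\log^+|p|$ valid at every seminorm.

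\emph{Lower bounds.} This is the main step. We need $\max_k|a_k(x)|$ bounded below, and we get it from base point freeness. Because the $s_k$ generate $L$ on $U$, the $a_k$ generate the unit ideal in $\cal{O}(U)$, so there are $c_k\in \cal{O}(U)$ with $\sum_k c_k a_k = 1$. Evaluating at $x$,
\[1 = \left|\sum_k c_k(x) a_k(x)\right| \le n'\max_k|c_k(x)|\max_k|a_k(x)|,\]
where $n'$ is the number of terms. The upper bound argument already controls $|c_k(x)|$ when $|x_i|\le1$, so $\max_k|a_k(x)|$ is bounded below by $1/(n'\max|\text{coeff}(c_k)|\cdot\#\text{monomials})$. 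The same works for the $b_l$. Combining the four bounds gives the claim, with $A$ absorbing the constant multiplicities (e.g.\ $A=2$ or $4$) and the $p_i$ being the coefficients and integer counts collected above.

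The only care needed is that these estimates hold uniformly over all of $X^{\an}$, both archimedean and non-archimedean seminorms, including ones restricting to $p$-adic or trivial norms on $\bb{Z}$. This is why the error is written as $\log^+\max|p_i|$ rather than as an absolute constant: at trivially valued or non-archimedean points, integer counts contribute nothing.
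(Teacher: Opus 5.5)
Your argument for the case where $L$ and $M$ are trivial on $U$ is sound, and it is exactly the part the paper simply cites from Matsuzawa. You get upper bounds on $a_k,b_l$ from monomial estimates and lower bounds from a relation $\sum_k c_ka_k=1$, with integer counts entering through $\log^+|n|_x$. (A small slip: with $\divv(s_D)=D$ and $D|_U=\divv(f)$ one has $s_D=f\sigma$, not $f^{-1}\sigma$. Your displayed identity is the one for $s_D=f\sigma$.)

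The gap is the reduction to that case, which you dispose of in a parenthetical. In general $L|_U$ and $M|_U$ need not be trivial. Once you pass to a piece $U_j=D(g_j)$ on which they are, the functions $a_k,b_l,c_k$ lie in $\mathcal{O}(U_j)=\mathcal{O}(U)[g_j^{-1}]$, not in $K[x_1,\dots,x_N]/I$. They are polynomials in $(x_1,\dots,x_N,g_j^{-1})$, and $|x_i|\le 1$ gives no control on $|g_j(x)|^{-1}$ for $x$ near $V(g_j)$. So your step ``each monomial has absolute value at most $1$'' fails there, and you never say which piece to use for a given $x$.

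This is precisely the half of the proof the paper writes out. Choose $e_j\in\mathcal{O}(U)$ with $\sum_{j=1}^r e_jg_j=1$. For $x$ with $|x_i|\le 1$, some $j$ satisfies $1\le\max\{1,|r|_x\}\,|e_j(x)|\,|g_j(x)|$, and $|e_j(x)|$ is controlled by your monomial estimate. Hence the coordinates $(x_1,\dots,x_N,g_j^{-1})$ of $x$ on $U_j$ are bounded by some $C_x$ with $\log^+ C_x\le A'\log^+\max_i|p_i|_x$. Your estimates then run on $U_j$ with $1$ replaced by $C_x$, at the cost of enlarging $A$ and the list of $p_i$, and you take the union over the finitely many $j$.

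Alternatively, you can avoid trivializing $L$ and $M$ altogether. Since $\sigma=s_D/f$ trivializes $\mathcal{O}(D)|_U$, the sections $s'_k:=s_k\otimes\sigma^{-1}$ and $t_l$ are two generating families of sections of $M|_U$ over the affine $U$. Write $s'_k=\sum_l\alpha_{kl}t_l$ and $t_l=\sum_k\beta_{lk}s'_k$ with $\alpha_{kl},\beta_{lk}\in\mathcal{O}(U)$. This bounds $\lambda_{\mathcal{D}}(x)-\log|f(x)|^{-1}=\max_k\min_l\log|(s'_k/t_l)(x)|$ above and below directly in terms of $|\alpha_{kl}(x)|$ and $|\beta_{lk}(x)|$, which your monomial estimates control. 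Either way an argument is needed here; as written, your proof covers only the trivialized case.
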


\begin{proof}
Let $D \simeq L \otimes M^{-1}$ be the line bundles used for the presentation for $\cal{D}$. If $L$ and $M$ are both trivial on $U$, then this follows from the proof of \cite[Lemma 4.2.4]{Mat20}. Note that we obtain the analogous statement if we look at points with affine coordinates $\leq C$ instead of just $\leq 1$, where the data of $\left\{p_1,\ldots,p_m\right\}$ and $A$ will be allowed to depend on $C$ too.      
\par 
In general, we replace $U$ with a finite affine open cover $U_1,\ldots,U_n$ with  $U_i = \divv\left(f_i\right)$ where each $L$ and $M$ are trivial on $U_i$. Note that if $\left(x_1,\ldots,x_N\right)$ are affine coordinates for $U$, then $\left(x_1,\ldots,x_N, f_i^{-1}\right)$ may serve as affine coordinates for $U_i$. As the $U_i$'s cover $U$, we may find $c_1,\ldots,c_n \in \ovl{\bb{Q}}$ so that 
\[\sum_{i=1}^{n} c_i f_i = 1\]
on $U$. Then is some $\delta > 0$, depending only on $m$ and $c_i$'s, so that $\left|f_i\left(x_1,\ldots,x_N\right)\right| \leq \delta$ for at least one $i$. Then for $U_i$ our affine coordinates are bounded above by $\max \left\{1,\delta^{-1}\right\}$, which imply that $\left|\log \left|f\left(x\right)\right|^{-1} - \lambda_{\cal{D}}\left(x\right)\right| \leq A \log^+ \max\left\{\left|p_1\right|,\ldots,\left|p_m\right|\right\}$ with the $p_i$'s and $A > 0$ depending only on our affine open cover as desired.
\end{proof}

Now let $Z$ be a closed subscheme of $X$. Let $D_i$ be Cartier divisors and let $\cal{D}_i$ be presentations of each $D_i$. We say that $\cal{Z} = \left\{\cal{D}_1,\ldots,\cal{D}_m\right\}$ is a presentation of $Z$ if $\cap_{i=1}^{m} D_i = Z$ as closed subschemes of $X$. We then define 
\[\lambda_{\cal{Z}} = \min \left\{\lambda_{\cal{D}_1},\ldots,\lambda_{\cal{D}_m}\right\}.\]

This is a continuous real valued function on $X^{\an} \setminus Z^{\an}$. We have the following analogues of Propositions \ref{BoundedSubset2} and \ref{BoundedSubset3}. They both follow from the proof of \cite[Proposition 4.25]{Mat20}.

\begin{proposition} \label{BoundedSubset4}
Let $\cal{Z},\cal{Z}'$ be two presentations for $Z$. Then there exists $p_1,\ldots,p_m \in K^{\times}$ such that 
\[\left|\lambda_{\cal{Z}} - \lambda_{\cal{Z}'}\right| \leq \log^+ \max\left\{\left|p_1\right|,\ldots,\left|p_m\right|\right\}.\]
\end{proposition}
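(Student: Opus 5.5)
We compare two presentations $\cal{Z} = \left\{\cal{D}_1,\ldots,\cal{D}_m\right\}$ and $\cal{Z}' = \left\{\cal{D}'_1,\ldots,\cal{D}'_{m'}\right\}$ of the same closed subscheme $Z$ by reducing to local statements on the finitely many affine charts of Proposition \ref{BoundedSubset1}. Fix the affine opens $U_1,\ldots,U_k$ with embeddings $\iota_j$. Refining if necessary (as in the proof of Proposition \ref{BoundedSubset3}), we may assume that on each $U_j$ every $D_i$ is principal, $D_i = \divv(f_{ij})$, and every $D'_l$ is principal, $D'_l = \divv(g_{lj})$. By Proposition \ref{BoundedSubset3}, for points $x$ whose coordinates in $U_j$ are all $\leq 1$ (up to the bounded-coordinate variant noted in its proof), we have $\lambda_{\cal{D}_i}(x) = -\log|f_{ij}(x)| + O(\log^+\max|p|)$, and similarly for the $\cal{D}'_l$. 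Hence on such points
\[\lambda_{\cal{Z}}(x) = -\log \max_i |f_{ij}(x)| + O(1), \qquad \lambda_{\cal{Z}'}(x) = -\log \max_l |g_{lj}(x)| + O(1),\]
where $O(1)$ means bounded by $A\log^+\max\{|p_1|,\ldots,|p_r|\}$ for finitely many fixed $p_r \in K^\times$.

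Next we use that both families cut out the same ideal on $U_j$. Since $(f_{1j},\ldots,f_{mj}) = I_Z(U_j) = (g_{1j},\ldots,g_{m'j})$ in the coordinate ring $\cal{O}(U_j)$, we can write $g_{lj} = \sum_i a_{li} f_{ij}$ and $f_{ij} = \sum_l b_{il} g_{lj}$ with $a_{li}, b_{il} \in \cal{O}(U_j)$. Each $a_{li}$ is a polynomial in the affine coordinates with coefficients in $K$. So at a point with coordinates $\leq 1$, $|a_{li}(x)|$ is bounded by a quantity of the form $\max\{1,|c|\}$ times a number of terms. Here the $c$ are the finitely many coefficients, and the number of terms only matters at archimedean seminorms. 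This yields
\[\max_l |g_{lj}(x)| \leq C_j \max_i |f_{ij}(x)|,\]
with $\log C_j \le \log^+\max\{|p|\}$ after adjoining the coefficients, and an integer such as $2$ or the number of monomials, to the list of $p$'s. The reverse inequality holds by symmetry. Taking logarithms gives $|\lambda_{\cal{Z}} - \lambda_{\cal{Z}'}| \leq O(1)$ on the set of points good for $U_j$.

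Finally, since every $x \in X^{\an}\setminus Z^{\an}$ has coordinates $\leq 1$ in some $U_j$, we take the union over the finitely many $j$ of the collections of $p$'s, with a multiplicative constant absorbed by raising $p$'s to powers. This gives the bound everywhere. Note that Proposition \ref{BoundedSubset1} is stated for $K$-points, but the same argument works for any point of $X^{\an}$, with its residue field $\cal{H}(x)$ in place of $K$.

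The main obstacle is bookkeeping the error uniformly over all seminorms, including the archimedean ones. The error must take the shape $\log^+\max|p_i|$ rather than a plain constant, so that it vanishes at places where all $p_i$ are units. This forces us to encode the triangle-inequality constants (numbers of terms) and the coefficients of the $a_{li}, b_{il}$ as elements $p_i$. The refinement step, which makes the $D_i$ principal on the charts while keeping coordinates bounded, is handled exactly as in Proposition \ref{BoundedSubset3}.
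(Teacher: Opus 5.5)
Your proof is correct and is essentially the argument the paper relies on: the paper simply cites the proof of \cite[Proposition 4.25]{Mat20}, which is Silverman's argument. That argument passes to finitely many bounded affine charts on which both presentations are given by local equations, uses Proposition \ref{BoundedSubset3} to replace $\lambda_{\cal{Z}}$ and $\lambda_{\cal{Z}'}$ by $-\log\max_i|f_i|$ and $-\log\max_l|g_l|$, and compares these through the ideal relations $g_l=\sum_i a_{li}f_i$, $f_i=\sum_l b_{il}g_l$, bounding the coefficients $a_{li}$, $b_{il}$ and the archimedean term counts by $\log^+$ of finitely many elements of $K^\times$, exactly as you do.
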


\begin{proposition} \label{BoundedSubset5}
Let $U$ be an affine open of $X$ and let the ideal of $Z$ be given by $\left\{f_1,\ldots,f_k\right\}$. Let $\cal{Z}$ be a presentation for $Z$. Fix an embedding $U \xhookrightarrow{} \bb{A}^N$. Then there exists $p_1,\ldots,p_m \in K^{\times}$ and $A > 0$ so that 
\[\left| \log \min\left\{\left|f_1\left(x\right)\right|^{-1},\ldots,\left|f_k\left(x\right)\right|^{-1}\right\} - \lambda_{\cal{Z}} \right| \leq A \log^+ \max\left\{\left|p_1\right|,\ldots,\left|p_m\right|\right\}\]
for all $x \in U\left(K\right)$ with coordinates $\leq 1$. 
\end{proposition}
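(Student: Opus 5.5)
The plan is to reduce Proposition \ref{BoundedSubset5} to the divisor case, Proposition \ref{BoundedSubset3}, together with the independence of presentation, Proposition \ref{BoundedSubset4}.

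First, by Proposition \ref{BoundedSubset4} we may replace $\cal{Z}$ with any convenient presentation, at the cost of an error $\log^+\max\{|p_i|\}$. Changing the generators of the ideal of $Z$ on $U$ also costs only a bounded amount. Indeed, suppose $\{g_j\}$ is another generating set on $U$ with $g_j = \sum_i a_{ji} f_i$ and $f_i = \sum_j b_{ij} g_j$, where $a_{ji}, b_{ij} \in \cal{O}(U)$. At points with coordinates $\leq 1$, the values $|a_{ji}(x)|$ and $|b_{ij}(x)|$ are bounded by $\log^+$ of the absolute values of finitely many constants: the coefficients of these polynomials together with an integer counting their monomials. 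The non-archimedean and archimedean triangle inequalities then give $\max_j|g_j(x)| \leq C \max_i |f_i(x)|$ and conversely.

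Next, we choose a presentation adapted to $U$. Pick Cartier divisors $D_1,\ldots,D_m$ on $X$ with $\bigcap D_i = Z$ such that on $U$ each $D_i$ is principal, $D_i|_U = \divv(g_i)$, with $g_i$ generating the ideal of $Z$ on $U$. This works for instance by taking $D_i$ to be zero loci of sections of $L^{\otimes k}$ vanishing on $Z$, for $L$ ample, and shrinking or covering $U$ if necessary. Proposition \ref{BoundedSubset3} then gives $|\lambda_{\cal{D}_i}(x) - \log|g_i(x)|^{-1}| \leq A\log^+\max\{|p|\}$ for $x$ with coordinates $\leq 1$.

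Taking the minimum over $i$, which commutes with uniform errors, gives
\[\lambda_{\cal{Z}}(x) = \min_i \lambda_{\cal{D}_i}(x) = \log \min_i |g_i(x)|^{-1} + O\bigl(\log^+\max|p|\bigr).\]
Combining this with the change-of-generators step yields the statement for $\{f_1,\ldots,f_k\}$.

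The main obstacle is the globalization step. The divisors $D_i$ may have extra components through $U$ not contained in $Z$, so $D_i|_U$ need not be cut out exactly by generators of the ideal of $Z$. To handle this, I would follow \cite[Proposition 4.25]{Mat20}: cover $U$ by finitely many affine opens $U_j$, with coordinates extended by inverses $h_j^{-1}$, on which $D_i$ is principal and $L$ is trivial. As in the proof of Proposition \ref{BoundedSubset3}, a relation $\sum c_j h_j = 1$ guarantees that every point with coordinates $\leq 1$ in $U$ has bounded coordinates in some $U_j$. On each $U_j$, the ideal of $Z$ is generated both by the $f$'s and by the local equations of the $D_i$. Comparing these two generating sets there, as in the change-of-generators step, gives the estimate with constants depending only on the finite cover.
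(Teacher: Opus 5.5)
Your proposal is correct and follows essentially the same route as the paper, which simply defers to the proof of \cite[Proposition 4.25]{Mat20}: cover $U$ by finitely many principal opens on which each $D_i$ is principal, apply the divisor estimate of Proposition \ref{BoundedSubset3} there, take minima, and compare the local equations of the $D_i$ with the $f_i$ as two generating sets of the ideal of $Z$ at points of bounded coordinates. Your preliminary step of choosing an adapted presentation is unnecessary, and the obstruction it runs into is really that the $D_i$ need not be principal on all of $U$ rather than extra components (the condition $\bigcap D_i = Z$ already makes their local equations generate the ideal of $Z$), but your covering argument handles this in any case.
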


The above propositions imply that for each closed subscheme $Z$, we can define a local height function $\lambda_Z$ on $X^{\an}$ that is well-defined up to an error of $\log^+ \max\left\{\left|p_1\right|,\ldots,\left|p_m\right|\right\}$ for a finite set of elements $p_1,\ldots,p_m \in K^{\times}$. We will now consider our local height functions up to such equivalence. This is essentially equivalent to considering functions up to $M_K$-constants in the usual context of local height functions over global fields. 

\begin{proposition} \label{LocalHeightProperties1}
We have the following properties. 

\begin{enumerate}
\item Let $Z \subseteq Y$. Then $\lambda_Z \leq \lambda_Y$.

\item If $\Supp Z \subseteq \Supp Y$, then there is a constant $c > 0$ such that $\lambda_Z \leq c \lambda_Y$.

\item Let $\vphi: X \to X'$ be a morphism of projective varieties over $\mathbb{C}$. Let $Y$ be a closed subscheme of $X'$. Then 
$\lambda_{\vphi^* Y} = \lambda_Y.$

\item Let $U \subseteq X$ be an open set and let $W \subseteq X$ be the complement of $U$. If $Z \vert_U \subseteq Y \vert_U$, then there exists a constant $c > 0$ such that $\lambda_Z \leq \lambda_Y + c \lambda_W$.
\end{enumerate}
\end{proposition}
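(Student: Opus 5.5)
Throughout, all (in)equalities between local height functions are understood modulo the equivalence fixed after Proposition \ref{BoundedSubset5}: errors bounded by $\log^+\max\{|p_1|,\ldots,|p_m|\}$ for finitely many $p_i\in K^\times$. The basic tool is Proposition \ref{BoundedSubset5}. Cover $X$ by the finitely many affine opens $U_1,\ldots,U_m$ of Proposition \ref{BoundedSubset1}. Every point lies in some $U_j$ with coordinates $\le 1$, and on that bounded region the function $\lambda_Z$ agrees, up to the allowed error, with $-\log\max_i |f_i(x)|$, where $f_1,\ldots,f_k$ generate the ideal of $Z$ on $U_j$. Since there are only finitely many charts, the $p_i$'s may be taken uniform over the charts. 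So each item reduces to a local statement about generators on a single affine chart, evaluated at points whose coordinates are bounded by $1$.

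\emph{Items (1) and (3).} For (1), $Z\subseteq Y$ means $I_Y\subseteq I_Z$. Write each generator $g_l$ of $I_Y$ as $g_l=\sum_i a_{li}f_i$ with $a_{li}\in\mathcal{O}(U_j)$ polynomials. At points with coordinates $\le 1$, each $|a_{li}(x)|$ is bounded by a constant built from the coefficients of $a_{li}$ (and, archimedeanly, the number of terms). Hence $\max_l|g_l(x)|\le C\max_i|f_i(x)|$, which gives $\lambda_Z\le\lambda_Y$ up to the allowed error. For (3), take presentations of $Y$ on $X'$ and pull them back. Pullbacks of basepoint-free bundles, sections and the meromorphic section $s_D$ give a presentation of $\vphi^*D$. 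Moreover $\bigcap\vphi^*D_i=\vphi^*Y$ scheme-theoretically, because pullback of ideal sheaves commutes with sums. The defining formula then gives $\lambda_{\vphi^*\mathcal{Y}}=\lambda_{\mathcal{Y}}\circ\vphi$ exactly, and Proposition \ref{BoundedSubset4} handles the choice of presentation.

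\emph{Item (2).} If $\Supp Z\subseteq\Supp Y$, the Nullstellensatz gives an $r$ with $I_Y^r\subseteq I_Z$ on each chart. Hence $Z\subseteq rY$, where $rY$ is the subscheme cut out by $I_Y^r$. By (1), $\lambda_Z\le\lambda_{rY}$. On a chart, $I_Y^r$ is generated by the monomials of degree $r$ in the $g_l$, and $\max$ of their absolute values equals $(\max_l|g_l|)^r$. So $\lambda_{rY}=r\lambda_Y$ up to error, and we take $c=r$ (or $r+1$ to absorb constants).

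\emph{Item (4).} Let $I_W$ be the ideal of $W$. The hypothesis $Z|_U\subseteq Y|_U$ means $I_Z$ and $I_Y$ agree after localizing away from $W$. Coherence, together with finitely many generators $h$ of $I_W$ on each chart, gives an $N$ with $I_W^N I_Y\subseteq I_Z$. Locally this says $h^N g_l\in I_Z$ for each product. Evaluating as in (1) at points with coordinates $\le 1$ gives $\max|h|^N\max|g_l|\le C\max|f_i|$. Taking $-\log$ yields $\lambda_Z\le\lambda_Y+N\lambda_W$ up to error, so $c=N$.

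\emph{Main obstacle.} The hardest step is the bookkeeping in this last estimate. The constants coming from polynomial coefficients must all be of the form $\log^+|p|$ with $p\in K^\times$, so they are valid simultaneously at every place. This holds because the archimedean coefficient-count constants are integers, and integers lie in $K^\times$. The same bookkeeping must also pass correctly through the change of charts.
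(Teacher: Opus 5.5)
Your proof is correct and is essentially the argument of \cite[Theorem 2.1]{Sil87} that the paper invokes. Each item rests on an ideal containment: $I_Y\subseteq I_Z$; $I_Y^r\subseteq I_Z$ by the Nullstellensatz; $I_W^NI_Y\subseteq I_Z$. Silverman turns these into inequalities through $\lambda_{Y\cap Z}=\min\{\lambda_Y,\lambda_Z\}$ and $\lambda_{Y+Z}=\lambda_Y+\lambda_Z$, using $Z=Z\cap Y$, $Z\subseteq rY$ and $Z\subseteq NW+Y$. You instead estimate local generators directly on the bounded charts of Proposition~\ref{BoundedSubset5}, which is the same content unfolded.

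Two small repairs are needed.
\begin{itemize}
\item In (3), a divisor $D_i$ of your presentation may contain $\vphi(X)$ even though $\vphi(X)\not\subseteq Y$. Then the pullback of $s_{D_i}$ vanishes identically and $\vphi^*D_i$ is not a Cartier divisor. Either discard such $D_i$, or choose the presentation in general position as Silverman does. Discarding is harmless: the preimage of such a $D_i$ is all of $X$, so the remaining pullbacks still cut out $\vphi^*Y$, and $\lambda_{\mathcal{D}_i}\circ\vphi\equiv+\infty$ does not affect the minimum. The case $\vphi(X)\subseteq Y$ is trivial, since both sides are identically $+\infty$.
\item In (4), the hypothesis says only that $I_Y\subseteq I_Z$ away from $W$, not that the two ideals agree. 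That inclusion is all you actually use, so the argument stands once the sentence is corrected.
\end{itemize}
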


\begin{proof}
The proof is analogous to that of \cite[Theorem 2.1]{Sil87}.     
\end{proof}

We now let $X$ be a quasi-projective variety. Fix a compactification $\ovl{X}$ of $X$. We thus obtain a local height function for the boundary $\lambda_{\partial X}$ where we set $\partial X = \ovl{X} \setminus X$. A priori this depends on the choice of compactification. 

\begin{proposition} \label{BoundaryHeight1}
Let $\ovl{X}$ and $\ovl{X}'$ be two compactifications of $X$ and let $\lambda_{\partial X}, \lambda'_{\partial X}$ be the two local height functions for the  boundary. Then there exists a constant $c > 0$ such that 
\[c^{-1} \lambda_{\partial X}' \leq \lambda_{\partial X} \leq c \lambda_{\partial X}'.\]
\end{proposition}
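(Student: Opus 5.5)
The plan is to compare both compactifications with a common third one that dominates them. Let $\ovl{X}''$ be the closure of the diagonal image of $X$ in $\ovl{X} \times \ovl{X}'$. This is a projective variety containing $X$ as a dense open subset, and it comes with proper birational morphisms $p: \ovl{X}'' \to \ovl{X}$ and $p': \ovl{X}'' \to \ovl{X}'$ which restrict to the identity on $X$. Since $X$ embeds as a closed subvariety of $X \times X$ and the diagonal is closed there, $X$ is exactly the preimage of $X \subseteq \ovl{X}$ under $p$, and likewise for $p'$. Hence, set-theoretically,
\[
p^{-1}\left(\partial X\right) = \partial'' X = p'^{-1}\left(\partial' X\right),
\]
where $\partial'' X = \ovl{X}'' \setminus X$. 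It therefore suffices to prove the claim for the pair $\left(\ovl{X}, \ovl{X}''\right)$, and the same argument applied to $\left(\ovl{X}', \ovl{X}''\right)$ finishes the proof.

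To make the boundary into a closed subscheme, give $\partial X$ any closed subscheme structure supported on $\ovl{X} \setminus X$, for example the reduced one. The only difference between choices is the choice of subscheme structure on the same support, and Proposition \ref{LocalHeightProperties1}(2) shows that this changes $\lambda_{\partial X}$ only up to a multiplicative constant. Next, Proposition \ref{LocalHeightProperties1}(3) gives
\[
\lambda_{p^*\partial X} = \lambda_{\partial X} \circ p^{\an}
\]
on $\ovl{X}''^{\an}$, where $p^{\an}$ is the identity on $X^{\an}$. The subscheme $p^*\partial X$ (the scheme-theoretic preimage) has the same support as $\partial'' X$. Applying Proposition \ref{LocalHeightProperties1}(2) in both directions yields a constant $c > 0$ with
\[
c^{-1}\lambda_{\partial'' X} \leq \lambda_{p^*\partial X} \leq c\,\lambda_{\partial'' X}.
\]
Restricted to $X^{\an}$, this is the desired comparison between $\lambda_{\partial X}$ and $\lambda_{\partial''X}$.

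The main obstacle is to be careful about what these inequalities mean. The local heights are only defined up to errors of size $\log^+\max\left\{\left|p_i\right|\right\}$, so the inequalities have to be read modulo such bounded errors. In addition, Proposition \ref{LocalHeightProperties1}(2) is really a bound of the form $\lambda_Z \leq c\lambda_Y + (\text{error})$. One can also check it directly: cover by finitely many affine opens as in Proposition \ref{BoundedSubset1}, and note that if $\Supp Z \subseteq \Supp Y$, the Nullstellensatz gives $g^{k} \in I_Y$ for every generator $g$ of $I_Z$. Proposition \ref{BoundedSubset5} then converts this into the inequality $\lambda_Z \leq k\lambda_Y + O(1)$. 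I expect the statement to be intended with these $M_K$-type constants absorbed, in the same convention as the preceding propositions. One should also note that the boundary heights are positive only up to this error, so the multiplicative comparison is meaningful precisely in the regime where $\lambda_{\partial X}$ is large, which is how it is used later.
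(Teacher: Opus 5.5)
Your argument is correct and is the standard one behind the paper's one-line appeal to \cite[Theorem 5.1]{Sil87}: dominate both compactifications by the closure $\ovl{X}''$ of the diagonal in $\ovl{X}\times\ovl{X}'$, pull the boundary back with Proposition~\ref{LocalHeightProperties1}(3), and compare the two boundary subschemes of $\ovl{X}''$, which have equal support, using Proposition~\ref{LocalHeightProperties1}(2), all modulo the $\log^+\max\{|p_i|\}$ errors. There are two small slips, neither affecting the main line. First, $p^{-1}(X)=X$ follows from the graph of $X\hookrightarrow\ovl{X}'$ being closed in $X\times\ovl{X}'$ (separatedness of $\ovl{X}'$), not merely from the diagonal being closed in $X\times X$. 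Second, in your Nullstellensatz aside the containment is reversed: $\Supp Z\subseteq\Supp Y$ gives $h^k\in I_Z$ for the generators $h$ of $I_Y$, and that is what yields $\lambda_Z\le k\lambda_Y+O(1)$.
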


\begin{proof}
The proof is analogous to that of \cite[Theorem 5.1]{Sil87}.     
\end{proof}

Now let $Z$ be a closed subscheme of $X$. If we fix a compactification $\ovl{X}$ and replace $Z$ with $Z \cup \left(\ovl{X} \setminus X\right)$, we thus obtain a well-defined $\lambda_Z$ up to a multiple of $\lambda_{\partial X}$. 
\par 
We now move to the setting of global heights oveer a number field $K$. After fixing a compactification and a presentation for $\ovl{X} \setminus X$, we define
\[h_{\partial X}\left(x\right) = \sum_{v \in M_K} n_v \lambda_{\partial X,v}\left(x\right) \text{ for all } x \in X\left(K\right)\]
where $\lambda_{\partial X,v}$ is the local height function restricted to $X_v^{\an}$. This is invariant under finite extensions and we get a function on $X\left(\ovl{\bb{Q}}\right)$. If $h'_{\partial X}$ is the function coming from another choice of data, then 
\[h_{\partial X} \leq c_1 h'_{\partial X} + c_2\]
for some $c_1, c_2 > 0$. Now for a closed subscheme $Z$ of $X$, we may fix  a presentation $\cal{Z}$ for $Z \cup \left(\ovl{X} \setminus X\right)$ which gives us a local height function $\lambda_Z$. We can now define 
\[h_Z\left(x\right) = \sum_{v \in M_K} n_v \lambda_{\cal{Z},v}\left(x\right) \text{ for all } x \in \left(X \setminus Z\right)\left(K\right).\]
Again, this quantity is invariant under finite extensions and so we obtain a well-defined function on $\ovl{\bb{Q}}$. If $h'_Z$ is the global height function for another compactification and presentation, we obtain 
\[\left|h_Z - h'_{Z}\right| \leq c_1 h_{\partial X} + c_2\]
for some $c_1, c_2 > 0$. One fact that we will need is the following.

\begin{proposition} \label{AmpleHeight1}
Let $h_L$ be an ample height function on $X\left(\ovl{\bb{Q}}\right)$. Then there exists $c_1, c_2 > 0$ so that 
\[h_Z \leq c_1 h_L + c_2.\]
\end{proposition}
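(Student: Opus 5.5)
The plan is to reduce to the case where $Z$ is an effective Cartier divisor on the compactification and then compare with an ample divisor. Fix the compactification $\ovl{X}$ and the presentation $\cal{Z}$ of $Z \cup \left(\ovl{X} \setminus X\right)$ used to define $h_Z$. Since $h_L$ is only defined up to $O(1)$, and any ample height is bounded below and comparable (up to constants) to any other ample height on $\ovl{X}$, I may replace $L$ by a very ample line bundle on $\ovl{X}$ and some power of it freely, at the cost of changing $c_1,c_2$.

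First step: choose an effective Cartier divisor $D$ on $\ovl{X}$ with $Z \cup \left(\ovl{X}\setminus X\right) \subseteq D$ as closed subschemes. Concretely, let $\cal{I}$ be the ideal sheaf of $Z' := Z \cup (\ovl{X}\setminus X)$; for $m \gg 0$ the sheaf $\cal{I} \otimes L^{m}$ is globally generated, so there is a nonzero section $\sigma \in H^0(\ovl{X}, \cal{I}\otimes L^m) \subseteq H^0(\ovl{X},L^m)$ (nonzero because $Z'$ is a proper closed subscheme and $\ovl{X}$ is irreducible). Its divisor $D = \divv(\sigma)$ is effective, linearly equivalent to $mL$, and contains $Z'$. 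By Proposition \ref{LocalHeightProperties1}(1), $\lambda_{Z'} \leq \lambda_D$ up to the allowed error, i.e. for each place $v$ we have $\lambda_{\cal{Z},v} \leq \lambda_{D,v} + \log^+\max\{|p_1|_v,\ldots,|p_k|_v\}$ with finitely many fixed $p_j \in K^\times$.

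Second step: sum over places. Since the $p_j$ are fixed nonzero elements of $K$, $\sum_v n_v \log^+\max_j |p_j|_v \le \sum_j h(p_j)$ is a finite constant, and it vanishes at all but finitely many places. Hence $h_Z(x) \leq h_D(x) + c$ for $x \in (X\setminus Z)(\ovl{\bb{Q}})$, where $h_D = \sum_v n_v \lambda_{D,v}$ is the Weil height of the effective divisor $D$ off its support. By the standard Weil height machine (or directly via the presentation with $L^m$ basepoint free and $M$ trivial, as in the construction of $\lambda_{\cal{D}}$), $h_D = m h_L + O(1)$ on $(\ovl{X}\setminus D)(\ovl{\bb{Q}}) \supseteq (X\setminus Z)(\ovl{\bb{Q}})$. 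This gives $h_Z \le m h_L + c_2$.

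The main obstacle is the bookkeeping of error terms: the local inequalities in Proposition \ref{LocalHeightProperties1} and Proposition \ref{BoundedSubset4} hold only up to $\log^+\max|p_j|$ with $p_j$ depending on the presentations, and one must check these are genuinely $M_K$-constants (finitely many nonzero places, uniform in $x$) so they sum to a bounded quantity; this is where I would be careful, using that $D$, $\cal{Z}$ are defined over a fixed number field $K$ and all comparisons are made with fixed data. A minor point is that Proposition \ref{LocalHeightProperties1}(1) gives only $\lambda_{Z'} \le \lambda_D$ (no multiplicative constant needed, since $Z' \subseteq D$ scheme-theoretically), so the constant $c_1$ is just $m$.
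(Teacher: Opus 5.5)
Your strategy of dominating $Z'=\overline{Z}\cup(\overline{X}\setminus X)$ by effective divisors in $|mL|$ and using $\lambda_{Z'}\le\lambda_D$ is the standard argument behind \cite[Proposition 4.2]{Sil87}, which is all the paper cites. However, the last step has an error. Since $D\supseteq Z'$, you have $\overline{X}\setminus D\subseteq X\setminus Z$, not $\supseteq$.

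A single divisor $D=\operatorname{div}(\sigma)$ generally has $\operatorname{Supp}D$ strictly larger than $\operatorname{Supp}Z'$. This is forced, for instance, as soon as $Z'$ has an irreducible component of codimension $\ge 2$. At a point $x\in(X\setminus Z)(\overline{\mathbb{Q}})$ lying on $D$ you have $\lambda_{D,v}(x)=+\infty$ for every $v$. There the inequality $h_Z(x)\le h_D(x)+c$ says nothing, and $h_D=m\,h_L+O(1)$ is false. So you have only proved $h_Z\le m\,h_L+c_2$ on $(\overline{X}\setminus D)(\overline{\mathbb{Q}})$, which misses a whole divisor's worth of points of $X\setminus Z$.

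The repair uses what you already have. Since $\mathcal{I}\otimes L^m$ is globally generated, choose finitely many sections $\sigma_1,\dots,\sigma_r$ generating it, and set $D_i=\operatorname{div}(\sigma_i)$. Each $D_i$ is effective, lies in $|mL|$ and contains $Z'$, and $\bigcap_i D_i=Z'$ scheme-theoretically. Hence every $x\in(X\setminus Z)(\overline{\mathbb{Q}})$ lies off some $D_i$. For that $i$, Proposition~\ref{LocalHeightProperties1}(1) and your summation over places give $h_Z(x)\le h_{D_i}(x)+c_i=m\,h_L(x)+c_i'$. Because there are only finitely many indices, $c_2=\max_i c_i'$ is uniform in $x$. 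Equivalently, $\{D_1,\dots,D_r\}$ is itself a presentation of $Z'$, so $\lambda_{Z'}=\min_i\lambda_{D_i}$ up to an $M_K$-constant, and at each $x$ you bound the sum of the minima by the sum for one index with $x\notin D_i$. With this change your argument is complete.
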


\begin{proof}
See \cite[Proposition 4.2]{Sil87}.     
\end{proof}

\subsection{Degenerations with Ultrafilters} \label{sec: DegenUltra}
We state and explain some basic general facts about degenerating via ultrafilters. For more details, the readers may consult \cite{FG24}, \cite{Yap25} or \cite{GY26}. Let $X$ be a quasi-projective variety over a number field $K$ which is irreducible. Let $\left(K_n\right)$ be a sequence of complete valued fields with norm $\left|\cdot \right|_n$ that contains $K$. 

We define 
\[A^{\eps} = \{ (x_n) \in \prod_{n=1}^{\infty} K_n \mid |x_n|^{\eps_n} \text{ is bounded}\}.\]
This is a Banach ring under the supremum norm $\left|\left(x_n\right)\right| = \sup_n \left|x_n\right|^{\eps_n}$. We have natural projection maps $\pi_n: A^{\eps} \to K_n$ which induce projections $\pi_n: X\left(A^{\eps}\right) \to X\left(K_n\right)$. The Berkovich spectrum $\cal{M}\left(A^{\eps}\right)$ is isomorphic to $\beta \bb{N}$, the set of ultrafilters on $\bb{N}$, via the bijection 
\[\omega \mapsto \left|\left(x_n\right)\right|_{\omega} = \lim_{\omega} \left|x_n\right|^{\eps_n}\]
where $\omega \in \beta \bb{N}$ is an ultrafilter on $\bb{N}$ (see \cite{CN74} for the basics of ultrafilters).  

\begin{definition} \label{DegenerateDefinition1}
Let $x_n \in X\left(K_n\right)$ be a sequence of points. An $\left(\eps_n\right)$-degeneration of $\left(x_n\right)$ is a point $x \in X\left(A^{\eps}\right)$ such that $\pi_n\left(x\right) = x_n$ for all $n \in \bb{N}$.     
\end{definition}

If an $\left(\eps_n\right)$-degeneration exists, it is necessarily unique. 

\begin{proposition} \label{DegenerationUniqueness1}
There exists at most one $\left(\eps_n\right)$-degeneration of $\left(x_n\right)$. 
\end{proposition}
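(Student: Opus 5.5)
The plan is to reduce uniqueness to the injectivity of the natural map $A^{\eps} \to \prod_n K_n$, and then glue using the separatedness of $X$. First suppose $X$ is affine, say $X = \Spec R$ with $R$ a finitely generated $K$-algebra. Then a point $x \in X\left(A^{\eps}\right)$ is a $K$-algebra homomorphism $R \to A^{\eps}$. Since $A^{\eps}$ is by definition a subring of $\prod_n K_n$, and $\pi_n$ is the restriction of the $n$-th coordinate projection, two homomorphisms $x, x'$ with $\pi_n \circ x = \pi_n \circ x'$ for all $n$ agree coordinatewise on every $r \in R$. Hence $x = x'$. In concrete terms, if $X \xhookrightarrow{} \bb{A}^M$ then $x$ is the tuple of sequences $\left(x_{n,1}\right)_n, \ldots, \left(x_{n,M}\right)_n$, which is determined by the $x_n$.

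For general quasi-projective $X$, let $x, x' \in X\left(A^{\eps}\right)$ both be $\left(\eps_n\right)$-degenerations of $\left(x_n\right)$, and consider the morphism $\left(x,x'\right): \Spec A^{\eps} \to X \times X$. Since $X$ is separated, the diagonal $\Delta_X$ is a closed subscheme, and we must show that $\left(x,x'\right)$ factors through it. Its pullback is a closed subscheme $\Spec\left(A^{\eps}/I\right)$ of $\Spec A^{\eps}$ that contains every "coordinate point" $\Spec K_n \to \Spec A^{\eps}$. This does not immediately give $I = 0$, because the kernels of the $\pi_n$ (for ultrafilters that are not principal) do not exhaust $\Spec A^{\eps}$, so I would instead argue locally with affine charts.

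The step I expect to be the main obstacle is precisely this local chart argument. Cover $X$ by affine opens $U_1,\ldots,U_m$ with embeddings, as in Proposition \ref{BoundedSubset1}. The preimages $x^{-1}\left(U_i\right)$ form an open cover of $\Spec A^{\eps}$. An open cover of an affine scheme can be refined to one by principal opens $D\left(a_j\right)$ with $\sum_j b_j a_j = 1$, and on each $D\left(a_j\right)$ both $x$ and $x'$ can be taken to land in a common affine $U_i \cap U_{i'}$, after intersecting with $x'^{-1}$ as well. By the affine case applied to the rings $A^{\eps}[a_j^{-1}]$, it then suffices that $A^{\eps}[a_j^{-1}] \to \prod_{n : a_{j,n} \neq 0} K_n$ is injective. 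This holds because $A^{\eps}$ is reduced and an element killed in every coordinate where $a_j$ is nonzero is annihilated by $a_j$. Gluing via the sheaf property, together with the separatedness of $X$, then yields $x = x'$.

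If that bookkeeping proves troublesome, the alternative is to embed $X$ into $\bb{P}^N$ and use the fact that an $A^{\eps}$-point of $\bb{P}^N$ is a line subbundle of $\left(A^{\eps}\right)^{N+1}$. Any two such line subbundles that agree after every $\pi_n$ have the same Plücker-type minors vanishing coordinatewise, and hence vanishing in $A^{\eps}$. This reduces the problem once more to the injectivity of $A^{\eps} \to \prod_n K_n$.
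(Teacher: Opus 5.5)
The gap is in the step you flag as the main obstacle. On a principal open $D(a_j)$ refining the cover by the sets $x^{-1}(U_i)\cap x'^{-1}(U_{i'})$, you only know that $x$ lands in $U_i$ and $x'$ lands in $U_{i'}$. Nothing puts both restrictions into $U_i\cap U_{i'}$, or into any common affine open. You also cannot switch to covering $\Spec A^{\eps}$ by the sets $x^{-1}(U_i)\cap x'^{-1}(U_i)$: that would require already knowing that $x$ and $x'$ send each point into the same charts, which is essentially what you are trying to prove. So the affine case cannot be applied on $D(a_j)$ as written. To repair it, look at $(x,x')\colon D(a_j)\to U_i\times U_{i'}$. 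By separatedness, $\Delta_X\cap(U_i\times U_{i'})$ is cut out by an ideal $J\subseteq\mathcal{O}(U_i)\otimes_K\mathcal{O}(U_{i'})$. The image of $J$ in $A^{\eps}[a_j^{-1}]$ is killed by every $\pi_n$ with $a_{j,n}\neq 0$, so it vanishes by your (correct) injectivity of $A^{\eps}[a_j^{-1}]\to\prod_{a_{j,n}\neq 0}K_n$. But that is just the diagonal argument done locally.

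In fact the global diagonal argument you discarded already finishes the proof, and your reason for discarding it is mistaken: you never need the points $\ker\pi_n$ to exhaust $\Spec A^{\eps}$. Since $(x,x')\circ\Spec(\pi_n)=(x_n,x_n)$ factors through $\Delta_X$, each $\Spec K_n\to\Spec A^{\eps}$ factors through the closed subscheme $\Spec(A^{\eps}/I)$. That is, $I\subseteq\ker\pi_n$ for every $n$, so $I\subseteq\bigcap_n\ker\pi_n=0$ because $A^{\eps}\subseteq\prod_n K_n$. The paper phrases the same step topologically. The points $\ker\pi_n$ are dense in $\Spec A^{\eps}$, since a nonzero $a$ has some $a_n\neq 0$ and hence $\ker\pi_n\in D(a)$. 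So the pulled-back diagonal has full support, and reducedness of $A^{\eps}$ forces it to be all of $\Spec A^{\eps}$. Your Pl\"ucker fallback can also be made to work. For $v\in L$ and $v'\in L'$, the coordinates of $v\wedge v'$ lie in $A^{\eps}$ and are killed by every $\pi_n$; local unimodularity then gives $L=L'$, and $X\hookrightarrow\mathbb{P}^N$ is a monomorphism. However, $L$ need not be free over $A^{\eps}$, so that argument must be run with elements or locally, and it is strictly more work than the diagonal argument.
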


\begin{proof}
Let $Y=\bigsqcup_{n\ge1}\Spec K_n$. Each projection 
$\pi_n\colon A^\varepsilon\to K_n$ induces \[\Spec\left(\pi_n\right)\colon\Spec K_n\to\Spec A^\varepsilon,\] and these glue to a single morphism \[f\colon Y\to\Spec A^\varepsilon.\] We claim that $f$ is an epimorphism in the category of separated, reduced schemes. Indeed given $g,h: \Spec A^{\eps} \to X$ such that $g \circ f = h \circ f$, we can consider the diagonal embedding $\Delta: X \to X \times X$, which is a closed immersion by separatedness of $X$. Pulling back $\Delta\left(X\right)$ via $\left(g,h\right): \Spec A^{\eps} \to X \times X$ gives us a closed subscheme $Z$ of $\Spec A^\eps$ and the map $f: Y \to \Spec A^{\eps}$ must factor through $Z$ since $g \circ f = h \circ f$.
\par 
We now claim that $f$ is dominant. Indeed, the image of $f$ consists of the kernels of the projection maps $\pi_n: A^{\eps} \to K_n$. Given any non-zero $a \in A^{\eps}$ with $a = \left(a_n\right)$, there must exist some $a_n \not = 0$ and so $\pi_n\left(a\right) \not = 0$, which implies that $\ker\left(\pi_n\right) \in D\left(a\right)$. Since distinguished opens form a basis, it follows that $f\left(Y\right)$ is dense and hence $f$ is dominant as desired. Now as $f$ is dominant and factors through $Z$, we must have $Z$ be dense in $\Spec A^{\eps}$ and as $\Spec A^{\eps}$ is reduced, we must have $Z = \Spec A^{\eps}$ and hence $g = h$ as desired. 
\par 
Now the points $x_n \in X\left(K_n\right)$ define maps $\Spec K_n\to X$ which glue to \[g\colon Y\to X.\] and $g$ is clearly unique. By definition an $\left(\varepsilon_n\right)$-degeneration is a morphism \[\tilde x\colon\Spec A^\varepsilon\to X\] with $\tilde x\circ f=g$. Given two such lifts $\tilde{x}_1, \tilde{x}_2$, we have $\tilde{x}_1 \circ f = \tilde{x}_2 \circ f$. Since $f$ is an epimorphism of reduced, separated schemes, post-composition with $f$ is injective on morphisms.  It follows that $\tilde x_1=\tilde x_2$, proving uniqueness.
\end{proof}

Clearly an $\left(\eps_n\right)$-degeneration need not always exist. For example if $\eps_n = 1$ and $X = \bb{A}^1$, then $x_n$ corresponds to an element of $K_n$ and an $\left(\eps_n\right)$-degeneration exists if and only if $\left|x_n\right|$ is bounded from above. However if $X$ is a projective variety, then an $\left(\eps_n\right)$-degeneration always exists. 

\begin{proposition} \label{DegenerationExistence1}
Assume that $X$ is a projective variety over $K$. Let $x_n \in X\left(K_n\right)$ for all $n$. Then an $\left(\eps_n\right)$-degeneration of $\left(x_n\right)$ always exists. 
\end{proposition}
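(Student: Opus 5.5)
The plan is to reduce to projective space and then use Proposition \ref{BoundedSubset1} together with the pigeonhole principle. Since $X$ is projective, fix a closed embedding $X \hookrightarrow \bb{P}^N$ over $K$, with homogeneous ideal generated by homogeneous polynomials $F_1,\ldots,F_r$ with coefficients in $K$. For each $n$, write $x_n = [x_{n,0}:\cdots:x_{n,N}]$ and normalize by choosing an index $i(n)$ with $|x_{n,i(n)}|_n$ maximal. Dividing through, we may assume $x_{n,i(n)} = 1$ and $|x_{n,j}|_n \leq 1$ for all $j$. This is exactly the content of the proof of Proposition \ref{BoundedSubset1}.

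Next we fix a single chart. The set $\bb{N}$ is partitioned into the finitely many sets $I_i = \{n : i(n) = i\}$, where ties are broken by taking the smallest maximal index. For each $j$ we set $a_j = (a_{j,n})_n$, with $a_{j,n} = x_{n,j}/x_{n,i(n)}$. Since $|a_{j,n}|_n^{\eps_n} \leq 1$, each $a_j$ lies in $A^{\eps}$. The natural candidate degeneration is the point $[a_0:\cdots:a_N]$. To make sense of this over the ring $A^{\eps}$ rather than over a field, we use the idempotents $e_i = \mathbf{1}_{I_i} \in A^{\eps}$. These have norm at most $1$, are orthogonal, and sum to $1$. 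Hence $\Spec A^{\eps} = \bigsqcup_i \Spec(e_i A^{\eps})$, and on the $i$-th piece the coordinate $a_i = e_i$ is the unit of $e_iA^{\eps}$. So on $\Spec(e_iA^{\eps})$ we obtain a morphism into the standard affine chart $U_i \cong \bb{A}^N$ of $\bb{P}^N$, given by the tuple $(e_i a_j)_{j}$. Gluing over the disjoint union yields a point $x \in \bb{P}^N(A^{\eps})$. Equivalently, we are using the surjection $\mathcal{O}^{N+1} \to \mathcal{O}$ given by the $a_j$, which is surjective because $\sum_i e_i a_i = 1$.

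It remains to check two things. First, $x$ lands in $X$: on each piece the dehomogenized equations $F_k(a_0,\ldots,a_N)/a_i^{\deg F_k}$ vanish, because they vanish componentwise in every $K_n$, and $A^{\eps} \subseteq \prod_n K_n$ is injective. So $x$ factors through the closed subscheme $X \cap U_i$. Second, $\pi_n(x) = x_n$: this holds by construction, since $\pi_n(e_i) = 1$ exactly when $n \in I_i$, and then the image is $[x_{n,0}:\cdots:x_{n,N}]$.

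The only slightly delicate step is the sheaf-theoretic one: making sure that a compatible family of chart-valued points over the idempotent decomposition really defines an $A^{\eps}$-point of the projective scheme. I would justify it by the disjoint-union decomposition of $\Spec A^{\eps}$ via the idempotents $e_i$. A point of $X$ over a disjoint union is a tuple of points over the pieces, and each piece lands in an affine chart, where membership in $X$ reduces to the vanishing of polynomial equations. That vanishing is checked coordinatewise, as above. Uniqueness is not needed here, since Proposition \ref{DegenerationUniqueness1} already supplies it.
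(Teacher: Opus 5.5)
Your argument is correct and is essentially the paper's proof. The paper likewise chooses, for each $n$, a chart of a finite affine cover in which $x_n$ has coordinates of absolute value at most $1$. It then assembles these coordinates into an $A^{\eps}$-point of the affine scheme $Y=\bigsqcup_i U_i$, which is your idempotent decomposition $\Spec A^{\eps}=\bigsqcup_i \Spec(e_iA^{\eps})$ written out explicitly.

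The only difference is how the chart is chosen. The paper lifts $x_n$ to the valuation ring $R_n$, while your max-coordinate normalization does the same job more concretely and also covers archimedean $K_n$, where there is no valuation ring to lift to.
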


\begin{proof}
Let $\left\{U_1,\ldots,U_n\right\}$ be an affine open cover for $X$ and let $Y = \bigsqcup_{i=1}^{n} U_i$. Then we have a natural morphism $\pi: Y \to X$ and $Y$ is affine. Let $x_n \in X\left(K_n\right)$. Then since $X$ is projective, $x_n$ lifts to $\tilde{x}_n \in X\left(R_n\right)$ where $R_n$ is the valuation ring of $K_n$. Hence we have an induced morphism $\tilde{x}_n: \Spec R_n \to X$. Since $\left\{U_i\right\}$ is an open cover and $R_n$ is a valuation ring, if $\tilde{x}_n^{-1}\left(U_i\right)$ contains the maximal ideal of $R_n$ then it contains the entire $\Spec R_n$. Thus $\tilde{x}_n \in U_i\left(R_n\right) \subseteq Y\left(R_n\right)$. 
\par 
Since $Y$ is affine, we may let $x_1,\ldots,x_N$ be a choice of affine coordinates for $Y$ over $K$. Then $\tilde{x}_n$ has affine coordinates $a_{n,1},\ldots,a_{n,N}$ with $a_{n,i} \in R_n$ and $\left|a_{n,i}\right| \leq 1$ for all $1\le i\le N$. Hence if we set $a_i = \left(a_{n,i}\right)$, we have $a_i \in A^{\eps}$. We thus obtain an $A^{\eps}$-point $\left(a_i\right)_{i=1}^N$ of $Y$ that commutes with the projection map $\pi_n: A^{\eps} \to K_n$. Projecting it down to $X$, we obtain our $\left(\eps_n\right)$-degeneration as desired.
\end{proof}

\begin{remark}\label{rmk:Bep} Observe that in Proposition \ref{DegenerationExistence1}, the resulting $\left(\varepsilon_n\right)$-degeneration is actually a $B^{\eps}$-point of $X$ for \[B^{\eps} = \prod_{n=1}^{\infty} R_n,\] where $R_n$ is the valuation ring of $K_n$.\end{remark}
\par 
Now if $X$ is only quasi-projective, we may embed it into some projective compactification $\ovl{X}$. Then the $\left(\eps_n\right)$-degeneration exists on $\ovl{X}$ and we can try to check whether our $A^{\eps}$-point lies on $X$. Recall that for each valued field $K'$ containing our number field $K$, we have a local distance function $\lambda_{\partial X}$ depending on the compactification $\ovl{X}$. On an affine open $U_i \subseteq \overline{X}$ such that the reduced closed subscheme $\left(\overline{X} \setminus X \right) \cap U_i$ is given by $\left\{f_1 = f_2 = \cdots = f_m = 0\right\}$ for $f_j \in\mathcal{O}\left(U_i\right)$, Proposition \ref{BoundedSubset5} implies that for $x\in U_i\left(K\right)$ whose affine coordinates in $U_i$ have norm $\leq 1$, we can find $p_1,\ldots,p_m \in K^{\times}$ and $A > 0$ so that  
\begin{equation}\label{eqn:distancemin}
\left|\lambda_{\partial X}\left(x\right) - \min_{1 \leq i \leq n} -\log \left|f_i\left(x\right)\right| \right| \leq A \log^+\max\left\{\left|p_1\right|,\ldots,\left|p_m\right|\right\}.\end{equation} The function $\lambda_{\partial X}$ depends on the choice of compactification, but $\frac{\lambda_{\partial X}}{\lambda'_{\partial X}}$ (up to an additive constant of $\log^+ \max \{|p_1|,\ldots,|p_m\}$is bounded from above and below for any two compactifications.

\begin{proposition}\label{DegenerationExistence2}
Let $X$ be a quasi-projective variety over $K$. Let $x_n \in X\left(K_n\right)$ and let $\left(\eps_n\right)$ be a sequence of positive reals.  Assume that for any $p\in K^{\times}$, the sequence $\{\eps_n \log^+|p|_n\}$ is uniformly bounded from above. Then an $\left(\eps_n\right)$-degeneration of $\left(x_n\right)$ exists on $X$ if and only if  
$\eps_n \lambda_{\partial X}\left(x_n\right)$
is uniformly bounded from above. 
\end{proposition}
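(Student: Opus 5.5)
Both directions go through the degeneration on the compactification. By Proposition \ref{DegenerationExistence1} there is an $\left(\eps_n\right)$-degeneration $x \in \ovl{X}\left(A^{\eps}\right)$, and by Proposition \ref{DegenerationUniqueness1} it is unique. So the question is exactly whether $x$ factors through the open $X \subseteq \ovl{X}$. Following Remark \ref{rmk:Bep} and the proof of Proposition \ref{DegenerationExistence1}, I cover $\ovl{X}$ by finitely many affine opens $U_1,\ldots,U_k$ with fixed embeddings as in Proposition \ref{BoundedSubset1}. After partitioning $\bb{N}$ into finitely many pieces according to which $U_i$ contains $x_n$ with all coordinates $\leq 1$, I may assume a single $U=U_i$ works for all $n$; boundedness and existence can be checked on each piece and glued, since $A^{\eps}$ splits as a finite product along a finite partition of $\bb{N}$. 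On $U$ the boundary is cut out by $f_1,\ldots,f_m \in \cal{O}\left(U\right)$, and the degeneration is the $A^{\eps}$-point with coordinates $a_j=\left(a_{n,j}\right)$, where $|a_{n,j}|_n\le 1$.

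\emph{($\Leftarrow$)} Suppose $\eps_n \lambda_{\partial X}\left(x_n\right) \leq C$. By \eqref{eqn:distancemin},
\[\min_j -\log|f_j(x_n)|_n \leq \lambda_{\partial X}(x_n) + A\log^+\max_i |p_i|_n .\]
Multiplying by $\eps_n$ and using the hypothesis that $\eps_n \log^+|p_i|_n$ is bounded, I get a uniform bound $\min_j -\eps_n\log|f_j(x_n)|_n \leq C'$. Thus for each $n$ some $j(n)$ has $|f_{j(n)}(x_n)|_n^{\eps_n} \geq e^{-C'}$. Partitioning $\bb{N}$ further by the value of $j(n)$, I may assume $j$ is constant. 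The element $f_j(a) = \left(f_j(a_{n,\cdot})\right)$ then lies in $A^{\eps}$ (its entries are polynomials in coordinates of norm $\leq1$, and the coefficients are controlled by the $\eps_n\log^+|p|_n$ bound). Its inverse $\left(f_j(x_n)^{-1}\right)$ has $|\cdot|^{\eps_n} \leq e^{C'}$, so it also lies in $A^{\eps}$. Hence $f_j(a)$ is a unit in $A^{\eps}$, the point lands in $U \cap D(f_j) \subseteq X$, and the degeneration exists on $X$.

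\emph{($\Rightarrow$)} Suppose $x \in X(A^{\eps})$. Cover $X$ by finitely many affines $D(f_j)\cap U_i$. The preimages of these opens in $\Spec A^{\eps}$ cover it, so the ideal they generate is the unit ideal, and there is an identity $\sum_j b_j g_j = 1$ in $A^{\eps}$ with each $g_j$ a unit on its open. The key step is to extract from this that for each $n$ some $j$ satisfies a uniform lower bound on $|f_j(x_n)|^{\eps_n}$. A cleaner route is to pick, on each affine piece $V$ of $X$, affine coordinates including $1/f_j$; the point restricted to the corresponding summand of a finite idempotent decomposition of $A^{\eps}$ has $1/f_j(x_n)$ with bounded $|\cdot|^{\eps_n}$. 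Then \eqref{eqn:distancemin}, together with the bound on $\eps_n\log^+|p_i|_n$, gives $\eps_n\lambda_{\partial X}(x_n)\le C$.

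The main obstacle is the $(\Rightarrow)$ direction: showing that an $A^{\eps}$-point of $X$ splits, after a finite partition of $\bb{N}$, into pieces each lying in a single affine chart. This requires that the open cover of $\Spec A^{\eps}$ be refined by idempotent-defined clopens. I would prove it via the identification $\cal{M}\left(A^{\eps}\right)\cong\beta\bb{N}$: every ultrafilter $\omega$ lies in some chart, and by compactness only finitely many charts are needed. Each chart condition is open and defined by a subset of $\bb{N}$, so it gives a clopen set in $\beta\bb{N}$, that is, an idempotent of $A^{\eps}$.
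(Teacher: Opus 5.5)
Your ($\Leftarrow$) direction is the paper's argument. Partitioning $\bb{N}$ by the index $j(n)$ that realizes the minimum is the same as the paper's choice of indicator coefficients $c_{n,i}$. In both cases the point is that $(b_n)\in A^{\eps}$ is a unit exactly when $|b_n|_n^{-\eps_n}$ is bounded. Your reduction to a single chart $U$ by a finite partition also matches the paper's use of $Y=\bigsqcup U_i$.

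The gap is in ($\Rightarrow$). You correctly reach an identity $\sum_j b_j f_j(a)=1$ in $A^{\eps}$. You then call the extraction of a uniform lower bound ``the key step'' and do not carry it out, although it is one line and is exactly how the paper finishes. Evaluate the identity coordinatewise to get $\sum_j b_{n,j}f_j(a_n)=1$ in $K_n$. This gives $1\le |m|_n\max_j|b_{n,j}|_n|f_j(a_n)|_n$, where the factor $|m|_n$ is needed only at archimedean $n$. Raise this to the power $\eps_n$, and use $\sup_n|b_{n,j}|_n^{\eps_n}<\infty$ together with the hypothesis applied to $p=m$. This yields $\max_j|f_j(a_n)|_n^{\eps_n}\ge\delta>0$ uniformly in $n$, and \eqref{eqn:distancemin} then bounds $\eps_n\lambda_{\partial X}(x_n)$ from above. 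No idempotent refinement is needed.

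The $\beta\bb{N}$ detour you propose instead does not work as written. A set $\{\omega : |f_j(a)|_\omega\neq 0\}$ is open in $\beta\bb{N}$ but in general not clopen. For example, if $f_j(a_n)\neq0$ and $|f_j(a_n)|_n^{\eps_n}\to0$, this set is exactly the set of principal ultrafilters, and its closure is all of $\beta\bb{N}$. So the claim that ``each chart condition gives a clopen set'' is false. You could use compactness to refine the cover by finitely many basic clopens $\{\omega : S\in\omega\}$. You would still have to show that non-vanishing of $f_j(a)$ at every ultrafilter containing $S$ makes $f_j(a)e_S$ a unit in $e_SA^{\eps}$. That means showing $|f_j(a_n)|_n^{\eps_n}$ is bounded below on $S$, via a subsequence-and-ultrafilter argument or Berkovich's invertibility criterion. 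That is the same uniform lower bound again, so the detour is circular. The direct coordinatewise estimate above is the missing step.
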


\begin{proof}
Pick any projective compactification $\ovl{X}$ of $X$ and let $U_i$ be a finite affine open cover of $\ovl{X}$. Let $Y = \bigsqcup U_i$ and let $\pi: Y \to \ovl{X}$ be the natural morphism. Then $\pi^{-1}\left(X\right)$ is Zariski open in $Y$. Let $f_1,\ldots,f_m \in \mathcal{O}\left(Y\right)$ cut out $Y \setminus \pi^{-1}\left(X\right)$. Note that $\bigcup_{i=1}^{m} D\left(f_i\right) = X$ where $D\left(f_i\right)$ is the distinguished open for $f_i$. 
\par  
Now given $x_n \in \ovl{X}\left(K_n\right)$, we lift (as in the proof of Proposition \ref{DegenerationExistence1}) $x_n$ to an $R_n$-point of some $U_i$ and hence to an $R_n$-point $\tilde{x}_n$ of $Y$. We then have 
\[\left| \eps_n \lambda_{\partial X}\left(x_n\right) - \min_{1 \leq i \leq n} - \left(\eps_n\right) \log \left|f_i\left(\tilde{x}_n\right)\right|_n \right| \leq \eps_n A \log^+ \max\left\{\left|p_1\right|_n,\ldots,\left|p_m\right|_n\right\}\] as given in \eqref{eqn:distancemin}.
If we let $a_{n,1},\ldots,a_{n,N}$ be the affine coordinates of $x_n$, then the affine coordinates of our $A^{\eps}$-point $\left(x_n\right)$ are given by $\left(a_1,\ldots,a_N\right)$ where each $a_i = \left(a_{n,i}\right)$. We wish to check whether our $A^{\eps}$-point lies on $\pi^{-1}\left(X\right)$. This is equivalent to checking that 
\[\left(f_1\left(a_1,\ldots,a_N\right), \ldots, f_m\left(a_1,\ldots,a_N\right)\right)\]
generate the unit ideal in $\left(A^{\eps}\right)$ as the condition is equivalent to the pullback of the opens $D\left(f_i\right)$ covering $\Spec A^{\eps}$. But an element $b = \left(b_n\right)$ in $A^{\eps}$ is in $\left(A^{\eps}\right)^{\times}$ if and only if $\left(\left|b_n\right|^{-1}\right)^{\eps_n}$ is uniformly bounded from above \cite[Remark 3.7]{FG24}. Hence if there exists $c_i$ with $c_i = \left(c_{n,i}\right)^{\infty}_{n=1}$ with 
\begin{equation} \label{eq: Inverse1}
\sum c_i f_i\left(a_1,\ldots,a_N\right) \in \left(A^{\eps}\right)^{\times} 
\end{equation} then as $\left|c_{i,n}\right|^{\eps_n}$ is bounded from above as $n$ varies, \eqref{eq: Inverse1} implies that
\[\max_{1 \leq i \leq m} \left|f_i\left(a_{n,1},\ldots,a_{n,N}\right)\right|_n^{\eps_n}\] is uniformly bounded away from $0$ and so $\eps_n \lambda_{\partial X}\left(x_n\right)$ is uniformly bounded from above since $\eps_n A \log^+\max\left\{\left|p_i\right|\right\}$ is uniformly bounded from above. Conversely suppose that $\eps_n \lambda_{\partial X}\left(x_n\right) \leq C$ for some $C > 0$. For each $n$, let $i_n$ be an index such that 
\[\left|-\log \left|f_{i_n}\left(a_{n,1},\ldots,a_{n,N}\right)\right|_n - \lambda_{\partial X}\left(x_n\right) \right| \leq A \log^+ \max \left\{\left|p_1\right|_n,\ldots,\left|p_m\right|_n\right\}.\]
We define $c_i \in A^{\eps}$ as $c_i = \left(c_{1,i},\ldots,c_{n,i},\ldots\right)$ where $c_{n,i} = 1$ if $i_n = i$ and $0$ otherwise. Then 
\[\left|\sum c_{n,i} f_i\left(a_{n,1},\ldots,a_{n,N}\right)\right|^{\eps_n} = e^{- \lambda_{\partial X}\left(x_n\right) \eps_n + O\left(1\right)}\]
and hence the left-hand side is uniformly bounded away from $0$ as $\lambda_{\partial X}\left(x_n\right) \eps_n$ is uniformly bounded from above. Thus $\sum c_i f_i\left(a_1,\ldots,a_N\right) \in \left(A^{\eps}\right)^{\times}$ and so our $A^{\eps}$-point lies in $X$ as desired.
\end{proof}

Note that there are two natural situations where $\eps_n \log^+\left|p\right|_n$ is uniformly bounded from above. The first is if the valued field $\left(K_n,\left| \cdot \right|_n\right)$ is the same for all $n$ and $\eps_n$ is bounded from above. The second is if the residue characteristic of $K_n$ is either $0$ for all sufficiently large $n$ or goes to $\infty$. This will be the two specific cases for which we will apply Proposition \ref{DegenerationExistence2} to.

\section{Non-Archimedean Rigidity} \label{sec: Rigidity}

{\blue The main result of this section is Theorem~\ref{Rigidity1}, which is a strengthening of the non-archimedean rigidity statement of Theorem~\ref{IntroRigidity1}.} Let $\left(K,\left| \cdot \right|\right)$ be a complete non-archimedean algebraically closed field with residue characteristic $0$. Assume that $\left| \cdot \right|$ is non-trivial. We will prove the following statement. 

{\blue \begin{theorem} \label{Rigidity1}
Let $f,g: \bb{P}^1 \to \bb{P}^1$ be two rational maps of degrees $d_1,d_2 \geq 2$ respectively, that are defined over $K$. Let $C \subseteq \bb{P}^1 \times \bb{P}^1$ be an irreducible curve over $K$ that projects dominantly to both coordinates. Assume that $\Lyap\left(f\right) > 0$ and that there exist an open subset $U\subseteq C^{\an}$ and a constant $\rho>0$ such that
\[\left(\pi_1^*\mu_f\right)\vert_U=\rho\left(\pi_2^*\mu_g\right)\vert_U\ne0.\]
Then there exist positive integers $a,b$ with $d_1^a=d_2^b$ such that $C$ is preperiodic under $\left(f^a,g^b\right)$.
\end{theorem}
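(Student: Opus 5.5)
We follow the strategy of Ji--Xie \cite{JX25} in the archimedean case, replacing Montel-type arguments with the non-archimedean ergodic theory of Favre--Rivera-Letelier \cite{FRL25}. The goal is to produce, from the local equality of measures, a local analytic correspondence between $f$ and $g$ that is compatible with the dynamics. We then globalize it to show that $C$ is preperiodic.

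\textbf{Step 1: a local branch relating the two measures.} Since $\Lyap(f)>0$, the results of \cite{FRL25} give a well-understood structure for $\mu_f$. It is non-atomic and does not charge the Berkovich hyperbolic space in the way the degenerate (zero Lyapunov) case does. Moreover, $\mu_f$-typical points $x$ in the classical part $\bb{P}^1(K)$ admit univalent inverse branches of $f^n$ on discs $B_n(x)$ that contract exponentially at rate $e^{-n\Lyap(f)}$ (a Pesin-type theory). We choose a point $p\in U$ over which $\pi_1$ and $\pi_2$ are unramified and $\pi_1(p)$ is $\mu_f$-generic. On a small disc around $p$, $C$ is then the graph of an analytic isomorphism $h:D_1\to D_2$ with $h_*(\mu_f|_{D_1})=\rho'\mu_g|_{D_2}$. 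From this relation we deduce $\Lyap(g)>0$, since $\mu_g$ charges classical points in the same way.

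\textbf{Step 2: renormalization and rescaling limits.} Take inverse branches $f^{-n}$ on $D_1$ near generic points and pull them back to unit discs using affine rescalings. Using the $K$-analytic compactness available for bounded univalent maps of the unit disc (the coefficient bounds replace Montel), we extract a convergent subsequence. Applying the same construction on the $g$-side, the rescaled conjugates $h_n = A_n'^{-1}\circ g^{m_n}\circ h\circ f^{-n}\circ A_n$ are analytic isomorphisms of discs that push one rescaled measure to a multiple of the other.

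Because the measures are preserved and $\mu_f$ is exact/mixing by \cite{FRL25}, we choose $m_n$ with $d_2^{m_n}\approx d_1^{n}$ so that the mass normalization stays bounded. The limiting maps are then nonconstant analytic isomorphisms. The key output is a \emph{symmetry relation}: there are infinitely many pairs $(n,m)$ and local analytic maps such that $g^{m}\circ h$ and $h\circ f^{n}$ agree up to local symmetries of $\mu_g$. Here we use that the local symmetry group of $\mu_g$ near a generic point is small. This follows the Levin--Przytycki and Ji--Xie argument: a positive-dimensional family of local symmetries would force $g$ to be exceptional, which is incompatible with positive Lyapunov exponent together with the \cite{FRL25} classification.

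\textbf{Step 3: algebraization and preperiodicity.} From finitely many of these relations we obtain $a,b$ with $(f^a,g^b)$ mapping a germ of $C$ into a germ of some curve $C'$ in the finite set of curves of the form $(f^a,g^b)^k(C)$. The finiteness comes from the degree/measure compatibility, which forces $d_1^a=d_2^b$ by comparing masses: $\mu_f$ scales by $d_1^{-a}$ under inverse branches, and $\mu_g$ by $d_2^{-b}$. Since all the curves $(f^a,g^b)^k(C)$ satisfy the same measure relation on some open set with controlled constant, the correspondences they define lie in a finite set. Analytic continuation together with irreducibility then gives $(f^a,g^b)^{k}(C)=(f^a,g^b)^{l}(C)$ for some $k\neq l$.

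\textbf{Main obstacle.} The main obstacle is Step 2: obtaining non-degenerate rescaling limits without Montel's theorem. We would first try to use the \cite{FRL25} description of $\mu_f$-typical points, namely exponentially contracting univalent inverse branches on discs of definite size. This should give the needed uniform control and nondegeneracy. It is exactly the place where the hypothesis $\Lyap(f)>0$ enters.
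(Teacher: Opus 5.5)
Your overall architecture matches the paper: a local graph $h$, inverse branches of definite size from \cite{FRL25}, composed maps between fixed discs, a limit map, and two iterates of $C$ coinciding. But the central step is wrong. In Step 2 you extract a convergent subsequence from ``$K$-analytic compactness'' given by coefficient bounds, and no such compactness exists here. A bound $|a_{k,i}|\le Rr^{-i}$ only places the coefficients in a closed ball of $K$. Since $K$ is algebraically closed with residue field of characteristic $0$, that residue field is infinite and closed balls are not compact: coefficients with pairwise distinct reductions have no convergent subsequence. This is exactly the failure of Montel the argument must get around, and uniform size control of inverse branches does not supply it.

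The paper's substitute is measure-theoretic. It works with $h_n=f^{\theta(n)}\circ h^{-1}\circ g_{-n}$ on one fixed disc $D$, which is possible because $B(g^n(y),r)$ ranges over finitely many balls. The time change $\theta$ is chosen by matching derivatives, not degrees. Your matching $d_2^{m_n}\approx d_1^n$ would need $\Lyap(f)/\log d_1=\Lyap(g)/\log d_2$, which you have not established. With derivative matching, the images $h_n(D)$ are balls of radius bounded below around points $f^{\theta(n)}(x)$ that accumulate at a type I point of $J_f$. This gives $\mu_f(h_n(E))=\rho\, d_1^{\theta(n)}d_2^{-n}\mu_g(E)$ with the factor bounded below; the bound is an output, not an input. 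Suppose the lower coefficients converge but the $N$-th coefficients have an $r'$-separated subsequence. Then the images of a small disc about a suitable $z\in J_g$ are pairwise disjoint and each has $\mu_f$-mass bounded below, contradicting $\mu_f(\bb{P}^1)=1$. Induction on $N$ plus diagonalization then gives a limit $h'$.

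Your finiteness mechanism is also missing. The claim that a ``positive-dimensional family of local symmetries'' forces $g$ to be exceptional is the Levin--Przytycki argument. It uses Montel and the Euclidean topology and does not transfer. Step 3's assertion that the curves $(f^a,g^b)^k(C)$ give correspondences in a finite set because they satisfy a measure relation is essentially the theorem itself.

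What is actually needed is Lemma~\ref{lem:move2}: injective analytic maps $\sigma_k:D'\to D'$ with $\sigma_k^*\mu=c_k\mu$ and $\sigma_k\to\id$ uniformly must equal $\id$ for large $k$. Its proof linearizes at a repelling periodic point in $D'$ and conjugates by powers of the multiplier to make the maps nearly affine. It then uses the same disjoint-images/finite-mass contradiction. Residue characteristic $0$ enters through $|mb|=|b|$, which makes the translates of a small disc under a limiting translation $T_b$ pairwise disjoint.

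Applying this to $\sigma_k=(h')^{-1}\circ h_{n_k}$ shows the $h_{n_k}$ are eventually equal. Then the curves $(f^{\theta(n_j)},g^{n_j})(C)$ share an infinite graph for different $j$ and so coincide. This produces a curve $C'$ fixed by $(f^a,g^b)$, and $d_1^a=d_2^b$ follows by a degree count. Preperiodicity of $C$ follows because all its $(f^a,g^b)$-iterates lie in $(f^{\theta(n_j)},g^{n_j})^{-1}(C')$.
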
}

When $K = \bb{C}$ is archimedean {\blue and $U=C^{\an}$}, a weaker version of Theorem \ref{Rigidity1} is the key complex dynamical input to the solution of the dynamical Manin--Mumford for $\left(\bb{P}^1\right)^n$ by Ghioca--Nguyen--Ye \cite{GNY18, GNY19}. In their setting, they are able to assume that $C$ contains infinitely many preperiodic points for $\left(f,g\right)$ which is a necessary ingredient in their proof. Mavraki--Schmidt \cite{MS25} subsequently removed this assumption and used it in their proof of the uniform dynamical Bogomolov for one-parameter families of split maps. 
\par 
Both the arguments of \cite{GNY18} and \cite{MS25} for $K = \bb{C}$ proceed by constructing a family of symmetries of the equilibrium measure $\mu_f$ using the condition on equilibrium measures, before applying Levin--Przytycki \cite{LP97} to obtain that outside of the exceptional maps, the family of symmetries has to be finite in size. It then easily follows that $C$ has to be preperiodic for $\left(f,g\right)$. 
\par 
There are two main obstacles in adapting this argument to the non-archimedean case. The first is that Levin--Przytycki relies on Montel's theorem, i.e. that any family of holomorphic maps $\psi_n: U \to V$ for $U,V$ bounded open subsets of $\mathbb{C}$, has a convergent subsequence. This is false in the non-archimedean case due to the failure of local compactness. The second is even if we knew that $\left\{\psi_n\right\}$ had a convergent subsequence, Levin--Przytycki's argument relies on the Euclidean topology and does not work for non-archimedean fields. 
\par 
{\blue Both obstacles are handled by the following simple observation. Let $\psi_n:D\to\psi_n\left(D\right)$ be injective analytic maps such that
\[\mu_f\left(\psi_n\left(E\right)\right)\geq c\mu_f\left(E\right)\]
for every Borel subset $E\subseteq D$, with $c>0$ independent of $n$. If $\mu_f\left(D\right)>0$, the images $\psi_n\left(D\right)$ cannot be pairwise disjoint, since $\mu_f$ has finite mass. Applied on smaller discs, this observation produces a convergent subsequence.} 
\par 
We will follow the proof strategy by Ji--Xie \cite{JX25} which uses ergodicitiy properties of $\mu_f$, see also \cite{DFG23}. We first collect some basic facts of $f$ and $\mu_f$ which are classical in the archimedean case, but have only been recently proved in the non-archimedean setting. We then state some independent lemmas that will help us prove that if our constructed family $\left\{\psi_n\right\}$ has a convergent subsequence, then the family has to be finite. Finally, we will prove Theorem \ref{Rigidity1}. 
\par 
\subsection{Basic Facts}

{\blue For the next two subsections, let $f:\bb{P}^1\to\bb{P}^1$ have degree $d\geq2$ and satisfy $\Lyap\left(f\right)>0$. In particular, the valuation of $K$ is non-trivial.}

The measure $\mu_f$ (and similarly for $\mu_g$) is the equilibrium measure of $f$. It is a probability measures that is supported on the Berkovich space $\berkP_k$. By \cite{FRL10}[Proposition 3.5], the measure $\mu_f$ is ergodic. When working on $\bb{P}^1$, we will let $B\left(x,r\right)$ denote the ball of radius $r$ and centered at $x$ using the spherical metric. 
\par 
The quantity $\Lyap\left(f\right)$ is the Lyapunov exponent of $f$. Working with the spherical metric. we may let $|f'|$ denote the spherical derivative which satisfies the formula 
\[|f'|(x) = |f'(x)| \frac{\max\{1,|x|\}^2}{\max\{1,|f(x)|\}^2}\]
for $x \not \in f^{-1}(\infty)$. This extends continuously to the entire $\berkP$. We may now define 
\[\Lyap(f) = \int \log |f'|(x) d \mu_f(x).\]
{\blue By \cite[Theorems B and C, Corollary D]{FRL25}, in residue characteristic $0$, positive Lyapunov exponent is equivalent to $\mu_f\left(\bb{P}^1\left(K\right)\right)=1$, and to the existence of a type I repelling periodic point. Moreover, \cite[Theorem B\,(3)]{FRL25} gives the equidistribution of type I repelling periodic points towards $\mu_f$. The measure $\mu_f$ has no atoms, since an atom would imply potential good reduction by \cite[Theorem E]{FRL10}.} 

\begin{proposition} \label{Lyapunov1}
For $\mu_f$-almost every $x$, we must have 
\[\lim_{n \to \infty} \frac{1}{n} \log| (f^n)|'(x) \to \Lyap(f).\]
\end{proposition}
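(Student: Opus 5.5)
The plan is to obtain the statement as a direct application of Birkhoff's ergodic theorem to the cocycle $\log|f'|$. By the chain rule for the spherical derivative, which holds on $\bb{P}^1(K)$ away from the finitely many points where the formula degenerates and then on all of $\berkP$ by continuity,
\[\log|(f^n)'|(x) = \sum_{j=0}^{n-1} \log|f'|\left(f^j(x)\right).\]
Hence $\frac{1}{n}\log|(f^n)'|(x)$ is exactly the $n$-th Birkhoff average of $\phi = \log|f'|$ along the orbit of $x$.

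\textbf{Step 1: ergodicity.} The measure $\mu_f$ is $f$-invariant, since $f_*\mu_f = \mu_f$, and it is ergodic by \cite{FRL10}[Proposition 3.5].

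\textbf{Step 2: integrability of $\phi$.} This is where the real work lies. Since $f$ is a degree $d$ rational map, the continuous extension of $|f'|$ to $\berkP$ is bounded above, so $\phi^+$ is bounded and in particular integrable. For $\phi^-$, observe that $\log|f'|$ differs from $\log|f'(x)|$ only by terms of the form $\log\max\{1,|\cdot|\}$, which are continuous on $\berkP$. Locally, $\log|f'|$ is then a finite sum of functions $\log|x - c_i|$ (centered at critical points $c_i$) plus bounded terms. Equivalently, $\log|f'|$ is a difference of quasi-potentials: its Laplacian is $\sum_{c}\delta_c$ over critical points minus $2\deg$ times a smooth reference measure, up to the standard normalization.

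\textbf{Step 3: conclusion.} The equilibrium measure $\mu_f$ has continuous potentials. Since it has no atoms and its potential $g_f$ is continuous, the integral $\int \log|x-c|\,d\mu_f(x)$ equals the value at $c$ of a continuous potential and is therefore finite. So $\phi\in L^1(\mu_f)$, which also shows that $\Lyap(f)$ is a well-defined finite number. Birkhoff's ergodic theorem, applied to the ergodic system $(\berkP, f, \mu_f)$ and $\phi\in L^1$, then gives
\[\frac{1}{n}\log|(f^n)'|(x)\to\int\phi\,d\mu_f=\Lyap(f)\]
for $\mu_f$-almost every $x$.

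The main obstacle is Step 2: ensuring that $\log|f'|$ is genuinely $\mu_f$-integrable on the Berkovich line, including at type I critical points where it equals $-\infty$. I would try first to cite the integrability established in \cite{FRL25}, or the potential-theoretic formula for $\Lyap(f)$ in terms of $g_f$ evaluated at the critical points. That formula expresses $\Lyap(f)$ as a finite combination of continuous potential values, which yields finiteness directly.
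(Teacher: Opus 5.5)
Your proposal is correct and follows the paper's proof: you apply Birkhoff's ergodic theorem to $\log|f'|$ for the ergodic measure $\mu_f$ and use the chain rule for the spherical derivative to identify the Birkhoff sums with $\log|(f^n)'|$. The paper simply asserts $\log|f'|\in L^1(\mu_f)$, so your Steps 2--3 fill in a detail it leaves implicit: you write $\log|f'|$ as a sum of log-distances to the critical points plus a bounded continuous term, and you use that $\mu_f$ has continuous potentials, which is finite at type I points.
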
 

\begin{proof}
Since $\mu_f$ is ergodic and $\log |f'|(x) \in L^1(\mu_f)$, by Birkhoff's ergodic theorem we must have 
\[\lim_{n \to \infty} \frac{1}{n} \sum_{i=0}^{n-1}\log |f'|(f^i(x)) \to \Lyap(f)\]
for $\mu_f$-almost every $x$. Since 
\[|f_1 \circ f_2|'(x) = |f_1|'( f_2(x)) \cdot |f_2|'(x),\]
this is equivalent to 
\[\lim_{n \to \infty} \frac{1}{n} \log |(f^n)|'(x) \to \Lyap(f)\]
as desired.
\end{proof}

We now state \cite{FRL25}[Lemma 3.18]. We set
\[\hat{J} = \{(x_i)_{i \in \bb{Z}} \in (J_f \cap \bb{P}^1(K))^{\bb{Z}}, f(x_i) = x_{i+1}\}\]
and we let $\sigma$ be the left-shift operator. We have a projection map $\pi: \hat{J} \to J_f \cap \bb{P}^1(K)$ that satisfies $f \circ \pi = \pi \circ \sigma$. We let $\hat{\mu_f}$ be the unique probability measure that is invariant under $\sigma$ and projects to $\mu_f$. 

\begin{proposition} \label{Lyapunov2}
There exists a subset $\hat{J}_0$ of $\hat{J}$, with full measure, and a measurable function $\tau,L: \hat{J}_0 \to \left(0,\infty\right)$ such that for any $\hat{x} \in \hat{J}_0$ and $n \in \bb{N}$, the function $f^n$ admits an inverse branch sending $x_0$ to $x_{-n}$ defined on $B\left(x_0, \tau\left(\hat{x}\right)\right)$. 
\end{proposition}

\begin{proof}
This is \cite{FRL25}[Lemma 3.18]    
\end{proof}

With this, we can deduce the non-archimedean analogue of \cite{JX25}[Lemma 4.10]. Given a subset $A$ of $\bb{N}$, we let $d\left(A\right)$ denote its density if it exists. 

\begin{proposition} \label{Lyapunov3}
For every $\eps > 0$, there is $r>0$ such that for $\mu_f$-almost every $x \in \bb{P}^1\left(K\right)$, there exists a subset $A = A_x$ of positive integers with $d\left(A\right) > 1 - \eps$ such that for every $n \in A$, if we denote by $W$ the connected component of $f^{-n}\left(B\left(f^n\left(x\right),r\right)\right)$ containing $x$, then the map $f^n: W \to B\left(f^n\left(x\right),r\right)$ is a bijection. 
\end{proposition}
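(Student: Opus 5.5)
We deduce Proposition~\ref{Lyapunov3} from the inverse-branch statement of Proposition~\ref{Lyapunov2}. The idea is to pass to the natural extension, apply Birkhoff's ergodic theorem to an indicator function, and then project back to $\bb{P}^1(K)$. This mirrors the archimedean argument of \cite{JX25}.

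\textbf{Step 1: choose $r$.} Fix $\eps>0$. Let $\hat{J}_0$ and $\tau$ be as in Proposition~\ref{Lyapunov2}. Since $\tau>0$ on a set of full $\hat{\mu}_f$-measure, we have $\hat\mu_f(\{\tau\ge r\})\to 1$ as $r\to 0$. Choose $r>0$ with
\[\hat\mu_f\left(\{\hat y\in\hat J_0 : \tau(\hat y)\ge r\}\right)>1-\eps.\]
Let $E=\{\tau\ge r\}$.

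\textbf{Step 2: ergodic theorem on the natural extension.} The measure $\mu_f$ is ergodic, and the natural extension of an ergodic system is ergodic. Hence $\hat\mu_f$ is ergodic for $\sigma$, and also for $\sigma^{-1}$. Apply Birkhoff's theorem to $\mathbf 1_E$ along forward $\sigma$-orbits. For $\hat\mu_f$-a.e.\ $\hat x$, the set
\[A_{\hat x}=\{n\ge 1: \sigma^n(\hat x)\in E\}\]
has density $\hat\mu_f(E)>1-\eps$.

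For $n\in A_{\hat x}$, write $\hat y=\sigma^n\hat x$. Then $y_0=x_n=f^n(x_0)$ and $y_{-n}=x_0$. Proposition~\ref{Lyapunov2} gives an inverse branch $h$ of $f^n$, defined on $B(f^n(x_0),\tau(\hat y))\supseteq B(f^n(x_0),r)$, with $h(f^n(x_0))=x_0$. Replacing $\hat J_0$ by $\bigcap_{k\in\bb Z}\sigma^k\hat J_0$, which still has full measure, ensures $\hat y\in\hat J_0$.

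\textbf{Step 3: the inverse branch gives a bijection.} The image $h(B(f^n(x_0),r))$ is connected, contains $x_0$, and lies in $f^{-n}(B(f^n(x_0),r))$. It is also open and closed in that preimage, because $f^n\circ h=\id$ and $f^n$ is open and finite. Therefore it equals the component $W$ containing $x_0$, and $f^n:W\to B(f^n(x_0),r)$ is a bijection with inverse $h$.

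This is the step I expect to need the most care. It relies on the precise meaning of ``inverse branch'' in \cite[Lemma 3.18]{FRL25}, namely an analytic section of $f^n$ on the Berkovich ball. If needed, I would use the Berkovich version of the ball together with the fact that a connected component of the preimage of a ball maps onto it properly. Then $W\to B$ has constant degree, and a section forces that degree to be $1$.

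\textbf{Step 4: project to $\bb{P}^1(K)$.} Let $G\subseteq\hat J_0$ be the full-measure set where Step 2 holds. Then $\pi(G)$ has full $\mu_f$-measure; if measurability is an issue, use that $\pi_*\hat\mu_f=\mu_f$ and pass to a measurable full-measure subset. Moreover $\mu_f(\bb{P}^1(K))=1$ because $\Lyap(f)>0$. For $x\in\pi(G)$, pick $\hat x\in G$ with $\pi(\hat x)=x$ and set $A_x=A_{\hat x}$. This $A_x$ has density greater than $1-\eps$, and $r$ does not depend on $x$, which completes the argument.
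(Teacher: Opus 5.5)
Your proposal is correct and follows the paper's proof: choose $r$ so that $\{\tau \geq r\}$ has $\hat{\mu}_f$-measure greater than $1-\eps$, apply Birkhoff's theorem to the shift $\sigma$ on the natural extension, and project to $\bb{P}^1(K)$ by choosing a generic lift $\hat{x}$ of $x$. Your Step 3 (the image of the inverse branch is clopen and connected in $f^{-n}(B)$, hence equals $W$) and your remark on measurability of the projected set fill in details that the paper leaves implicit; the replacement of $\hat{J}_0$ by $\bigcap_k \sigma^k \hat{J}_0$ is harmless but unnecessary, since $E \subseteq \hat{J}_0$ already.
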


\begin{proof}
 We apply Proposition \ref{Lyapunov2}. For any fixed $\eps > 0$, we have a $r>0$ so that 
 \[\hat{F} = \left\{\hat{x} \in \hat{J}_0 \mid \tau\left(\hat{x}\right) \geq r\right\}\]
 has measure $\hat{\mu}\left(\hat{F}\right) > 1 - \eps$. By Birkhoff's ergodic theorem, there is some $\hat{G}$ with $\hat{\mu}_f\left(\hat{G}\right) = 1$, so that if $A\left(\hat{x}\right) = \left\{n \geq 1 \mid \sigma^{n}\left(\hat{x}\right) \in \hat{F} \right\}$, then $d\left(A\left(\hat{x}\right)\right) = \hat{\mu}\left(\hat{F}\right)$ for all $\hat{x} \in \hat{G}$. We now set $G_0 = \pi_0\left(\hat{G}\right)$ so that $\mu_f\left(G_0\right) = 1$. For $x \in G_0$, we may choose $\hat{x} \in \pi_0^{-1}\left(x\right) \cap \hat{G}$ and set $A_x = A_{\hat{x}}$, which gives us what we want. 
\end{proof}

\subsection{Convergent Sequences of Symmetries} In this section, we will study the situation of a family of analytic maps $\sigma_n:D\to D$, that preserves $\mu_f$, and also has a convergent subsequence. With some assumptions, we will be able to prove that $\{\sigma_n\}$ must have only finitely many distinct maps. This is the non-archimedean analogue of \cite[Theorem 3.2]{JX25}. 

{\blue Continue to assume $\Lyap\left(f\right)>0$, and write $\mu=\mu_f$. Let $D$ be a closed analytic disc of positive radius with $\mu\left(D\right)>0$. Since $\mu$ gives full mass to classical points and has no atoms, the interior of $D$ has positive measure.

For an injective analytic map $u:D\to u\left(D\right)$, we interpret $u^*\mu=c\mu$ on $D$ as
\[\mu\left(u\left(E\right)\right)=c\mu\left(E\right)\qquad\text{for every Borel }E\subseteq D.\]}

\begin{lem}\label{lem:move}
Let $p$ be a repelling $N$-periodic point of $f$ in $D$, $N \geq 1$. Let $\mu$ denote the equilibrium measure of $f$. Then there does not exist a family of injective analytic maps  $\sigma_n:D\to D$ such that 
	\begin{itemize}
		\item $\sigma_n^*\mu=c_n\mu$ on $D$, $c_n>0$.
		\item $\sigma_n\left(p\right)\ne p$. 
		\item $\sigma_n\to\id$ uniformly on $D$.
	\end{itemize}

\end{lem}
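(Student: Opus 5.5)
We argue by contradiction, following the archimedean argument of \cite[Lemma 3.3]{JX25}. Suppose such $\sigma_n$ exist. Replacing $f$ by $f^N$ (which has the same equilibrium measure $\mu$), we may assume $p$ is a repelling fixed point with multiplier $\lambda = f'(p)$, $|\lambda|>1$. Choose a small closed disc $B = B(p,r)\subseteq D$ on which the local inverse branch $h$ of $f$ fixing $p$ is defined, with $h(B)\subseteq B$ and $h$ a contraction: $|h(x)-h(y)| = |\lambda|^{-1}|x-y|$ for $x,y\in B$. In residue characteristic $0$ this follows from the non-archimedean linearization near a repelling point. By shrinking $r$ we may also assume $f$ is injective on $B$ with $|f(x)-f(y)|=|\lambda||x-y|$. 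Since $\mu$ is $f$-invariant with $f^*\mu = d\mu$, on $B$ we have $\mu(f(E)) = d\,\mu(E)$ for Borel $E\subseteq B$ with $f|_B$ injective.

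The plan is to conjugate the near-identity maps $\sigma_n$ by iterates of $f$ so that the displacement of $p$ becomes of definite size. Set $\delta_n = |\sigma_n(p)-p|>0$, which tends to $0$ because $\sigma_n\to\id$ uniformly. Uniform convergence also gives $\eta_n:=\sup_D|\sigma_n(x)-x|\to0$. By the non-archimedean Schwarz lemma (the map $\sigma_n-\id$ is analytic on $D$ with sup-norm $\eta_n$), $\sigma_n$ is an isometry on small balls once $\eta_n$ is smaller than the radius of $D$. Hence $\sigma_n$ maps $B(p,\rho)$ onto $B(\sigma_n(p),\rho) = B(p,\rho)$ for every $\rho\ge\delta_n$ with $B(p,\rho)\subseteq D$. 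Choose $k_n$ with $|\lambda|^{k_n}\delta_n\in[r/|\lambda|, r)$, and define
\[\tau_n = f^{k_n}\circ\sigma_n\circ h^{k_n}\colon B\to B.\]
This is well defined because $h^{k_n}(B)= B(p,r|\lambda|^{-k_n})$, which has radius at least $\delta_n$ and is therefore preserved by $\sigma_n$. Each $\tau_n$ is injective analytic, satisfies $\tau_n^*\mu = c_n\mu$ on $B$ (the factors $d^{\pm k_n}$ cancel), and has $|\tau_n(p)-p|=|\lambda|^{k_n}\delta_n\in[r/|\lambda|,r)$. Moreover $\tau_n$ is an isometry of $B$: the maps $f^{k_n}$ and $h^{k_n}$ scale distances by inverse factors, and $\sigma_n$ is isometric on the relevant ball. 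Since $\tau_n$ maps $B$ into $B$ isometrically and bijectively, $c_n=1$.

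Next we extract a limit. Each $\tau_n$ has the form $\tau_n(x) = p + a_n + \phi_n(x)$, where $|a_n|\in[r/|\lambda|,r)$ and $\phi_n$ is a near-isometry in a fixed ball. Here the failure of Montel's theorem is the main obstacle, and we handle it with the observation from the introduction of this section. We instead use measure-theoretic finiteness. Cover $B$ by finitely many closed subdiscs of radius $r/|\lambda|^2$ (there are only finitely many up to the residue field, but the residue field may be infinite). To avoid this issue, we work on a disc $B'\subseteq B(p, r/|\lambda|^2)$ around $p$ with $\mu(B')>0$; such a disc exists since $p\in J_f=\mathrm{supp}\,\mu$. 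The images $\tau_n(B')$ are discs of the same radius with $\mu(\tau_n(B'))=\mu(B')$, centred at distance in $[r/|\lambda|,r)$ from $p$. If infinitely many of them were pairwise disjoint, this would contradict $\mu(B)\le1$. So, passing to a subsequence, $\tau_n(B')$ all coincide, and $\tau_m^{-1}\circ\tau_n$ maps $B'$ to itself and preserves $\mu$.

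The final step, and the genuinely hard one, is to derive the contradiction. The composites $\tau_m^{-1}\tau_n$ are conjugates of $\sigma_m^{-1}\sigma_n$, and these move $p$ only at a scale finer than $\delta_m$ when $k_m$ is large. The plan is to iterate this extraction and produce a single nontrivial $\mu$-preserving isometry $\tau$ of a disc around $p$ that moves $p$ by a definite amount and lies in the closure of conjugates of near-identity maps. One then shows that $\tau$ commutes with $f$ locally: pass to limits of $f^{k}\tau h^{k}$ and use the isometry property to get equicontinuity. From this, $\tau$ must fix the repelling point $p$, contradicting the definite displacement. The expected obstacle is making this limit rigorous without local compactness. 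I would first try to show that the family $\{\tau_n\}$ is equicontinuous, since all the $\tau_n$ are isometries, and then apply the disjoint-images argument on progressively smaller discs, as suggested before Lemma~\ref{lem:move}, to obtain uniform Cauchy subsequences.
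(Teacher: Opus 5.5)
Your rescaling is correct and is essentially the paper's. In linearizing coordinates, $\tau_n=f^{k_n}\circ\sigma_n\circ h^{k_n}$ is $z\mapsto\lambda^{k_n}\sigma_n(\lambda^{-k_n}z)$, a $\mu$-preserving analytic automorphism of a fixed disc $B$ that moves $p$ by a definite amount. Your observation that infinitely many of the $\tau_n(B')$ cannot be pairwise disjoint is the paper's second case. But your argument stops exactly where the contradiction has to be produced, and the route you sketch does not work. A limit of the $\tau_n$ has no reason to commute with $f$. As explained below, it is a translation $z\mapsto z+b$ with $b\neq0$, and conjugating it by $f^k$ gives $z\mapsto z+\lambda^k b$, which diverges. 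So the intended contradiction, that the limit fixes $p$, cannot be reached this way. Also, $\tau_m^{-1}\circ\tau_n$ is not a conjugate of $\sigma_m^{-1}\circ\sigma_n$ when $k_m\neq k_n$. Tellingly, your final step never uses the hypothesis that the residue characteristic is $0$, and that is where the contradiction actually comes from.

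Two ingredients are missing. First, the $\tau_n$ are asymptotically translations. Normalize so that $p=0$ and $B$ is the unit disc, and write $\sigma_n(z)=\sum_k a_kz^k$ with $|a_k|\le1$. The coefficient of $z^k$ in $\tau_n$ is $\lambda^{-k_n(k-1)}a_k$, and $a_1=\sigma_n'(0)\to1$ by uniform convergence. Hence $\tau_n(z)=z+b_n+e_n(z)$ with $|b_n|\in[|\lambda|^{-1},1)$ and $\sup_B|e_n|\to0$. Second, in residue characteristic $0$ one has $|mb|=|b|$ for every integer $m\neq0$.

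The paper combines these through a limit. Suppose $\tau_n(0)$ has a subsequential limit $b$; your clustering argument would give this if you repeat it at radii $\rho_j\to0$, diagonalize, and use completeness of $K$. Then $\tau_n\to T_b$ uniformly and $T_b$ preserves $\mu$ on $B$. The discs $T_b^m(D(0,\rho))=D(mb,\rho)$, with $0<\rho<|b|$, are pairwise disjoint and all have the same positive mass, contradicting $\mu(B)\le1$. If there is no such limit, you are in the disjoint-images case. In fact no limit is needed: for a single large $n$, ultrametricity gives $\tau_n^m(0)=mb_n+O(\sup|e_n|)$, so the images $\tau_n^m(D(0,\rho))$ are already pairwise disjoint. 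Equicontinuity of isometries cannot replace this near-translation structure, since an isometry with definite displacement may well be periodic.
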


\begin{proof}
Assume by contradiction such family $\sigma_n$ exists. By passing to an iterate of $f$ we may assume {\blue that $p$ is fixed, and we write $d$ for the degree of this iterate of $f$}. We now use Poincare's linearization at $p$ \cite[Section 4.2]{Ben19}, which gives us a neighborhood $U$ of $p$ so that $f$ is conjugated to $[\la]: z \mapsto \lambda z$, where $\lambda$ is the multiplier at $p$. Since $\sigma_n\to\id$ uniformly, for sufficiently large enough $n$ we have $\sigma_n(U) = U$. We may then replace $D$ with $U$.
\par 
After conjugating, we may further assume that $D = D(0,1)$ is the closed unit disc, $p = 0$, and and $f$ is given by $[\la]$. If we view $[\la]$ as a map from $D(0, |\la|^{-1}) \to D(0,1)$, we then have $[\la]^* \mu = d \cdot \mu$. Note that as $p$ is a repelling fixed point, it is in the closure of the Julia set $J_f$ and so $\mu(D(0,\eps)) > 0$ for any $\eps > 0$. 
\par 
We first claim that there is a uniform $\delta > 0$ such that $c_n\geq \delta$ for every $n\geq 1$.  Since $\sigma_n\to\id$, we can  choose $M$ large enough such that $|\sigma_n(0)|<1/2$ and $|\sigma'_n(0)|>1/2$ for $n\geq M$.  Thus $D':=D(0,1/2)\subset \sigma_n(D)$ for every $n\geq M$.  Then for $n\geq M$,
$$c_n\mu(D)=\sigma^*_n\mu(D)=\mu(\sigma_n(D))\geq \mu(D')>0.$$
Thus $c_n\geq \mu(D')/\mu(D) > 0$ for $n\geq M$, which implies the claim since $\mu(D')/\mu(D)$ is a fixed positive constant. {\blue Also $c_n\leq 1$, since $\sigma_n(D)\subseteq D$ and $\mu(\sigma_n(D))=c_n\mu(D)$.}

For each $n\geq 1$, let  $l_n$ be the maximal positive integer such that $$ |\la^{-1}|<|\la^{l_n}\sigma_n(0)|\leq 1.$$
Then $l_n\to +\infty$,  since $\sigma_n(0)\to 0$.  Consider the injective analytic map 
$$\tau_n:=[\la^{l_n}]\circ \sigma_n\circ [\la^{-l_n}]:D\to D.$$
{\blue Since $\sigma_n$ is an isometry and $|\sigma_n(0)|\leq |\lambda|^{-l_n}$, it maps $D(0,|\lambda|^{-l_n})$ onto itself. Thus $\tau_n$ is an analytic automorphism of $D$.}

Then $\tau_n^*\mu=c_n\mu$ on $D$. Expanding $\sigma_n$ as a power series centered at $0$, if $a_k$ is the coefficient for $z^k$, then the coefficient of $z^k$ for $\tau_n$ is $\lambda^{-l_n(k-1)} a_k$. Since $|a_k| \leq 1$, it follows that if we define 
$$\eps_n(z):=\tau_n(z)-\tau_n(0)-\tau'_n(0)z,$$
then $\eps_n(z)\to 0$ uniformly on $D$ as $l_n \to \infty$. In particular, $\tau_n(z)$ becomes close to linear as $n \to \infty$. {\blue Uniform convergence $\sigma_n\to\id$ also gives $\tau'_n(0)=\sigma'_n(0)\to1$.}

\medskip

We now have two cases, depending on whether $\tau_n(0)$ has a subsequential limit or not. 

\medskip

{\bf Case 1:} We assume here that $\tau_n(0)$ has a subseuqential limit. Hence by passing to a subsequence, we may assume that $\tau_n(0)\to b$ for some $b\in D$. We may also assume that $c_n$ has a subsequential limit $c > 0$. By our choice of $l_n$, we have $|b| \geq |\lambda^{-1}|$ and thus $b \not = 0$.  Then $\tau_n\to T_b$ uniformly on $D$, and $T_b^*\mu=c\mu $ on $D$, where $T_b(z):=z+b$ is a translation.  Since $T_b:D\to D$ is a bijection,  we have 
$$c\mu(D)=T_b^*\mu(D)=\mu(T_b(D))=\mu(D),$$
thus $c=1$, i.e. $T_b$ preserves the measure $\mu$ on $D$.

Since the residue field of $K_v$ has characteristic $0$,   we have $|mb|=|b|$ for every every integer $m\neq 0$.  Let $D_0=D(0,r)$ be a closed disk with $0<r<|b|$,  define  $D_n:=D(nb,r)=T_b^n(D_0)\subset D$.  Assume $i\neq j$, since  $|(i-j)b|=|b|$, we have $D_i\cap D_j=\emptyset$.   Since $T_b^n$ preserves the measure $\mu$ on $D$, we have $\mu(D_n)= \mu(D_0)>0$  for every $n\geq 0$. Thus 
$$\mu(D)\geq \mu(\bigcup_{n=0}^\infty D_n)=\sum_{n=0}^\infty \mu(D_n)=+\infty,$$
which contradicts to $\mu(D)\leq 1$.

\medskip

{\bf Case 2:} We now assume that $\tau_n(0)$ has no subsequential limit. Hence there exists $r_0>0$ such that after passing to a subsequence, we have  $|\tau_i(0)-\tau_j(0)|>r_0$ for every $i\neq j$.   Let $D_0=D(0,r)$ be a closed disk with $0<r<r_0$, let $D_n:=\tau_n(D_0)$. Then for every $n\geq 0$, $D_n$ has radius at most $r$.  Let $i\neq j$, since $|\tau_i(0)-\tau_j(0)|>r_0$, we have  $D_i\cap D_j=\emptyset$.

Since $c_n\geq \delta>0$, we have  $(\tau_n)^*(\mu)\geq \delta \mu$ on $D$.  Thus $\mu(D_n)\geq \delta  \mu(D_0)>0$.  Hence 
$$\mu(D)\geq \mu(\bigcup_{n=0}^\infty D_n)=\sum_{n=0}^\infty \mu(D_n)=+\infty,$$
which contradicts to $\mu(D)\leq 1$.

\medskip

In both cases, we obtain a contradiction. This finishes our proof.
\end{proof}
\medskip

We now remove the hypothesis of $\sigma_n\left(p\right)\ne p$ from Lemma \ref{lem:move}. In this setting, such a family of $\{\sigma_n\}$ may exist but they must be equal to the identity map eventually. 

\begin{lem} \label{lem:move2}
Let  $\sigma_n:D\to D$  be a  family of injective analytic maps  such that 
\begin{itemize}
	\item $\sigma_n^*\mu=c_n\mu$ on $D$, $c_n>0$.

	\item $\sigma_n\to\id$ uniformly on $D$.
\end{itemize} 

Then $\sigma_n=\id$ for $n$ large enough.

\end{lem}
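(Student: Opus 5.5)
We argue by contradiction, reducing to Lemma \ref{lem:move}. Suppose that $\sigma_n \neq \id$ for infinitely many $n$; passing to that subsequence, we may assume $\sigma_n \neq \id$ for all $n$. The strategy is to find a single repelling periodic point $p$ in $D$ which is moved by infinitely many $\sigma_n$. Lemma \ref{lem:move}, applied to this subsequence (whose members still satisfy $\sigma_n^*\mu = c_n\mu$ and $\sigma_n \to \id$ uniformly), then gives a contradiction.

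First, the supply of repelling periodic points. Since $\Lyap(f) > 0$ and the residue characteristic is $0$, \cite[Theorem B\,(3)]{FRL25} says that type I repelling periodic points equidistribute towards $\mu$. Since $\mu(D)>0$, and indeed the interior of $D$ has positive measure, $D$ contains infinitely many distinct repelling periodic points $p_1, p_2, \ldots$. Moreover, the set $P = \{p_k\}$ is dense in $\Supp(\mu) \cap D$, in the sense that every open disc meeting $\Supp \mu \cap \operatorname{int} D$ contains some $p_k$.

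Next, we show that a fixed non-identity map cannot fix too many of these points. Each $\sigma_n \neq \id$ is an analytic map on the closed disc $D$, so $\sigma_n(z) - z$ is a nonzero power series converging on $D$. By the Weierstrass preparation theorem (Strassmann's theorem), it has only finitely many zeros in $D$. Hence each $\sigma_n$ fixes only finitely many of the $p_k$.

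This alone does not produce a single $p$ moved by infinitely many $\sigma_n$, so a diagonal argument is needed. Suppose, to the contrary, that for every $k$ there is $N_k$ such that $\sigma_n(p_k) = p_k$ for all $n \ge N_k$. Fix $K_0$ larger than a bound $M$ on the number of zeros of $\sigma_n(z)-z$ in $D$, valid uniformly for large $n$. For $n \ge \max_{k \le K_0} N_k$, the map $\sigma_n$ fixes $p_1,\dots,p_{K_0}$, which is more than $M$ points. This forces $\sigma_n = \id$, a contradiction. So some $p = p_k$ satisfies $\sigma_n(p) \neq p$ for infinitely many $n$, and Lemma \ref{lem:move} finishes the proof.

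The main obstacle is the uniform bound $M$. Uniform convergence $\sigma_n \to \id$ does not by itself bound the number of zeros of $\sigma_n(z) - z$, which is the Weierstrass degree of a small power series and could be large. I would handle this by first rescaling. Write $h_n = \sigma_n - \id$, which tends to $0$ uniformly, and normalize $h_n/\|h_n\|_D$, a power series of sup-norm $1$ on $D$. Its number of zeros in a slightly smaller disc $D(0,r)$, with $r$ in the value group, equals the largest index attaining the maximum of $|a_k| r^k$. A compactness-free argument is then needed: choose $r$ so that only finitely many $p_k$ lie in the smaller region. If this fails, the alternative is to avoid uniformity altogether. One would choose $p$ directly by noting that the fixed-point sets $\mathrm{Fix}(\sigma_n)\cap D$ are finite, hence countable in union, while $P$ is infinite but also countable. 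In that case I would instead use the density of $P$ together with an analytic-continuation argument to force equality.
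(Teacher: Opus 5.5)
Your reduction rests on the uniform bound $M$ for the number of fixed points of $\sigma_n$ in $D$. You do not prove this bound, and neither remedy you sketch supplies it. Uniform convergence to $\id$ gives no control on the Weierstrass degree of $\sigma_n-\id$. For $|\pi|<1$ and distinct points $p_1,p_2,\ldots$ in $D=D(0,1)$, the maps $z\mapsto z+\pi^n\prod_{k\le n}(z-p_k)$ are injective isometries $D\to D$ that converge uniformly to $\id$ and fix exactly $p_1,\ldots,p_n$. So any bound would have to come from $\sigma_n^*\mu=c_n\mu$. Your normalization $h_n/\|h_n\|_D$ never uses that hypothesis: in the example the normalized series is $\prod_{k\le n}(z-p_k)$, whose number of zeros in $D(0,r)$ is unbounded. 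The countability fallback fails outright. Countably many finite sets $\mathrm{Fix}(\sigma_n)$ can exhaust the countable set $P$, and that is exactly the scenario ``each $p_k$ is fixed for $n\ge N_k$'' you need to exclude. No identity-principle argument applies unless one single map is shown to have infinitely many fixed points in $D$.

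That last remark is the missing idea. The paper realizes it through the self-similarity of $\mu$ at a repelling point. After passing to an iterate and linearizing at a repelling fixed point in $D$, one has $D=D(0,1)$, $f=[\lambda]$ with $|\lambda|>1$, and $[\lambda]^*\mu=d\mu$. Lemma \ref{lem:move} applied at $0$ then gives $\sigma_n(0)=0$ for large $n$.

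\begin{itemize}
\item If some $\sigma_i'(0)=1$, consider the conjugates $\tau_n=[\lambda^n]\circ\sigma_i\circ[\lambda^{-n}]$. They are not the identity, satisfy $\tau_n^*\mu=c_i\mu$, and tend to $\id$. By Lemma \ref{lem:move}, a repelling periodic point $p\neq0$ is fixed by $\tau_n$ for all large $n$. Thus $\lambda^{-n}p$ is fixed by the single map $\sigma_i$ for infinitely many $n$, which forces $\sigma_i=\id$.
\item If $\sigma_i'(0)\neq1$ for all $i$, letting $n\to\infty$ in the same conjugation replaces $\sigma_i$ by the linear maps $z\mapsto\sigma_i'(0)z$. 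These still satisfy the hypotheses, tend to $\id$, and fix no point other than $0$, so Lemma \ref{lem:move} applies directly.
\end{itemize}
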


\begin{proof}
We assume by contradiction that our lemma is false. By passing to a subsequence we may assume $\sigma_n\neq \id$ for every $n\geq 1$. Again as in Lemma \ref{lem:move}, after passing to an iterate of $f$ and using Poincar\'e's  linearization for a repelling fixed point in $D$, we may assume that $D = D(0,1)$ is the closed unit disk, $f$ is the map given by $[\la]:z\mapsto \la z$, $|\la|>1$ and $[\la]^*\mu=d \mu$ on $D$.  By Lemma \ref{lem:move} we may assume $\sigma_n(0)=0$ for all $n\geq 1$.

\medskip

We now split into two cases, depending on whether there exists $i$ such that the multiplier $\sigma_i'(0)$ is $1$. 

\medskip

{\bf Case 1:}   There is $i \geq 1$ such that $\sigma_i'(0)=1$.  Let $\tau_n$ be the map
$$\tau_n:=[\la^{n}]\circ \sigma_i\circ [\la^{-n}]:D\to D.$$
Then since $\sigma_i'(0)=1$, we may check that $\tau_n\to \id$ uniformly on $D$. Furthermore $\tau_n\neq \id$ for every $n\geq 1$ as $\sigma_n\neq \id$, and $\tau_n^*\mu=c_i\mu$ on $D$. In particular, the family $\{\tau_n\}$ satisfies the conditions of Lemma \ref{lem:move}.  
\par 
 For each repelling periodic point $p\in D$, let $N_p\subset \Z_{\geq 0}$ be the set of $n$ such that $\tau_n(p)=p$. By Lemma \ref{lem:move},  $\Z_{\geq 0}\setminus N_p$ must be  a finite set. Fix a repelling periodic point $p \not = 0$ in $D$. Then we may find infinitely many $n$ so that $\tau_n(p)=p$ and thus $\lambda^{-n} p $ is a fixed point $\sigma_i$. This implies that $\{\sigma_i(z) = z\}$ has $\{0\}$ as an accumulation point and hence $\sigma_i=\id$, which is a contradiction. 
 
 \medskip
 
 {\bf Case 2:} We now assume that $\sigma_i'(0)\neq 1$ for every $i \geq 1$. For each $i,n \geq 1$, let  
 $$\tau_{i,n} = [\la^{n}]\circ \sigma_i\circ [\la^{-n}].$$ 
 As $n \to \infty$, this converges uniformly to $\tau_i: z\mapsto \sigma'_i(0)z$ and {\blue $\tau_i^*\mu=c_i\mu$ on $D$.} Furthermore as $\sigma'_i(0) \to 1$, we have $\tau_i \to \id$. Hence we may replace $\{\sigma_i\}$ with the family of linear maps $\{\tau_i\}$ instead.
 \par 
 Now let $p\in D$ be a repelling periodic point which is not $0$. As $\sigma_n$ are linear maps which is not $\id$, we must have $\sigma_n\left(p\right)\ne p$ for every $n\geq 1$. Lemma \ref{lem:move} then gives us a contradiction. 
 
 \medskip
 
 We get a contradiction in both cases and thus we may conclude our lemma.
 
\end{proof}

\subsection{Proof of Theorem \ref{Rigidity1}} We now begin the proof of Theorem \ref{Rigidity1}. {\blue Let $U$ and $\rho>0$ be as in its statement. If the valuation on $K$ were trivial, then $f$ would have good reduction and its equilibrium measure would be the Dirac mass at the Gauss point, where its spherical derivative has norm $1$. This would give $\Lyap\left(f\right)=0$, contrary to our assumption. Thus the valuation is non-trivial.}
\par
{\blue By \cite[Theorem B]{FRL25}, $\mu_f$ gives full mass to $\bb{P}^1\left(K\right)$, and it has no atoms. Hence $\pi_1^*\mu_f$ gives full mass to $C\left(K\right)$ and no mass to the finite singular and ramification loci of the two projections. We may therefore choose a classical smooth point $\left(x,y\right)\in U$ in its support, where both projections are unramified. Shrinking about this point, we obtain a disc $V$ around $x$ and an analytic bijection $h:V\to h\left(V\right)$ whose graph lies in $U$, with $\mu_f\left(V\right)>0$ and
\[h_*\left(\mu_f\vert_V\right)=\rho\mu_g\vert_{h\left(V\right)}.\]
In particular, $\mu_g$ gives positive mass to classical points. Since the residue characteristic is $0$, \cite[Theorem C and Corollary D]{FRL25} imply that $\Lyap\left(g\right)>0$. Fix an $L>1$ larger than the spherical Lipschitz constants of both $f$ and $g$, and let $K\geq1$ be a Lipschitz constant for both $h$ and $h^{-1}$.}
\par 
We now state a lemma that gives a uniform lower bound for $\mu_g\left(B\left(g^n\left(y\right),r\right)\right)$ from for a fixed $r>0$.  

\begin{proposition} \label{Volume1}
\begingroup\emergencystretch=3em
Fix $r>0$ and $\eps > 0$. Then there exists a finite collection of balls $\left\{B\left(x_i,r\right)\right\}_{i=1}^{n}$ such that that for $\mu_g$-almost all $y$, the set 
\[A_y = \left\{ n \in \bb{N} \mid B\left(g^n\left(y\right),r\right) = B\left(x_i,r\right) \text{ for some } i \right\}\]
satisfies $\uline{d}\left(A_y\right) \geq 1 - \eps.$ In particular there exists $\delta > 0$ such that for $n \in A_y$, we have $\mu_g(B(g^n(y),r)) \geq \delta$. 
\par\endgroup
\end{proposition}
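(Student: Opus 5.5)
Fix $r>0$ and $\eps>0$. Since $\Lyap(g)>0$, \cite[Theorem B]{FRL25} shows that $\mu_g$ gives full mass to $\bb{P}^1(K)$. It is a probability measure, and the idea is to capture most of its mass in finitely many closed balls of radius $r$. The ultrametric structure makes this workable: two closed balls of the same radius $r$ are either equal or disjoint. So if $y'$ lies in $B(x_i,r)$, then $B(y',r)=B(x_i,r)$ exactly, not merely approximately.

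\textbf{Step 1: choose the balls.} The balls $B(x,r)$ for $x\in\bb{P}^1(K)$ partition $\bb{P}^1(K)$ into pairwise disjoint sets. Each is a Borel set: it is the trace on classical points of a Berkovich closed disc. Because $\mu_g$ has total mass at most $1$ and is concentrated on $\bb{P}^1(K)$, only countably many of these balls $B_1,B_2,\ldots$ can have positive measure. The balls of measure zero form an uncountable family, and the main technical point is that their union must also have measure zero.

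I would handle this obstacle with a regularity argument. $\mu_g$ is a Radon measure on the compact space $\berkP$. If $\mu_g$ gave positive mass to the union of the null balls, then inner regularity would give a compact set $Q$ inside that union with $\mu_g(Q)>0$. The set $Q$ is covered by the open Berkovich discs $D(x,r)^{-}$ of slightly larger radius. Spherical balls of radius $r$ are open as well as closed in $\bb{P}^1(K)$, so I expect $Q$ to be covered by finitely many of them, each of measure zero, which would give $\mu_g(Q)=0$. If this regularity step causes trouble, the alternative is to use the separability of the support. The support of $\mu_g$ is the closure of the repelling periodic points by the equidistribution in \cite[Theorem B(3)]{FRL25}, so the classical part of the support can be covered by countably many balls of radius $r$, and the remaining mass is zero.

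In either case we get $\sum_i\mu_g(B_i)=1$. We then choose $n$ with $\mu_g\big(\bigcup_{i\le n}B_i\big)>1-\eps$, and let $F=\bigcup_{i\le n}B_i$.

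\textbf{Step 2: ergodic theorem.} The measure $\mu_g$ is $g$-invariant and ergodic (\cite{FRL10}, as recalled above). Birkhoff's theorem applied to $\mathbf{1}_F$ shows that for $\mu_g$-almost every $y$, the density of $\{m\mid g^m(y)\in F\}$ exists and equals $\mu_g(F)>1-\eps$. For every such $m$, the point $g^m(y)$ lies in some $B(x_i,r)$, so by ultrametricity $B(g^m(y),r)=B(x_i,r)$. Hence this set of $m$ is contained in $A_y$, and $\uline{d}(A_y)\geq 1-\eps$.

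\textbf{Step 3: the lower bound.} Set $\delta=\min_{i\le n}\mu_g(B(x_i,r))$. This is positive because each $B_i$ was chosen with positive measure. For $m\in A_y$ we then have $\mu_g(B(g^m(y),r))\ge\delta$.
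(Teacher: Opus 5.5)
Your proof is correct, but it produces the finite family of balls by a different route from the paper.

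\textbf{The paper's route.} It uses outer regularity. Since $\mu_g(\bb{H})=0$, it picks an open $U\supseteq\bb{H}$ with $\mu_g(U)<\eps$. It then covers the compact set $J_g$ by $U$ together with the discs $B(x,r)$ centred at type I points of $J_g$, read as Berkovich open sets so that they form an open cover, and extracts a finite subcover. Birkhoff is applied to $\mathbf{1}_U$. Positivity of each $\mu_g(B(x_i,r))$ is automatic, because the centres lie in $J_g=\Supp\mu_g$.

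\textbf{Your route.} You argue from the inside. You use the ultrametric partition of $\bb{P}^1(K)$ into radius-$r$ balls and note that only countably many of them carry mass. You then show the union of the null balls is null, exhaust the mass by finitely many balls, and apply Birkhoff to $\mathbf{1}_F$.

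\textbf{What each buys.} The paper's argument is shorter and uses only compactness of $J_g$ in $\berkP$. Yours stays entirely on classical points and does not care whether balls are taken open or closed. It also gives the slightly stronger fact that $\mu_g$ is carried by countably many disjoint radius-$r$ balls. The price is the extra lemma that the null balls have null union.

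\textbf{A correction in that lemma.} Discard the sentence about covering $Q$ by open Berkovich discs of slightly larger radius. Such a disc contains many radius-$r$ balls, some of which may have positive mass, so it proves nothing. The justification that works is the one you give next. $Q\subseteq\bb{P}^1(K)$ is compact in the subspace topology, and that topology coincides with the spherical-metric topology. There the null balls are open, so finitely many of them cover $Q$. Inner regularity applies because the union of null balls is Borel: it is the complement in $\bb{P}^1(K)$ of countably many Borel balls.

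Your fallback argument is also valid. Equidistribution of repelling periodic points puts $\Supp\mu_g$ inside their closure. Hence the countably many balls $B(p,r)$, with $p$ repelling periodic, already cover $J_g\cap\bb{P}^1(K)$.
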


\begin{proof}
First as $\Lyap\left(g\right)>0$, we have $\mu_g\left(\bb{H}\right) = 0$ where $\bb{H}$ denotes $\berkP \setminus \bb{P}^1\left(K\right)$. Since $\bb{H}$ is Borel measurable \cite[Proposition 5.5]{BR10} and $\mu_g$ is a Radon measure \cite[Theorem 13.37]{Ben19}, there exists an open set $U$ containing $\bb{H}$ such that $\mu_g\left(U\right) < \eps$. Now for every type I point $x \in J_g$, we consider the union of all balls $B\left(x,r\right)$ along with $U$. This is a collection of open sets that contain $J_g$ which is compact. Hence there exists a finite set of balls $B\left(x_1,r\right),\ldots,B\left(x_n,r\right)$ that along with $U$ cover $J_g$. 
\par 
By Birkhoff's ergodic theorem, the density of $n$ so that $g^n\left(x\right)$ does not lie in $U$ is $\geq 1 - \eps$. For such $n$'s we must have $B\left(g^n\left(x\right),r\right) = B\left(x_i,r\right)$ for some $i$. Since each $B\left(x_i,r\right)$ has positive mass, there must exist a uniform $\delta > 0$ so that $\mu_g\left(B\left(g^n\left(x\right),r\right)\right) \geq \delta$ as desired.
\end{proof}

Now for any $\eps > 0$, by Proposition \ref{Lyapunov2} there exists $r>0$ so that if we set 
\[B_y = \left\{ n \in \bb{N} \mid g^n: W \to B\left(g^n\left(y\right),r\right) \text{ is a bijection}  \right\},\]
where $W$ is the connected component containing $y$ in $g^{-n}(B(g^n(y),r)$, then $\uline{d}\left(B_y\right) \geq 1 - \eps$ for $\mu_g$-almost all $y$. Now applying Proposition \ref{Volume1}, there exists a finite collection of discs $\left\{B\left(x_i,r\right)\right\}_{i=1}^{n}$ so that if we let $B'_y$ be the subset of $B_y$ for which $\left(B\left(g^n\left(y\right),r\right)\right) = B\left(x_i,r\right)$, then $\uline{d}\left(B'_y\right) \geq 1 -2 \eps $. 
\par 
{\blue We may similarly apply Proposition \ref{Lyapunov3} to $f$. Since $h_*\left(\mu_f\vert_V\right)=\rho\mu_g\vert_{h\left(V\right)}$ with $\rho>0$, if a property holds for $\mu_g$-almost all $y$ in $h\left(V\right)$, then it holds for $h\left(x\right)$ for $\mu_f$-almost all $x$ in $V$. We thus obtain the following.} 

\begin{proposition} \label{Conical1}
Fix an $\eps > 0$. For $\mu_f$-almost all $x$ in $V$ where $y = h\left(x\right)$, we have: 
\begin{enumerate}
\item $\lim_{n \to \infty} \frac{1}{n} \log\left|g^n\right|'\left(y\right) = \Lyap\left(g\right)$ and $\lim_{n \to \infty} \frac{1}{n} \log \left|f^n\right|'\left(x\right) = \Lyap\left(f\right)$.

\item There exists $r>0$ and $B'_y \subseteq \bb{N}$ with $\uline{d}\left(B'_y\right) \geq 1 -2 \eps $ so that for $n \in B'_y$, the ball $B\left(g^n\left(y\right),r\right)$ is one of finitely many possible balls $\left\{B\left(x_i,r\right)\right\}_{i=1}^{n}$. Furthermore,  
\[g^n: W \to B\left(g^n\left(y\right),r\right)\]
is a bijection where $W$ is the component of $g^{-n}\left(B\left(g^n\left(y\right),r\right)\right)$ containing $y$.  We will denote the inverse branch by $g_{-n}$. 

\item There exists $A_x \subseteq \bb{N}$ with $d\left(A_x\right) \geq 1 - \eps$ so that for $n \in A_x$, if $W'$ is the component of $f^{-n}\left(B\left(f^n\left(x\right),r\right)\right)$ containing $x$, then $f^n: W' \to B\left(f^n\left(x\right),r\right)$ is a bijection, where $r$ is the same $r$ as in (2).
\end{enumerate}
\end{proposition}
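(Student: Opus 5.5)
Each of the three items holds on a set of full $\mu_f$-measure in $V$, and a finite intersection of such sets again has full measure. So the plan is to get each item separately as an almost-everywhere statement and then intersect. Throughout, I will use the transfer principle already noted before the statement. Since $h_*\left(\mu_f\vert_V\right)=\rho\mu_g\vert_{h\left(V\right)}$ with $\rho>0$, if $E\subseteq h\left(V\right)$ is $\mu_g$-null then $h^{-1}\left(E\right)$ is $\mu_f$-null. Hence any property of $y$ holding $\mu_g$-almost everywhere holds at $h\left(x\right)$ for $\mu_f$-almost every $x\in V$.

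For item (1), the statement about $x$ is Proposition \ref{Lyapunov1} applied to $f$, restricted to $V$. The statement about $y$ is Proposition \ref{Lyapunov1} applied to $g$, which is legitimate because $\Lyap\left(g\right)>0$ was established just before the statement and $\mu_g$ is ergodic. We then pull it back to $x$ via the transfer principle.

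For items (2) and (3), the only subtlety is making the radius $r$ the same in both. I will first apply Proposition \ref{Lyapunov3} to $f$ and to $g$ with the given $\eps$, obtaining radii $r_f$ and $r_g$. Then I set $r=\min\left\{r_f,r_g\right\}$. Shrinking the radius preserves the bijectivity conclusions. Indeed, if $f^n:W'\to B\left(f^n\left(x\right),r_f\right)$ is a bijection, then restricting to the preimage of the smaller ball $B\left(f^n\left(x\right),r\right)$ inside $W'$ gives a bijection onto that smaller ball. That preimage is exactly the connected component of $f^{-n}\left(B\left(f^n\left(x\right),r\right)\right)$ containing $x$, since the restriction of a bijective analytic map between discs is a homeomorphism. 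This yields (3) with density at least $1-\eps$.

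For (2), I take the set $B_y$ of $n$ at which the inverse branch of $g^n$ exists on $B\left(g^n\left(y\right),r\right)$; it has lower density at least $1-\eps$ for $\mu_g$-almost every $y$. Next I apply Proposition \ref{Volume1} with this $r$ and $\eps$, giving finitely many balls $B\left(x_i,r\right)$ and a set $A_y$ with $\uline{d}\left(A_y\right)\geq1-\eps$. I put $B'_y=A_y\cap B_y$. The complement of $B'_y$ has upper density at most $2\eps$, since upper density is subadditive. Hence $\uline{d}\left(B'_y\right)\geq 1-2\eps$. Finally the transfer principle moves this from $\mu_g$-almost every $y$ to $y=h\left(x\right)$ for $\mu_f$-almost every $x\in V$.

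The main point to check carefully is the common-radius step, namely that restricting the bijective branches to smaller balls keeps the component structure. Non-archimedean discs and injective analytic maps make this straightforward. The density bookkeeping with lower densities is routine.
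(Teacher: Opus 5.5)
Your proposal is correct and follows essentially the same route as the paper. Item (1) comes from the Birkhoff argument of Proposition \ref{Lyapunov1} for $f$ and for $g$. Items (2) and (3) come from Proposition \ref{Lyapunov3} for both maps together with Proposition \ref{Volume1} and subadditivity of upper density. Everything is then transported from $\mu_g$-generic $y$ to $\mu_f$-generic $x\in V$ via $h_*(\mu_f\vert_V)=\rho\mu_g\vert_{h(V)}$. Your explicit choice $r=\min\{r_f,r_g\}$ supplies a justification the paper leaves implicit in the phrase ``the same $r$ as in (2)''. This works because the inverse branch restricts to a bijection over the smaller ball and Proposition \ref{Volume1} is applied only after fixing this final $r$.
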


Fix an $\eps > 0$, so that we have a corresponding $x \in V$ and $r>0$ that satisfies (1),(2) and (3) in Proposition \ref{Conical1}. For each positive integer $n$, we may consider $h^{-1} \circ g_{-n}$ which is a map from $B\left(g^n\left(y\right),r\right)$ to $B\left(x,r'\right)$, where $r'$ is very small. We want to precompose it with an iterate of $f$, say $f^{\theta\left(n\right)}$, so that 
\[f^{\theta\left(n\right)} \circ h^{-1} \circ g_{-n} : B\left(g^n\left(y\right),r\right) \to B\left(f^{\theta\left(n\right)}\left(x\right),R_n\right)\]
is still injective, and $\theta\left(n\right)$ is almost as large as possible, so that $R_n$ is uniformly bounded away from $0$. 
\par 
We will quantify this as follows. Recall that we have fixed a constant $L>1$ that is larger than the spherical Lipschitz constant for both $f$ and $g$ and also a $K \geq 1$ which serves as a Lipschitz constant for both $h$ and $h^{-1}$. We define $\alpha_{f,x}\left(n\right)$ to be the minimal $m$ so that {\blue $\log \left|f^m\right|'\left(x\right) \geq \left(n-1\right) \log L - \log K$}. Then $\alpha_{f,x}$ is strictly increasing for sufficiently large $n$ and there exists constants $\lambda, Q > 1$ so that 
\[m-Q \leq \alpha_{f,x}\left(m\right) \leq \frac{m \log L + Q}{\log \lambda}.\]
We similarly define $\alpha_{g,y}\left(n\right)$ to be the minimal $m$ so that {\blue $\log \left|g^m\right|'\left(y\right) \geq n \log L$}. Then again $\alpha_{g,y}$ is strictly increasing and we have a constant $Q$ so that 
\[m \leq \alpha_{g,y}\left(m\right) \leq \frac{m \log L + Q}{\log \lambda}.\]
We now set 
\[G = B'_y \cap \left(\alpha_{g,y} \circ \alpha_{f,x}^{-1}\left(A_x\right)\right).\]
For $n \in G$, we define $\theta\left(n\right) = \alpha_{f,x} \circ \alpha_{g,y}^{-1}\left(n\right).$ Given $A \subseteq \bb{N}$, we will let $\uline{d}\left(A\right)$ denote the lower density of $A$, i.e. 
\[\uline{d}\left(A\right):= \liminf_{n} \frac{\left|A \cap \left\{1,2,\ldots,n\right\}\right|}{n}.\]

\begin{proposition} \label{Conical2}
For $\eps$ sufficiently small, we have $\uline{d}\left(G\right)$ and $\uline{d}\left(\theta\left(G\right)\right) > 0$. For sufficiently large $n \in G$, we have that 
\[f^{\theta\left(n\right)} \circ h^{-1} \circ g_{-n} : B\left(g^n\left(y\right),r\right) \to B\left(f^{\theta\left(n\right)}\left(x\right),R_n\right)\]
where $\frac{r}{K^2 L^2} \leq R_n \leq r$ for all sufficiently large $n$
\end{proposition}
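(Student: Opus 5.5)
The proof splits into a density statement and a distortion statement; I treat them separately.

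\emph{Density.} The key input is that $\alpha_{f,x}$ and $\alpha_{g,y}$ are strictly increasing with linear upper and lower bounds. Hence each has image of positive lower density and distorts densities by at most bounded factors.

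Concretely, if $A\subseteq\bb{N}$ has $\uline{d}(A)\geq 1-\eps$, then the complement of $\alpha_{f,x}^{-1}(A)$ inside the domain has upper density $O(\eps)$ relative to $\{1,\ldots,n\}$. The same holds after applying $\alpha_{g,y}$. Both statements use only that $\alpha$ is injective and that $\alpha(m)\asymp m$, so counting $\alpha(m)\le N$ gives $m\asymp N$.

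Thus $\alpha_{g,y}\circ\alpha_{f,x}^{-1}(A_x)$ contains all of $\alpha_{g,y}(\bb{N})$ except a set of upper density at most $C\eps$, where $C$ depends only on $L,\lambda,Q$. The lower density of $\alpha_{g,y}(\bb{N})$ is at least $\log\lambda/\log L$. So the intersection with $B'_y$, whose complement has upper density at most $2\eps$, has positive lower density once $\eps\ll \log\lambda/\log L$.

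Since $\theta$ is injective and increasing with $\theta(n)\asymp n$ (the two $\alpha$'s are comparable to the identity), $\theta(G)$ also has positive lower density.

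\emph{Distortion.} Fix $n\in G$ and set $m=\alpha_{g,y}^{-1}(n)$, so $n$ is the first time with $\log|g^n|'(y)\geq m\log L$. Minimality together with the Lipschitz constant $L$ gives
\[
m\log L\le \log|g^n|'(y)< (m+1)\log L .
\]
Set $\theta(n)=\alpha_{f,x}(m)$, which lies in $A_x$. Then similarly
\[
(m-1)\log L-\log K\le\log|f^{\theta(n)}|'(x)<m\log L-\log K+\log L .
\]
In the non-archimedean setting, an injective analytic map on a disc sends discs to discs and scales radii exactly by $|\phi'|$ at the centre. This holds as long as the map stays injective on the disc, since it is then a similarity in the spherical metric at small scales.

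Apply this to the chain. First, $g_{-n}$ maps $B(g^n(y),r)$ bijectively onto $W$, of radius $r/|g^n|'(y)$, because $n\in B'_y$. Next, $h^{-1}$ maps this onto a disc of radius between $K^{-1}$ and $K$ times that. Finally, $f^{\theta(n)}$ is injective on the relevant small disc around $x$. This last point holds because the disc is contained in $W'$, the inverse-branch component of radius $r/|f^{\theta(n)}|'(x)$ given by $\theta(n)\in A_x$; I check containment using the displayed inequalities.

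Multiplying the scale factors gives
\[
R_n=r\cdot\frac{|f^{\theta(n)}|'(x)\cdot|h^{-1}|'}{|g^n|'(y)},
\]
and the displayed bounds put this in $[r/(K^2L^2),\,r]$. The shift by $\log K$ in the definition of $\alpha_{f,x}$ is exactly what gives the upper bound $R_n\le r$.

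\emph{Main obstacle.} The delicate step is justifying injectivity of $f^{\theta(n)}$ on $h^{-1}(W)$. It is also where one must show that the image is precisely the disc $B(f^{\theta(n)}(x),R_n)$ rather than merely contained in it. I would handle this by comparing radii: $h^{-1}(W)$ has radius at most $Kr/|g^n|'(y)\le r/|f^{\theta(n)}|'(x)$ by the choice of $\theta$, so it lies in $W'$. Then $f^{\theta(n)}|_{W'}$ is a bijective analytic map between discs, hence a scaled isometry with constant factor $|f^{\theta(n)}|'(x)$, which gives the exact image radius.
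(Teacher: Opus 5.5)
Your proposal is correct and follows essentially the same route as the paper. The densities come from counting with the linear bounds on the strictly increasing maps $\alpha_{f,x},\alpha_{g,y}$, and the radius bounds come from comparing $\diam(h^{-1}(W))$ with $\diam(W')$ to get $h^{-1}(W)\subseteq W'$.

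There is one arithmetic slip in your displayed bound for $f$. Minimality of $\theta(n)=\alpha_{f,x}(m)$ against the threshold $(m-1)\log L-\log K$, plus one Lipschitz step, gives $\log|f^{\theta(n)}|'(x)<m\log L-\log K$, not $m\log L-\log K+\log L$. It is this sharper bound that your containment $Kr/|g^n|'(y)\le r/|f^{\theta(n)}|'(x)$ and the bound $R_n\le r$ actually need; your displayed version would only give $R_n\le Lr$.
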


\begin{proof}
The proof of this is very similar to \cite[Lemma 4.14]{JX25}. As $\alpha_{f,x}, \alpha_{g,y}$ are both strictly increasing functions and there exists $c > 0$, {\blue depending only on $L$, $K$ and $\lambda$}, such that 
{\blue \[\frac{\left|\alpha_{f,x}(\bb{N}) \cap \{1,2,\ldots,m\}\right|}{m} \geq c\]}
for all sufficiently large $m$ and also for $\alpha_{g,y}$, we have 
\[\uline{d}\left(\alpha_{f,x}^{-1}\left(A_x\right)\right) \geq 1 - \frac{\eps}{c}, \quad \uline{d}\left(\alpha_{g,y} \circ \alpha_{f,x}^{-1}\left(A_x\right)\right) \geq c \left(1 - \frac{\eps}{c}\right).\]
As $\uline{d}\left(B'_y\right) \geq 1 -2 \eps $, if $\eps$ is sufficiently small we have $\uline{d}\left(G\right) > 0$. Similarly we have $\uline{d}\left(\theta\left(G\right)\right) > 0$. 
\par 
 Now let $n \in G$. Let $\alpha_{g,y}\left(m\right) = n$. Then $n$ is the smallest positive integer so that {\blue $\log \left|g^n\right|'\left(y\right) \geq m \log L$} and in particular 
{\blue \[\left(m+1\right) \log L \geq \log \left|g^n\right|'\left(y\right) \geq m \log L.\]}
Similarly since $\theta\left(n\right) = \alpha_{f,x}\left(m\right)$, for all sufficiently large $n$ we must have 
{\blue \begin{equation} \label{eq: Lipschitz1}
m\log L - \log K \geq \log \left|f^{\theta\left(n\right)}\right|'\left(x\right) \geq \left(m-1\right) \log L - \log K.
\end{equation}}
Now let $W$ be the connected component of $g^{-n}\left(B\left(g^n\left(y\right),r\right)\right)$ that contains $y$, so that 
\[g^n: W \to B\left(g^n\left(y\right),r\right)\]
is injective. It follows that 
\[\log r - \left(m+1\right) \log L \leq \log \diam\left(W\right) \leq \log r - m \log L.\]
Now applying $h^{-1}$, we obtain a disc $h^{-1}\left(W\right)$ containing $x$ satisfying 
\[\log r - \left(m+1\right) \log L - \log K \leq \log \diam\left(h^{-1}\left(W\right)\right) \leq \log r - m \log L + \log K.\]
{\blue Since $\theta(n)\in A_x$, Proposition \ref{Conical1} (3) gives a component $W'$ containing $x$ such that
\[f^{\theta(n)}:W'\longrightarrow B(f^{\theta(n)}(x),r)\]
is bijective. By \eqref{eq: Lipschitz1},
\[\log\diam(W')\geq\log r-m\log L+\log K.\]
Thus $h^{-1}(W)\subseteq W'$, so $f^{\theta(n)}$ is injective on $h^{-1}(W)$ and its image has radius at most $r$. The lower bounds for $\diam(h^{-1}(W))$ and in \eqref{eq: Lipschitz1} give $R_n\geq r/(K^2L^2)$, as desired.}
\end{proof}

 Let $h_n = f^{\theta\left(n\right)} \circ h^{-1} \circ g_{-n}$ for $n \in G$. We now claim that we may choose a subsequence $n_1,n_2,\ldots$ with $n_i \in G$ so that $f^{\theta\left(n_i\right)}\left(x\right)$ converges to a Type I point in $J_f$. 

 \begin{proposition} \label{Conical3}
 There exists a subsequence $n_1,n_2,\ldots$ with $n_i \in G$ so that $f^{\theta\left(n_i\right)}\left(x\right)$ converges to a Type I point in $J_f$.     
 \end{proposition}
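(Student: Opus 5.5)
The idea is to use compactness on the Julia set together with the positive density of $\theta(G)$. A naive compactness argument on the Berkovich space only produces a limit point in $J_f$, which might be of type II--IV. So we need an argument that forces the orbit points $f^{\theta(n)}(x)$, for many $n\in G$, into a fixed compact set made of finitely many classical balls. The tool is the same device as in Proposition \ref{Volume1}, now applied to $f$.

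First, since $\Lyap(f)>0$, $\mu_f$ gives no mass to the hyperbolic space $\bb{H}=\berkP\setminus\bb{P}^1(K)$. We choose an open $U\supseteq\bb{H}$ with $\mu_f(U)<\eps'$, where $\eps'$ will be fixed small relative to $\uline{d}(\theta(G))$. By compactness of $J_f$, finitely many closed balls $\ovl{B}(z_1,s),\ldots,\ovl{B}(z_k,s)$ centered at type I points of $J_f$, together with $U$, cover $J_f$. Here $s>0$ is arbitrary; we will later let $s\to 0$ along a sequence.

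Next, we may assume the point $x$ of Proposition \ref{Conical1} was chosen $\mu_f$-generic for Birkhoff's theorem applied to the indicator of $U$ (only a countable family of such conditions is needed, one for each $s=2^{-j}$). Then the set $E=\{m : f^m(x)\in U\}$ has upper density at most $\eps'$. Since $\uline{d}(\theta(G))>0$ by Proposition \ref{Conical2}, choosing $\eps'<\uline{d}(\theta(G))$ shows that $\theta(G)\setminus E$ is infinite. For $m$ in this set, $f^m(x)$ lies in $J_f\setminus U$, hence in one of the finitely many balls $\ovl{B}(z_i,s)$. By pigeonhole, infinitely many $n\in G$ have $f^{\theta(n)}(x)$ in a single ball $\ovl{B}(z_{i},s)$.

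Finally, we do this for $s=2^{-j}$, $j=1,2,\ldots$, with $\eps'_j=\eps'$ fixed, so the density bound works uniformly. A diagonal argument then gives nested choices, provided the balls at consecutive scales can be chosen compatibly. To avoid a compatibility issue, it is cleaner to work with the compact set $J_f\setminus U$ itself. It is a closed subset of $\berkP$ contained in $\bb{P}^1(K)$, and on $\bb{P}^1(K)$ the Berkovich topology agrees with the spherical metric topology. Moreover $J_f\setminus U$ is compact in $\berkP$ and consists of type I points, so it is a compact metric space. Infinitely many terms $f^{\theta(n)}(x)$, $n\in G$, lie in it, so a subsequence converges spherically to a type I point $z\in J_f\setminus U\subseteq J_f$.

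The main subtlety is that Birkhoff's theorem is only available for $\mu_f$-a.e.\ $x$. We must therefore ensure that the $x$ fixed after Proposition \ref{Conical1} lies in this full-measure set. This is harmless, because we may intersect the countably many full-measure conditions (those of Proposition \ref{Conical1} plus the Birkhoff condition for the chosen $U$) before fixing $x$. The remaining point to check is that the Berkovich topology restricted to the type I points of a compact set coincides with the metric topology. This holds because a compact subset of $\berkP$ avoiding $\bb{H}$ has, by compactness, a countable base of the standard open affinoid neighbourhoods, and these restrict to spherical balls.
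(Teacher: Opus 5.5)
Your argument is correct, and it takes a somewhat different route from the paper. The paper lets $Z$ be the closure of $\{f^{\theta(n)}(x) : n\in G\}$. It uses the genericity of $x$ and $\underline{d}(\theta(G))>0$ to get $\mu_f(Z)>0$, and then uses $\mu_f(\mathbb{H})=0$ to find a type I point in $Z$. You instead fix an open $U\supseteq\mathbb{H}$ with $\mu_f(U)<\underline{d}(\theta(G))$, the device of Proposition \ref{Volume1}. Birkhoff then forces infinitely many $f^{\theta(n)}(x)$ into $J_f\setminus U$, which is a compact subset of $\mathbb{P}^1(K)$ on which the Berkovich topology is the spherical metric topology, and sequential compactness gives the convergent subsequence.

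The paper's version is shorter, but yours is more robust in two ways:
\begin{itemize}
\item Your Birkhoff conditions concern sets that do not depend on the orbit of $x$. The paper's $Z$ is built from that orbit, so its genericity step implicitly needs something like weak convergence of empirical measures, which is delicate on a possibly non-metrizable Berkovich line.
\item You produce an honest convergent sequence. The paper's last sentence, read literally, is automatic, since every $f^{\theta(n)}(x)$ is already classical. To get a type I accumulation point and a subsequence converging to it, it still needs the absence of atoms and a countable neighbourhood basis at type I points.
\end{itemize}

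A few things to tidy:
\begin{itemize}
\item Since $G$ and $\theta$ are built from $x$, so is $\underline{d}(\theta(G))$, and hence so is your $U$. To avoid circularity, impose Birkhoff's theorem for a countable family of open $U_j\supseteq\mathbb{H}$ with $\mu_f(U_j)<1/j$, together with $x\in J_f$, before fixing $x$. Then choose $j$ afterwards.
\item Indexing the countable family by the radii $s=2^{-j}$ uses the wrong parameter. The ball covering, pigeonhole and diagonal steps can simply be deleted; they would also need open Berkovich discs rather than closed balls for the compactness argument.
\item The reason given in your final sentence is not the right one. The agreement of the two topologies on $\mathbb{P}^1(K)$ is a global standard fact and needs no compactness. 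The open Berkovich discs of radius $r\to 0$ about a type I point form a neighbourhood basis there, and they meet $\mathbb{P}^1(K)$ in classical discs.
\end{itemize}
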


 \begin{proof}
 Let $Z$ be the closure of $\left\{f^{\theta\left(n\right)}\left(x\right) \mid n \in G\right\}$. Recall that $x$ has was chosen to be $\mu_f$-generic. Since $\uline{d}\left(\theta\left(G\right)\right) > 0$, it follows that $\mu_f\left(Z\right) > 0$. As $\mu\left(\bb{H}\right) = 0$, it must be that $Z$ contains a type I point as desired.     
 \end{proof}

\begingroup\emergencystretch=3em
 Passing to this subsequence $\left\{n_k\right\}$ and then to a possibly further subsequence, we may assume that $B\left(g^{n_k}\left(y\right),r\right)$ are all the same disc $D$ by (2) of Proposition \ref{Conical1}. Hence we have a family of injective maps 
 \[h_{n_k}: D \to B\left(f^{\theta\left(n_k\right)}\left(x\right),R_{n_k}\right)\]
 with $f^{\theta\left(n_k\right)}\left(x\right)$ converging to a Type I point, say $z\in J_f$. Since $z\in J_f$, it follows that {\blue $\mu_f(B\left(z, \frac{r}{2K^2L^2}\right)) = \delta' > 0$}. As {\blue $R_{n_k} \geq \frac{r}{K^2L^2}$}, passing to a subsequence, we may assume that 
 {\blue \[B\left(z, \frac{r}{2K^2L^2}\right) \subseteq B(f^{\theta(n_k)}(x),R_{n_k}).\]}
 In particular $\mu_f\left(B\left(f^{\theta\left(n_k\right)}\left(x\right),R_{n_k}\right)\right) \geq \delta'$.

\par\endgroup
{\blue \begin{proposition} \label{Volume2}
For any Borel set $E \subseteq D$, we have
\[\mu_f\left(h_{n_k}\left(E\right)\right) = \rho d_1^{\theta\left(n_k\right)} d_2^{-n_k} \mu_g\left(E\right).\]
Furthermore, there exists $\delta > 0$ such that  
\[d_1^{\theta\left(n_k\right)} d_2^{-n_k} \geq \delta > 0\]
for all $k \geq 1$. 
\end{proposition}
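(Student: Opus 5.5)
The plan is to decompose $h_{n_k}=f^{\theta(n_k)}\circ h^{-1}\circ g_{-n_k}$ into its three injective pieces and track how each one transforms the measures. I then get the lower bound on the constant from the mass of the image disc.

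\textbf{The equality.} There are three transformation rules to record.
\begin{itemize}
\item For $g$: if $g^{n}$ is injective on a set $W'$, then for Borel $E'\subseteq W'$ the invariance $g^*\mu_g=d_2\mu_g$ gives $\mu_g(g^{n}(E'))=d_2^{n}\mu_g(E')$. Applying this to $E'=g_{-n_k}(E)\subseteq W$, with $E\subseteq D$, yields $\mu_g(g_{-n_k}(E))=d_2^{-n_k}\mu_g(E)$.
\item For $h$: the hypothesis $h_*(\mu_f|_V)=\rho\,\mu_g|_{h(V)}$ gives $\mu_f(h^{-1}(E''))=\rho\,\mu_g(E'')$ for Borel $E''\subseteq h(V)$. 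This applies because $W\subseteq h(V)$ for $n_k$ large, by the diameter estimates in Proposition \ref{Conical2}.
\item For $f$: Proposition \ref{Conical2} shows $f^{\theta(n_k)}$ is injective on $h^{-1}(W)$, so the same invariance argument gives a factor $d_1^{\theta(n_k)}$.
\end{itemize}
Chaining the three factors gives $\mu_f(h_{n_k}(E))=\rho\, d_1^{\theta(n_k)}d_2^{-n_k}\mu_g(E)$.

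The injectivity step deserves care. From the functional equation of the equilibrium measure one has $\mu(f(E))=d\,\mu(E)$ whenever $f$ is injective on a neighborhood of $E$, since the local degree is then $1$. In the non-archimedean case I would justify this by noting that $f$ maps the disc $W'$ bijectively onto a disc, so $f^*\delta_y$ restricted to $W'$ is just $\delta_{f^{-1}(y)}$. Pulling back $\mu=d^{-1}f^*\mu$ and restricting to $W'$ then gives the claim. Iterating handles $f^n$ and $g^n$.

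\textbf{The lower bound.} Take $E=D$, which is legitimate since $h_{n_k}$ is defined on all of $D$. The left side $\mu_f(h_{n_k}(D))$ is the mass of the full image disc, which contains $B(f^{\theta(n_k)}(x),R_{n_k})\cap$ image. The right side is $\rho\, d_1^{\theta(n_k)}d_2^{-n_k}\mu_g(D)$, where $\mu_g(D)\le 1$.

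The image $h_{n_k}(D)$ is the disc $B(f^{\theta(n_k)}(x),R_{n_k})$ itself, because the map is a bijection onto it, as in the statement of Proposition \ref{Conical2}. By the preceding paragraph, this disc contains $B\!\left(z,\tfrac{r}{2K^2L^2}\right)$, which has $\mu_f$-mass $\delta'$. Hence
\[
\delta'\le \rho\, d_1^{\theta(n_k)}d_2^{-n_k}\mu_g(D)\le \rho\, d_1^{\theta(n_k)}d_2^{-n_k},
\]
and we may take $\delta=\delta'/\rho$.

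\textbf{Main obstacle.} The step I expect to need the most care is showing that $h_{n_k}$ is onto the disc of radius $R_{n_k}$, and not just into it. I would argue this in two parts. First, an injective analytic map on a closed disc is a bijection onto a closed disc, by the non-archimedean Weierstrass preparation theorem. Second, $R_{n_k}$ is by definition the radius of that image disc, and the image is centered at $f^{\theta(n_k)}(x)=h_{n_k}(g^{n_k}(y))$.
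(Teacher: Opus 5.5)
Your proposal is correct and follows the paper's proof. You chain the factors $d_2^{-n_k}$, $\rho$ and $d_1^{\theta(n_k)}$ through the three injective pieces of $h_{n_k}$, then take $E=D$ and combine $\mu_f(h_{n_k}(D))\ge\delta'$ with $\mu_g(D)\le 1$. Your extra justifications fill in points the paper leaves implicit: local degree one on the injective discs, and the fact that $h_{n_k}(D)$ is the whole disc $B(f^{\theta(n_k)}(x),R_{n_k})$.
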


\begin{proof}
As $f^*\mu_f=d_1\mu_f$, it follows that if $f:E\to f\left(E\right)$ is injective, then $\mu_f\left(f\left(E\right)\right)=d_1\mu_f\left(E\right)$. By our assumption about  proportionality of two measures,   $\mu_f\left(h^{-1}\left(E\right)\right)=\rho\mu_g\left(E\right)$ for Borel sets $E\subseteq h\left(V\right)$. Since $g_{-n_k}\left(D\right)\subseteq h\left(V\right)$, $g^{n_k}$ is injective on $g_{-n_k}\left(D\right)$, and $f^{\theta\left(n_k\right)}$ is injective on $h^{-1}\circ g_{-n_k}\left(D\right)$, we have
\[\mu_g\left(g_{-n_k}\left(D\right)\right) = d_2^{-n_k} \mu_g\left(D\right)\]
and 
\[\mu_f\left(f^{\theta\left(n_k\right)} \circ h^{-1} \circ g_{-n_k} \left(D\right)\right) = \rho d_1^{\theta\left(n_k\right)} d_2^{- n_k} \mu_g\left(D\right).\]
The same argument applies to any Borel set $E \subseteq D$. Now since $\mu_f\left(h_{n_k}\left(D\right)\right) \geq \delta' > 0$, we have
\[1 \geq \rho d_1^{\theta\left(n_k\right)} d_2^{-n_k} \mu_g\left(D\right) \geq \delta',\]
giving us $\delta > 0$ such that $d_1^{\theta\left(n_k\right)}d_2^{-n_k} \geq \delta$ as desired.
\end{proof}}

We will now show that we can further choose a subsequence of $h_{n_k}$ so that it converges as a power series on $D$, after possibly shrinking $D$. 

\begin{proposition} \label{Convergence1}
We may replace $n_k$ with a subsequence and $D$ with a smaller disc $D'$ so that $\mu_g\left(D'\right) > 0$ and $h_{n_k} : D' \to h_{n_k}\left(D'\right)$ converges to a power series $h': D' \to h(D')$.
\end{proposition}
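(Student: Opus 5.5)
We will use the ``simple observation'' from the start of this section: the measure lower bound of Proposition \ref{Volume2} forbids the images of a small disc under the $h_{n_k}$ from being pairwise far apart, and this lets us extract convergence coefficient by coefficient in the power series. Normalize coordinates so that $D$ is a closed disc and the targets $B\left(f^{\theta\left(n_k\right)}\left(x\right),R_{n_k}\right)$ all contain $B\left(z,\frac{r}{2K^2L^2}\right)$ and have radius at most $r$. After a change of coordinates on both sides (a fixed chart around $z$ and one around the center of $D$), each $h_{n_k}$ becomes an injective analytic map between closed discs, so it is given by a power series $h_{n_k}\left(w\right)=\sum_{j\ge 0}a_{k,j}w^j$. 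Because $h_{n_k}$ is injective, it is an isometry up to scaling on a disc (the non-archimedean Weierstrass preparation and injectivity give $\left|a_{k,1}\right|\rho_D = \diam h_{n_k}\left(D\right)$ and $\left|a_{k,j}\right|\rho_D^j\le\left|a_{k,1}\right|\rho_D$). Combined with $R_{n_k}\in\left[\frac{r}{K^2L^2},r\right]$, this shows that the norm $\left|a_{k,1}\right|$ is bounded above and below uniformly in $k$, and all coefficients satisfy $\left|a_{k,j}\right|\le C\rho_D^{1-j}$.

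The issue is that bounded sequences in $K$ need not have convergent subsequences, since $K$ is not locally compact. We handle this with the disjointness trick. Pick a smaller closed disc $D'\subseteq D$ with $\mu_g\left(D'\right)>0$ and radius $s$ much smaller than that of $D$. The images $h_{n_k}\left(D'\right)$ are discs of radius comparable to $s$, each of $\mu_f$-mass at least $\rho\delta\mu_g\left(D'\right)$ by Proposition \ref{Volume2}. Finite mass of $\mu_f$ then forces infinitely many of these images to intersect. By a diagonal pigeonhole argument (Ramsey, or repeatedly extracting subsequences), we can pass to a subsequence along which all $h_{n_k}\left(D'\right)$ lie in a common disc of radius $\asymp s$, so the constant terms $h_{n_k}\left(c'\right)$ (with $c'$ the center of $D'$) are Cauchy up to error $O\left(s\right)$.

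To get honest convergence we iterate over a shrinking sequence of scales: choose radii $s_m\to 0$ and nested discs $D_m$ around a $\mu_g$-generic point, apply the above at each scale, and take a diagonal subsequence. The values $h_{n_k}\left(c'\right)$ then form a Cauchy sequence and converge in $K$, which is complete. For the higher coefficients, we apply the same argument to the difference quotients. Alternatively, we note that the Taylor coefficients at $c'$ are determined up to small error by the images of a few well-separated subdiscs of $D'$: the inner points $c'+t\omega_i$ are mapped close to a common disc at every scale. Since injective maps of discs are controlled by their values on the ``boundary directions'', this yields convergence of each coefficient $a_{k,j}$ after re-expansion at $c'$, with the uniform bound $\left|a_{k,j}\right|\le C\rho_D^{1-j}$ giving uniform convergence on $D'$ (slightly shrunk). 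The limit $h'$ is then a power series on $D'$.

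I expect the main obstacle to be precisely this step: upgrading ``the images at every scale eventually cluster'' to convergence of every coefficient, not just the constant term. The first approach I would try is to work with the normalized maps $\left(h_{n_k}\left(c'+w\right)-h_{n_k}\left(c'\right)\right)/a_{k,1}$, which are injective power series on a fixed disc with linear term $w$. I would apply the clustering argument to the images of translated subdiscs $c'+b_i+D\left(0,s\right)$ to force the values at finitely many points $b_i$ to converge, and then use interpolation (Vandermonde on points of distinct residues, which is valid in residue characteristic $0$) to control the first $N$ coefficients. The tail is uniformly small on a shrunk disc, so this suffices.
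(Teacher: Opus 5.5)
Your coefficient bounds and your clustering step are fine. At most $1/(\rho\delta\mu_g(D'))$ of the discs $h_{n_k}(D')$ can be pairwise disjoint, pairwise intersecting ultrametric discs form a chain, and diagonalizing over scales $s_m\to 0$ makes $h_{n_k}(b)$ converge for any fixed $b\in\Supp\mu_g$.

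The gap is the step you flag yourself: upgrading this to convergence of every Taylor coefficient. Neither of your concrete suggestions closes it.
\begin{itemize}
\item The difference quotients $(h_{n_k}(w)-h_{n_k}(c'))/(w-c')$ satisfy no relation of the form $\mu_f(h(E))\geq c\,\mu_g(E)$, so the disjointness argument does not apply to them.
\item The Vandermonde plan needs $N+1$ points $c'+b_i$ lying in $\Supp\mu_g$. Otherwise the discs $c'+b_i+D(0,s)$ carry no $\mu_g$-mass for small $s$, and clustering says nothing. The points must also lie in pairwise distinct residue classes of a small disc about $c'$.
\item Residue characteristic $0$ makes the residue field infinite, but it does not force the Julia set to meet many residue classes. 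Over the completed algebraic closure of $\bb{C}((t))$, the map $g(z)=(z^2-z)/t$ has a type I repelling fixed point at $0$, hence $\Lyap(g)>0$. Its Julia set is a Cantor set meeting at most two residue classes of any closed disc, so it contains no three pairwise equidistant points.
\item With unequal spacings, the Lagrange inverse degrades with the minimal separation. You would then need a uniform-perfectness property of $\Supp\mu_g$ that is not available.
\end{itemize}

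The missing idea, which is what the paper does, is to induct on the coefficient index so that a single test point suffices.
\begin{itemize}
\item Suppose $a_{k,i}$ converges for $i<N$ but $a_{k,N}$ has no convergent subsequence. Pass to a subsequence with $|a_{k,N}-a_{k',N}|\geq r'$ for $k\neq k'$.
\item Put the center at $0$. Since $\mu_g$ has no atoms, there is $z\in\Supp\mu_g$ with $0<|z|<1$ so small that the Cauchy bounds give $|a_{k,n}z^{n-N}|<r'$ for all $k$ and all $n>N$. For $k,k'$ large, also $|a_{k,i}-a_{k',i}|<r'|z|^N$ for $i<N$.
\item The strict ultrametric inequality then gives $|h_{n_k}(z)-h_{n_{k'}}(z)|=|a_{k,N}-a_{k',N}||z|^N\geq r'|z|^N$.
\item Take a uniform Lipschitz constant $C$ and choose $\eps$ with $C\eps<r'|z|^N$. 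The discs $h_{n_k}(D(z,\eps))$ are then pairwise disjoint, each of $\mu_f$-mass at least $\rho\delta\mu_g(D(z,\eps))>0$. This contradicts finiteness of $\mu_f$, and a diagonal extraction over $N$ concludes.
\end{itemize}

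In your own framework this reads as follows. Once the lower coefficients converge, evaluate at a single $z\in\Supp\mu_g$ at which $h_{n_k}(z)$ already converges. This yields $\limsup_{k,k'}|a_{k,N}-a_{k',N}|\leq C'|z|$ for a constant $C'$ depending on $N$. Letting $z\to c'$ within the support, diagonalizing over a sequence of such $z$, gives convergence of $a_{k,N}$. No multi-point interpolation is needed.
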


\begin{proof}
Let $y' \in J_g$ be a point that lies on $D$. After translating and possibly inverting, we may assume that $D$ is centered at $y' = 0$. Similarly after passing to a subsequence and possibly inverting, we may assume there exists $R > 0$ such that $h_{n_k}\left(D\right) \subseteq D\left(0,R\right)$, where $D\left(0,R\right)$ represents the disc centered at $0$ with Euclidean radius $\leq R$ instead of the spherical metric.  
\par 
We may then expand $h_{n_k}$ as a convergent power series $h_{n_k} = a_{k,0} + a_{k,1} z + a_{k,2} z^2 + \cdots$. Since $D$ has radius $r$, we must have $\left|a_{k,n}\right| \leq R r^{-n}$ as $h_{n_k}\left(D\right) \subseteq D\left(0,R\right)$. We now let $D' = D\left(0, \frac{r}{2}\right)$. Since $0$ is a point of $J_g$, it follows that $\mu_g\left(D'\right) > 0$. Then to show that there is a subsequence of $h_{n_k}$ that converges on $D'$, it suffices to show that there is a subsequence of $h_{n_k}$ such that $a_{k,i}$ converges pointwise for each $i$. To show this, we will induct on the assumption that we can find a subsequence so that $a_{k,i}$ converges for all $i < N$. We then show we can further extract a subsequence so that $a_{k,N}$ converges too. A diagonalization argument then gives us the subsequence we want. 
\par 
Now let's assume that $a_{k,i}$ converges to $a_k$ for all $i < N$. If $a_{k,N}$ has a subsequence that converges to a Type I point, then we can simply pass to that subsequence and we are done. Else, passing to a subsequence, we may assume that there is $r' > 0$ such that $\left|a_{k,N} - a_{k',N}\right| \geq r'$ for all $k \not = k'$. We will show that this leads to a contradiction. 
\par 
Consider a $z \in D$ with $0 < \left|z\right| < 1$ small enough so that 
\begin{equation} \label{eq: Rigidity1}
\left|a_{k,n} z^{n-N}\right| < r'
\end{equation}
for all $k$ and $n \geq N+1$. For such a $z$, as it is non-zero, we can choose $M > 0$ so that for all $k,k' \geq M$, we have 
\[\left|a_{k,i} - a_{k',i}\right| < r' \left|z\right|^N\]
for all $i < N$. It follows that 
\[\left|h_{n_k}\left(z\right) - h_{n_{k'}}\left(z\right)\right| = \left|a_{k,N} - a_{k',N}\right| z^N \geq r' \left|z\right|^N.\]
Let $C$ be a uniform Lipschitz constant for $h_{n_k}$ for all $k \geq 1$. Then if we take $\eps > 0$ such that $C \eps < r' \left|z\right|^N$, it follows that $h_{n_k}\left(D\left(z, \eps\right)\right)$ are all disjoint for all $k \geq M$. 
\par 
Now since $0$ is a Type I point in $J_g$, we may find a non-zero $z$ satisfying \eqref{eq: Rigidity1} that is also a Type I point in $J_g$. Then we for any $\eps > 0$, it follows that $\mu_g\left(D\left(z,\eps\right)\right) > 0$. If we choose $\eps < \frac{r}{2} \left|z\right|^N$, it follows that 
{\blue \[\mu_f\left(h_{n_k}\left(D\left(z,\eps\right)\right)\right) \geq \rho d_1^{\theta\left(n_k\right)} d_2^{-n_k} \mu_g\left(D\left(z,\eps\right)\right) \geq \rho \delta \mu_g\left(D\left(z,\eps\right)\right).\]}
Since each $h_{n_k}\left(D\left(z, \eps\right)\right)$ is disjoint, this implies that $\mu_f$ has infinite mass which is a contradiction. Hence $a_{k,N}$ must have a Type I point as a subsequential limit. This finishes our induction as desired. 
\end{proof}
 
We may now assume that $h_{n_k}$ converges to a power series $h'$ on $D'$ with $\mu_g\left(D'\right) > 0$. Since $h_{n_k}$ is injective, it follows that 
\[\diam\left(h_{n_k}\left(D\right)\right) = \left|h'_{n_k}\left(x\right)\right| \diam\left(D\right)\]
for any $x \in D$. As $\diam\left(h_{n_k}\left(D\right)\right)$ is bounded from below, it follows that $\liminf_{x \in D} \left|h'_{n_k}\left(x\right)\right| > 0$ and in particular, we must have $\left|h'_{n_k}\left(x\right)\right|$ being all equal for sufficiently large $k$. It then follows that $h_{n_k}\left(D'\right)$ are all equal for $k$ sufficiently large, say to $D''$. We may then assume that $h_{n_k}\left(D'\right) = D''$ for all $k$. Our aim now is to show that $\left\{h_{n_k}\right\}$ is a finite set. This will then imply that $h_{n_k} = h_{n_{k'}}$ for $k' > k$ which will provide for us a local semiconjugacy between $f$ and $g$.

\begin{proof}[Proof of Theorem \ref{Rigidity1}]
{\blue The limit $h':D'\to D''$ is an analytic bijection. By Proposition \ref{Volume2}, the common image $D''$ implies
\[\rho d_1^{\theta\left(n_k\right)}d_2^{-n_k}=\frac{\mu_f\left(D''\right)}{\mu_g\left(D'\right)}=:A>0.\]
Hence $\left(h_{n_k}\right)_*\left(\mu_g\vert_{D'}\right)=A^{-1}\mu_f\vert_{D''}$ and  $h'_*\left(\mu_g\vert_{D'}\right)=A^{-1}\mu_f\vert_{D''}$. For each positive integer $k$, consider the analytic map
\[\sigma_k = \left(h'\right)^{-1} \circ h_{n_k} : D' \to D'.\]
Then $\sigma_k$ is injective and $\sigma_k^*\left(\mu_g\vert_{D'}\right)=\mu_g\vert_{D'}$. Furthermore as $k \to \infty$, we have $\sigma_k \to \id$ and $\mu_g\left(D'\right) > 0$. We may now apply Lemma \ref{lem:move2} to the family and obtain that $\left\{\sigma_k\right\}$ is a finite family. Hence $\left\{h_{n_k}\right\}$ must be a finite family, and passing to a subsequence, we may assume that all of them are equal to each other. We thus get
\[f^{\theta\left(n_i\right)} \circ h^{-1} \circ g_{-n_i} = f^{\theta\left(n_j\right)} \circ h^{-1} \circ g_{-n_j}\]
 as maps from $D'$ to $D''$. For each $j$, the infinite set
\[X_j=\left\{\left(h^{-1}\circ g_{-n_j}\left(t\right),g_{-n_j}\left(t\right)\right)\mid t\in D'\right\}\]
lies in $C$, and its image under $\left(f^{\theta\left(n_j\right)},g^{n_j}\right)$ is the same graph $\left\{\left(h_{n_i}\left(t\right),t\right)\mid t\in D'\right\}$, independently of $j$.}
Hence for two $j' > j$, we have that 
\[\left(f^{\theta\left(n_j\right)}, g^{n_j}\right)\left(C\right) \cap \left(f^{\theta\left(n_{j'}\right)}, g^{n_{j'}}\right)\left(C\right)\]
has infinite intersection and thus they must actually be equal as curves. Let $a = \theta\left(n_{j'}\right) - \theta\left(n_j\right)$ and $b = n_{j'} - n_j$. Then $C' = \left(f^{\theta\left(n_j\right)}, g^{n_j}\right)\left(C\right)$ is fixed by $\left(f^a,g^b\right)$. By the discussion after the statement of Lemma 6.1 in \cite{GNY19}, it follows that $\deg\left(f^a\right) = \deg\left(g^b\right)$, i.e. $d_1^a=d_2^b$. 
\par 
\begingroup\emergencystretch=3em
Now consider the forward iterates of $C$ under $\left(f^a,g^b\right)$, which we will label as $C_1, C_2,\ldots$. Since $\left(f^{\theta\left(n_j\right)},g^{n_j}\right)$ commutes with $\left(f^a,g^b\right)$, it follows that $(f^{\theta(n_j)},g^{n_j})(C_i) = C'$. Hence $C_i \subseteq (f^{\theta(n_j)},g^{n_j})^{-1}(C')$. This implies that the sequence $C_1,C_2,\ldots$ take on only finitely many possible curves and in particular $C$ must be preperiodic under $\left(f^a,g^b\right)$ as desired. 

\par\endgroup
\end{proof}

We need another rigidity statement when $f$ is a Latt{\`e}s map {\blue that does not have potentially good reduction} and $g$ is a map with good reduction. 

\begin{proposition} \label{Rigidity2}
Let $f,g$ be two rational maps of degree $d \ge 2$ and assume that $f$ is a Latt{\`e}s map {\blue that does not have potentially good reduction} and $g$ has good reduction. Then for any curve $C \subseteq \left(\bb{P}^1\right)^2$ such that $\pi_1,\pi_2$ projects dominantly, we  have 
\[
\left(\deg\pi_2\right)\pi_1^*\mu_f
\neq
\left(\deg\pi_1\right)\pi_2^*\mu_g.
\]
\end{proposition}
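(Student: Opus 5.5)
Suppose for contradiction that $\left(\deg\pi_2\right)\pi_1^*\mu_f=\left(\deg\pi_1\right)\pi_2^*\mu_g$ on $C^{\an}$. The plan is to compare where the two sides put their mass. Since $g$ has good reduction, $\mu_g=\delta_{\zeta_G}$ is the Dirac mass at the Gauss point $\zeta_G$. Hence $\pi_2^*\mu_g$ is supported on the finite set $\pi_2^{-1}\left(\zeta_G\right)\subseteq C^{\an}$. This is a finite set of type II points, because $\pi_2: C^{\an}\to\berkP$ is a finite morphism. In particular the right-hand side is a nonzero finite sum of point masses at non-classical points.

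Next I would show that $\pi_1^*\mu_f$ has no atoms at all, which gives the contradiction. Because $f$ is Latt\`es without potentially good reduction, it comes from an elliptic curve $E$ with multiplicative (potentially bad) reduction. On the Tate curve $E^{\an}\simeq \bb{G}_m^{\an}/q^{\bb{Z}}$ the Haar measure is the uniform measure on the skeleton circle. Therefore $\mu_f$ is the pushforward of this Lebesgue measure on a circle to a segment in $\berkP$. Equivalently, I would invoke the classification in \cite{FRL25}, or the statement recalled just before the proof of Theorem \ref{Rigidity1}: $\mu_f$ has no atoms, since an atom would force potentially good reduction by \cite[Theorem E]{FRL10}. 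So $\mu_f$ is non-atomic. (If one wants to avoid the Tate-curve description, $\Lyap(f)=0$ is not needed here; only non-atomicity is used.)

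It remains to check that the pullback of a non-atomic measure under a finite map $\pi_1$ is non-atomic. By definition, $\pi_1^*\mu$ assigns to a point $P$ of $C^{\an}$ a mass bounded by $\mu\left(\left\{\pi_1(P)\right\}\right)$ times a local degree, so atoms of $\pi_1^*\mu_f$ lie over atoms of $\mu_f$. More precisely, $(\pi_1)_*\pi_1^*\mu_f=\left(\deg\pi_1\right)\mu_f$, and a positive measure whose pushforward under a map has no atoms itself has no atoms. Hence the left-hand side is non-atomic while the right-hand side is purely atomic and nonzero, so the two cannot be equal.

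The main obstacle is making sure that the pullback $\pi_i^*$ of measures on $\berkP$ under a finite map of Berkovich curves is well defined and satisfies $(\pi_i)_*\pi_i^*\mu=\left(\deg\pi_i\right)\mu$. I would cite this, for instance via continuous potentials and pullback of $dd^c$, or via local degrees. I would also check the case where $C$ is singular, by passing to its normalization, which does not affect these measure statements.
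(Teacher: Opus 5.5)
Your proof is correct and follows the paper's approach: both arguments note that $\pi_2^*\mu_g$ is concentrated on the finite fibre $\pi_2^{-1}(\zeta_G)$, and that $\pi_1^*\mu_f$ cannot be concentrated on a finite set. The only difference is the invariant used for the second half: the paper uses that $\Supp\mu_f$ is an interval, hence infinite, so $\Supp(\pi_1^*\mu_f)$ is infinite, whereas you use that $\mu_f$ is non-atomic, so $\pi_1^*\mu_f$ is non-atomic; either one works.
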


\begin{proof}
Since $g$ has good reduction, we know that $\mu_g$ is a delta mass at the Gauss point. As $\pi_2: C^{\an} \to \berkP$ is a finite morphism, this implies that $\pi_2^*\mu_g$ is supported on a finite set of points. On the other hand, we know that $\Supp \mu_f$ is an interval and thus infinite in size. As $\pi_1: C^{\an} \to \berkP$ is also finite, it cannot be that $\Supp (\pi_1^* \mu_f)$ is a finite set of points and hence 
\[
\left(\deg\pi_2\right)\pi_1^*\mu_f
\neq
\left(\deg\pi_1\right)\pi_2^*\mu_g.
\]
\end{proof}

\section{Flat Metrics of Curves in Families} \label{sec: Flat}

In this section, given two rational maps $f,g$ and a curve $C$ over a non-trivially valued field $K$, we will define a local energy pairing $\langle f , g \rangle_{v,C}$. We will then prove that it is continuous under some mild hypothesis in Proposition \ref{FlatContinuity1}.  
\par
Let $S$ be a quasi-projective variety over a number field $K$ and let $\cal{C}$ be a family of smooth projective curves over $S$, i.e. a flat family whose fibers are smooth, projective and geometrically irreducible curves. Then for each non-trivial $s \in S^{\an}$, we obtain a non-trivially valued field $\cal{H}\left(s\right)$ and a geometrically irreducible curve $\cal{C}_s$ over $\cal{H}\left(s\right)$. Let $L$ be a line bundle on $\cal{C}$ such that $L_s$ is numerically trivial on $\cal{C}_s$ for each $s \in S$ and hence $s \in S^{\an}$. 
\par 
Recall from \cite{YZ17} that for a smooth projective curve $C$ and numerically trivial line bundle $L$ over a non-trivially valued field $K$, there exists a metrized line bundle $\ovl{L}$ with underlying bundle $L$ such that $c_1\left(\ovl{L}\right) = 0$ on $C^{\an}$, where $c_1\left(\ovl{L}\right)$ is the Chambert--Loir measure associated to $\ovl{L}$. Furthermore this metric is unique up to constants. We will say that $\ovl{L} = \left(L, \left| \cdot \right|\right)$ is flat, or that $\left| \cdot \right|$ is a flat metric. 
\par 
Hence for each $s \in S^{\an}$, we obtain a flat metric on $L_s$. We would like to say that these flat metrics vary in a family, i.e. there exists an integrable adelic line bundle $\ovl{L} \in \wh{\Pic}\left(\cal{C}/\bb{Z}\right)$ such that the restriction to each $s \in S^{\an}$ gives us a flat line bundle $\ovl{L}_s$. Here, $\wh{\Pic}\left(\cal{C}/\bb{Z}\right)$ is the space of adelic line bundles on $\cal{C}/K$, where we view $K$ as a finitely generated field over $\bb{Q}$, see \cite[Section 2]{YZ24} for the definitions and basic facts.  
\par 
When $L_s$ is numerically trivial for each $s$, such an adelic line bundle $\ovl{L}$ is also called an admissible adelic extension of $L$ \cite[Definition 2.5]{Yua24}. 

\begin{proposition} \label{FlatMetric1}
Let $\cal{C}/S$ be a flat family of smooth projective curves over $S$ where $S$ is a quasi-projective variety over a number field $K$. Assume further that $S$ is normal and integral. Let $L$ be a line bundle on $\cal{C}$ for which $L_s$ is numerically trivial for each $s \in S$. Then there exists an admissible adelic extension $\ovl{L}$ of $L$. 
\end{proposition}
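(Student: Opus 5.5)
The plan is to build $\ovl{L}$ through the Yuan--Zhang theory of adelic line bundles on quasi-projective varieties, using the relative Jacobian (Picard scheme) of $\cal{C}/S$ and its Néron--Tate adelic line bundles. Since $L_s$ is numerically trivial on every fiber, after possibly replacing $L$ by a positive power $L^{\otimes m}$ (admissibility is stable under taking $m$-th roots in $\wh{\Pic}(\cal{C}/\bb{Z})_{\bb{Q}}$), we may assume $L_s$ has degree $0$. Then $L$ defines a section $\xi: S \to J := \Pic^0_{\cal{C}/S}$, which is an abelian scheme over the normal integral base $S$ because the fibers are smooth, projective and geometrically irreducible. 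Normality of $S$ guarantees that $\xi$ is a genuine morphism and not only a rational map. We do not need a section of $\cal{C}/S$ globally: after a finite étale (or generically finite) base change $S' \to S$ we may pick a section and an Abel--Jacobi map $\iota: \cal{C}_{S'} \to J_{S'}$, and then descend the construction at the end by the usual norm or averaging argument. This is the reason for insisting that $S$ be normal.

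The key steps, in order, are as follows. First, on the abelian scheme $J_{S'}$, equip the Poincaré bundle $P$ on $J \times_S J^\vee$ (equivalently, a symmetric rigidified ample bundle $M$) with its canonical Néron--Tate adelic extension $\ovl{P}$. Following \cite[Section 6]{YZ24}, this is obtained as the Tate limit $\lim [2]^*\ovl{P}_0/4^n$. It is an integrable adelic line bundle in $\wh{\Pic}(J\times J^\vee/\bb{Z})$ whose fiberwise restrictions at each $s \in S^{\an}$ are the canonical cubical metrics.

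Second, pull $\ovl{P}$ back via $(\iota, \xi\circ\pi): \cal{C}_{S'} \to J \times_S J^\vee$. The underlying bundle is $L$ up to a bundle pulled back from $S'$, and this discrepancy is harmless since it can be absorbed by twisting by $\pi^*$ of a line bundle on $S'$ with any adelic extension. The resulting $\ovl{L}$ lies in $\wh{\Pic}(\cal{C}_{S'}/\bb{Z})$.

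Third, check flatness fiberwise. For $s \in S^{\an}$ non-trivial, $\ovl{L}_s$ is the pullback of the canonical metric on the Poincaré bundle restricted to $J_s \times \{\xi(s)\}$. This is a translation-invariant, cubical metric on a degree-$0$ (algebraically trivial) bundle, so its curvature form or Chambert--Loir measure vanishes on $J_s^{\an}$. Pulling back by $\iota_s$ then gives $c_1(\ovl{L}_s) = 0$ on $\cal{C}_s^{\an}$, which by the uniqueness statement in \cite{YZ17} identifies $\ovl{L}_s$ with the flat metric. Finally, we descend from $S'$ to $S$, either via the norm along $\cal{C}_{S'} \to \cal{C}$ divided by the degree, or by using instead the canonical theta-divisor description $\iota^*$, which is independent of the chosen section up to pullbacks from the base.

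The main obstacle is that $\xi$ need not be a torsion section, so the Néron--Tate extension of a translate $t_\xi^* M \otimes M^{-1}$ must be shown to be integrable and not merely a limit that fails to be model-Cauchy near the boundary of $S$. My first attempt is to use the Poincaré bundle on $J\times_S J^\vee$ directly: this is where the Yuan--Zhang construction is established. If that construction is only available over a dense open subset, I would shrink $S$ and argue that admissibility on a dense open subset suffices for the continuity applications later.
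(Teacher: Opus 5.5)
The paper constructs nothing here. It spreads $\cal{C}/S$ and $L$ out to a model over $\bb{Z}$ and quotes \cite[Proposition 2.6]{Yua24}, which is stated for bases over $\bb{Z}$, using that $\wh{\Pic}(\cal{C}/\bb{Z})$ is a direct limit over such models. You are instead reproving Yuan's proposition through the relative Jacobian. That is a legitimate route, but as written it has a genuine gap at the section/descent step.

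A smooth proper curve over $S$ need not acquire a section after a finite {\'e}tale base change. For the universal curve over a fine moduli space $M_g[n]$ with $g\geq 3$, a finite {\'e}tale multisection would lift to a holomorphic section of the universal curve over Teichm{\"u}ller space, which does not exist by Hubbard's theorem. So you are forced to use a generically finite multisection $S'\to S$. At that point ``descend by the usual norm or averaging argument'' is not an argument. The natural norm in the framework of \cite{YZ24} is the relative-dimension-zero Deligne pairing, which needs $\cal{C}_{S'}\to\cal{C}$ to be finite and flat over a normal base, and you do not arrange this. Your fallback of shrinking $S$ proves a strictly weaker statement, whereas the proposition asks for $\ovl{L}$ over all of $\cal{C}$. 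Also, normality has nothing to do with $\xi$ being a morphism: any line bundle on $\cal{C}$ is already an $S$-point of $\Pic_{\cal{C}/S}$.

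The obstacle you single out is, by contrast, not a real one. The Tate-limit construction of \cite[Section 6]{YZ24} only uses $([2]\times[2])^*P\simeq P^{\otimes 4}$, so no torsion hypothesis on $\xi$ enters. For integrability, let $\lambda\colon J\to J^\vee$ be the principal polarization and $N=(1,\lambda)^*P$, which is symmetric and relatively ample. Then $((1\times\lambda)^*P)^{\otimes 2}\simeq m^*N\otimes p_1^*N^{-1}\otimes p_2^*N^{-1}$. Uniqueness of invariant extensions identifies $2(1\times\lambda)^*\ovl{P}$ with $m^*\ovl{N}-p_1^*\ovl{N}-p_2^*\ovl{N}$, a difference of nef adelic line bundles.

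The descent gap can be closed without choosing any section.
\begin{itemize}
\item Let $\delta\colon\cal{C}\times_S\cal{C}\to J$ send $(x,y)$ to $\cal{O}(x-y)$, and pull $\ovl{P}$ back along $(\delta,\xi\circ\pi)$. The underlying bundle is $p_1^*L^{\mp1}\otimes p_2^*L^{\pm1}$ up to a bundle from $S$.
\item Take the Deligne pairing along $p_1$ with $p_2^*\ovl{A}$, for an integrable $\ovl{A}$ of relative degree $e>0$. This gives an integrable extension of $L^{\otimes(\mp e)}$ up to a bundle from $S$. This is where normality of $S$, hence of $\cal{C}$, is genuinely used.
\item Fiberwise flatness then reduces to flatness of the restrictions to $\cal{C}_s\times\{y\}$ and $\{x\}\times\cal{C}_s$ for rigid points. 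That forces the metric to be a product of flat metrics up to a constant.
\end{itemize}
The needed input is that the canonical metric on an algebraically trivial bundle on $J_s$ pulls back under Abel--Jacobi to a metric with zero Chambert--Loir measure. At non-archimedean places this is a genuine theorem (Zhang's admissible pairing, Heinz). It is not a formal consequence of translation invariance on $J_s^{\an}$, as your third step suggests.
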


\begin{proof}
\cite[Proposition 2.6]{Yua24} proves this in the case that $S$ is a quasi-projective variety over $\bb{Z}$ instead of $K$. In our case, we may take some model $\cal{C}'/\cal{S}$ of $\cal{C}/S$, along with $\cal{L}$ extending $L$, so that $\cal{S}$ is now a quasi-projective variety over $\bb{Z}$. We may then apply \cite[Proposition 2.6]{Yua24} and conclude. 
\end{proof}

The flat metric is important as it allows us to generalize the energy pairing of Favre--Rivera-Letelier \cite{FRL06} (see also Fili \cite{Fil17}), of two rational maps $\left\langle f ,g \right\rangle$ to the more general setting of $\left\langle f ,g \right\rangle_C$ for a pair of rational maps $\left(f,g\right)$ and a non-vertical/horizontal curve $C \subseteq \left(\bb{P}^1\right)^2$.  When $C = \Delta$ is the diagonal, we recover the original Favre--Rivera-Letelier pairing. The energy pairing has been used by DeMarco--Krieger--Ye \cite{DKY20, DKY21} and Poineau \cite{Poi24b} to prove instances of the uniform dynamical Bogomolov when $C = \Delta$. This more general pairing has been used by Mavraki--Schmidt \cite{MS25} in the proof of the uniform dynamical Bogomolov for $1$-parameter families. 
\par 
We first define the local energy pairing for a pair of rational maps $\left(f,g\right)$ of degree $d \ge 2$ that are defined over a non-trivially valued field $\left(K,\left| \cdot \right|\right)$. Let $C \subseteq \left(\bb{P}^1\right)^2$ be our geometrically irreducible curve over $K$, not necessarily smooth, and let it be of bidegree $\left(d_1,d_2\right)$, with $d_i > 0$. {\blue Here $d_i=\deg(\pi_i|_C)$.} Let $\ovl{L}_f$ be the metrized line bundles, unique up to constants, on $\bb{P}^1$ with underlying line bundle $O\left(1\right)$ and with $c_1\left(\ovl{L}_f\right) = \mu_f$, the equilibrium measure of $f$. We similarly define $\ovl{L}_g$. We then set 
\[\ovl{L} = d_2 \pi_1^* \ovl{L}_f - d_1 \pi_2^* \ovl{L}_g.\]
If $L$ is the underlying line bundle of $\ovl{L}$, then $L$ is numerically trivial. Let $\pi: C' \to C$ be its normalization. Then we may pullback and obtain $\pi^*\ovl{L}$ with underlying line bundle $\pi^*L$ and furthermore, $\pi^*L$ is still numerically trivial. Hence has a flat metric, which we denote by $\ovl{M}$. We now set $\ovl{N} = \pi^*\ovl{L} - \ovl{M}$, where the underlying line bundle of $\ovl{N}$ is $N \simeq O_{C'}$ the trivial bundle. We then define the local energy pairing 
\[\left\langle f , g \right\rangle_C = - \ovl{N}^2 \geq 0,\]
where we use the sections $\left(\bf{1}, \bf{1}\right)$ to compute $\ovl{N}^2$. The quantity $-\ovl{N}^2$ is independent of the choice of isomorphism between $N$ and $O_{C'}$ and thus $\left\langle f ,g \right\rangle_C$ is well-defined. Now if $\left(f,g,C\right)$ are defined over a number field $K$, we define the global energy pairing 
\[\left(f,g\right)_C = \sum_{v \in M_K} n_v \left\langle f ,g \right\rangle_{C,v}\]
where $\left\langle f , g \right\rangle_{C,v}$ is the local pairing over the valued field $\bb{C}_v$ and $n_v = \left[K_v:\bb{Q}_p\right]/\left[K:\bb{Q}\right]$ is the normalization factor. 
\par 
The global energy pairing is related to the {\blue normalized} canonical height $\h_{\left(f,g\right)}\left(C\right)$ by the following inequality. Note that by definition, we have 
{\blue \[
\h_{(f,g)}(C)
=\frac{(\pi_1^*\ovl{L}_f+\pi_2^*\ovl{L}_g)^2}{[K:\bb{Q}](d_1+d_2)}
=\frac{2(\pi_1^*\ovl{L}_f)\cdot(\pi_2^*\ovl{L}_g)}{[K:\bb{Q}](d_1+d_2)}
\]}
as $\ovl{L}_f^2=\ovl{L}_g^2=0$. 

\begin{proposition} \label{GlobalPairing1}
We have the inequality
{\blue \[\h_{(f,g)}(C) \geq \frac{1}{d_1d_2(d_1+d_2)}(f,g)_C.\]}
\end{proposition}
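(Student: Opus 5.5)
We work globally on the normalization $\pi: C' \to C$ over a number field $K$. The plan is to compare $(\pi_1^*\ovl{L}_f)\cdot(\pi_2^*\ovl{L}_g)$ with the adelic self-intersection of $\ovl{L} = d_2\pi_1^*\ovl{L}_f - d_1\pi_2^*\ovl{L}_g$, where we fix the metrics of $\ovl{L}_f$, $\ovl{L}_g$ at all places, hence of $\ovl{L}$.

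First, expand the square, using $(\pi_1^*\ovl{L}_f)^2 = (\pi_2^*\ovl{L}_g)^2 = 0$ on $C'$:
\[
\ovl{L}^2 = -2d_1d_2\,(\pi_1^*\ovl{L}_f)\cdot(\pi_2^*\ovl{L}_g).
\]
The vanishing holds because $\ovl{L}_f^2=0$ on $\bb{P}^1$ (the canonical metric has height $0$ on $\bb{P}^1$, by projection formula) and $\pi_1^*$ from a curve to $\bb{P}^1$ pushes forward to multiples of $\ovl{L}_f^2$. Therefore
\[
[K:\bb{Q}](d_1+d_2)\,\h_{(f,g)}(C) = -\ovl{L}^2/(d_1d_2).
\]

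Second, decompose $\pi^*\ovl{L} = \ovl{M} + \ovl{N}$ as in the definition, with $\ovl{M}$ flat at every place and $\ovl{N}$ having trivial underlying bundle. Then
\[
\ovl{L}^2 = \ovl{M}^2 + 2\ovl{M}\cdot\ovl{N} + \ovl{N}^2 .
\]
The middle term $\ovl{M}\cdot\ovl{N}$, computed with the section $\bf{1}$ of $N$, equals $-\sum_v n_v\int \log|\bf{1}|_{N,v}\, c_1(\ovl{M}_v) = 0$, because $c_1(\ovl{M}_v)=0$ by flatness. The constant ambiguity in the flat metrics only contributes terms of the form $\deg$ times constants, and these vanish since the degree is $0$.

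Third, we need $\ovl{M}^2 \le 0$ and $\ovl{N}^2 = -\sum_v n_v\langle f,g\rangle_{C,v}\cdot[K:\bb{Q}]$, with normalization tracked carefully. For the latter, $\ovl{N}^2$ is a sum of local terms $\ovl{N}_v^2$ computed with the sections $(\bf{1},\bf{1})$, so it equals $-[K:\bb{Q}](f,g)_C$ by definition. For the former, flat metrics give the Néron–Tate type pairing: by the Faltings–Hriljac / Yuan–Zhang theory for admissible extensions of numerically trivial bundles, $\ovl{M}^2 = -2\,\h_{NT}(M) \le 0$ on the Jacobian of $C'$. This is the main point needing care. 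I would cite \cite{YZ17} or \cite[Prop.~2.6]{Yua24} for the identity of the self-intersection of the admissible extension with minus the Néron–Tate height.

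Combining, $-\ovl{L}^2 = -\ovl{M}^2 - \ovl{N}^2 \ge [K:\bb{Q}](f,g)_C$. Hence
\[
\h_{(f,g)}(C)\ge \frac{(f,g)_C}{d_1d_2(d_1+d_2)}.
\]
The main obstacle is justifying that the global flat metric $\ovl{M}$ is an integrable adelic line bundle with nonpositive self-intersection, and that cross terms vanish with the right normalization. The rest is bookkeeping of degrees under pullback to $C'$.
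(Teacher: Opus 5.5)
Your proof is correct and follows the paper's argument. Both expand $\ovl{L}^2=-2d_1d_2(\pi_1^*\ovl{L}_f)\cdot(\pi_2^*\ovl{L}_g)$ using $\ovl{L}_f^2=\ovl{L}_g^2=0$, split $\pi^*\ovl{L}=\ovl{M}+\ovl{N}$ so the cross term vanishes by flatness of $\ovl{M}$ and triviality of $N$, identify $-\ovl{N}^2=[K:\bb{Q}](f,g)_C$, and conclude from $\ovl{M}^2\le 0$; you additionally spell out the cross-term vanishing via the induction formula with the section $\mathbf{1}$ and justify $\ovl{M}^2\le 0$ by Faltings--Hriljac, where the paper simply asserts the splitting and cites \cite[Section 2.2]{YZ17}.
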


\begin{proof}
Working over the number field $K$ {\blue on the normalization of $C$}, let $\ovl{L}_f,\ovl{L}_g$ be the invariant adelic line bundles for $f,g$ and $\ovl{L}=d_2\pi_1^*\ovl{L}_f-d_1\pi_2^*\ovl{L}_g$, then 
{\blue \[
\begin{aligned}
\ovl{L}^2
&=(d_2\pi_1^*\ovl{L}_f-d_1\pi_2^*\ovl{L}_g)^2
=-2d_1d_2(\pi_1^*\ovl{L}_f)\cdot(\pi_2^*\ovl{L}_g)\\
&=-[K:\bb{Q}]d_1d_2(d_1+d_2)\h_{(f,g)}(C).
\end{aligned}
\]}
On the other hand, we may find a flat adelic line bundle $\ovl{M}$ with same underlying line bundle as $\ovl{L}$, so that $\ovl{N}=\ovl{L}-\ovl{M}$ has the trivial underlying line bundle and 
{\blue \[-\ovl{N}^2 = [K:\bb{Q}](f,g)_C.\]}
Due to flatness of $\ovl{M}$ and $N$ being trivial, we have 
\[\ovl{L}^2 = \ovl{N}^2 + \ovl{M}^2\]
and furthermore, $\ovl{M}^2 \leq 0$ \cite[Section 2.2]{YZ17}. thus 
{\blue \[
d_1d_2(d_1+d_2)\h_{(f,g)}(C)
=\frac{-\ovl{L}^2}{[K:\bb{Q}]}
\geq\frac{-\ovl{N}^2}{[K:\bb{Q}]}
=(f,g)_C
\]}
as desired.
\end{proof}

Now let $S$ be a quasi-projective variety over s number field $K'$ and let $S$ parametrize a family of triples $\left(f,g,C\right)$ where $\left(f,g\right)$ are rational maps on $\bb{P}^1$ of degree $d \ge 2$ and $C$ a degree $\left(d_1,d_2\right)$ curve. For each non-trivially valued $s \in S^{\an}$, we obtain a complete valued residue field $\cal{H}\left(s\right)$ and a triple $\left(f,g,C\right)$ defined over $\cal{H}\left(s\right)$ and hence a the pairing $\left\langle f , g \right\rangle_{C,s}$. This gives us a function
\[\left\langle f , g \right\rangle_{C,s} : S^{\an} \setminus S^{\triv} \to \bb{R}_{\geq 0}\]
where $S^{\triv}$ is the set of $s \in S^{\an}$ for which the residue field $\cal{H}\left(s\right)$ is trivially valued. We wish to prove that this function is continuous. We will instead prove a weaker claim that we may cover $S$ with finitely many locally closed subvarieties $Z_i$ so that the function restricted to each $Z_i^{\an}$ is continuous.

\begin{proposition} \label{FlatConstruction2}
Let $\cal{C}/S$ be a flat family of geometrically irreducible, projective curves where $S$ is a quasi-projective variety over a number field $K$. We may cover $S$ with finitely many locally closed irreducible normal subvarieties $Z_1,\ldots,Z_m$ such that for each $i$:

\begin{enumerate}
\item There exists a flat family of smooth projective curves $\cal{Y}_i/Z_i$ where $Z_i$'s is a quasi-projective variety over $K$ with a morphism $\pi:\cal{Y}_i/Z_i \to \cal{C}_{Z_i}/Z_i$ where $\cal{C}_{Z_i}$ is the restriction of $X$ to $Z_i$.

\item For each $s \in Z_i$, the morphism $\pi: \cal{Y}_{i,s} \to \cal{C}_{i,s}$ is birational.
\end{enumerate}
\end{proposition}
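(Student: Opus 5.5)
We argue by Noetherian induction on $S$. It suffices to show: for every irreducible closed subvariety $T \subseteq S$ there is a nonempty open $Z \subseteq T$, which we may take normal (the normal locus of a variety over a field of characteristic $0$ is open and dense), over which (1) and (2) hold. Indeed, applying this to each irreducible component of $S$ and then to the irreducible components of the complement $T \setminus Z$ terminates by Noetherianity and produces the finite cover $Z_1,\ldots,Z_m$.

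Fix such a $T$ with generic point $\eta$ and function field $k(\eta)$. The generic fiber $\cal{C}_\eta$ is a geometrically irreducible projective curve over $k(\eta)$. Let $\nu_\eta : Y_\eta \to \cal{C}_\eta$ be its normalization. Since $\operatorname{char} k(\eta) = 0$, $Y_\eta$ is smooth and projective over $k(\eta)$. It is also geometrically irreducible because $\cal{C}_\eta$ is, and $\nu_\eta$ is finite and birational.

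Next we spread out. By standard limit arguments (EGA IV, \S 8), there is a nonempty open $Z \subseteq T$, a scheme $\cal{Y}$ projective over $Z$, and a finite $Z$-morphism $\pi : \cal{Y} \to \cal{C}_Z$ whose generic fiber is $\nu_\eta$. Shrinking $Z$ further, we arrange the following.

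\begin{enumerate}[(a)]
\item $\cal{Y} \to Z$ is flat with smooth, geometrically irreducible fibers. Flatness holds on an open set by generic flatness. Smoothness of the fibers is open on the base for a proper flat morphism. Geometric irreducibility of fibers is a constructible condition containing $\eta$ (EGA IV 9.7.7).
\item $\pi$ is birational on every fiber. Let $W \subseteq \cal{C}_\eta$ be a dense open subset over which $\nu_\eta$ is an isomorphism, and spread it out to an open $\cal{W} \subseteq \cal{C}_Z$ such that $\pi^{-1}(\cal{W}) \to \cal{W}$ is an isomorphism. The complement of $\cal{W}$ has finite generic fiber, so after shrinking $Z$ it is quasi-finite over $Z$. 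Every fiber of $\cal{C}_Z \to Z$ is an irreducible curve, so $\cal{W}_s$ is dense in $\cal{C}_s$. On each fiber, $\pi_s$ is then an isomorphism over a dense open subset, hence birational.
\end{enumerate}

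Finally, intersect $Z$ with the normal locus of $T$. This gives the required locally closed irreducible normal $Z_i$.

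The only step needing care is (a), namely that geometric irreducibility and smoothness of the fibers persist on an open neighborhood of $\eta$. The cleanest way to handle it is to cite EGA IV 12.2.4 for the smooth and geometrically connected locus of a proper flat morphism. For smooth proper fibers, geometric connectedness already implies geometric irreducibility.
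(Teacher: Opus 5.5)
Your proposal is correct and follows essentially the same route as the paper's proof: Noetherian induction on $S$, normalization of the generic fiber (smooth since the characteristic is $0$), spreading out to an open $Z$, and shrinking $Z$ to be normal. Your version also justifies steps the paper only asserts, namely openness of the locus of smooth, geometrically irreducible fibers and fiberwise birationality via spreading out the locus where $\nu_\eta$ is an isomorphism.
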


\begin{proof}
We may assume that $S$ is irreducible. We note that it suffices to construct a Zariski open $Z_1$ satisfying the properties we want by induction on the complement $S \setminus Z_1$, where finiteness follows from noetherianess. 
\par 
To obtain $Z_1$, let $\eta \in S$ be the generic point and let $\cal{C}_{\eta}/k\left(S\right)$ be the generic fiber where $k = K'$. We may resolve its singularities by taking a normalization and so obtaining $\pi: Y_{\eta} \to X_{\eta}$ over $k\left(S\right)$ with $Y_{\eta}$ being a smooth projective curve. 
\par 
This spreads out to a family $\cal{Y}/U'$ of smooth projective curves with a morphism $\pi: \cal{Y}/U' \to \cal{C}/U$ for some Zariski open $U$ of $S$. By shrinking $U$, we may assume that $U$ is normal since $S$ is integral. Furthermore, each fiber $\pi: \cal{Y}_s \to \cal{C}_s$ is birational. Finally we may shrink $U'$ so that $\pi: U' \to U$ is isomorphic as $k\left(U'\right)$ is isomorphic to $k\left(U\right)$. We thus obtain our $Z_1$ by setting $Z_1 = U$ as desired.    
\end{proof}

We now show that when restricted to such $Z_i$'s, our function $\left\langle f , g \right\rangle_{C,s}$ is continuous. Let $S$ be our quasi-projective variety parametrizing our triples $\left(f,g,C\right)$. By Proposition \ref{FlatConstruction2}, we may cover $S$ with $Z_i$'s so that on each $Z_i$, our family of curves $\cal{C}_{Z_i}/Z_i$ has a simultaneous resolution $\pi: \cal{C}'/Z_i \to \cal{C}_{Z_i}/Z_i$. Before we begin the proof, as we will have to use the theory of adelic line bundles over quasi-projective varieties by Yuan--Zhang \cite{YZ24}, we will quickly recall some definitions and facts here. 
\par 
Let $X$ be a flat integral projective scheme over $\Spec \bb{Z}$. Then an arithmetic $\bb{Q}$-divisor $\cal{D}$ is a pair $\left(D,g_{\cal{D}}\right)$ where $D$ is a $\bb{Q}$-Cartier divisor and $g_{\cal{D}}$ a continuous Green's function for the divisor $D$ on $X\left(\bb{C}\right)$. Now if $U$ is a quasi-projective scheme flat over $\Spec \bb{Z}$, we define the space of arithemtic model divisors $\wh{\Div}_{\bb{Q}}\left(U\right)_{\mathrm{mod}}$ as 
\[\wh{\Div}_{\bb{Q}}\left(U\right)_{\mathrm{mod}} = \lim_{\rightarrow} \wh{\Div}_{\bb{Q}}\left(X\right)\]
where $X$ ranges over all compactifications of $U$. Now fixing some compactification $\ovl{U}$, we may define a boundary divisor $\cal{E}$ which is a model divisor whose support is $\ovl{U} \setminus U$ and $g_{\cal{E}} > 0$ everywhere. We may then endow $\wh{\Div}_{\bb{Q}}\left(U\right)_{\mathrm{mod}}$ with a suitable boundary topology and form its completion, giving us the space of adelic divisors $\wh{\Div}_{\bb{Q}}\left(U\right)$. Quotienting out by a suitable equivalence gives us the space of adelic line bundles $\wh{\Pic}\left(U/\bb{Z}\right)$. 
\par 
Finally if $X$ is simply a quasi-projective variety over a number field $K$, we will let 
\[\wh{\Pic}\left(X/\bb{Z}\right):= \lim_{\rightarrow} \wh{\Pic}\left(\cal{U}/\bb{Z}\right)\]
where the limit is taken over all quasi-projective models $\cal{U}$ of $X$ over $\Spec \bb{Z}$. 
\par 
The space $\wh{\Pic}\left(X/\bb{Z}\right)$ is usually too big and we will work with a subset of it known as the integrable adelic line bundles. All adelic line bundles/divisors we will use are integrable and so we will simply impose integrability as an additional condition to being an adelic divisor. We can now begin our proof that the function $\left\langle f , g \right\rangle_{C,s}$ is continuous. We will further assume that our base $S$ is normal, so that we can apply the theory of Deligne pairings for adelic line bundles \cite[Section 4]{YZ24}. 

\begin{proposition} \label{FlatContinuity1}
Assume that $S$ is both integral and normal. Also assume that our family of curves $\cal{C}/S$ has a simultaneous resolution $\pi: \cal{C}'/S \to \cal{C}/S$. Then the function
\[\left\langle f , g \right\rangle_{C,s} : S^{\an} \setminus S^{\triv} \to \bb{R}_{\geq 0}\]
is continuous. 
\end{proposition}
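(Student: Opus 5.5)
The plan is to realize $\left\langle f , g \right\rangle_{C,s}$ as the local value, at $s$, of a Deligne pairing of integrable adelic line bundles on $\cal{C}'/S$, and then to use that Deligne pairings of integrable adelic line bundles have continuous metrics on $S^{\an}$ \cite[Section 4]{YZ24}.

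\emph{Step 1: the invariant bundles in families.} Over the parameter space of pairs of rational maps, the dynamical systems $\left(\bb{P}^1_S,f\right)$ and $\left(\bb{P}^1_S,g\right)$ are polarized by $O\left(1\right)$. The Tate limiting argument of Yuan--Zhang \cite[Theorem 6.1.1]{YZ24} then produces invariant integrable adelic line bundles $\ovl{L}_f,\ovl{L}_g \in \wh{\Pic}\left(\bb{P}^1_S/\bb{Z}\right)$. Their restrictions to the fiber over each non-trivial $s \in S^{\an}$ are the metrized bundles with curvature $\mu_{f_s},\mu_{g_s}$.

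We pull these back along $\cal{C}' \to \cal{C} \subseteq \left(\bb{P}^1\right)^2_S$ and set
\[\ovl{L} = d_2\pi_1^*\ovl{L}_f - d_1\pi_2^*\ovl{L}_g.\]
Its underlying line bundle $L$ has degree $0$ on every fiber of the smooth family $\cal{C}'/S$.

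\emph{Step 2: the flat part.} Since $S$ is normal and integral, Proposition \ref{FlatMetric1} gives an admissible adelic extension $\ovl{M}$ of $L$ whose fiberwise restrictions are flat. We then put $\ovl{N} = \ovl{L} - \ovl{M}$. Its underlying bundle is fiberwise trivial, so after shrinking or twisting by a line bundle pulled back from $S$ we may take it to be $O_{\cal{C}'}$, with the metric given by a continuous function on $\cal{C}'^{\an}$.

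\emph{Step 3: the Deligne pairing.} Consider $\langle \ovl{N},\ovl{N}\rangle \in \wh{\Pic}\left(S/\bb{Z}\right)$, using the Deligne pairing for the smooth projective morphism $\cal{C}' \to S$ \cite[Section 4]{YZ24}. The underlying bundle is canonically trivial via the sections $\left(\bf{1},\bf{1}\right)$. By the compatibility of Deligne pairings with base change to $\cal{H}\left(s\right)$, the function $-\log\left|\bf{1}\right|$ at $s$ equals the local intersection number $\ovl{N}_s^2$, and hence equals $-\left\langle f , g \right\rangle_{C,s}$.

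Since the metric of an adelic line bundle on $S$ is continuous on $S^{\an}$, the function $s \mapsto \left\langle f , g \right\rangle_{C,s}$ is continuous on $S^{\an}\setminus S^{\triv}$. The ambiguities do not affect this: the flat metric is only unique up to constants, and the choice of trivialization of $N$ may vary, but $-\ovl{N}_s^2$ is independent of both by the remark following its definition. So the identification holds pointwise, and continuity follows.

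\emph{Main obstacle.} The delicate point is Step 2 combined with the base change in Step 3. One must check that the family metric from Proposition \ref{FlatMetric1}, restricted to every fiber $\cal{C}'_s$ over a non-trivially valued point $s$ (not just over points of $S$), is the Yuan--Zhang flat metric, i.e.\ has vanishing Chambert--Loir measure there. I would first try to deduce this from the admissibility characterization in \cite[Definition 2.5]{Yua24}, by showing the fiberwise curvature $c_1\left(\ovl{M}_s\right)$ is a limit of zero measures along the model approximations. Failing that, I would identify $\ovl{M}$ with the pullback of the canonical (N\'eron-type) metric on the Poincar\'e bundle along the Abel--Jacobi map to $\Pic^0_{\cal{C}'/S}$, where fiberwise flatness holds because the canonical metric is translation-invariant.
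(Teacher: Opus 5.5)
Your proposal is correct and follows the paper's proof essentially step for step. Both use the invariant bundles $\ovl{L}_f,\ovl{L}_g$ from \cite[Theorem 6.1.1]{YZ24} and the admissible extension $\ovl{M}$ from Proposition \ref{FlatMetric1}, then read off $-\ovl{N}_s^2=\left\langle f,g\right\rangle_{C,s}$ from the norm of the section $\left\langle \mathbf{1},\mathbf{1}\right\rangle$ of the Deligne pairing $\left\langle \ovl{N},\ovl{N}\right\rangle\in\wh{\Pic}\left(S/\bb{Z}\right)$, whose metric is continuous on $S^{\an}$. The point you flag as the main obstacle is handled in the paper by the definition of an admissible adelic extension, which already requires a flat restriction at every $s\in S^{\an}$ and not only at scheme points; also, no shrinking or twisting is needed, since $\ovl{M}$ is a metric on $\pi^*L$ itself and so $N\simeq O_{\cal{C}'}$ canonically.
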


\begin{proof}
As $S$ parametrizes a family of triples $\left(f,g,C\right)$, we obtain a morphism $f: \bb{P}^1_S \to \bb{P}^1_S$ which is polarizable for the line bundle $O\left(1\right)$. By \cite[Theorem 6.1.1]{YZ24}, we have a $f$-invariant adelic line bundles $\ovl{L}_f \in \wh{\Pic}\left(\bb{P}^1_S/\bb{Z}\right)$ with underlying line bundle $\tilde{L}_f \in \wh{\Pic}\left(\bb{P}^1_S/\bb{Q}\right)$. Similarly we obtain $\ovl{L}_g$. Then viewing $\cal{C}/S$ as embedded in $\left(\bb{P}_S^1\right)^2$, we may form the adelic line bundle 
\[\ovl{L} = d_2 \pi_1^* \ovl{L}_f - d_1 \pi_2^* \ovl{L}_g \in \wh{\Pic}\left(\cal{C}/\bb{Z}\right).\]
Then if $\tilde{L}$ is its underlying line bundle, restricting to each fiber gives us a numerically trivial line bundle $L_s$. Now we may pull this back under $\pi$ to obtain $\pi^*\ovl{L} \in \wh{\Pic}\left(\cal{C}'/\bb{Z}\right)$ with underlying line bundle $\pi^*L$. As $\pi^*L$ is numerically trivial for each $s \in S$, there exists a flat adelic metric on it given by an adelic line bundle $\ovl{M} \in \wh{\Pic}\left(\cal{C}'/\bb{Z}\right)$. Then 
\[\ovl{N} = \pi^*\ovl{L} - \ovl{M} \in \wh{\Pic}\left(\cal{C}'/\bb{Z}\right)\]
is an adelic line bundle with underlying line bundle isomorphic to $O_{\cal{C}'}$. Furthermore for each $s \in S^{\an}$, we have 
\[-\ovl{N}^2_s = \left\langle f , g \right\rangle_{C,s}\]
where $\ovl{N}^2_s$ is the induced metrized line bundle over the residue valued field $\cal{H}\left(s\right)$. Now consider the Deligne pairing 
\[\left\langle \ovl{N}, \ovl{N} \right\rangle \in \wh{\Pic}\left(S/\bb{Z}\right).\]
Then there is a global section of $\left\langle N , N \right\rangle$ given by $\left\langle \mathbf{1}, \mathbf{1} \right\rangle$ where $\bf{1}$ is a constant section in $O_{\cal{C}'}$. Taking its norm, it follows that 
\[-\log \left|\left\langle \mathbf{1}, \mathbf{1} \right\rangle\right|_s: S^{\an} \to \bb{R}\]
is a continuous function. When $\cal{H}\left(s\right)$ is not trivially valued, the fiberwise formula \cite[Sections 4.2.2 and 4.6.2]{YZ24} imply that 
\[-\log \left|\left\langle \mathbf{1}, \mathbf{1} \right\rangle\right|_s = \ovl{N}_s^2\]
and so the function $-\ovl{N}_s^2 = \left\langle f ,g \right\rangle_{C,s}$ is continuous on $S^{\an} \setminus S^{\triv}$ as desired.
\end{proof}

\section{Uniform Energy Pairing Bounds} \label{sec: EnergyPairing}
{\blue The main results of this section are the uniform local energy bounds in Proposition~\ref{IntroUniformEnergy1} and their variants in Propositions~\ref{UniformEnergy2}--\ref{UniformEnergy4}.} Recall from Section \ref{sec: Flat} that if $\left(K,\left| \cdot \right|\right)$ is a complete valued field, $f,g: \bb{P}^1 \to \bb{P}^1$ two rational maps of degree $d \ge 2$ over $K$ and $C \subseteq \left(\bb{P}^1\right)^2$ a geometrically irreducible degree $\left(d_1,d_2\right)$-curve over $K$, then we have a local energy pairing $\left\langle f ,g \right\rangle_C$. We have the following basic positivity property of the pairing. 

\begin{proposition} \label{Pairing1}
The inequality 
\[\left\langle f , g \right\rangle_C \geq 0\]
holds, with equality if and only if 
\[d_2 \pi_1^* \mu_f = d_1 \pi_2^* \mu_g.\]
\end{proposition}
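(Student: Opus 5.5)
We will work on the normalization $\pi: C' \to C$, with $\ovl{L} = d_2\pi_1^*\ovl{L}_f - d_1\pi_2^*\ovl{L}_g$ pulled back to $C'$, its flat metric $\ovl{M}$, and $\ovl{N} = \pi^*\ovl{L} - \ovl{M}$ as in Section \ref{sec: Flat}. Since $N \simeq O_{C'}$, we may write $\ovl{N} = (O_{C'}, e^{-\phi})$ for a continuous function $\phi$ on $C'^{\an}$, determined up to an additive constant. The local intersection number computed with the sections $(\mathbf{1},\mathbf{1})$ is then
\[\ovl{N}^2 = \int_{C'^{\an}} \phi \, c_1(\ovl{N}) = \int \phi \, dd^c\phi,\]
which is the energy formula for the Chambert--Loir measure on a curve. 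So the first step is to express $-\ovl{N}^2$ as a Dirichlet-type energy $-\int \phi\, dd^c \phi$. Here $c_1(\ovl{N}) = c_1(\pi^*\ovl{L}) - c_1(\ovl{M}) = \pi^*(d_2\pi_1^*\mu_f - d_1\pi_2^*\mu_g)$, because $c_1(\ovl{M}) = 0$ by flatness. This measure has total mass zero.

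The second step is positivity. On the Berkovich curve $C'^{\an}$, the pairing $-\int \phi\, dd^c\phi$ is the Dirichlet form of the Thuillier/Baker--Rumely potential theory: for $\phi$ a difference of continuous semipositive potentials it equals $\int d\phi\wedge d^c\phi \geq 0$. Concretely, we will approximate $\phi$ by model functions, i.e. piecewise linear functions on a skeleton $\Gamma$ of $C'^{\an}$. On a metric graph, $-\int \phi\,\Delta\phi = \int_\Gamma |\phi'|^2\,dx \geq 0$. The inequality then follows by continuity of local intersection numbers under uniform limits of metrics. Alternatively, we can invoke the Hodge index theorem for local intersection on curves over non-archimedean fields, as in \cite{YZ17}, \cite{Thu05}.

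The third step is the equality case. If $d_2\pi_1^*\mu_f = d_1\pi_2^*\mu_g$ on $C$, then its pullback to $C'$ agrees, so $dd^c\phi = 0$. Then $\ovl{N}^2 = 0$.

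Conversely, suppose $\ovl{N}^2 = 0$. Then the Dirichlet energy of $\phi$ vanishes. We want to conclude that $\phi$ is constant, hence $c_1(\ovl{N}) = 0$, which gives $\pi^*(d_2\pi_1^*\mu_f - d_1\pi_2^*\mu_g) = 0$. Since $\pi$ is an isomorphism outside finitely many type I points, and the pulled-back measures do not charge these points (the equilibrium measures have no atoms, or else are supported off $\bb{P}^1(K)$), this yields equality on $C$ itself.

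The main obstacle is exactly this rigidity of the equality case: that vanishing energy forces $dd^c\phi = 0$ for a merely continuous, non-model $\phi$. We would try the following. Finite energy functions form a Hilbert space modulo constants, with $\int \phi\,dd^c\phi$ continuous under the relevant convergence; this is Thuillier's theory. Cauchy--Schwarz then gives $|\int \psi\, dd^c\phi|^2 \leq (\int \psi dd^c\psi)(\int\phi dd^c\phi) = 0$ for every smooth test function $\psi$, so the measure $dd^c\phi$ vanishes. Here $\psi$ ranges over model functions, which are dense in continuous functions. This Cauchy--Schwarz route avoids needing constancy of $\phi$ directly, and it is the version we would write.
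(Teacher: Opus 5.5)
Your proposal is correct and is essentially the paper's argument: the paper applies the local Hodge index theorem of \cite{YZ17} to $\ovl{N}$ on the normalization $C'$, which gives $-\ovl{N}^2\geq 0$ with equality iff $c_1(\ovl{N})=0$, and then descends to $C$ exactly as you do, using that the normalization is an isomorphism away from finitely many type I points that $\pi_1^*\mu_f$ and $\pi_2^*\mu_g$ do not charge. Your Cauchy--Schwarz step simply unpacks the equality clause of that theorem, which the paper cites directly; note also that the same argument goes through at archimedean places with smooth test functions on $C'(\mathbb{C})$, a case the paper needs in Proposition \ref{BoundedHeight1}.
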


\begin{proof}
The inequality follows from the local Hodge index theorem applied on $\ovl{N}$ and the curve $C'$ \cite[Theorem 1.1]{YZ17}. Then equality holds if and only if $c_1\left(\ovl{N}\right) = 0$, which is equivalent to
\[d_2 \phi^* \pi_1^* \mu_f = d_1 \phi^* \pi_2^* \mu_g.\]
Since $\phi: C' \to C$ is birational, the map $\phi: \left(C'\right)^{\an} \to C^{\an}$ is an isomorphism outside of a finite set of Type I points and as $\mu_f,\mu_g$ do not charge any Type I points, it follows that 
\[d_2 \pi_1^*\mu_f = d_1 \pi_2^* \mu_g\]
as desired. 
\end{proof}

We note if $K'$ is another complete valued field containing $K$, then the quantity $\left\langle f ,g \right\rangle_C$ remains invariant if we work over $K'$ instead of $K$. When $K$ is archimedean this is clear. For non-archimedean $K$, because $\ovl{N}$ is constructed from line bundles coming from dynamics, there exists a sequence of model line bundles $\left\{\ovl{N}_{i}\right\}_{i \geq 1}$ such that if $K'$ is any complete value field extending $K$, then the base change $\left\{\ovl{N}_{K',i}\right\}$ converges to $\ovl{N}_{K'}$. By \cite[Remark 3.10]{Gub07}, the local intersection number for model metrics is invariant, i.e. $\ovl{N}^2_i = \ovl{N}_{K',i}^2$. Thus taking limits, we obtain $\ovl{N}^2 = \ovl{N}_{K'}^2$ as desired.
\par 
Now if $K$ is a number field and $f,g,K$ are defined over $K$, we may consider the global energy pairing 
\[\left(f,g\right)_C := \sum_{v \in M_K} n_v \left\langle f , g \right\rangle_{C,v}\]
where $\left\langle f , g \right\rangle_{C,v}$ is the energy pairing over the field $\bb{C}_v$.
\par 
Let $X = \Rat_d \times \Rat_d \times C_{d_1,d_2}$ be the moduli space of all triples $\left(f,g,C\right)$ where $f,g$ are rational maps of degree $d \ge 2$ and $C_{d_1,d_2}$ is a degree $\left(d_1,d_2\right)$-curve. This is a fine moduli space defined over $\bb{Q}$, and even over $\bb{Z}$ although the latter is not important for us. By Propositions \ref{FlatConstruction2} and \ref{FlatContinuity1} and taking normalizations, we may find finitely many normal quasi-projective subvarieties $\left\{Z_k\right\}$ that cover $X$. Then each $Z_k$ can be considered as paremetrizing a family of triples $\left(f,g,C\right)$. As $Z_k$ is normal, we know that the function 
\[\left\langle f , g \right\rangle_{C,s}: Z_k^{\an} \setminus Z_k^{\triv} \to \bb{R}_{\geq 0}\]
is continuous. We will now restrict ourselves to each $Z_k$. We first study its preperiodic locus.

\begin{proposition} \label{WeaklySpecial1}
There exists a countable collection of subvarieties $V_1,V_2,\ldots,$ of $Z_k$ such that if $s \in Z_k$ corresponds to a triplet $\left(f_s,g_s,C_s\right)$ over the residue field $\ovl{\bb{Q}}\left(s\right)$, then $C_s$ is preperiodic if and only if $s \in V_i$ for some $i$. 
\end{proposition}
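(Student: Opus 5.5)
The plan is to write the preperiodic locus as a countable union of loci on which the curve satisfies an explicit periodicity relation of prescribed combinatorial type, and to show that each such locus is Zariski closed in $Z_k$. For integers $m \geq 0$ and $p \geq 1$, let $\Phi_s = \left(f_s,g_s\right)$ and
\[W_{m,p} = \left\{ s \in Z_k \mid \Phi_s^{m+p}\left(C_s\right) = \Phi_s^{m}\left(C_s\right)\right\}.\]
By definition, $C_s$ is preperiodic for $\Phi_s$ exactly when $s \in W_{m,p}$ for some pair $\left(m,p\right)$. There are only countably many such pairs. Once we know that each $W_{m,p}$ is a closed subvariety of $Z_k$, we can enumerate them, or their irreducible components, as $V_1,V_2,\ldots$, and the proposition follows.

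To prove that $W_{m,p}$ is closed, I would encode the image curves through their defining equations. Write $\cal{C} \subseteq \left(\bb{P}^1\right)^2_{Z_k}$ for the universal curve, and let $\Phi: \left(\bb{P}^1\right)^2_{Z_k} \to \left(\bb{P}^1\right)^2_{Z_k}$ be the universal split map. Because $\Phi$ is finite and $\cal{C}$ is flat over $Z_k$, the pushforward cycle $\Phi^n_*[\cal{C}]$ is a family of effective divisors on $\left(\bb{P}^1\right)^2$. Its bidegree is $\left(d^n d_1', d^n d_2'\right)$ up to the correct bookkeeping, so it is constant in $s$. Its fiber over $s$ is $\deg\left(\Phi_s^n|_{C_s}\right)\cdot[\Phi_s^n\left(C_s\right)]$. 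Concretely, this divisor is given by the norm of the defining equation of $C_s$ along $\Phi_s^n$, which is a resultant-type polynomial in the coefficients of $f_s$, $g_s$ and $C_s$. It therefore defines a morphism $\psi_n: Z_k \to |O\left(a_n,b_n\right)|$ into a projective space.

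The reduced image $\Phi_s^n\left(C_s\right)$ is the unique irreducible component of this divisor. This gives the criterion
\[\Phi_s^{m+p}\left(C_s\right) = \Phi_s^{m}\left(C_s\right) \iff \psi_{m+p}\left(s\right)^{e} \text{ and } \psi_m\left(s\right)^{e'} \text{ are proportional for some } e,e' \text{ with } e\, a_{m+p} = e' a_m.\]
The multiplicities $\deg\left(\Phi_s^n|_{C_s}\right)$ take only finitely many values, because their product with $\deg \Phi^n\left(C_s\right)$ is fixed. So $W_{m,p}$ is a finite union over the possible exponent pairs $\left(e,e'\right)$. Each piece is the preimage, under $\left(\psi_{m+p},\psi_m\right)$ composed with the appropriate power maps, of the diagonal in a product of projective spaces, which is a closed condition. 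Alternatively, one could define $W_{m,p}$ as the image in $Z_k$ of the closed set $\{\Phi^{m+p}\left(\cal{C}\right)\subseteq \Phi^m\left(\cal{C}\right) \text{ fiberwise}\}$, using properness of the relevant incidence correspondence.

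I expect the main obstacle to be making the multiplicity bookkeeping honest: the multiplicity $\deg\left(\Phi_s^n|_{C_s}\right)$ can jump in special fibers, so one cannot simply compare the pushforward divisors directly. Two fixes are available. The first is the finite case split over exponents described above. The second is to work in the Chow variety (or Hilbert scheme) of reduced curves and use upper semicontinuity of the fiber dimension of the incidence $\{\left(s,x\right) \mid x\in \Phi_s^{m+p}\left(C_s\right)\setminus \Phi_s^m\left(C_s\right)\}$.

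Finally, the statement is over the residue field $\ovl{\bb{Q}}\left(s\right)$. Since each $W_{m,p}$ is defined over the base field, membership of $s$ can be tested after base change to an algebraic closure, so it makes no difference which field we use.
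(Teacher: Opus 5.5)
Your proposal is correct and is essentially the paper's argument: the paper also realizes $(f^m,g^m)_*C$ as an algebraic map into the projective space of bihomogeneous forms (pushforward of cycles, computed by a norm/resultant) and cuts out each locus by the closed condition $\Phi^m = d^{m-n}\cdot\Phi^n$, i.e.\ one pushforward form is proportional to a power of the other. The multiplicity issue you flag needs no case split: your own constraint $e\,a_{m+p}=e'\,a_m$ forces $e'=d^p e$, and by unique factorization every such pair gives the same single condition $\psi_{m+p}(s)\propto\psi_m(s)^{d^p}$, which is exactly the paper's equation.
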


\begin{proof}
It suffices to handle the case of $X = \Rat_d \times \Rat_d \times C_{d_1,d_2}$. For each pair $\left(m,n\right)$ of positive integers, we let $V_{m,n}$ be the locus for which $\left(f_s,g_s\right)^m\left(C_s\right) = \left(f_s,g_s\right)^n\left(C_s\right)$. We claim that this cuts out a Zariski closed set in $X$. Indeed, let $C'_{d_1,d_2}$ denote the space of all degree {\blue $\left(d_2,d_1\right)$}-bihomogeneous forms up to scalars. We may view $C_{d_1,d_2}$ as an open variety of $C'_{d_1,d_2}$. Then we have an algebraic map 
\[\Phi^m: \Rat_d \times \Rat_d \times C'_{d_1,d_2} \to \Rat_d \times \Rat_d \times C'_{d^m d_1, d^m d_2}\]
given by
\[\left(f,g,C\right) \mapsto \left(f,g, \left(f^m,g^m\right)_*C\right)\]
where $\left(f^m,g^m\right)_*$ denotes the pushforward operator on cycles (for example, see the proof of \cite[Theorem 3.8]{Hut18}.  
For $m > n$, we have a multiplication by $d^{m-n}$ map sending $C'_{d^n d_1, d^n d_2}$ to $C'_{d^m d_1, d^m d_2}$, given by $C \mapsto C^{d^{m-n}}$. 
\par 
Since $C$ is irreducible, so is $\left(f^m,g^m\right)\left(C\right)$ and thus the locus we are interested in is thus given by $V_{m,n} = \left\{\Phi^m = d^{m-n} \cdot \Phi^n\right\}$ which is Zariski closed. Restricting to $C_{d_1,d_2}$ gives us a closed subvariety. Now letting $\left(m,n\right)$ vary over all pairs of non-negative integers gives us our countable collection $V_1,V_2,\ldots$ as desired.    
\end{proof}

We can now our state our first main uniform lower bound on the energy pairing. Let $V_1,V_2,\ldots$ be our countable collection of closed subvarieties whose union is the preperiodic locus. Following Section \ref{sec: LocalSubSec}, we fix local height functions $\lambda_{\partial V_i}$ for each subvariety $V_i$ and a local height function $\lambda_{\partial Z_k}$ for the boundary of $Z_k$. We will prove a uniform energy bound on $\left\langle f ,g \right\rangle_C$, assuming that our triplet $\left(f,g,C\right)$ is not too close to any $V_i$ and also that the Lyapunov exponent $\Lyap\left(f\right)$ of $f$ is not too small. 

\begin{proposition} \label{UniformEnergy1}
Let $\left(K,\left| \cdot \right|\right)$ be a complete non-trivially valued field of characteristic $0$. Let $\delta, c_1,c_2,\ldots$ be positive constants. Then there exists constants $N > 0$ and $c > 0$ depending on $K,\delta,c_1,c_2,\ldots$ such that if $s = \left(f,g,C\right) \in Z_k\left(K\right)$ satisfies:
\begin{enumerate}
\item $\lambda_{V_i}\left(s\right) \leq c_i \lambda_{\partial Z_k}\left(s\right)$ for all $i \geq 1$;

\item $\Lyap(f) \geq \delta \lambda_{\partial Z_k}(s)$;

\item $\lambda_{\partial Z_k}\left(s\right) \geq N$.
\end{enumerate}
Then we have the uniform lower bound 
\[\left\langle f,g \right\rangle_C \geq c \lambda_{\partial Z_k}\left(s\right).\]
\end{proposition}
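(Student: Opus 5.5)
We argue by contradiction via ultrafilter degeneration, as in Section \ref{sec: DegenUltra}. Suppose no such $N,c$ exist. Then there is a sequence $s_n=(f_n,g_n,C_n)\in Z_k(K)$ satisfying (1) and (2), with $\lambda_{\partial Z_k}(s_n)\to\infty$ and
\[\langle f_n,g_n\rangle_{C_n}\leq \tfrac1n\,\lambda_{\partial Z_k}(s_n).\]
We set $\eps_n=1/\lambda_{\partial Z_k}(s_n)\to 0$. Because $K$ is fixed and $\eps_n\to0$, the quantity $\eps_n\log^+|p|$ stays bounded for every $p\in K^\times$. Moreover $\eps_n\lambda_{\partial Z_k}(s_n)=1$. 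Proposition \ref{DegenerationExistence2} therefore produces an $(\eps_n)$-degeneration $s\in Z_k(A^\eps)$ of $(s_n)$.

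We fix a non-principal ultrafilter $\omega$. It gives a point $|\cdot|_\omega\in\cal M(A^\eps)$, hence a point $s_\omega\in Z_k^{\an}$. Its residue field $\cal H(s_\omega)$ is the completed ultraproduct with the rescaled norm $\lim_\omega|\cdot|_n^{\eps_n}$. Since $\eps_n\to0$, this norm is trivial on $K$, so the field is non-archimedean with residue characteristic $0$. It is non-trivially valued: the boundary local height is computed by a minimum of $-\log|f_i|$ as in \eqref{eqn:distancemin}, and the rescaling by $\eps_n$ gives $\lambda_{\partial Z_k}(s_\omega)=\lim_\omega\eps_n\lambda_{\partial Z_k}(s_n)=1$. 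In particular $s_\omega\notin Z_k^{\triv}$. Over $\cal H(s_\omega)$ (after passing to an algebraically closed completion, which leaves the pairing invariant) we get a limiting triple $(f_\omega,g_\omega,C_\omega)$.

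The three hypotheses transfer to the limit as follows.
\begin{itemize}
\item \textbf{Continuity of the pairing.} By Proposition \ref{FlatContinuity1} on the normal $Z_k$, together with the homogeneity of local intersection numbers under rescaling the norm by $\eps_n$, we get $\langle f_\omega,g_\omega\rangle_{C_\omega}=\lim_\omega\eps_n\langle f_n,g_n\rangle_{C_n}=0$.
\item \textbf{Positive Lyapunov exponent.} The Lyapunov exponent scales the same way. We need its continuity, or at least upper semicontinuity in the right direction, in the parameter on $Z_k^{\an}\setminus Z_k^{\triv}$; I would cite Favre--Gong \cite{FG24} or derive it from the continuity of the Deligne pairing of the invariant metric with itself. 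This gives $\Lyap(f_\omega)\geq\delta>0$. I expect this to be the main obstacle: a merely lower-semicontinuous Lyapunov exponent would not suffice.
\item \textbf{Non-preperiodicity.} Hypothesis (1) gives $\eps_n\lambda_{V_i}(s_n)\le c_i$. This bounds $\lambda_{V_i}(s_\omega)$ and shows $s_\omega\notin V_i^{\an}$ for every $i$. Since the $V_i$ are defined over a number field and cut out the preperiodic locus universally (Proposition \ref{WeaklySpecial1}), $C_\omega$ is not preperiodic for $(f_\omega,g_\omega)$. The same holds for all iterate pairs $(f^a,g^b)$, which are already covered since $\deg f=\deg g$ forces $a=b$.
\end{itemize}

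Now Proposition \ref{Pairing1} turns $\langle f_\omega,g_\omega\rangle_{C_\omega}=0$ into $d_2\pi_1^*\mu_{f_\omega}=d_1\pi_2^*\mu_{g_\omega}$. We have $\Lyap(f_\omega)>0$, and $C_\omega$ projects dominantly to both factors because it has bidegree $(d_1,d_2)$ with $d_i>0$. Theorem \ref{IntroRigidity1}, or Theorem \ref{Rigidity1} with $U=C_\omega^{\an}$, then says $C_\omega$ is preperiodic, contradicting the previous step.

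Two technical points remain. First, one must check that $\omega$-limits of $\eps_n$-rescaled local heights and pairings really equal the corresponding quantities at $s_\omega$. This comes from continuity of these functions on $Z_k^{\an}$ along the image of $\cal M(A^\eps)\cong\beta\bb N$, which restricts at the principal ultrafilter $n$ to $s_n$ with the norm $|\cdot|_n^{\eps_n}$. Second, condition (1) ranges over infinitely many $i$. This is harmless because the contradiction is reached at a single $\omega$-limit, where each $V_i$ is excluded individually; no uniformity in $i$ is needed.
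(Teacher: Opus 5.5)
Your proposal is correct and follows the paper's proof essentially step for step. Both degenerate with $\eps_n=\lambda_{\partial Z_k}(s_n)^{-1}$ and pass to a non-principal ultrafilter $\omega$. Continuity of the pairing and of the Lyapunov exponent (the paper cites \cite[Theorem 6.3.7]{YZ24} for the latter) then gives $\langle f_\omega,g_\omega\rangle_{C_\omega}=0$ and $\Lyap(f_\omega)\geq\delta$. Proposition \ref{Pairing1} and Theorem \ref{Rigidity1} then force $s_\omega\in V_m$, contradicting $\lambda_{V_m}(s_\omega)\leq c_m$.

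The one difference is minor. You obtain non-triviality of $\cal{H}(s_\omega)$ from $\lambda_{\partial Z_k}(s_\omega)=1$, whereas the paper deduces it from $\Lyap(f_\omega)>0$ (bad reduction). Your route therefore only needs continuity of $\Lyap$ away from $Z_k^{\triv}$.
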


\begin{proof}
We assume otherwise. Then we have a sequence of points $s_n = \left(f_n,g_n,C_n\right)$ such that 
\[\lambda_{\partial V_i}(s_n) \leq c_i \lambda_{\partial Z_k}(s_n), \Lyap(f_n) \geq \delta \lambda_{\partial Z_k}(s_n), \lambda_{\partial Z_k}(s_n) \geq n \text{ and } \langle f_n, g_n \rangle_{C_n} \leq \frac{1}{n} \lambda_{\partial Z_k}(s_n).\]
We now degenerate using ultrafilters with the scaling $\eps_n = \lambda_{\partial Z_k}\left(s_n\right)^{-1}$ as in Section \ref{sec: DegenUltra}. By Proposition \ref{DegenerationExistence2}, as $\eps_n \to \infty$, we  obtain a morphism 
$s: \Spec A^{\eps} \to Z_k$ that restricts to $s_n: \Spec K \to Z_k$ for each $n$. Observe that we have an induced morphsim $s: \cal{M}\left(A^{\eps}\right) \to Z_k^{\an}$ which is continuous. To see why, observe that we have a continuous morphism $(\Spec A^{\eps})^{\an} \to Z_k^{\an}$ coming from the analytification of $\Spec A^{\eps} \to Z_k$. Then we have a natural inclusion $\cal{M}\left(A^{\eps}\right) \xhookrightarrow{} \left(\Spec A^{\eps}\right)^{\an}$ which is continuous as the topology on both sides is the Gelfand topology. Then $s$ is the composition of the two. 
\par 
Letting $\omega$ be a non-principal ultrafilter, we obtain a triple $\left(f_{\omega},g_{\omega},C_{\omega}\right)$ over the residue field $\cal{H}\left(\omega\right)$, given by our morphism $s$. Observe that $\cal{H}\left(\omega\right)$ is of residue characteristic $0$ since $\eps_n \to 0$. Furthermore, \[\Lyap\left(f_{\omega}\right) = \lim_{\omega} \eps_n \Lyap\left(f_n\right) \geq \delta > 0\] 
as the Lyapunov exponent is a continuous function on $Z_k^{\an}$ an thus on $\cal{M}\left(A^{\eps}\right)$ by \cite[Theorem 6.3.7]{YZ24}. Hence $f$ has bad reduction and in particular $\cal{H}\left(\omega\right)$ is not trivially valued. 
\par 
As $\left\langle f,g \right\rangle_{C}$ is continuous on $Z_k^{\an} \setminus Z_k^{\triv}$, it is also continuous on $\cal{M}\left(A^{\eps}\right)$ and so we obtain 
\[\left\langle f_{\omega}, g_{\omega} \right\rangle_{C_{\omega}} = \lim_{\omega} \eps_n \left\langle f_n , g_n \right\rangle_{C_n} = 0.\]
By Proposition \ref{Pairing1}, we must have $d_2 \pi_1^* \mu_{f_{\omega}} = d_1 \pi_2^* \mu_{g_{\omega}}$. Since $\Lyap\left(f_{\omega}\right) > 0$, by Proposition \ref{Rigidity1}, our curve $C_{\omega}$ must be preperiodic for $\left(f_{\omega},g_{\omega}\right)$. Thus $s_{\omega}$ must lie in $V_m$ for some $m \geq 1$. However, we also have 
\[\lambda_{V_m}\left(s_{\omega}\right) = \lim_{\omega} \eps_n \lambda_{V_m}\left(s_n\right) \leq c_m < +\infty,\]
which implies that $s_{\omega} \not \in V_m$. This is a contradiction and we are done. 
\end{proof}

We can now obtain independence from $K$ and remove the requirement that $\lambda_{\partial K}$ assuming the residue characteristic is large enough. 

\begin{proposition} \label{UniformEnergy2}
Let $\delta, c_1,c_2,\ldots$ be positive constants. Then there exists constants $N > 0$ and $c > 0$ depending on $\delta, c_1,c_2,\ldots$ such that for any algebraically closed complete non-trivially valued field $\left(K,\left| \cdot \right|\right)$ with residue characteristic $\geq N$, if $s = \left(f,g,C\right) \in Z_k\left(K\right)$ satisfies:
\begin{enumerate}
\item ${\blue \lambda_{V_i}}(s) \leq c_i \lambda_{\partial Z_k}(s)$ for all $i \geq 1$;

\item $\Lyap\left(f\right) \geq \delta \lambda_{\partial Z_k}\left(s\right),$
\end{enumerate}
then we have the uniform lower bound 
\[\left\langle f,g \right\rangle_C \geq c \lambda_{\partial Z_k}\left(s\right).\]
\end{proposition}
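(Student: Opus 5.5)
We argue by contradiction, following the ultrafilter degeneration argument in the proof of Proposition \ref{UniformEnergy1}, but now letting the valued field vary with $n$. Suppose no such $N,c$ exist. Then there are algebraically closed complete non-trivially valued fields $(K_n,|\cdot|_n)$ with residue characteristic $p_n\geq n$, and points $s_n=(f_n,g_n,C_n)\in Z_k(K_n)$ satisfying (1) and (2), but with
\[\left\langle f_n,g_n\right\rangle_{C_n}\leq \tfrac{1}{n}\lambda_{\partial Z_k}(s_n).\]
We first dispose of the case where $\lambda_{\partial Z_k}(s_n)$ stays bounded along a subsequence. Here the point is that the residue characteristic tends to infinity. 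Each presentation-dependent error term $\log^+\max\{|p_1|_n,\ldots,|p_m|_n\}$ vanishes once $p_n$ avoids the finitely many primes dividing the numerators and denominators of the $p_i$. Hence $\lambda_{\partial Z_k}(s_n)\geq 0$ up to a vanishing error. If $\lambda_{\partial Z_k}(s_n)$ is essentially $0$, the conclusion $\langle f,g\rangle_C\geq c\lambda_{\partial Z_k}(s)$ is trivial by Proposition \ref{Pairing1}. So the relevant regime is $\lambda_{\partial Z_k}(s_n)>0$, and we rescale by $\eps_n=\lambda_{\partial Z_k}(s_n)^{-1}$ in all cases.

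Next we check that the hypothesis of Proposition \ref{DegenerationExistence2} holds. For each $p\in K^\times$, $\eps_n\log^+|p|_n=0$ for $n$ large, because $p_n\to\infty$; this is the second situation highlighted after Proposition \ref{DegenerationExistence2}. Since $\eps_n\lambda_{\partial Z_k}(s_n)=1$, the degeneration $s:\Spec A^\eps\to Z_k$ exists. Fix a non-principal ultrafilter $\omega$. The field $\cal{H}(\omega)$ has residue characteristic $0$: every nonzero integer $m$ has $|m|_n=1$ once $p_n>|m|$. Continuity of the Lyapunov exponent on $Z_k^{\an}$ then gives
\[\Lyap(f_\omega)=\lim_\omega \eps_n\Lyap(f_n)\geq\delta>0,\]
so $\cal{H}(\omega)$ is non-trivially valued. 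Continuity of the pairing on $Z_k^{\an}\setminus Z_k^{\triv}$ (Proposition \ref{FlatContinuity1}) gives
\[\left\langle f_\omega,g_\omega\right\rangle_{C_\omega}=\lim_\omega\eps_n\left\langle f_n,g_n\right\rangle_{C_n}\leq\lim_\omega \tfrac1n=0.\]
After passing to the completed algebraic closure of $\cal{H}(\omega)$, which is harmless by the base-change invariance of the pairing noted above, Proposition \ref{Pairing1} and Theorem \ref{Rigidity1} show that $C_\omega$ is preperiodic. Thus $s_\omega\in V_m$ for some $m$. On the other hand, $\lambda_{V_m}(s_\omega)=\lim_\omega\eps_n\lambda_{V_m}(s_n)\leq c_m<\infty$, so $s_\omega\notin V_m$, a contradiction.

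The main obstacle is justifying that the limits along $\omega$ commute with the local height functions and with the pairing when the fields $K_n$ vary and their characteristics differ. Concretely, we need the error terms $\log^+|p_i|_n$ to be $0$ for $\omega$-almost all $n$, rather than merely bounded. We also need continuity of $\Lyap$ and of $\langle\cdot,\cdot\rangle_C$ on $Z_k^{\an}$ over the Banach ring $(\mathbb{Z},|\cdot|_\infty)$ to apply at the image of $\cal{M}(A^\eps)$, which now consists of seminorms lying over varying primes. I would first check that the map $\cal{M}(A^\eps)\to Z_k^{\an}$ is continuous exactly as in the proof of Proposition \ref{UniformEnergy1}. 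That proof never used that $K_n$ is constant, only the continuity of these global functions on $Z_k^{\an}$, so the same argument should go through once the vanishing of the error terms is established.
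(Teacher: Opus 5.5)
Your proposal is correct and is essentially the paper's own argument. You assume the bound fails along fields $K_n$ of residue characteristic $\geq n$, degenerate with $\eps_n=\lambda_{\partial Z_k}(s_n)^{-1}$ via Proposition~\ref{DegenerationExistence2} (its hypothesis holds because the residue characteristics tend to infinity), and reach a contradiction with $\lambda_{V_m}(s_\omega)\leq c_m$ using the vanishing of the limiting pairing, $\Lyap(f_\omega)\geq\delta$, Proposition~\ref{Pairing1} and Theorem~\ref{Rigidity1}. Your side remarks are also correct and make explicit points the paper leaves implicit: residue characteristic $0$ of $\mathcal{H}(\omega)$ comes here from $p_n\to\infty$ rather than from $\eps_n\to 0$ (which need not hold without condition (3) of Proposition~\ref{UniformEnergy1}), and one should pass to the completed algebraic closure of $\mathcal{H}(\omega)$ before invoking Theorem~\ref{Rigidity1}.
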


\begin{proof}
Similar to the proof of Proposition \ref{UniformEnergy1}, we assume otherwise. Then we obtain a sequence of valued fields $\left(K_n,\left| \cdot \right|\right)$ with residue characteristic of $K_n$ being $\geq n$, along with a sequence $s_n = \left(f_n,g_n,C_n\right) \in Z_k\left(K_n\right)$ so that:

\[{\blue \lambda_{V_i}}(s_n) \leq c_i \lambda_{\partial Z_k}(s_n), \Lyap(f_n) \geq \delta \lambda_{\partial Z_k}(s), \text{ and } \langle f_n, g_n \rangle_{C_n} < \frac{1}{n} \lambda_{\partial Z_k}(s_n).\]
We may also assume that $\lambda_{\partial Z_k}\left(s_n\right) > 0$ as the energy pairing is non-negative. We degenerate with the scaling $\eps_n = \lambda_{\partial Z_k}\left(s_n\right)^{-1}$. Since the residue characteristic of $K_n$ is either $0$ or goes to $\infty$, by Proposition \ref{DegenerationExistence2} we obtain a morphism $s: \Spec A^{\eps} \to Z_k$ restricting to $s_n: \Spec K_n \to Z_k$ for each $n$ and thus a morphism $s: \cal{M}\left(A^{\eps}\right) \to Z_k^{\an}$. 
\par 
Picking $\omega$ to be a non-principal ultrafilter, the residue field $\cal{H}\left(\omega\right)$ is of residue characteristic $0$ and we obtain a triple $\left(f_{\omega},g_{\omega},C_{\omega}\right)$ over $\cal{H}\left(\omega\right)$ using $s$. We then have 
\[\left\langle f_{\omega}, g_{\omega} \right\rangle_{C_{\omega}} = \lim_{\omega} \eps_n \left\langle f_n , g_n \right\rangle_{C_n} = 0.\]
Since $\Lyap\left(f_{\omega}\right) \geq \delta > 0$, it must be that $d_2 \pi_1^* \mu_{f_{\omega}} = d_1 \pi_2^* \mu_{g_{\omega}}$. Hence $C_{\omega}$ must be preperiodic and so $s_{\omega} = \left(f_{\omega},g_{\omega},C_{\omega}\right)$ must lie on $V_m$ for some $m \geq 1$, But this can't happen as $\lambda_{V_{m}}\left(s_{\omega}\right) \leq c_m < +\infty$ which gives us a contradiction.  
\end{proof}

We next handle the setting where $f$ is a Latt{\`e}s map and $g$ is a map with good reduction. We note that the locus of triplets $(f,g,C) \in Z_k$ with $f$ being Latt{\`e}s is Zariski closed since the flexible Latt{\`e}s locus is closed in $\rat_d$ and the rigid Latt{\`e}s maps correspond to a finite set of points. Hence their union in $\rat_d$ is closed and so is their preimage in $\Rat_d$. We fix a local height function $\lambda_{\partial \rat_d}$ for the boundary of $\rat_d$. For $f \in \Rat_d\left(K\right)$, we let $\left\langle f \right\rangle$ be the induced point on $\rat_d\left(K\right)$.

\begin{proposition} \label{UniformEnergy3}
Let $\left(K,\left| \cdot \right|\right)$ be a complete non-trivially valued field of characteristic $0$. Let $\delta > 0$ be a positive constant. Then there exists constants $N > 0$ and $c > 0$ depending on $K,\delta$ such that if $s = \left(f,g,C\right) \in Z_k\left(K\right)$ satisfies:
\begin{enumerate}
\item $f$ is a Latt{\`e}s map with $\lambda_{\rat_d}(\langle f \rangle) \geq \delta \lambda_{\partial Z_k}(s)$ and $\lambda_{\rat_d}(\langle f \rangle) \geq N \lambda_{\partial \Rat_d}(g)$;

\item $\lambda_{\partial Z_k}\left(s\right) \geq N$.
\end{enumerate}
Then we have the uniform lower bound 
\[\left\langle f,g \right\rangle_C \geq c \lambda_{\partial Z_k}\left(s\right).\]
\end{proposition}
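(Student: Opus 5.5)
The plan is to argue by contradiction via ultrafilter degeneration, exactly as in the proof of Proposition \ref{UniformEnergy1}, but using Proposition \ref{Rigidity2} in place of Theorem \ref{Rigidity1}. Suppose the statement fails. Then there are points $s_n=(f_n,g_n,C_n)\in Z_k(K)$ with $f_n$ Latt{\`e}s, $\lambda_{\rat_d}(\langle f_n\rangle)\geq \delta\lambda_{\partial Z_k}(s_n)$, $\lambda_{\rat_d}(\langle f_n\rangle)\geq n\lambda_{\partial\Rat_d}(g_n)$, $\lambda_{\partial Z_k}(s_n)\geq n$, and $\langle f_n,g_n\rangle_{C_n}\leq \frac1n\lambda_{\partial Z_k}(s_n)$. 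Here $\lambda_{\rat_d}$ is read as the boundary height $\lambda_{\partial\rat_d}$.

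Set $\eps_n=\lambda_{\partial Z_k}(s_n)^{-1}\to 0$. Since the field is fixed and $\eps_n$ is bounded, Proposition \ref{DegenerationExistence2} gives an $(\eps_n)$-degeneration $s:\Spec A^{\eps}\to Z_k$. Take a non-principal ultrafilter $\omega$. This yields a triple $(f_\omega,g_\omega,C_\omega)$ over $\cal{H}(\omega)$, which has residue characteristic $0$.

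\begin{enumerate}
\item \emph{$f_\omega$ is Latt{\`e}s.} The Latt{\`e}s locus is Zariski closed and each $s_n$ lies in it. Its defining equations vanish on the $A^\eps$-point, so $f_\omega$ is Latt{\`e}s.
\item \emph{$g_\omega$ has potentially good reduction.} Here $\eps_n\lambda_{\partial\Rat_d}(g_n)\leq \frac{1}{n}\eps_n\lambda_{\rat_d}(\langle f_n\rangle)$. The quantity $\eps_n\lambda_{\rat_d}(\langle f_n\rangle)$ is bounded above, because $\lambda_{\partial\rat_d}$ pulled back to $Z_k$ is dominated by a multiple of $\lambda_{\partial Z_k}$ (Proposition \ref{LocalHeightProperties1}). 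Hence $\eps_n\lambda_{\partial\Rat_d}(g_n)\to 0$, so $g_\omega$ lies in $\Rat_d$ with $\lambda_{\partial\Rat_d}(g_\omega)=0$. This means $g_\omega$ has good reduction after a change of coordinates, which is harmless since $\mu_{g_\omega}$ and the pairing are coordinate independent (or one can argue via continuity of $\Lyap$ and the minimal resultant).
\item \emph{$f_\omega$ does not have potentially good reduction.} We have $\lambda_{\partial\rat_d}(\langle f_\omega\rangle)=\lim_\omega \eps_n\lambda_{\partial\rat_d}(\langle f_n\rangle)\geq\delta>0$, using continuity of the pushforward map $\cal{M}(A^\eps)\to\rat_d^{\an}$. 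In residue characteristic $0$, positive boundary height in $\rat_d$ means the conjugacy class is unbounded, i.e.\ there is no potentially good reduction. This also shows $\cal{H}(\omega)$ is non-trivially valued.
\end{enumerate}

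By continuity of the energy pairing on $Z_k^{\an}\setminus Z_k^{\triv}$ (Proposition \ref{FlatContinuity1}), $\langle f_\omega,g_\omega\rangle_{C_\omega}=\lim_\omega\eps_n\langle f_n,g_n\rangle_{C_n}=0$. Proposition \ref{Pairing1} then gives $d_2\pi_1^*\mu_{f_\omega}=d_1\pi_2^*\mu_{g_\omega}$, which contradicts Proposition \ref{Rigidity2}.

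The main obstacle is step 3, identifying ``boundary height of $\langle f\rangle$ positive'' with ``no potentially good reduction''. I would handle it by noting that a bounded $\cal{H}(\omega)$-point of $\rat_d$ extends to an integral point of the affine GIT quotient. That point lifts, after a finite extension, to a semistable and in fact good-reduction model for Latt{\`e}s maps; this lift requires residue characteristic $0$. Step 2 needs the analogous but easier statement for $\Rat_d$ directly. The hypothesis $\lambda_{\partial Z_k}(s)\geq N$ is used only to make $\eps_n\to0$.
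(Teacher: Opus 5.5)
Your proposal is correct and follows the paper's proof essentially step for step. Both argue by contradiction and degenerate with $\eps_n=\lambda_{\partial Z_k}(s_n)^{-1}$. Both then get that $f_\omega$ is Latt\`es (closedness of the Latt\`es locus), that $\lambda_{\partial\rat_d}(\langle f_\omega\rangle)\geq\delta$ (no potentially good reduction) and $\lambda_{\partial\Rat_d}(g_\omega)=0$ (good reduction), and that the pairing vanishes by continuity, which contradicts Propositions~\ref{Pairing1} and~\ref{Rigidity2}.

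One small remark on step 3: you only need the easy direction. If $f_\omega$ had potentially good reduction, a good-reduction conjugate would give an integral point of $\rat_d$, hence zero boundary height over the residue-characteristic-$0$ field $\mathcal{H}(\omega)$. The lifting argument you sketch as the ``main obstacle'' proves the converse implication and is not needed.
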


\begin{proof}
The proof is similar to Proposition \ref{UniformEnergy1}. We assume otherwise and obtain a sequence of triplets $(f_n,g_n,C_n) \in Z_k(K)$ where $f_n$ is a Latt{\`e}s map satisfying 
\[\lambda_{\partial \rat_d}(\langle f_n \rangle) \geq \delta \lambda_{\partial Z_k}(s_n), \lambda_{\partial \rat_d}(\langle f_n \rangle) \geq n \lambda_{\partial \Rat_d}(g_n) \text{ and } \lambda_{\partial Z_k}(s_n) \geq n.\]
Degenerating with $\eps_n = \lambda_{\partial Z_k}\left(s_n\right)^{-1}$ gives us $\left(f_{\omega},g_{\omega},C_{\omega}\right)$ such that $f_{\omega}$ is a Latt{\`e}s map with $\lambda_{\partial \rat_d}(\langle f_{\omega} \rangle) > 0$ and $\lambda_{\partial \Rat_d}(g_{\omega}) = 0$. Here, $f_{\omega}$ is a Latt{\`e}s map because the locus of Latt{\`e}s maps is closed. {\blue This means that $f_{\omega}$ does not have potentially good reduction and $g_{\omega}$ has good reduction.} By Proposition \ref{Rigidity2}, we cannot have $d_2 \pi_1^* \mu_{f_{\omega}} = d_1 \pi_2^* {\blue \mu_{g_{\omega}}}$ which implies that $\langle f_{\omega}, g_{\omega} \rangle_{C_{\omega}} > 0$. However this contradicts
\[\langle f_{\omega}, g_{\omega} \rangle_{C_{\omega}} = \lim_{\omega} \eps_n \langle f_n, g_n, \rangle_{C_n} = 0\]
and we are done.
\end{proof}

Similarly we have the analogue of Proposition \ref{UniformEnergy2}. 

\begin{proposition} \label{UniformEnergy4}
Let $\delta > 0$ be a positive constant. Then there exists constants $N > 0$ and $c > 0$ depending on $\delta$ such that for any algebraically closed complete non-trivially valued field $\left(K,\left| \cdot \right|\right)$ with residue characteristic $\geq N$, if $s = \left(f,g,C\right) \in Z_k\left(K\right)$ satisfies  \[\lambda_{\rat_d}(\langle f \rangle) \geq \delta \lambda_{\partial Z_k}(s) \text{ and } \lambda_{\partial\rat_d}(\langle f \rangle) \geq N \lambda_{\partial\Rat_d}(g),\]
then we have the uniform lower bound 
\[\left\langle f,g \right\rangle_C \geq c \lambda_{\partial Z_k}\left(s\right).\]
\end{proposition}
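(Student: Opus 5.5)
We argue by contradiction, combining the degeneration argument of Proposition \ref{UniformEnergy2} with the rigidity input of Proposition \ref{UniformEnergy3}. Suppose no such $N,c$ exist. Then for each $n$ there is an algebraically closed complete non-trivially valued field $K_n$ of residue characteristic $\geq n$ (or $0$) and a triple $s_n=(f_n,g_n,C_n)\in Z_k(K_n)$ with $f_n$ Latt\`es such that
\[\lambda_{\partial\rat_d}(\langle f_n\rangle)\geq \delta\lambda_{\partial Z_k}(s_n),\qquad \lambda_{\partial\rat_d}(\langle f_n\rangle)\geq n\lambda_{\partial\Rat_d}(g_n),\qquad \langle f_n,g_n\rangle_{C_n}<\tfrac1n\lambda_{\partial Z_k}(s_n).\]
Since the pairing is nonnegative, this forces $\lambda_{\partial Z_k}(s_n)>0$.

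We set $\eps_n=\lambda_{\partial Z_k}(s_n)^{-1}$. Because the residue characteristics are either $0$ or tend to $\infty$, the sequence $\eps_n\log^+|p|_n$ is bounded for each $p\in K^\times$; indeed it vanishes for large $n$. Also $\eps_n\lambda_{\partial Z_k}(s_n)=1$, so Proposition \ref{DegenerationExistence2} yields an $(\eps_n)$-degeneration $s:\Spec A^{\eps}\to Z_k$, and hence a continuous map $\cal{M}(A^\eps)\to Z_k^{\an}$. We fix a non-principal ultrafilter $\omega$ and obtain $(f_\omega,g_\omega,C_\omega)$ over $\cal{H}(\omega)$, which has residue characteristic $0$.

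Next we record the properties of the limit triple. Since the Latt\`es locus is Zariski closed in $Z_k$, the map $f_\omega$ is Latt\`es. By continuity of local height functions under the degeneration, $\lambda_{\partial\rat_d}(\langle f_\omega\rangle)=\lim_\omega\eps_n\lambda_{\partial\rat_d}(\langle f_n\rangle)\geq\delta>0$. This gives two conclusions. First, $\cal{H}(\omega)$ is non-trivially valued. Second, $\langle f_\omega\rangle$ lies outside the analytic closure of the image of $\rat_d$ over the valuation ring, so $f_\omega$ has no potentially good reduction. For $g$, the inequality $\eps_n\lambda_{\partial\Rat_d}(g_n)\leq \eps_n\lambda_{\partial\rat_d}(\langle f_n\rangle)/n$ must tend to $0$ along $\omega$. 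For this I need $\eps_n\lambda_{\partial\rat_d}(\langle f_n\rangle)$ to be bounded. I plan to get that bound from Proposition \ref{LocalHeightProperties1}(3) applied to the projection $Z_k\to\rat_d$: the boundary height of $\rat_d$ pulled back is dominated by $c\lambda_{\partial Z_k}$, after compactifying compatibly. Hence $\lambda_{\partial\Rat_d}(g_\omega)=0$, so $g_\omega$ has good reduction.

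Finally, continuity of the pairing from Proposition \ref{FlatContinuity1} gives
\[\langle f_\omega,g_\omega\rangle_{C_\omega}=\lim_\omega\eps_n\langle f_n,g_n\rangle_{C_n}=0,\]
so Proposition \ref{Pairing1} forces $d_2\pi_1^*\mu_{f_\omega}=d_1\pi_2^*\mu_{g_\omega}$. This contradicts Proposition \ref{Rigidity2}.

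The main obstacle is justifying the passage to the limit uniformly in the varying fields $K_n$. Concretely, we need that the local height functions $\lambda_{\partial\rat_d}$, $\lambda_{\partial\Rat_d}$ and $\lambda_{\partial Z_k}$, which are defined only up to errors of size $\log^+\max|p_i|_n$, converge after scaling by $\eps_n$. These errors are $0$ once the residue characteristic exceeds the primes dividing the $p_i$, exactly as in Proposition \ref{UniformEnergy2}. We also need that the boundedness of $\eps_n\lambda_{\partial\rat_d}(\langle f_n\rangle)$ holds, which is where the comparison of boundary heights is required.
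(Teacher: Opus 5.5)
Your proposal is correct and is exactly the argument the paper intends when it says the proof is similar to Propositions~\ref{UniformEnergy2} and~\ref{UniformEnergy3}: you degenerate with $\eps_n=\lambda_{\partial Z_k}(s_n)^{-1}$ over fields whose residue characteristic is zero or grows, obtain a limit triple with $f_\omega$ Latt\`es without potentially good reduction, $g_\omega$ of good reduction and vanishing pairing, and then contradict Proposition~\ref{Rigidity2}. You also rightly make explicit two points the paper leaves implicit: first, that $f$ must be assumed Latt\`es (as in Proposition~\ref{UniformEnergy3} and in the application in Proposition~\ref{LattesGap1}, although the printed statement drops it); second, that $\eps_n\lambda_{\partial\rat_d}(\langle f_n\rangle)$ is bounded, which in fact follows at once from Proposition~\ref{DegenerationExistence2} applied to the composite $A^{\eps}$-point $\Spec A^{\eps}\to Z_k\to\rat_d$.
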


\begin{proof}
This is similar to Propositions \ref{UniformEnergy2} and \ref{UniformEnergy3}. We omit the proof.     
\end{proof}

\section{Uniform Dynamical Bogomolov for Curves} \label{sec: Curves}
{\blue The main result of this section is Theorem~\ref{HeightGapCurve1}, a uniform dynamical height gap for non-weakly special curves on $\bb{P}^1\times\bb{P}^1$.} As $\Rat_d, \rat_d, C_{d_1,d_2}$ are affine, we fix height functions $h_{\Rat_d}, h_{\rat_d}$ and $h_{C_{d_1,d_2}}$ coming from a height function of a choice of compactification so that their boundary is ample and hence they correspond to ample height functions. We similarly let $\lambda_{\Rat_d}, \lambda_{\rat_d}, \lambda_{C_{d_1,d_2}}$ be their local height functions.

{\blue \begin{definition}\label{def:WeaklySpecialCurves}
Let $f,g$ be rational maps of degree $d\geq2$ and let $C\subseteq\left(\bb{P}^1\right)^2$ be an irreducible curve over an algebraically closed field of characteristic $0$. We say that $C$ is weakly $\left(f,g\right)$-special if either:

\begin{enumerate}
\item $C$ does not project dominantly to one of the coordinates;
\item $C$ is $\left(f,g\right)$-preperiodic;
\item Both $f$ and $g$ are exceptional, with associated algebraic groups $G_f,G_g$, isogenies $\psi_f,\psi_g$ and dominant maps $\vphi_f,\vphi_g$ to $\bb{P}^1$ satisfying
\[f\circ\vphi_f=\vphi_f\circ\psi_f,\qquad g\circ\vphi_g=\vphi_g\circ\psi_g,\]
and there exists a coset $H\subseteq G_f\times G_g$ such that
\[C\cap\operatorname{Im}\left(\vphi_f,\vphi_g\right)=\left(\vphi_f,\vphi_g\right)\left(H\right).\]
Here each of $G_f,G_g$ is either $\bb{G}_m$ or an elliptic curve, and a coset is a translate of a connected algebraic subgroup.
\end{enumerate}
\end{definition}}

\begin{theorem} \label{HeightGapCurve1}
Let $f,g$ be two rational maps of degree $d \ge 2$ defined over $\ovl{\bb{Q}}$. Let $C \subseteq \left(\bb{P}^1\right)^2$ be a degree $\left(d_1,d_2\right)$ curve which is not weakly $\left(f,g\right)$-special. Then there exists $c = c\left(d_1,d_2,d\right)$ such that 
\[\h_{\left(f,g\right)}\left(C\right) \geq c \max\left\{h_{\rat_d}\left(f\right), h_{\rat_d}\left(g\right),1\right\}.\]
\end{theorem}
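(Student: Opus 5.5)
We will deduce the height gap from the global energy pairing. By Proposition \ref{GlobalPairing1},
\[\h_{(f,g)}(C) \geq \frac{1}{d_1d_2(d_1+d_2)}(f,g)_C,\]
so it suffices to find $c=c(d_1,d_2,d)>0$ with $(f,g)_C \geq c\max\{h_{\rat_d}(f),h_{\rat_d}(g),1\}$ whenever $C$ is not weakly $(f,g)$-special. We work on one of the finitely many normal strata $Z_k$ covering $\Rat_d\times\Rat_d\times C_{d_1,d_2}$, on which the local pairing is continuous (Proposition \ref{FlatContinuity1}). We write the global pairing as $\sum_v n_v\langle f,g\rangle_{C,v}$ and compare it place by place with the global boundary height $h_{\partial Z_k}(s)=\sum_v n_v\lambda_{\partial Z_k,v}(s)$. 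This boundary height dominates $\max\{h_{\rat_d}(f),h_{\rat_d}(g)\}$ up to constants, after conjugating $f,g$ to a minimal representative and moving $C$ along, which changes neither the height nor $(f,g)_C$.

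First we handle the main case, where (after possibly swapping $f$ and $g$) $f$ is not Latt\`es. By Ingram's theorem that the critical height is comparable to $h_{\rat_d}$ off the Latt\`es locus, together with the Lyapunov lower bound Theorem \ref{GlobalLyapunov1}, we get $\sum_v n_v\Lyap_v(f)\geq c'h_{\rat_d}(f)-c''$. Let $\mathcal{G}$ be the set of places $v$ with $\Lyap_v(f)\geq\delta\lambda_{\partial Z_k,v}(s)$, $\lambda_{\partial Z_k,v}(s)\geq N$, and $\lambda_{V_i,v}(s)\leq c_i\lambda_{\partial Z_k,v}(s)$ for all $i$. At each $v\in\mathcal{G}$ we apply Proposition \ref{UniformEnergy1} (for the finitely many small residue characteristics and the archimedean places) and Proposition \ref{UniformEnergy2} (for the remaining places) to get $\langle f,g\rangle_{C,v}\geq c\lambda_{\partial Z_k,v}(s)$. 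We then need to show that the places in $\mathcal{G}$ carry a fixed proportion of $h_{\partial Z_k}(s)$. The contribution of places failing the Lyapunov condition is controlled by the global Lyapunov bound, provided $\delta$ is small. The contribution of places with $\lambda_{\partial Z_k,v}<N$ is at most $N$ times the sum of $n_v$ over the finitely many places where $\lambda_{\partial Z_k,v}$ can be nonzero, up to bounded error; this is absorbed into the constant, which handles the $\max\{\cdot,1\}$.

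The hardest step is condition (1) for all $i$ simultaneously. The point $s$ is not in any $V_i$, because $C$ is not preperiodic. However, the global heights $h_{V_i}(s)$ need not be small relative to $h_{\partial Z_k}(s)$, and there are infinitely many $V_i$. We would first try to choose $c_i$ growing fast, using Proposition \ref{AmpleHeight1} to get $h_{V_i}\leq a_ih_{\partial Z_k}+b_i$. By a pigeonhole over places, the set of places where $\lambda_{V_i,v}>c_i\lambda_{\partial Z_k,v}$ then carries at most an $a_i/c_i$ fraction of the mass. Taking $c_i=2^{i}a_i/\eta$ makes the total excluded mass at most $\eta\,h_{\partial Z_k}(s)$ plus the sum of the $b_i$ terms. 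Controlling these additive constants uniformly is where the argument is delicate; we would handle it by allowing the constants $N,c$ of Proposition \ref{UniformEnergy1} to absorb a compactness-in-$\omega$ version of the condition.

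The remaining case is when $f$ and $g$ are both Latt\`es. If only one of them is Latt\`es, we swap and use the non-Latt\`es map. If both are Latt\`es, we combine Propositions \ref{UniformEnergy3} and \ref{UniformEnergy4}, at places where $\lambda_{\partial\rat_d}(f)\gg\lambda_{\partial\Rat_d}(g)$ or symmetrically, with the previous rigidity argument for the places where both degenerate comparably. When both maps are exceptional, the non-weakly-special hypothesis, clause (3) of Definition \ref{def:WeaklySpecialCurves}, is precisely what excludes the rigidity failures, via the torsion-coset classification. Summing the places then gives the claimed bound.
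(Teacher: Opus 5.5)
Your non-Latt\`es argument follows the paper's Proposition \ref{HeightGapCurve2}, but the case analysis around it has genuine gaps.

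\textbf{Height imbalance with a Latt\`es map.} The Lyapunov pigeonhole only works if $h_{\Lyap}(f)$ dominates $h_{\partial Z_k}(s)$. That requires the non-Latt\`es map you use to carry, up to constants, the largest of the heights of $f$, $g$ and $C$. This is the hypothesis $M_2h_{\rat_d}(f)\geq\max\{1,h_{\Rat_d}(f),h_{\Rat_d}(g)\}$ in that proposition, and the reason for the reduction \eqref{eq: Assumption2}. So ``if only one of them is Latt\`es, we swap and use the non-Latt\`es map'' fails when the Latt\`es map has much larger height. Take $g$ Latt\`es with $h_{\rat_d}(g)\to\infty$ and $f$ of bounded height. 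Then the places with $\Lyap(f_v)\geq\delta\lambda_{\partial Z_k,v}(s)$ carry negligible mass. You cannot switch to $g$ either: a Latt\`es map has zero Lyapunov exponent at every non-archimedean place of residue characteristic $0$, so Theorem \ref{Rigidity1} says nothing about the limit. This is exactly where the paper needs Proposition \ref{Rigidity2} (Latt\`es without potentially good reduction versus good reduction), fed through Propositions \ref{UniformEnergy3}--\ref{UniformEnergy4} into Proposition \ref{LattesGap1}.

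\textbf{Exceptional pairs.} You place Propositions \ref{UniformEnergy3}--\ref{UniformEnergy4} in the both-Latt\`es case instead, where they do not suffice. At places where both maps degenerate comparably, $f_\omega,g_\omega$ are both Latt\`es with $\Lyap=0$. Equality of the pulled-back measures can then hold without $C$ being special, so there is no ``previous rigidity argument'' to invoke. The paper handles both-Latt\`es pairs by uniform Bogomolov for $E_1\times E_2$ (Gao--Ge--K\"uhne, or K\"uhne). It handles pairs of power/Chebyshev maps by Bombieri--Zannier for $\bb{G}_m^2$. A remark about torsion cosets does not replace these inputs.

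\textbf{Bounded height.} Nothing in your argument handles this regime. The degeneration propositions give at best $(f,g)_C\geq c\,h_{\partial Z_k}(s)-C'$. Absorbing $C'$ yields nothing when $h_{\Rat_d}(f),h_{\Rat_d}(g),h_{C_{d_1,d_2}}(C)\leq N$, where one needs $\h_{(f,g)}(C)\geq\delta(d,d_1,d_2,N)>0$. The paper obtains this in Proposition \ref{BoundedHeight1} from three ingredients:
\begin{enumerate}
\item the archimedean rigidity theorem of Mavraki--Schmidt (Theorem \ref{ArchimedeanRigidity});
\item compactness of the region $D_k\subseteq Z_k(\bb{C})$ cut out by bounds on $\lambda_{\partial Z_k}$, $\lambda_{W_k}$ and the $\lambda_{V_i}$;
\item a pigeonhole showing that at least half of the archimedean weight lies in $D_k$.
\end{enumerate}

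\textbf{Smaller errors.} The losses from imposing $\lambda_{\partial Z_k,v}(s)\geq N$ are bounded because archimedean places and places of small residue characteristic have bounded total weight $\sum n_v$. Your reason, that $\lambda_{\partial Z_k,v}$ is nonzero at only finitely many places, fails: that set's weight is not uniformly bounded. At large residue characteristic you should drop this condition, which Proposition \ref{UniformEnergy2} does not need. The additive constants $b_i$ you flag in condition (1) are not handled by ``compactness in $\omega$''. The paper removes them by its preliminary reductions: conjugating to minimal-height representatives, bounding $h_{C_{d_1,d_2}}(C)$ via \eqref{eq: CurveDifference}, and working with $h_{\rat_d}(f)\geq1$ maximal. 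After these, the ample height of $s$ is at most a multiple of $h_{\rat_d}(f)$, so $h_{\partial Z_k}(s)\geq a_ih_{V_i}(s)$ with no additive term.
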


Before we begin proving Theorem \ref{HeightGapCurve1}, we need to first compare the sum of Lyapunov exponents of $f$ at every place $v$ to its height $h_{\rat_d}$. For $f$ a rational map of degree $d \ge 2$ over a number field $K$, we define the Lyapunov height 
\[h_{\Lyap}\left(f\right) = \sum_{v \in M_K} n_v \Lyap\left(f_v\right)\]
We will show that if $f$ is not a Latt{\`e}s map, then this is comparable to $h_{\rat_d}\left(f\right)$.

\begin{theorem} \label{GlobalLyapunov1}
Let $f$ be a rational map of degree $d \ge 2$ over $\ovl{\bb{Q}}$ that is not a Latt{\`e}s map. Then there exists constants $c_1, c_2 > 0$, depending only on $d$, such that 
\[ h_{\Lyap}\left(f\right) \geq c_1 h_{\rat_d}\left(\left\langle f \right\rangle\right) - c_2.\]
\end{theorem}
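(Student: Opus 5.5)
We would prove Theorem \ref{GlobalLyapunov1} by comparing $h_{\Lyap}$ with the critical height and then invoking Ingram's comparison theorem. The link is the Przytycki/Manning-type formula for the Lyapunov exponent at each place $v$:
\[
\Lyap\left(f_v\right) = \log\left|d\right|_v - 2\log\left|\Res(F)\right|_v^{\frac{1}{2d(d-1)}}\cdot(\text{normalization}) + \sum_{c \in \crit(f)} G_{F,v}\left(c\right),
\]
valid over every complete algebraically closed field; see Okuyama and Jacobs for the archimedean case and the non-archimedean one. Here $F$ is a homogeneous lift, $G_{F,v}$ is the escape-rate function, and the critical points are counted with multiplicity.

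\textbf{Step 1 (summing over places).} Sum the formula over $v \in M_K$ with weights $n_v$. The term $\log|d|_v$ vanishes by the product formula. The resultant term, combined with the homogeneous $G_{F,v}$ of the lifted critical points, is independent of the choice of lift $F$, again by the product formula. The critical-point contributions sum to the critical height
\[
h_{\crit}\left(f\right)=\sum_{c}\h_f\left(c\right).
\]
This gives the standard identity $h_{\Lyap}\left(f\right) = h_{\crit}\left(f\right)$, possibly up to a harmless constant depending only on $d$. Checking this is routine bookkeeping, and one can cite it from the literature if available.

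\textbf{Step 2 (Ingram's comparison).} Apply Ingram's theorem \cite{Ing18}. It states that there are constants $c_1,c_2>0$, depending only on $d$, such that
\[
h_{\crit}\left(f\right) \geq c_1 h_{\rat_d}\left(\left\langle f \right\rangle\right) - c_2
\]
for every $f$ outside the Latt\`es locus. Combining this with Step 1 gives the theorem.

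\textbf{Main obstacle.} The hard part is Step 2, which Ingram proves via McMullen's rigidity theorem \cite{McM87}: the multiplier map is finite off the Latt\`es locus. The proof runs as follows.
\begin{itemize}
\item Heights of multipliers of periodic points are bounded above by a multiple of $h_{\crit}$, since a periodic orbit's multiplier is controlled by local Lyapunov-type quantities at each place.
\item Finiteness of the multiplier map (on an affine model of $\rat_d$ minus the flexible Latt\`es curves) makes $h_{\rat_d}$ bounded by a multiple of the multiplier heights plus a constant.
\end{itemize}
Rigid Latt\`es maps are finitely many points and are excluded anyway. The delicate issue is that the finiteness holds only away from a closed set, so the height comparison must be arranged uniformly. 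We would follow Ingram's argument directly, using the local height machinery of Section \ref{sec: LocalSubSec} (in particular Proposition \ref{AmpleHeight1}) to handle the error terms, rather than reprove it.
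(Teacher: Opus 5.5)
Your proposal is correct and follows essentially the same route as the paper. The paper sums the local bound of \cite[Proposition 2.7]{GOV20}, namely $\Lyap(f_v) \geq -\log|d|_v + \sum_{c} g_{F,v}(c) - \frac{2}{d}\log|\Res F|_v + \log|C_F|_v$, over all places; the $d$, resultant and $C_F$ terms cancel by the product formula, giving $h_{\Lyap}(f) \geq h_{\crit}(f)$, and it then cites Ingram's comparison \cite{Ing18} off the Latt\`es locus. For the same reason the garbled resultant normalization in your displayed formula is harmless, and you need not reprove Ingram's theorem.
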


\begin{proof}
Let 
\[h_{\crit}\left(f\right) = \sum_{i=1}^{2d-2} \h_{f}\left(c_i\right)\]
be the critical height of $f$, where $c_1,\ldots,c_{2d-2}$ are the critical points of $f$ counted with multiplicity. By Ingram\cite{Ing18}, using McMullen's theorem on multipliers of rational maps\cite{McM87}, we know that there are constants $A,B > 0$ depending only on $d$ such that
\[h_{\crit}\left(f\right) \geq A h_{\rat_d}\left(\left\langle f \right\rangle\right) - B\]
for $f$ outside of the Latt{\`e}s locus. We now apply \cite[Proposition 2.7]{GOV20} which states that after choosing a homogeneous lift $F$ of $f$, for any place $v$, we have 
\[\Lyap\left(f_v\right) \geq - \log \left|d\right|_v + \sum_{c \in \mathrm{Crit}\left(f\right)} g_{F,v}\left(c\right) - \frac{2}{d} \log \left|\Res F\right|_v + \log \left|C_F\right|_v\]
where $C_F$ is some constant associated to $F$ and $g_F$ is the local homogeneous escape rate function. Summing up over all places and using the product formula, we obtain 
\[h_{\Lyap}\left(f\right) \geq h_{\crit}\left(f\right) \geq A h_{\rat_d}\left(\left\langle f \right\rangle\right) - B\]
for $f$ outside the Latt{\`e}s locus as desired. 
\end{proof}

We begin the proof of Theorem \ref{HeightGapCurve1} by making some reductions. First, we may assume there is a constant $A = A\left(d\right) > 0$ depending only on $d$ so that 
\begin{equation} \label{eq: Assumption1}
h_{\Rat_d}\left(f\right) \leq A \max\left\{1,h_{\rat_d}\left(f\right)\right\}, h_{\Rat_d}\left(g\right) \leq c_1 \max\left\{1, h_{\rat_d}\left(g\right)\right\}.
\end{equation}
Indeed for any $\vphi_1,\vphi_2 \in \GL_2\left(\ovl{\bb{Q}}\right)$, we may replace $\left(f,g,C\right)$ with $\left(\vphi_1  f \vphi_1^{-1} , \vphi_2 g   \vphi_2^{-1},  \left(\vphi_1, \vphi_2\right)\left(C\right)\right)$ since $\h_{\vphi_1  f \vphi_1^{-1}}\left(\vphi\left(x\right)\right) = \h_{f}\left(x\right)$ and similarly for $g$. By \cite[Lemma 6.3.2]{Sil12}, we know there exists $A,B > 0$, depending only on $d$, so that 
\[\min_{f' \in \left\langle f \right\rangle} h_{\Rat_d}\left(f'\right) \leq A h_{\rat_d}\left(f\right) + B,\]
which we may clearly assume that $A \geq B$. By symmetry, we may assume that $h_{\rat_d}\left(f\right) \geq h_{\rat_d}\left(g\right)$. Next, we may assume that 
\begin{equation} \label{eq: Assumption2}
h_{C_{d_1,d_2}}\left(C\right) \leq M_1 \max\left\{h_{\rat_d}\left(f\right),1\right\}
\end{equation}
for some $M_1 = M_1\left(d_1,d_2,d\right) > 0$ depending only on $\left(d_1,d_2,d\right)$. This is because by Section \ref{sec: CanonicalHeights}, we can find $M_1', M_2' > 0$ depending only on $\left(d_1,d_2,d\right)$
\begin{equation} \label{eq: CurveDifference}
M'_1 \h_{\left(f,g\right)}\left(C\right) + M'_2 \max\left\{h_{\Rat_d}\left(f\right), h_{\Rat_d}\left(g\right),1\right\} \geq  h_{C_{d_1,d_2}}\left(C\right).
\end{equation}
Hence if $h_{C_{d_1,d_2}}\left(C\right)$ is too large, then Theorem \ref{HeightGapCurve1} is automatically true. Thus we may assume \eqref{eq: Assumption2} holds. 
\par 
We now prove Theorem \ref{HeightGapCurve1} when $h_{\Rat_d}\left(f\right)$ is sufficiently large. For flexibility, we will assume the weaker inequality $M_2 h_{\rat_d}\left(f\right) \geq h_{\rat_d}\left(g\right)$ for some fixed constant $M_2 > 0$. 

\begin{proposition} \label{HeightGapCurve2}
Let $f,g$ be two rational maps of degree $d \ge 2$ defined over $\ovl{\bb{Q}}$. Let $C \subseteq \left(\bb{P}^1\right)^2$ be a degree $\left(d_1,d_2\right)$ curve which is not weakly $\left(f,g\right)$-special. Assume that $f$ is not a Latt{\`e}s map and there exists constants $M_1,M_2 > 1$ such that  
\[h_{C_{d_1,d_2}}\left(C\right) \leq M_1 \max\left\{h_{\rat_d}\left(f\right),1\right\} \text{ and } M_2 h_{\rat_d}\left(\left\langle f \right\rangle\right) \geq \max\left\{1, h_{\Rat_d}\left(f\right), h_{\Rat_d}\left(g\right)\right\}.\]
Then there exists $c = c\left(d_1,d_2,d,M_1,M_2\right) > 0$ and $N = N\left(d_1,d_2,d,M_1,M_2\right) > 0$ so that if $h_{\Rat_d}\left(f\right) \geq N$, then 
\[\h_{\left(f,g\right)}\left(C\right) \geq c \max\left\{h_{\rat_d}\left(f\right), h_{\rat_d}\left(g\right),1\right\}.\]
\end{proposition}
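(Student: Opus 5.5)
The plan is to combine Proposition \ref{GlobalPairing1} with a place-by-place lower bound on the local energy pairing, obtained from Propositions \ref{UniformEnergy1} and \ref{UniformEnergy2}. Proposition \ref{GlobalPairing1} gives
\[\h_{(f,g)}(C) \geq \frac{1}{d_1d_2(d_1+d_2)}\sum_{v} n_v \langle f,g\rangle_{C,v},\]
and every term is nonnegative by Proposition \ref{Pairing1}. It therefore suffices to show that $\sum_v n_v\langle f,g\rangle_{C,v} \geq c\, h_{\rat_d}(f)$ once $h_{\Rat_d}(f)\ge N$. The hypotheses give $h_{Z_k}(s) \asymp h_{\rat_d}(f)$ for $s=(f,g,C)$ in the relevant stratum $Z_k$ of $X=\Rat_d\times\Rat_d\times C_{d_1,d_2}$. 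Indeed, the global boundary height $h_{\partial Z_k}(s)=\sum_v n_v\lambda_{\partial Z_k,v}(s)$ is bounded above by a multiple of $h_{\Rat_d}(f)+h_{\Rat_d}(g)+h_{C_{d_1,d_2}}(C)+O(1)$ by Proposition \ref{AmpleHeight1}, hence by a multiple of $h_{\rat_d}(f)$. It is also bounded below by a multiple of $h_{\Rat_d}(f)\ge h_{\rat_d}(f)-O(1)$.

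Next I choose the constants. By Theorem \ref{GlobalLyapunov1}, $h_{\Lyap}(f)\ge c_1h_{\rat_d}(f)-c_2 \ge 2\delta\, h_{\partial Z_k}(s)$ for a suitable $\delta=\delta(d,M_1,M_2)$ once $N$ is large. Split the places into good and bad ones: $\mathcal{G}$ is the set of $v$ with $\Lyap(f_v)\ge \delta\lambda_{\partial Z_k,v}(s)$. On the complement, $\sum_{v\notin\mathcal{G}}n_v\Lyap(f_v)\le \delta h_{\partial Z_k}(s)$. In addition, $\Lyap(f_v)\le A\lambda_{\partial Z_k,v}(s)+O_v(1)$, where the error vanishes for $p$ large and comes from a bounded Lyapunov exponent in terms of the resultant. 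Together these force the good places to carry a definite proportion: $\sum_{v\in\mathcal{G}}n_v\lambda_{\partial Z_k,v}(s)\ge \delta' h_{\partial Z_k}(s)$.

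At a good place I apply Proposition \ref{UniformEnergy2} for residue characteristic $\ge N_0$, and Proposition \ref{UniformEnergy1} for each of the finitely many small primes and for the archimedean place. The archimedean place needs the complex analogue, which is the same ultrafilter argument with the hybrid/archimedean degeneration and Theorem \ref{IntroRigidity1}, since the ultrafilter limit is non-archimedean of residue characteristic $0$. Each application gives $\langle f,g\rangle_{C,v}\ge c\lambda_{\partial Z_k,v}(s)$ whenever $\lambda_{\partial Z_k,v}(s)\ge N$. Places with $\lambda_{\partial Z_k,v}(s)< N$ are finitely many, namely the archimedean place and the small primes, so they contribute $O(1)$ to the boundary height, and this is absorbed because $h_{\partial Z_k}(s)$ is large. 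Summing over places gives $(f,g)_C\ge c\delta'h_{\partial Z_k}(s)-O(1)\gg h_{\rat_d}(f)$, which proves the proposition.

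The main obstacle is hypothesis (1) of Proposition \ref{UniformEnergy1}, namely $\lambda_{V_i,v}(s)\le c_i\lambda_{\partial Z_k,v}(s)$ for every $i$ and at every good place. The constants $c_i$ may depend on $i$ arbitrarily, but they must be fixed before $s$ is. I would first try to use that $C$ is not preperiodic, so $s\notin V_i$, and that $s$ is algebraic. The global Weil height satisfies $h_{V_i}(s)\le a_i h(s)+b_i\ll_i h_{\partial Z_k}(s)$ by Proposition \ref{AmpleHeight1}. Using this and a further pigeonhole, I would discard places where $\lambda_{V_i,v}$ is large relative to $\lambda_{\partial Z_k,v}$. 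The difficulty is that this must hold simultaneously for infinitely many $i$. To handle this, I would refine the contradiction sequence argument: rerun the ultrafilter degeneration directly on a putative sequence of counterexamples $s_n$, with places $v_n$ carrying mass. Then only the single $V_m$ containing the limit matters, and one needs to show that $\varepsilon_n\lambda_{V_m,v_n}(s_n)$ stays bounded for that fixed $m$ along a positive proportion of places, using the global bound on $h_{V_m}(s_n)$.
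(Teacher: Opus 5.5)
Your architecture is the same as the paper's: Proposition~\ref{GlobalPairing1}, Theorem~\ref{GlobalLyapunov1}, and place-by-place bounds from Propositions~\ref{UniformEnergy2} and~\ref{UniformEnergy1} according to residue characteristic, with the finitely many small places costing only $O(1)$. However, there is a genuine gap at exactly the step you flag. You apply Propositions~\ref{UniformEnergy1} and~\ref{UniformEnergy2} at all places of $\mathcal{G}$, but there only hypothesis (2) has been verified. Hypothesis (1), namely $\lambda_{V_i,v}(s)\le c_i\lambda_{\partial Z_k,v}(s)$ for \emph{all} $i$, is never established, and without it those propositions give nothing. Your proposed remedy, a fresh ultrafilter degeneration along places $v_n$ of a sequence of global counterexamples, does not remove the difficulty. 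The index $m$ with $s_\omega\in V_m$ is determined only after the places $v_n$ are chosen. So to guarantee $\lim_\omega \varepsilon_n\lambda_{V_m,v_n}(s_n)<\infty$, you would still need each $v_n$ to be good for every $i$ at once, which is the original problem.

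The missing idea is that the pigeonhole you set aside does handle infinitely many $i$, because the constants can be chosen summably. By Proposition~\ref{AmpleHeight1} and your height comparisons, there are constants $a_i>0$ with $h_{V_i}(s)\le a_i^{-1}h_{\partial Z_k}(s)$ whenever $s\notin V_i$. These $a_i$ depend only on $i$ and $(d_1,d_2,d,M_1,M_2)$, not on $s$; the $M_K$-constants from lower bounds on local heights are absorbed into $a_i$ since $h_{\partial Z_k}(s)\ge 1$. Take $c_i=10^i/(a_i\kappa)$ for a fixed $\kappa>0$; the paper uses $c_i=10^i(a_0b_1a_i)^{-1}$ and $\delta=a_0/100$. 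Let $T_i$ be the set of places with $\lambda_{V_i,v}(s)>c_i\lambda_{\partial Z_k,v}(s)$. Then
\[
\sum_{v\in T_i}n_v\lambda_{\partial Z_k,v}(s)\le\frac{1}{c_i}\sum_{v\in T_i}n_v\lambda_{V_i,v}(s)\le\frac{h_{V_i}(s)}{c_i}\le\kappa\,10^{-i}\,h_{\partial Z_k}(s),
\]
so $\bigcup_{i\ge1}T_i$ carries at most $\frac{\kappa}{9}h_{\partial Z_k}(s)$ of the boundary mass. Choose $\kappa$ small compared with your $\delta'$; the paper instead converts this to Lyapunov mass through $\lambda_{\partial Z_k,v}\ge b_1\Lyap(f_v)-b_2$ and compares with $h_{\Lyap}(f)$. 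Then the places in $\mathcal{G}\setminus\bigcup_iT_i$ still carry mass $\gg h_{\partial Z_k}(s)$. At those places all hypotheses hold simultaneously, with $(\delta,c_1,c_2,\ldots)$ fixed before $s$. With this step inserted your argument closes as in the paper, and no second degeneration is needed. Note also that Proposition~\ref{UniformEnergy1} already covers the archimedean places, since $K=\bb{C}$ is allowed there.
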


\begin{proof}
Let $s = \left(f,g,C\right)$ be a triple satisfying the hypothesis of Proposition \ref{HeightGapCurve2} and let $K$ be a number field that they are defined over. We may assume that $h_{\rat_d}\left(f\right) \geq 1$. Recall from Section \ref{sec: EnergyPairing} that we have a covering of $X = \Rat_d \times \Rat_d \times C_{d_1,d_2}$ by locally closed subvarieties $Z_1,\ldots,Z_m$. Let $\left(f,g,C\right) \in Z_k\left(\ovl{\bb{Q}}\right)$. Recall that we have a countable collection of subvarieties $V_1,V_2,\ldots$ of $Z_k$ so that $\left(f,g,C\right) \in V_m$ for some $m$ if and only if $C$ is preperiodic for $\left(f,g\right)$. We fix local height functions $\lambda_{V_i}$ for each $V_i$ along with a local height function $\lambda_{\partial Z_k}$ for the boundary of $Z_k$. Observe that the boundary $\partial X$ is ample 
\par 
Now since $X = \Rat_d \times \Rat_d \times C_{d_1,d_2}$ is affine, there exists a compactification $\ovl{X}$ for which $\partial X$ is ample. We may thus choose $\lambda_{\partial Z_k}$ so that $\lambda_{\partial Z_k}\left(s\right) \geq \lambda_{\Rat_d}\left(f\right)$ and thus $h_{\partial Z_k}\left(s\right) \geq h_{\Rat_d}\left(f\right)$. Also note that 
\[h_{\Rat_d}\left(f\right) + h_{\Rat_d}\left(g\right) + h_{C_{d_1,d_2}}\left(C\right)\]
is an ample height for $X$. By our hypothesis, this quantity is dominated by some multiple of  $\max\left\{h_{\rat_d}\left(f\right),1\right\} = h_{\rat_d}\left(f\right)$ and thus both $h_{\rat_d}\left(f\right)$ and $h_{\partial Z_k}\left(s\right)$ dominates some ample height. Since $h_{\rat_d}\left(f\right) \geq 1$, we hence obtain positive constants $a_1,a_2\ldots$, depending on $\left(d_1,d_2,d,M_1,M_2\right)$ so that 
\[h_{\partial Z_k}\left(s\right) \geq h_{\Rat_d}\left(f\right) \geq a_i h_{V_i}\left(s\right)\]
holds for all $i$ if $s \in \left(Z_k \setminus V_i\right)\left(\ovl{\bb{Q}}\right)$. By Theorem \ref{GlobalLyapunov1}. we also get a constant $a_0 > 0$ depending on $\left(d_1,d_2,d,M_1,M_2\right)$ so that 
\begin{equation} \label{eq: UniCurve1}
h_{\Lyap}\left(f\right) \geq  a_0 h_{\partial Z_k}\left(s\right).
\end{equation}
Note that we also have constants $b_1, b_2 > 0$ so that 
\begin{equation} \label{eq: UniCurve2}
\lambda_{\partial Z_k,v}\left(s\right) \geq \lambda_{\Rat_d,v}\left(f\right) \geq b_1 \Lyap\left(f_v\right) - b_2
\end{equation}
where we may take $b_2 = 0$ for all sufficiently large residue characteristics depending only on $d$. We now wish to apply Propositions \ref{UniformEnergy1} and \ref{UniformEnergy2}. We choose constants 
\[\delta = \frac{a_0}{100}, c_i = 10^i ({\blue a_0b_1}a_i)^{-1}.\]
Let $N'$ and $c'$ be common constants for $N$ and $c$ in both Propositions \ref{UniformEnergy1} and \ref{UniformEnergy2}. We may take $N'$ large enough so that $b_2 = 0$ for all residue characteristic $\geq N'$. We split into two cases. Let $M'_K$ denote the places of $K$ which have residue characteristic $\geq N'$. 
\par 
\textbf{Case 1:} We have 
\[\sum_{v \in M'_K} n_v \Lyap\left(f_v\right) \geq \frac{1}{2} h_{\Lyap}\left(f\right).\]
Let $S$ be the subset of places of $v \in M'_K$ such that 
\[c_i \lambda_{\partial Z_k,v}\left(s\right) \geq \lambda_{V_i,v}\left(s\right) \text{ for all } i \geq 1 \text{ and } \Lyap\left(f_v\right) \geq \delta \lambda_{\partial Z_k,v}\left(s\right).\]

\begin{lemma} \label{UniCurveLemma1}
We have 
\[\sum_{v \in S} n_v \lambda_{\partial Z_k,v}(s) \geq \frac{b_1}{4} {\blue h_{\Lyap}(f)}.\]
\end{lemma} 

\begin{proof}
Let $T_0$ be the set of places of $M'_K$ so that $\Lyap(f_v) < \delta \lambda_{\partial Z_k,v}(s)$ and let $T_i$ be the set of places in  $M'_K$ so that $c_i \lambda_{\partial Z_k,v}(s) < \lambda_{V_i,v}(s)$ {\blue but $v\notin T_0\cup\cdots\cup T_{i-1}$.} Then $S$ is the complement of $\cup_{i \geq 0} T_i$ in $M'_K$. For $i \geq 1$, we have 
\[{\blue h_{\partial Z_k}}(s) \geq a_i \sum_{v \in T_i} n_v \lambda_{V_i,v}(s) \geq a_i c_i \sum_{v \in T_i} n_v \lambda_{\partial Z_k,v}(s)\]
and hence 
\begin{equation} \label{eq: UniCurve3}
\sum_{v \in T_i} n_v \lambda_{\partial Z_k,v}(s) \leq \frac{1}{a_i c_i} {\blue h_{\partial Z_k}}(s) \leq \frac{{\blue a_0b_1}}{10^i} {\blue h_{\partial Z_k}}(s).
\end{equation}
For $T_0$, we have 
\[\sum_{v \in T_0} n_v \Lyap(f_v) \mathrel{\blue \leq} \delta \sum_{v \in T_0} n_v \lambda_{\partial Z_k,v}(s) \leq \frac{a_0}{100} {\blue h_{\partial Z_k}}(s).\]
By \eqref{eq: UniCurve1}, we have  
\[\sum_{v \in M_K} n_v \Lyap(f_v) \geq a_0 {\blue h_{\partial Z_k}}(s)\]
and so 
\begin{equation} \label{eq: UniCurve4}\sum_{v \in T_0} n_v \Lyap(f_v) \leq \frac{1}{100} \sum_{v \in M_K} n_v \Lyap(f_v) = \frac{1}{100} h_{\Lyap}(f).
\end{equation}
Summing up \eqref{eq: UniCurve3} and \eqref{eq: UniCurve4} {\blue and using \eqref{eq: UniCurve2} on the disjoint $T_i$ for $i\geq1$}, we get 
\[\sum_{v \in M'_K \setminus S} n_v \Lyap(f_v) \leq \left(\frac{1}{100} + \sum_{i=1}^{\infty} \frac{1}{10^i} \right) h_{\Lyap}(f) \leq \frac{1}{5} h_{\Lyap}(f).\]
By assumption, we must have 
\[\sum_{v \in S} n_v \Lyap(f_v) \geq \frac{1}{4} h_{\Lyap}(f).\]
We now apply \eqref{eq: UniCurve2} to get 
\[\sum_{v \in S} n_v \lambda_{\partial Z_k{\blue ,v}}(s) \geq b_1 \sum_{v \in S} n_v \Lyap(f_v) \geq \frac{b_1}{4} h_{\Lyap}(f)\]
as desired.
\end{proof}

We now apply Proposition {\blue \ref{UniformEnergy2}}. We hence obtain 
\[\sum_{v \in S} n_v \langle f , g \rangle_{C,v} \geq {\blue c'} \sum_{v \in S} n_v \lambda_{\partial Z_k,v}(s) \geq \frac{b_1 c'}{4} h_{\Lyap}(f).\]
By {\blue Proposition \ref{GlobalPairing1} and }Theorem \ref{GlobalLyapunov1}, this proves Proposition \ref{HeightGapCurve2} in this case. 
\par 
\textbf{Case 2:} Let $T = M_K \setminus M_{K}'$. Then we have 
\[\sum_{v \in T} n_v \Lyap(f_v) \geq \frac{1}{2} h_{\Lyap}(f).\]
Note that $\sum_{v \in T} n_v$ is a finite number bounded only in terms of $\left(d_1,d_2,d,M_1,M_2\right)$ as $T$ consists of {\blue archimedean places and }places with residue characteristic $\mathrel{\blue <} N'$, where $N'$ itself depends only on $\left(d_1,d_2,d,M_1,M_2\right)$. Similar to Lemma \ref{UniCurveLemma1}, if we let $S$ be the subset of places of $T$ so that 
\[c_i \lambda_{\partial Z_k,v}(s) \geq \lambda_{V_i,v}(s) \text{ for all } i \geq 1 \text{ and } \Lyap(f_v) \geq \delta \lambda_{\partial Z_k,v}(s),\]
we again obtain 
\[\sum_{v \in S} n_v \Lyap(f_v) \geq \frac{1}{4} h_{\Lyap}(f).\]

We now apply \eqref{eq: UniCurve2} to obtain that 
\[\sum_{v \in {\blue S}} n_v \lambda_{\partial Z_k,v}(s) \geq \sum_{v \in {\blue S}} n_v (b_1 \Lyap(f_v) - b_2) \geq \frac{b_1}{4} h_{\Lyap}(f) - B_2\]
where $B_2 = b_2 \sum_{v \in T} n_v$ is some constant depending only on ${\blue \left(d_1,d_2,d,M_1,M_2\right)}$. We now apply Proposition \ref{UniformEnergy1} {\blue (or Proposition \ref{UniformEnergy2} when $p\geq P$)}, where we remove the condition $\lambda_{\partial Z_k,v}\left(s\right) \geq N'$ {\blue using nonnegativity of the pairing }and instead have the conclusion $\langle f , g \rangle_{C{\blue ,v}} \geq {\blue c'}(\lambda_{\partial Z_k,v}(s)-N')$. We thus get 
\[\sum_{v \in {\blue S}} n_v \langle f ,g \rangle_{C,v} \geq {\blue c'} \sum_{v \in {\blue S}} n_v (\lambda_{\partial Z_k,v}{\blue (s)} - N') \geq {\blue c'} \left(\frac{b_1}{4} h_{\Lyap}(f) - B_2 \right) - {\blue c'} N' \sum_{v \in T} n_v.\]
Again ${\blue c'} N' \sum_{v \in T} n_v$ is some constant which depends only on $\left(d_1,d_2,d,M_1,M_2\right)$. Hence{\blue , by Proposition \ref{GlobalPairing1} and Theorem \ref{GlobalLyapunov1},} if $h_{\Rat_d}\left(f\right)$ is large enough, we conclude Proposition \ref{HeightGapCurve2}.
\end{proof}

We now handle the case where $f$ is a Latt{\`e}s map and $h_{\rat_d}\left(f\right)$ is much larger than $h_{\rat_d}\left(g\right)$. 

\begin{proposition} \label{LattesGap1}
Let $f,g$ be two rational maps of degree $d \ge 2$ defined over $\ovl{\bb{Q}}$. Let $C \subseteq \left(\bb{P}^1\right)^2$ be a degree $\left(d_1,d_2\right)$ curve. Assume there is a constant $A > 0$, so that $f$ is a Latt{\`e}s map with 
\[h_{\rat_d}\left(\left\langle f \right\rangle\right) \geq A \max \left\{h_{\Rat_d}\left(f\right),1\right\}\]
and that there exists a constant $M > 1$ such that 
\[h_{C_{d_1,d_2}}\left(C\right) \leq M \max\left\{ h_{\rat_d}\left(f\right),1\right\}.\] 
Then there exists constants $c, N > 0$, depending only on $\left(d_1,d_2,d,M,A\right)$, so that if 
\[h_{\rat_d}\left(\left\langle f \right\rangle\right) \geq N \max\left\{h_{\Rat_d}\left(g\right),1\right\},\]
then 
\[\h_{f,g}\left(C\right) \geq c \max\left\{1, h_{\Rat_d}\left(f\right), h_{\Rat_d}\left(g\right)\right\}.\]
\end{proposition}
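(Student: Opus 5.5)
The plan mirrors the proof of Proposition \ref{HeightGapCurve2}, replacing the Lyapunov-exponent input by the local boundary heights of $\left\langle f \right\rangle$ in $\rat_d$, and the uniform energy bounds by Propositions \ref{UniformEnergy3} and \ref{UniformEnergy4}. Let $s=(f,g,C)\in Z_k(K)$ for a number field $K$.

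\textbf{Step 1: comparisons of global heights.} The hypotheses $h_{\rat_d}(\left\langle f\right\rangle)\geq A\max\{h_{\Rat_d}(f),1\}$, $h_{C_{d_1,d_2}}(C)\leq M\max\{h_{\rat_d}(f),1\}$ and $h_{\rat_d}(\left\langle f\right\rangle)\geq N\max\{h_{\Rat_d}(g),1\}$ show that the ample height $h_{\Rat_d}(f)+h_{\Rat_d}(g)+h_{C_{d_1,d_2}}(C)$ is bounded by a constant (depending on $d_1,d_2,d,M,A$) times $h_{\rat_d}(\left\langle f\right\rangle)$. Conversely, $h_{\rat_d}$ is bounded by a multiple of $h_{\Rat_d}(f)$ up to an additive constant (Proposition \ref{AmpleHeight1}). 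Choosing $\lambda_{\partial Z_k}$ to dominate $\lambda_{\Rat_d}(f)$ as before, we obtain $a_0>0$ with $h_{\rat_d}(\left\langle f\right\rangle)\geq a_0 h_{\partial Z_k}(s)$, once $h_{\rat_d}(f)$ is large. No condition involving the $V_i$ is needed, since Proposition \ref{UniformEnergy3} does not require one.

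\textbf{Step 2: selecting good places.} Let $S$ be the set of places $v$ satisfying both
\[\lambda_{\rat_d,v}(\left\langle f\right\rangle)\geq \delta\lambda_{\partial Z_k,v}(s) \quad\text{and}\quad \lambda_{\rat_d,v}(\left\langle f\right\rangle)\geq N'\lambda_{\partial\Rat_d,v}(g),\]
where $\delta=a_0/100$ and $N'$ is the common constant from Propositions \ref{UniformEnergy3} and \ref{UniformEnergy4}. We bound the contributions of the bad places, as in Lemma \ref{UniCurveLemma1}:
\begin{itemize}
\item Places failing the first inequality contribute at most $\delta h_{\partial Z_k}(s)\leq h_{\rat_d}(f)/100$ to $\sum_v n_v\lambda_{\rat_d,v}(\left\langle f\right\rangle)$.
\item Places failing the second inequality contribute at most $N'^{-1}\sum_v n_v\lambda_{\partial\Rat_d,v}(g)\leq N'^{-1}h_{\Rat_d}(g)$ (up to bounded error). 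This is at most $h_{\rat_d}(f)/100$ if $N$ is chosen large relative to $N'$.
\end{itemize}
Here we must remember that local heights are only defined up to $M_K$-constants, which are nonzero only at finitely many small-residue-characteristic places. Those bounded errors are absorbed exactly as in Case 2 of Proposition \ref{HeightGapCurve2}. It follows that $\sum_{v\in S}n_v\lambda_{\rat_d,v}(\left\langle f\right\rangle)\gtrsim h_{\rat_d}(f)$, and hence $\sum_{v\in S} n_v\lambda_{\partial Z_k,v}(s)\gtrsim h_{\rat_d}(f)$.

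\textbf{Step 3: applying the uniform energy bounds.} At places in $S$ of residue characteristic at least $N'$, apply Proposition \ref{UniformEnergy4}. At the finitely many remaining places (archimedean places and small residue characteristic), apply Proposition \ref{UniformEnergy3}. Between them, the archimedean ones can be reduced to $K=\bb{C}$, $\bb{R}$ and the finitely many $\bb{C}_p$, so the constants are uniform. At those places the conclusion is written as $\langle f,g\rangle_{C,v}\geq c'(\lambda_{\partial Z_k,v}(s)-N')$, using nonnegativity of the pairing. Summing gives $(f,g)_C\geq c'' h_{\rat_d}(f)-B$. Proposition \ref{GlobalPairing1} then converts this into a lower bound for $\h_{(f,g)}(C)$. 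Finally, $h_{\rat_d}(f)$ dominates $\max\{1,h_{\Rat_d}(f),h_{\Rat_d}(g)\}$, by the hypothesis with $A$ and by $N\geq1$; this yields the stated bound once $h_{\rat_d}(f)$ is large. If $h_{\rat_d}(f)$ is small, then $N\max\{h_{\Rat_d}(g),1\}\leq h_{\rat_d}(f)$ forces the whole configuration into a bounded-height regime, which we handle by enlarging $N$.

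\textbf{Main obstacle.} The delicate step is Step 2: converting the global inequality $h_{\rat_d}(f)\gg h_{\Rat_d}(g)$ into enough places where the local inequality holds. This must be done while tracking the $M_K$-constant ambiguity in the local heights, so that the losses stay uniform in $(d_1,d_2,d,M,A)$. Archimedean places also need separate treatment, since Proposition \ref{Rigidity2} is only non-archimedean. I would try to show that the archimedean contribution to $h_{\rat_d}$ is bounded by a small fraction plus a constant, or else route the archimedean places through an analogous complex degeneration argument.
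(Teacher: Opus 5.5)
Your proposal is correct and is essentially the paper's proof. You select the places where both local inequalities hold, apply Proposition \ref{UniformEnergy4} at residue characteristic $\geq N'$ and Proposition \ref{UniformEnergy3} (in the corrected form $c'(\lambda_{\partial Z_k,v}(s)-N')$) at the finitely many remaining places, and conclude via Proposition \ref{GlobalPairing1}; the paper instead splits into two cases according to whether most of $h_{\rat_d}(\left\langle f\right\rangle)$ comes from large or small residue characteristic, which your single sum over all places makes unnecessary.

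Two small corrections. First, the places failing the second inequality contribute at most $N'h_{\Rat_d}(g)$, not $N'^{-1}h_{\Rat_d}(g)$; this is still harmless once $N\geq 100N'$, which is the remedy you already propose. Second, your archimedean worry is unfounded. Proposition \ref{UniformEnergy3} is stated for any complete valued field of characteristic $0$, including $K=\bb{C}$. With $\eps_n\to 0$ the ultrafilter limit field $\cal{H}(\omega)$ is non-archimedean of residue characteristic $0$, so Proposition \ref{Rigidity2} applies there and no separate archimedean argument is needed.
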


\begin{proof}
The proof is similar to Proposition \ref{HeightGapCurve2} but is much simpler in this case. We will use Propositions \ref{UniformEnergy3} and \ref{UniformEnergy4}. Let $s = \left(f,g,C\right)$ be such a triplet and assume that $s \in Z_k\left(\ovl{\bb{Q}}\right)$. Let $K$ be a number field such that $\left(f,g,C\right)$ is defined over. As in Proposition \ref{HeightGapCurve2}, after assuming $h_{\rat_d}\left(f\right) \geq 1$, we may obtain constants $a,b > 0$ depending on $\left(d_1,d_2,d,M,A\right)$ so that 
\[b h_{\rat_d}\left(f\right) \geq h_{\partial Z_k}\left(s\right) \geq a h_{\rat_d}\left(f\right).\]
Let $c',N'$ be constants for Propositions \ref{UniformEnergy3} and \ref{UniformEnergy4} for $\delta = \frac{1}{100b}$. We let $M_{K}'$ be the set of places with residue characteristic $\geq N'$. We again split into two cases. Note that we have constants $b_1, b_2 > 0$ so that 
\[\lambda_{\partial Z_k,v}\left(s\right) \geq b_1\lambda_{\rat_d,v}\left(\left\langle f \right\rangle\right) - b_2.\]
where again $b_2 = 0$ for all sufficiently large residue characteristics, which after increasing $N'$ we may assume it holds for all residue characteristics $\geq N'$. 
\par 
\textbf{Case 1:} We have 
\[\sum_{v \in M_K'} n_v \lambda_{\rat_d,v}\left(f\right) \geq \frac{1}{2} h_{\rat_d}\left(f\right).\]
Now assume that $h_{\rat_d}\left(\left\langle f \right\rangle\right) \geq 6N' h_{\Rat_d}\left(g\right)$. Then if $S$ is the subset of places of $M_K'$ so that 
\[\lambda_{\rat_d}(\langle f \rangle) \geq N' \lambda_{\Rat_d,v}(g) \text{ and } \lambda_{\rat_d}(\langle f \rangle) \geq \delta \lambda_{\rat_d}(f).\]
Then 
\[\sum_{v \in M_K' \setminus S} n_v \lambda_{\rat_d,v}\left(\left\langle f \right\rangle\right) \leq N' \sum_{v \in M_K} n_v \lambda_{\Rat_d,v}\left(g\right) + \frac{1}{100 b} \sum_{v \in M_K} n_v \lambda_{\partial Z_k}\left(s\right) \]
\[\leq N' h_{\Rat_d}\left(g\right) + \frac{1}{100b} h_{\partial Z_k}\left(s\right) \leq \frac{1}{3} h_{\rat_d}\left(\left\langle f \right\rangle\right).\]
Hence we must have 
\[\sum_{v \in S} n_v \lambda_{\rat_d,v}\left(f\right) \geq \frac{1}{6} h_{\rat_d}\left(\left\langle f \right\rangle\right).\]
We now apply Proposition \ref{UniformEnergy4} to obtain 
\[\sum_{v \in S} n_v \left\langle f , g \right\rangle_{C,v} \geq c' \sum_{v \in S} n_v \lambda_{\partial Z_k,v}\left(s\right) \geq \frac{c' b_1}{6} h_{\rat_d}\left(\left\langle f \right\rangle\right)\]
which proves what we want. 
\par 
\textbf{Case 2:} Let $T = M_{K'} \setminus M_K$. We have 
\[\sum_{v \in T} n_v \lambda_{\rat_d,v}\left(f\right) \geq \frac{1}{2} h_{\rat_d}\left(f\right).\]
Again let $S$ be the subset of places of $M_K'$ so that 
\[\lambda_{\rat_d}(\langle f \rangle) \geq N \lambda_{\Rat_d,v}(g) \text{ and } \lambda_{\rat_d}(\langle f \rangle) \geq \delta \lambda_{\partial Z_k}(s).\]
Then similarly if $h_{\rat_d}\left(\left\langle f \right\rangle\right) \geq 6N' h_{\Rat_d}\left(g\right)$, we have 
\[\sum_{v \in S} n_v \lambda_{\rat_d,v} \left(\left\langle f \right\rangle\right) \geq \frac{1}{6} h_{\rat_d}\left(\left\langle f \right\rangle\right).\]
Hence we have 
\[\sum_{v \in S} n_v \lambda_{\partial Z_k,v}\left(s\right) \geq \frac{b_1}{6} h_{\rat_d} \left(\left\langle f \right\rangle\right) - b_2 \sum_{v \in T} n_v = \frac{b_1}{6} h_{\rat_d} \left(\left\langle f \right\rangle\right) - B_2\]
where $B_2 = b_2 \sum_{v \in T} n_v$ depends only on $\left(d,d_1,d_2,M,A\right)$. We then apply Proposition \ref{UniformEnergy3}, removing the condition $\lambda_{\partial Z_k,v}\left(s\right) \geq N'$ and replacing the lower bound $c \lambda_{\partial Z_k,v}\left(s\right)$ with $c \left(\lambda_{\partial Z_k,v}\left(s\right) - N'\right)$, we obtain 
\[\sum_{v \in S} n_v \left\langle f , g \right\rangle_{C,v} \geq c \left(\frac{b_1}{6} h_{\rat_d}\left(\left\langle f \right\rangle\right) - B_2 - N' \sum_{v \in T} n_v \right).\]
Again $N' \sum_{v \in T} n_v$ is some constant depending only on $\left(d,d_1,d_2,M,A\right)$ and so if $h_{\rat_d}\left(\left\langle f \right\rangle\right)$ is large enough, Proposition \ref{LattesGap1} follows as desired.
\end{proof}

Combining Propositions \ref{HeightGapCurve2} and \ref{LattesGap1}, we obtain Theorem \ref{HeightGapCurve1} for $\left(f,g,C\right)$ with the quantity $\max\left\{h_{\Rat_d}\left(f\right), h_{\Rat_d}\left(g\right), h_{C_{d_1,d_2}}\left(C\right)\right\}$ being sufficiently large.

\begin{proposition} \label{HeightGapCurve3}
Let $\left(f,g\right)$ be two rational maps of degree $d \ge 2$ defined over $\ovl{\bb{Q}}$. Let $C \subseteq \left(\bb{P}^1\right)^2$ be a degree $\left(d_1,d_2\right)$-curve defined over $\ovl{\bb{Q}}$ {\blue which is not weakly $\left(f,g\right)$-special.} Assume that there is a $A > 0$ so that
\[A h_{\rat_d}\left(\left\langle f \right\rangle\right) \geq  \max\left\{h_{\Rat_d}\left(f\right),1\right\}\]
and similarly for $g$. Then there exists $c, N > 0$ depending only on $\left(d,d_1,d_2,A\right)$ so that if 
\[\max\left\{ h_{\Rat_d}\left(f\right), h_{\Rat_d}\left(g\right), h_{C_{d_1,d_2}}\left(C\right) \right\} \geq N.\]
then
\[\h_{\left(f,g\right)}\left(C\right) \geq c \max\left\{h_{\Rat_d}\left(f\right), h_{\Rat_d}\left(g\right)\right\}.\]
\end{proposition}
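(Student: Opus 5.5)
By symmetry we may assume $h_{\rat_d}(\langle f\rangle)\ge h_{\rat_d}(\langle g\rangle)$; the case $\ge$ for $g$ is identical after swapping the coordinates. The plan is to reduce to the situation where everything is controlled by $h_{\rat_d}(\langle f\rangle)$, and then split according to whether $f$ is Latt\`es and how large $g$ is relative to $f$.

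\textbf{Step 1: bound the height of $C$.} By \eqref{eq: CurveDifference}, either
\[h_{C_{d_1,d_2}}(C)\le M\max\left\{h_{\Rat_d}(f),h_{\Rat_d}(g),1\right\}\]
for a suitable $M=M(d_1,d_2,d)$, or $\h_{(f,g)}(C)$ already dominates $\max\{h_{\Rat_d}(f),h_{\Rat_d}(g)\}$ and we are done. Using the hypothesis $A h_{\rat_d}(\langle\cdot\rangle)\ge\max\{h_{\Rat_d}(\cdot),1\}$ for both maps, together with $h_{\rat_d}(\langle f\rangle)\ge h_{\rat_d}(\langle g\rangle)$, we then get
\[\max\left\{h_{\Rat_d}(f),h_{\Rat_d}(g),h_{C_{d_1,d_2}}(C)\right\}\le M'\,h_{\rat_d}(\langle f\rangle)\]
once this maximum exceeds $N$. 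In particular $h_{\rat_d}(\langle f\rangle)$ is large when $N$ is large, and $h_{C_{d_1,d_2}}(C)\le M_1\max\{h_{\rat_d}(f),1\}$ with $M_1$ depending only on $(d,d_1,d_2,A)$.

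\textbf{Step 2: $f$ not Latt\`es.} Then all hypotheses of Proposition \ref{HeightGapCurve2} hold, with $M_2=AM'$: indeed $M_2 h_{\rat_d}(\langle f\rangle)\ge\max\{1,h_{\Rat_d}(f),h_{\Rat_d}(g)\}$. For $h_{\Rat_d}(f)$ large (guaranteed by enlarging $N$) we obtain
\[\h_{(f,g)}(C)\ge c\,h_{\rat_d}(f)\ge (c/M')\max\left\{h_{\Rat_d}(f),h_{\Rat_d}(g)\right\}.\]

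\textbf{Step 3: $f$ Latt\`es.} Fix a large threshold $N_0$ and split again.
\begin{itemize}
\item If $h_{\rat_d}(\langle f\rangle)\ge N_0\max\{h_{\Rat_d}(g),1\}$, then Proposition \ref{LattesGap1} applies, with the $A$-hypothesis coming from the standing assumption and the $M$-hypothesis from Step 1. It gives the bound.
\item Otherwise $h_{\rat_d}(\langle g\rangle)$ is comparable to $h_{\rat_d}(\langle f\rangle)$, since $h_{\Rat_d}(g)\gtrsim h_{\rat_d}(\langle f\rangle)/N_0$. If $g$ is not Latt\`es, swap the roles of $f$ and $g$ and apply Proposition \ref{HeightGapCurve2} with $M_2$ now depending on $N_0$.
\end{itemize}

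\textbf{Main obstacle.} The remaining case is both $f$ and $g$ Latt\`es of comparable height. This is where I expect the real difficulty. The non-weakly-special hypothesis (Definition \ref{def:WeaklySpecialCurves}(3)) must be used, presumably reducing to the uniform Bogomolov statement for $E_1\times E_2$ via the pullback $(\vphi_f,\vphi_g)$. Failing that, I would redo the degeneration argument of Proposition \ref{UniformEnergy1} with the ultrafilter limit of Latt\`es pairs, where $\Lyap(f_\omega)>0$ and Theorem \ref{Rigidity1} still applies.

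The constants are chosen in the order $N_0$, then the constants from the invoked propositions, then $N$ large enough to make every ``sufficiently large'' condition hold.
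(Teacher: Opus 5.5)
Your argument is essentially the paper's. After bounding $h_{C_{d_1,d_2}}(C)$ by a multiple of $\max\{h_{\rat_d}(\langle f\rangle),1\}$ via \eqref{eq: CurveDifference}, the paper applies:
\begin{itemize}
\item Proposition~\ref{HeightGapCurve2} when $f$ is non-Latt\`es;
\item Proposition~\ref{LattesGap1} when $f$ is Latt\`es and $h_{\rat_d}(\langle f\rangle)$ dominates $g$;
\item Proposition~\ref{HeightGapCurve2} to $(g,f,C)$ otherwise.
\end{itemize}
The paper symmetrizes on $h_{\Rat_d}$ rather than $h_{\rat_d}$, which is immaterial given the $A$-comparability. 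The both-Latt\`es case you flag is handled at the very start of the paper's proof by your first suggestion: uniform Bogomolov for curves in $E_1\times E_2$, via K\"uhne's relative Bogomolov \cite{Kuh23} or the Gao--Ge--K\"uhne height gap \cite{GGK21}. This uses that $\max\{h_{\rat_d}(f),1\}$ is comparable to $\max\{h_{\Fal}(E),1\}$. It is an external input that the paper does not prove, and you should simply commit to it.

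Drop your fallback, though: it would not work. Since the Latt\`es locus is closed, the ultrafilter limit $f_\omega$ is again a Latt\`es map, now over a field of residue characteristic $0$. There its equilibrium measure is either a Dirac mass at a type II point or carried by a segment of non-classical points, as used in the proof of Proposition~\ref{Rigidity2}. By \cite{FRL25} this means $\Lyap(f_\omega)$ cannot be positive. So your claim that $\Lyap(f_\omega)>0$ is false, and Theorem~\ref{Rigidity1} is unavailable for either coordinate. The hypothesis $\Lyap>0$ is genuinely necessary for the rigidity (see the remark after Theorem~\ref{IntroRigidity1}). That is why the paper needs the separate Proposition~\ref{Rigidity2} even for a Latt\`es $f$ paired with a good-reduction $g$, and why it imports the both-Latt\`es case from the abelian-variety literature instead of running the degeneration argument.
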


\begin{proof}
If both $f$ and $g$ are Latt{\`e}s, it follows directly from the case of uniform Bogomolov for curves in $E_1 \times E_2$ by using either Kuhne's relative Bogomolov for fibered powers of elliptic curves \cite{Kuh23} or Gao--Ge--Kuhne's height gap principle \cite{GGK21}. Hence we may assume that not both $f$ and $g$ are Latt{\`e}s. 
\par 
\begingroup\emergencystretch=3em
By symmetry, we may assume that $h_{\Rat_d}\left(f\right) \geq h_{\Rat_d}\left(g\right)$. By \eqref{eq: Assumption2}, we may assume there exists $M' = M'\left(d,d_1,d_2,d,A\right) > 0$ so that 
\[h_{C_{d_1,d_2}}\left(C\right) \leq M' \max\left\{h_{\rat_d}\left(f\right),1\right\}\]
Let $N_1$ be a constant for $N$ in Proposition \ref{LattesGap1}. Then if $f$ is Latt{\`e}s and $h_{\rat_d}(\langle f \rangle ) \geq N_1 \max\{h_{\rat_d}(g),1\}$, we are done. Hence we may assume either $f$ is non-Latt{\`e}s, or $f$ is Latt{\`e}s but 
\begin{equation} \label{eq: HeightCompare1}
N_1 A \max\left\{h_{\rat_d}\left(g\right),1\right\} \geq A h_{\rat_d}\left(\left\langle f \right\rangle\right)\geq h_{\Rat_d}\left(f\right).
\end{equation}
If $f$ is non-Latt{\`e}s, then the triplet $\left(f,g,C\right)$ satisfies the conditions of Proposition \ref{HeightGapCurve2} with $M_1 = M'$ and $M_2 =  A$. Then if $h_{\Rat_d}\left(f\right)$ is sufficiently large, we are done. In the other case, we have $N_1 \max\left\{h_{\rat_d}\left( \left\langle g \right\rangle\right),1\right\} \geq h_{\rat_d}\left(\left\langle f \right\rangle\right).$ and $g$ is non-Latt{\`e}s. Then the triplet $\left(g,f,C\right)$ satisfies Proposition \ref{HeightGapCurve2} with $M_1 = M' N_1 A, M_2 = N_1 A$.Hence if $h_{\rat_d}\left(g\right)$ is sufficiently large, we are done. By \eqref{eq: HeightCompare1}, this is implied by $h_{\rat_d}\left(f\right)$ being sufficiently large, concluding the proof of our Proposition. 
\par\endgroup
\par    
\end{proof}

We now handle the case where $\left(f,g,C\right)$ are of bounded height. In this case, we will use the archimiedean analogue of our rigidity result, which was proven by Mavraki--Schmidt \cite[Theorem 1.10]{MS25}. 

\begin{theorem} \label{ArchimedeanRigidity} 
Let $f,g \in \bb{C}\left(z\right)$ be two rational maps of degree $d \ge 2$. Let $C \subseteq \left(\bb{P}^1\right)^2$ be an irreducible complex algebraic curve which projects dominantly to both coordinates. {\blue If $\left(\deg \pi_2\right) \pi_1^* \mu_{f} = \left(\deg \pi_1\right) \pi_2^* \mu_g$, then $C$ is weakly $\left(f,g\right)$-special.} 
\end{theorem}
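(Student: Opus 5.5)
The statement is the archimedean counterpart of Theorem \ref{Rigidity1}. It is attributed to Mavraki--Schmidt \cite[Theorem 1.10]{MS25}, so the plan is to reduce to their result and explain how its conclusion matches Definition \ref{def:WeaklySpecialCurves}. I would still organize the argument so that it parallels Section \ref{sec: Rigidity}, splitting on whether the maps are exceptional.

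\textbf{Case 1: $f$ or $g$ is non-exceptional.} Say $f$ is non-exceptional. Over $\bb{C}$ the Lyapunov exponent is always at least $\frac12\log d>0$, so the positivity hypothesis needed in the non-archimedean case is automatic. I would then run the same scheme as in the proof of Theorem \ref{Rigidity1}.

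1. Pick a smooth point of $C$, in the support of $\pi_1^*\mu_f$, where both projections are unramified. Near it, write $C$ as the graph of a local biholomorphism $h$ satisfying $h_*\mu_f=\rho\,\mu_g$.
2. Use the ergodic and Pesin-type inverse branches of Propositions \ref{Lyapunov1}--\ref{Lyapunov3}, whose complex versions are classical. As in Proposition \ref{Conical2}, build the maps $h_{n}=f^{\theta(n)}\circ h^{-1}\circ g_{-n}$ on a fixed disc. By Proposition \ref{Volume2} they scale $\mu$ by factors bounded below.
3. In the complex setting, Montel's theorem gives a convergent subsequence directly; this replaces Proposition \ref{Convergence1}.
4. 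Apply the complex analogue of Lemma \ref{lem:move2}, which is Levin--Przytycki \cite{LP97} for non-exceptional $f$, or the version in \cite{JX25}. It shows that the family $\sigma_k=(h')^{-1}\circ h_{n_k}$ is finite.
5. Conclude exactly as in the end of the proof of Theorem \ref{Rigidity1}: the curve $C$ is preperiodic under $(f^a,g^b)$, hence under $(f,g)$. This is condition (2) of Definition \ref{def:WeaklySpecialCurves}.

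A preliminary point needs care here. If $f$ is non-exceptional but $g$ is exceptional, the measure equality transports the local symmetry structure of $\mu_g$ to $\mu_f$. This contradicts \cite{LP97}, or it forces $C$ to be preperiodic anyway.

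\textbf{Case 2: both $f$ and $g$ are exceptional.} Lift through the semiconjugacies $\vphi_f,\vphi_g$ from the groups $G_f,G_g$, each either $\bb{G}_m$ or an elliptic curve. Then $\mu_f$ and $\mu_g$ are pushforwards of Haar measures: on the unit circle for $\bb{G}_m$, or on the whole torus in the elliptic case. Pull $C$ back to a curve $\tilde C\subseteq G_f\times G_g$. The measure equality says the two projected Haar measures agree on each component of $\tilde C$.

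- For two elliptic curves, both measures are smooth $(1,1)$ forms, and their equality makes $\tilde C$ totally geodesic for the flat metric. This forces $\tilde C$ to be a coset.
- For $\bb{G}_m$ factors, the support condition $|x|=1\Leftrightarrow|y|=1$ along $\tilde C$, together with equality of angular measures, forces $\tilde C$ to be $x^my^n=\zeta$ up to units. This again gives a coset.
- A mixed $\bb{G}_m$/elliptic pair is impossible: the supports have different real dimensions, since $\pi_1^*\mu_f$ lives on a real curve while $\pi_2^*\mu_g$ charges open sets.

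Together these give condition (3) of Definition \ref{def:WeaklySpecialCurves}.

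\textbf{Main obstacle.} I expect the hardest step to be the finiteness in step 4 when some map is exceptional. There Levin--Przytycki genuinely fails, because the symmetry groups are infinite. That is exactly why the conclusion must be weakened from preperiodic to weakly special, and the coset analysis of Case 2 has to be done by hand. In practice I would quote \cite[Theorem 1.10]{MS25} for this case and check only that their notion of special curve coincides with Definition \ref{def:WeaklySpecialCurves}.
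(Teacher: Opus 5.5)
Your proposal matches the paper: the paper gives no argument of its own for this statement and simply imports it from \cite[Theorem 1.10]{MS25}, which is exactly your reduction. Your supplementary sketch (the complex analogue of Section~\ref{sec: Rigidity}, with Montel and \cite{LP97} in the non-exceptional case, and a coset analysis after lifting to $G_f\times G_g$ in the exceptional case) is a reasonable outline of the mechanism behind that citation, with one small slip: as in the proof of Theorem~\ref{Rigidity1}, the maps $\sigma_k=(h')^{-1}\circ h_{n_k}$ are self-maps of a disc on the $g$-side and preserve $\mu_g$, so \cite{LP97} must be applied to $g$, not $f$; since the hypothesis is symmetric in $f$ and $g$, simply take $g$ to be the non-exceptional map, which also makes your ``preliminary point'' about the mixed case unnecessary.
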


For our subvariety $Z_k \subseteq \Rat_d \times \Rat_d \times C_{d_1,d_2}$, we let $W_k$ be the closed subvariety cutting out triplets $\left(f,g,C\right)$ such that either $f$ and $g$ are both Latt{\`e}s maps, or where each $f$ and $g$ are conjugate to either a power or Chebyshev map. If $s = \left(f,g,C\right) \in Z_k \setminus W_k$, then $C$ is weakly $\left(f,g\right)$-special if and only if $C$ is preperiodic for $\left(f,g\right)$. 

\begin{proposition} \label{BoundedHeight1}
Let $\left(f,g\right)$ be two rational maps of degree $d \ge 2$ defined over $\ovl{\bb{Q}}$ and $C \subseteq \left(\bb{P}^1\right)^2$ an irreducible degree $\left(d_1,d_2\right)$-curve. Let $N > 0$ be a constant such that 
\[\max\left\{h_{\Rat_d}\left(f\right), h_{\Rat_d}\left(g\right), h_{C_{d_1,d_2}}\left(C\right) \right\} \leq N.\]
Assume that the pair $\left(f,g\right)$ are not both Latt{\`e}s maps, {\blue and} that {\blue at least one of them is} not conjugate to either a monomial or a Chebyshev map. Then there exists $\delta = \delta\left(d,d_1,d_2,N\right) > 0$ such that if $C$ is not weakly $\left(f,g\right)$-special, then
\[\h_{\left(f,g\right)}\left(C\right) \geq \delta.\]
\end{proposition}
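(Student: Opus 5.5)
By Northcott, finiteness plus a compactness argument on bounded-height points will not directly work: the number field is not fixed, so there are infinitely many triples of bounded height. I will therefore argue by contradiction using archimedean degeneration rather than Northcott. Suppose there is a sequence $s_n=(f_n,g_n,C_n)\in (Z_k\setminus W_k)(\ovl{\bb{Q}})$ of height at most $N$, with $C_n$ not weakly special and $\h_{(f_n,g_n)}(C_n)\to 0$. By Proposition~\ref{GlobalPairing1}, only the upper bound $\h\ge (f,g)_C/(d_1d_2(d_1+d_2))$ is available, which goes the wrong way. I will instead work with the normalized height itself and the equidistribution/Zhang machinery at the archimedean place.

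The key steps are as follows.

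\textbf{Step 1.} Pass to a subsequence so that all $s_n$ lie in a single $Z_k$, and use the archimedean local pairing $\langle f,g\rangle_{C,\infty}$, averaged over the Galois orbit, as a lower bound. Since the global pairing is a sum of nonnegative local terms, the average over complex embeddings $\sigma$ of $\langle f_n^\sigma,g_n^\sigma\rangle_{C_n^\sigma,\infty}$ is at most $[K:\bb{Q}]$-normalized $(f_n,g_n)_{C_n}$. This quantity is in turn bounded by a constant times $\h_{(f_n,g_n)}(C_n)$ plus an error term. To control that error, I will use the relation $\ovl{L}^2=\ovl{N}^2+\ovl{M}^2$ and aim to show that $-\ovl{M}^2$ is also a sum of local terms tending to zero. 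Here I expect to need $\h\to0$ forcing both $\ovl{N}^2\to 0$ and $\ovl{M}^2\to0$, which holds since both are nonpositive.

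\textbf{Step 2.} Bounded height gives uniformly bounded archimedean coordinates of the Galois conjugates $s_n^\sigma$ in a fixed compactification. The conjugates $s_n^\sigma$ therefore lie in a compact subset of $Z_k(\bb{C})$, once we know they stay away from $\partial Z_k$, which follows from the height bound: for each $n$, a positive proportion of conjugates have $\lambda_{\partial Z_k,\infty}$ bounded. Choose such conjugates and extract a convergent subsequence $s_n^{\sigma_n}\to s_\infty=(f,g,C)\in Z_k(\bb{C})$. By continuity of the archimedean pairing (Proposition~\ref{FlatContinuity1} in the archimedean fiber), $\langle f,g\rangle_{C}=0$. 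Hence $d_2\pi_1^*\mu_f=d_1\pi_2^*\mu_g$, and by Theorem~\ref{ArchimedeanRigidity}, $C$ is weakly $(f,g)$-special.

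\textbf{Step 3.} Since $s_\infty\notin W_k$ because $W_k$ is closed, and $C$ dominates both factors (this is an open condition inside $C_{d_1,d_2}$), $C$ is preperiodic, so $s_\infty\in V_m$ for some $m$. This alone gives no contradiction. The main obstacle is turning the limit into a statement about the $s_n$ themselves. I would first try the following. Apply the equidistribution of small points of $Z_k$ is unavailable; instead, use that the set of $\sigma$ with small archimedean pairing is most of the Galois orbit. The conjugates therefore accumulate only near the countable union $\bigcup V_m$, and by a Baire/measure argument they accumulate near a single $V_m$. One then needs $h_{V_m}(s_n)$ unbounded (as $s_n\notin V_m$ but converges to it in most archimedean embeddings), contradicting Proposition~\ref{AmpleHeight1}. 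Making this last step rigorous — uniformity in $m$ and the passage from local closeness to global height growth — is where I expect the real difficulty. An alternative fallback is to run the same local argument at every place, using Proposition~\ref{UniformEnergy1} for non-archimedean places and the Mavraki--Schmidt archimedean rigidity at infinity, in a compactness-in-$\beta\bb{N}$ form as in Section~\ref{sec: DegenUltra} with $\eps_n=1$.
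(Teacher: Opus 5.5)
\textbf{There is a genuine gap, and it sits in Step 3.} You never ensure that the limit point $s_\infty$ avoids $W_k$ and the countable union $\bigcup_m V_m$, and without that the argument does not close.

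Your justification that $s_\infty\notin W_k$ ``because $W_k$ is closed'' is backwards. Closedness of $W_k$ is exactly what allows limits of points off $W_k$ to land on $W_k$.

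The same problem arises for the $V_m$, and the proposed Baire/measure repair does not work. You select one embedding $\sigma_n$ per $n$. The fact that $s_n^{\sigma_n}$ is close to $V_m$ then gives no information about the global height $h_{V_m}(s_n)$. To contradict Proposition~\ref{AmpleHeight1} you would need large local height on a positive proportion of places, uniformly in $m$, and nothing in your argument produces this. As written, your argument ends with $s_\infty\in V_m$ and no contradiction.

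A smaller point: Proposition~\ref{GlobalPairing1} goes in exactly the direction you need, with no error term, since $\h_{(f,g)}(C)\ge (f,g)_C/(d_1d_2(d_1+d_2))$. So the discussion of $\ovl{M}^2$ in Step 1 is unnecessary.

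\textbf{The missing idea.} The mechanism you already used for $\partial Z_k$ (a positive proportion of archimedean places has bounded local height) can be applied simultaneously to $W_k$ and to every $V_i$, provided the thresholds are weighted geometrically.

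Let $h_X$ be the ample height, so $h_X(s)\le 4N$. By Proposition~\ref{AmpleHeight1} there are constants with $h_{V_i}(s)\le a_ih_X(s)\le 4Na_i$ for $s\notin V_i$, and $h_{\partial Z_k}\le b_1h_X$, $h_{W_k}\le b_2h_X$. It follows that:
\begin{itemize}
\item the archimedean places with $\lambda_{V_i,v}(s)>10^i\cdot 4Na_i$ carry total weight at most $10^{-i}$;
\item the places with $\lambda_{\partial Z_k,v}(s)>300Nb_1$, or with $\lambda_{W_k,v}(s)>300Nb_2$, carry weight at most $4/300$ each.
\end{itemize}
Hence at least half of the archimedean weight embeds $s$ into
\[D_k=\{\lambda_{\partial Z_k}\le 300Nb_1,\ \lambda_{W_k}\le 300Nb_2,\ \lambda_{V_i}\le 10^i\cdot 4Na_i \text{ for all } i\}\subseteq Z_k(\bb{C}).\]
The summable weights $10^{-i}$ are precisely what resolves your worry about uniformity in $m$.

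The set $D_k$ is an intersection of closed sublevel sets of continuous $\bb{R}\cup\{+\infty\}$-valued functions and stays away from the boundary, so it is compact. It meets neither $W_k$ nor any $V_i$. On $D_k$ the archimedean pairing is continuous, and by Theorem~\ref{ArchimedeanRigidity} it is strictly positive there, hence bounded below by some $c>0$. This gives $(f,g)_C\ge c/2$ and $\h_{(f,g)}(C)\ge c/(2d_1d_2(d_1+d_2))$, which is the paper's proof.

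Your sequential version also closes once you choose $\sigma_n$ with $s_n^{\sigma_n}\in D_k$ and small local pairing. This is possible because $D_k$ carries half the weight while the pairing is small on most of the weight. The limit then lies in the closed set $D_k$, which is incompatible with vanishing pairing.
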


\begin{proof}
Let $s = \left(f,g,C\right)$ lie on $Z_k\left(\ovl{\bb{Q}}\right)$ satisfying the hypothesis of our Proposition. Then by assumption, $s \in \left(Z_k \setminus W_k\right)\left(\ovl{\bb{Q}}\right)$ and $s \in \left(Z_k \setminus V_i\right)\left(\ovl{\bb{Q}}\right)$ for all $i \geq 1$. Let 
\[h_X\left(s\right) = h_{\Rat_d}\left(f\right) + h_{\Rat_d}\left(g\right) + h_{C_{d_1,d_2}}\left(C\right) + 1\]
so that $h_X$ is an ample height on $X = \Rat_d \times \Rat_d \times C_{d_1,d_2}$ and $h_X\left(s\right) \leq 3N + 1 \leq 4N$ as we may assume $N \geq 1$. Then there are positive constants $a_1,a_2,\ldots$ so that 
\[h_{V_i}\left(s\right) \leq a_i h_X\left(s\right)\]
for all $s \in \left(Z_k \setminus V_i\right)\left(\ovl{\bb{Q}}\right)$. We let $b_1,b_2$ be positive constants $h_{\partial Z_k}\left(s\right) \leq b_1 h_X\left(s\right) \text{ and } h_{W_k}\left(s\right) \leq b_2 h_X\left(s\right).$ Now on $Z_k\left(\bb{C}\right)$, consider the subset $D_k \subseteq Z_k\left(\bb{C}\right)$ consisting of points $s$ satisfying 
\[\lambda_{\partial Z_k}(s) \leq 300N {\blue b_1}, \lambda_{W_k}(s) \leq 300N {\blue b_2}, \lambda_{V_i}(s) \leq 10^{i} (4N) {\blue a_i}\]
where we use the local height functions over $\mathbb{C}$. Then $D_k$ is a closed subset of $\ovl{Z_k}\left(\bb{C}\right)$ for some compactification of $\ovl{Z_k}$ and hence a compact subset of $Z_k\left(\bb{C}\right)$ since it has empty intersection with the boundary. Furthermore, our function $\left\langle \, , \, \right\rangle$ is strictly positive on $D_k$ and hence there exists a $c > 0$ such that 
\[\left\langle f , g \right\rangle_C \geq c > 0\]
for all $s = \left(f,g,C\right) \in D_k$. Now let $K$ be a number field for which $\left(f,g,C\right)$ is defined over and let $T$ be the archimedean places of $K$. Let $S\subseteq T$ be the subset of places for which $s \in D_k$ when embedded via $v\in S$ into $Z_k\left(\bb{C}\right)$. We claim that 
\[\sum_{v \in S} n_v \geq \frac{1}{2} \sum_{v \in T} n_v = \frac{1}{2}.\]
To estimate this, for $i \geq 1$ let $T_i$ be the subset of $T$ so that 
\[{\blue \lambda_{V_i,v}}(s) \geq 10^i (4N) {\blue a_i}\]
and let $T_0$ be the subset so that $\lambda_{\partial Z_k,v}(s) \geq 300 N {\blue b_1}$
and $T_0'$ the subset so that ${\blue \lambda_{W_k,v}}(s) \geq 300N {\blue b_2}.$
For $i \geq 1$, we have 
\[10^i (4N) {\blue a_i} \sum_{v \in T_i} n_v \leq \sum_{v \in T_i} n_v {\blue \lambda_{V_i,v}}(s) \leq {\blue h_{V_i}}(s) \leq a_i h_X(s) \leq 4N a_i\]
and so
\[\sum_{v \in T_i} n_v \leq \frac{1}{10^i}.\]
Similarly, we obtain that 
\[\sum_{v \in T_0} n_v, \sum_{v \in T_0'} n_v \leq \frac{{\blue 4}}{300}.\]
As $S$ {\blue contains} the complement of $T_0,T_0'$ and $\cup_{i \geq 1} T_i$ in $T$, we thus have 
\[\sum_{v \in S} n_v \geq \sum_{v \in T} n_v - \frac{{\blue 8}}{300} -  \sum_{i=1}^{\infty} \frac{1}{10^i} \geq \frac{1}{2}\]
since $\sum_{v \in T} n_v = 1$. Now for each $v\in S$, we have 
\[\left\langle f ,g \right\rangle_{C,v} \geq c \]
and so 
\[\sum_{v \in S} n_v \left\langle f ,g \right\rangle_{C,v} \geq \frac{1}{2} c\]
which gives us our uniform $\delta{\blue {}=c/[2d_1d_2(d_1+d_2)]}$ as desired. 
\end{proof}

We now conclude the proof of Theorem \ref{HeightGapCurve1}.

\begin{proof}[Proof of Theorem \ref{HeightGapCurve1}]
As explained in our reductions at the start of the section, we may assume that $A \max\left\{ h_{\rat_d}\left(\left\langle f \right\rangle\right),1\right\} \geq h_{\Rat_d}\left(f\right)$ and similarly for $g$ for some $A > 0$ depending only on $d$. Then Proposition \ref{HeightGapCurve3} implies that we are done unless $h_{\Rat_d}\left(f\right), h_{\Rat_d}\left(g\right), h_{C_{d_1,d_2}}\left(C\right)$ have height bounded above by some $N = N\left(d,d_1,d_2\right) > 0$. If $\left(f,g\right)$ are not both Latt{\`e}s map, {\blue and at least one of them is} not conjugate to a monomial or Chebyshev map, then Proposition \ref{BoundedHeight1} implies that we are done in the bounded height case. Finally if both $\left(f,g\right)$ are Latt{\`e}s map, then we can conclude by uniform Bogomolov for products of elliptic curves by using Gao--Ge--Kuhne's \cite{GGK21} height gap principle, or by Kuhne's relative Bogomolov for fibered powers of elliptic curves \cite{Kuh23}. Here, one should note that $\max\{h_{\rat_d}(f),1\}$ is comparable to $\{h_{\Fal}(E),1\}$ if $E$ is the elliptic curve that the Latt{\`e}s map $f$ comes from. If both $\left(f,g\right)$ are conjugate to either the monomial or Chebyshev map, then we can instead conclude by the uniform Bogomolov conjecture for $\bb{G}_m^2$ due to Bombieri--Zannier\cite{BZ95}. 
\end{proof}

We can now deduce Theorem \ref{IntroUniformCurve1}. Applying Theorem \ref{QuantitativeZhang1} to Theorem \ref{HeightGapCurve1}, it suffices to handle the weakly $\left(f,g\right)$-special curves, in which case it is easy to see that Theorem \ref{IntroUniformCurve1} holds. We hence conclude Theorem \ref{IntroUniformCurve1}. 

\section{Higher Dimensional Subvarieties} \label{sec: HigherDim}
{\blue The main results of this section are the hypersurface height gap in Theorem~\ref{IntroTheorem2} and uniform dynamical Manin--Mumford over $\ovl{\bb{Q}}$ in Corollary~\ref{UniDMM1}.}
\par
We now consider the general case of subvarieties $V \subseteq \left(\bb{P}^1\right)^n$ where $\Phi = \left(f_1,\ldots,f_n\right)$ is a split endomorphism of $\left(\bb{P}^1\right)^n$ with $\deg f_i = d \geq 2$. {\blue For the height gap for hypersurfaces, we assume only that at least one $f_i$ is non-exceptional. For uniform dynamical Manin--Mumford for arbitrary subvarieties, we assume that at most one $f_i$ is exceptional.}
\par 
We say that a subvariety $V \subseteq \left(\bb{P}^1\right)^n$ is weakly $\Phi$-special if either of the following occurs:
\begin{enumerate} 
\item $V$ does not projects dominantly to all coordinates;

\item $V$ is preperiodic;

\item $V$ is of the form $\pi^{-1}\left(W\right)$ where $\pi: \left(\bb{P}^1\right)^n \to \left(\bb{P}^1\right)^{n-1}$ is a projection away from one of the coordinates. 
\end{enumerate}
We will prove the following height gap principle for hypersurfaces $H \subseteq \left(\bb{P}^1\right)^n$ that are not weakly $\Phi$-special. 

\begin{theorem} \label{HeightGapVarieties1}
Let $\Phi = \left(f_1,\ldots,f_n\right)$ be a split endomorphism of $\left(\bb{P}^1\right)^n$ over $\ovl{\bb{Q}}$ such that $\deg f_i = d \geq 2$ and each $f_i$ is non-exceptional. Let $H \subseteq \left(\bb{P}^1\right)^n$ be a hypersurface of degree $D$ that is not weakly $\Phi$-special. Then there exists $c = c\left(d,n,D\right) > 0$ and $E = E\left(d,n,D\right)$ such that 
\[\left\{x \in H\left(\ovl{\bb{Q}}\right) \mid \h_{\Phi}\left(x\right) \leq c \max\left\{h_{\rat_d}\left(\Phi\right),1\right\} \right\}\]
is contained in a proper Zariski closed subvariety $V$ of degree at most $E$.
\end{theorem}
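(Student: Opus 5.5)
We reduce to the case $n=2$ (Theorem \ref{HeightGapCurve1}) by a David--Philippon style argument, and then convert the resulting height bound into a Zariski-closed conclusion using Theorem \ref{QuantitativeZhang1}. By Theorem \ref{QuantitativeZhang1}, applied with $X=(\bb{P}^1)^n$ embedded by the Segre map so that $h(\Phi)$ is controlled by $\max\{h_{\Rat_d}(f_i)\}$, it suffices to prove a height gap
\[\h_\Phi(H)\geq c_0\max\{h_{\rat_d}(\Phi),1\}\]
for hypersurfaces $H$ that are not weakly $\Phi$-special, with $c_0=c_0(d,n,D)$. The conclusion then follows with $c=\eps c_0$ and $E=M$. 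First we normalize by conjugation, exactly as at the start of Section \ref{sec: Curves}, so that $h_{\Rat_d}(f_i)\ll\max\{h_{\rat_d}(f_i),1\}$. We also may assume that $h_{\Ph}(H)\ll\max\{h_{\rat_d}(\Phi),1\}$; otherwise \eqref{eq: Compare1}, with constants uniform in $h(\Phi)$ as in \eqref{eq: CurveDifference}, already gives the gap.

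To prove the gap for $H$, we argue by induction on $n$, slicing $H$ by fibres. Choose a coordinate $i$ with $h_{\rat_d}(f_i)$ maximal, and a second coordinate $j$ such that the projection $\pi_{ij}(H)=(\bb{P}^1)^2$. Fix general preperiodic (or small-height) values $x_k$ for the remaining coordinates $k\neq i,j$, and consider the curve
\[C=H\cap\{z_k=x_k,\ k\neq i,j\}\subseteq(\bb{P}^1)^2.\]
Since $H$ is not of the form $\pi^{-1}(W)$ and depends genuinely on each coordinate, for generic small-height choices of the $x_k$ some component of $C$ projects dominantly to both factors and is not $(f_i,f_j)$-preperiodic. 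Here, using Zhang's inequality in reverse, we choose the $x_k$ among preperiodic points of bounded degree. The degree of $C$ is bounded by $D$. Theorem \ref{HeightGapCurve1} then gives
\[\h_{(f_i,f_j)}(C)\geq c\max\{h_{\rat_d}(f_i),1\}\gg\max\{h_{\rat_d}(\Phi),1\}.\]
By Zhang's inequality, $C$ therefore contains a Zariski dense set of points of height $\gg\max\{h_{\rat_d}(\Phi),1\}$, and these are points of $H$ with $\h_\Phi$ at least that large, since the $x_k$ have height zero. To turn this into a lower bound on $\h_\Phi(H)$ itself, we instead apply the arithmetic Bézout theorem: intersecting $H$ with the divisors $\{z_k=x_k\}$, whose canonical height is $0$, gives
\[h_\Phi(C)\leq D^{O(n)}\,h_\Phi(H)+O\bigl(\max\{h(\Phi),1\}\bigr)\]
in the dynamical version. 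The error term has to be made small, which is done by choosing the $x_k$ so that only canonical heights appear.

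The main obstacle is the genericity step. We must show that among preperiodic slices of bounded degree some component is not weakly $(f_i,f_j)$-special, uniformly in $\Phi$, and we must handle the Bézout constant so that it does not swamp the gap. For the former, we would first try a counting argument. The set of $(x_k)$ for which every component of the slice is $(f_i,f_j)$-preperiodic is a countable union of proper closed subsets of $(\bb{P}^1)^{n-2}$. Preperiodic curves of bounded degree for non-exceptional maps come from finitely many families (by Ghioca--Nguyen--Ye together with Pakovich's classification of invariant curves). If every slice were special, $H$ would then be preperiodic, or would fail to project dominantly onto some coordinate. For the Bézout constant, we would use the local-height version: the error is a sum of local terms at places of bad reduction, which is bounded by $O(h(\Phi))$ with an absolute constant. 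This constant is offset by taking $D$-dependent multiples, after replacing $H$ by $\Phi^m(H)$ for a bounded $m$ to amplify $\h_\Phi(H)$ by $d^m$.
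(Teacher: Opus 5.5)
There is a genuine gap in the slicing step. Your reduction to a height gap via Theorem \ref{QuantitativeZhang1} is fine. The problem comes after that: a single slice $C=H\cap\{z_k=x_k\}$ cannot beat the error term. Combine arithmetic Bézout with the conversion between Philippon and canonical heights \eqref{eq: Compare1}, and write $M=\max\{h_{\rat_d}(\Phi),1\}$. You only get
\[c\,M\le \h_{(f_i,f_j)}(C)\le A\,\h_\Phi(H)+B\,M .\]
Here $c=c(d_1,d_2,d)$ is the constant of Theorem \ref{HeightGapCurve1}, which is small and comes from a compactness argument. The constant $B$ comes from the comparison constant $c(\Phi)$ and from the Green's functions at places of bad reduction, and is of order $1$. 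Nothing makes $B<c$, so the inequality gives no lower bound on $\h_\Phi(H)$.

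Choosing the $x_k$ preperiodic does not remove $B$. The local Green's functions $g_{f_k,v}(\cdot,x_k)$ are only bounded below by $-O(\lambda_v)$. The measure on $H$ pushed to the $k$-th factor is not $\mu_{f_k,v}$, so $\h_{f_k}(x_k)=0$ produces no cancellation. Your proposed fix also fails. Replacing $H$ by $\Phi^m(H)$ for bounded $m$ turns the inequality into $cM\le A d^m\h_\Phi(H)+BM$, which is still vacuous when $c\le B$, because the slice side is not amplified at all.

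The missing idea is \emph{averaging}: one error term must be shared by $d^m$ slices at once. The paper uses the graph embedding $\Phi_m(z)=(z,\Phi^m(z))$. It is an isomorphism onto its image, with $\deg\Phi_m(V)=(d^m+1)^{\dim V}\deg V$ and $\h_\Phi(\Phi_m(V))=(d^m+1)\h_\Phi(V)$. The paper intersects $\Phi_m(H)$ with the single hyperplane $\{z_{2n}=1\}$, which contains the distinct pieces $\Phi_m(H_z)$ for every $z$ with $f_n^m(z)=1$. One application of Bézout then gives
\[\frac{1}{(d^m+1)\deg H}\sum_{f_n^m(z)=1}\h_\Phi(H_z)\le\h_\Phi(H)+\frac{c_1(\Phi)+\log 2}{d^m+1},\]
so the error disappears as $m\to\infty$, even though $c_1$ depends on $\Phi$.

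This also changes the genericity you need. It is not enough to find one good slice; all but a negligible proportion of the $d^m$ slices must be good. The paper slices one coordinate at a time. Lemma \ref{Hypersurface1} shows that only finitely many $z\in O^-_{f_n}(1)$ have every component of $\pi(H_z)$ weakly $\Phi'$-special; this uses Ghioca--Nguyen--Ye with Mavraki--Schmidt--Wilms and the fact that $H$ is not a cylinder. Induction on $n$ then bounds the height of every remaining fiber from below. That finite exceptional set is not uniformly bounded, so $m$ must go to infinity rather than stay bounded.

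If you slice $n-2$ coordinates at once, you would need a density statement on products of backward orbits, and your counting argument only gives existence of a single good slice. A minor point: preperiodic points of bounded degree over a fixed number field are finite by Northcott, so you cannot make "generic" choices among them.
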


Here, we have a natural very ample line bundle $L = \sum_{i=1}^{n} \pi_i^* O\left(1\right)$ on $\left(\bb{P}^1\right)^n$. Via the Segre embedding, we may embed it to $\bb{P}^N$ and we will compute degrees and Weil heights with respect to this embedding. The canonical height can be written as a limit of Weil heights of iterates
\[\h_{\Phi}(V) = \frac{1}{d^n} \frac{h(\Phi^n(V))}{\deg(\Phi^n(V))}.\]
Our approach is inspired by the work of David--Philippon \cite{DP99} in the case of $\bb{G}_m^n$ \cite{DP99}, which has also been used in their uniform bounds for subvarieties of $E^g$ \cite{DP07}, see also their work on abelian varieties \cite{DP02}. We will need the following construction. We consider the map 
\[\Phi_m: \left(\bb{P}^1\right)^n \to \left(\bb{P}^1\right)^n \times \left(\bb{P}^1\right)^n\]
be given by $\left(z\right) \mapsto \left(z, \Phi^m\left(z\right)\right)$. We let $\left(\Phi,\Phi\right)$ act on  $\left(\bb{P}^1\right)^n \times \left(\bb{P}^1\right)^n$, so that we have a canonical height on $\left(\bb{P}^1\right)^n \times \left(\bb{P}^1\right)^n$ which we will denote again by $\h_{\Phi}$.

\begin{proposition} \label{Degrees1}
We have 
\[\deg\left(\Phi_m\left(V\right)\right) = \left(d^m + 1\right)^{\dim V} \deg\left(V\right), \quad  \h_{\Phi}\left(\Phi_m\left(V\right)\right) = \left(d^m + 1\right) \h_{\Phi}\left(V\right).\]    
\end{proposition}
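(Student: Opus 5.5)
The graph map $\Phi_m$ is a closed embedding $\left(\bb{P}^1\right)^n \hookrightarrow \left(\bb{P}^1\right)^{2n}$ which commutes with the dynamics: $\Phi_m \circ \Phi = \left(\Phi,\Phi\right)\circ \Phi_m$, because $\Phi^m$ commutes with $\Phi$. I would therefore pull the invariant adelic line bundle of $\left(\Phi,\Phi\right)$ back along $\Phi_m$ and compare it with the one on $\left(\bb{P}^1\right)^n$. Degrees and heights on the product are taken with respect to $L' = \sum_{j=1}^{2n}\pi_j^*O\left(1\right) = p_1^*L + p_2^*L$, where $p_1, p_2$ are the two factor projections. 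Then
\[\Phi_m^* L' = L + \left(\Phi^m\right)^* L \simeq L + d^m L = \left(d^m+1\right) L,\]
since $\Phi^*L \simeq L^{\otimes d}$ for a split endomorphism all of whose components have degree $d$.

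\textbf{Degree formula.} Because $\Phi_m$ is a closed embedding, $\Phi_m$ restricted to $V$ is birational onto its image $\Phi_m(V)$, so the projection formula gives
\[\deg_{L'}\Phi_m(V) = \left(\Phi_m^*L'\right)^{\dim V}\cdot V = \left(d^m+1\right)^{\dim V}\deg_L(V).\]

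\textbf{Height formula.} The same argument works at the level of adelic line bundles, using the invariant metrics from Section \ref{sec: CanonicalHeights}. The invariant adelic bundle for $\left(\Phi,\Phi\right)$ on the product is $p_1^*\ovl{L}_\Phi + p_2^*\ovl{L}_\Phi$. Pulling back along $\Phi_m$ gives $\ovl{L}_\Phi + \left(\Phi^m\right)^*\ovl{L}_\Phi$. By the invariance $\Phi^*\ovl{L}_\Phi \simeq \ovl{L}_\Phi^{d}$, this is isometric to $\left(d^m+1\right)\ovl{L}_\Phi$. The arithmetic projection formula then yields
\[h_\Phi\left(\Phi_m(V)\right) = \left(d^m+1\right)^{\dim V+1} h_\Phi(V).\]
Dividing by the degree computed above gives $\h_\Phi\left(\Phi_m(V)\right) = \left(d^m+1\right)\h_\Phi(V)$.

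If one prefers to avoid adelic language, the same result follows from the limit formula for $\h_\Phi$ together with $\Phi_m\circ\Phi^k = \left(\Phi,\Phi\right)^k\circ\Phi_m$. In that route, the Weil heights of the two sides agree up to the normalizing factor $\left(d^m+1\right)^{\dim V+1}$ and a bounded error, and the error disappears in the limit.

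\textbf{Main obstacle.} The only delicate point is making sure the isometry $\left(\Phi^m\right)^*\ovl{L}_\Phi \simeq d^m\ovl{L}_\Phi$ holds exactly, not merely up to a bounded metric, so that no error term survives. This is exactly the defining property of Zhang's invariant metric, and any constant rescaling of the metric contributes nothing after summing over places, by the product formula.
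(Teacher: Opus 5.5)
Your proof is correct and is essentially the paper's argument. Both use the projection formula along the closed embedding $\Phi_m$: first with $\Phi_m^*\left(p_1^*L + p_2^*L\right) \simeq \left(d^m+1\right)L$ to obtain the degree, and then with the pulled-back invariant adelic line bundle $\Phi_m^*\left(p_1^*\ovl{L}_\Phi + p_2^*\ovl{L}_\Phi\right) \simeq \left(d^m+1\right)\ovl{L}_\Phi$ to obtain $h_\Phi\left(\Phi_m(V)\right) = \left(d^m+1\right)^{\dim V+1}h_\Phi(V)$ before normalizing by the degree. Your write-up states this unnormalized identity more cleanly than the paper, whose displayed version has $\h_\Phi\left(\Phi_m(V)\right)$ where $h_\Phi(V)$ is meant.
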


\begin{proof}
Let $L_i=\pi_i^*L$ on $\left(\bb{P}^1\right)^n$ where $\pi_1,\pi_2$ are the two projection maps, so that $\deg(\Phi_m(V)) = \deg_{L_1 + L_2}(\Phi_m(V))$. As $\Phi_m: V \to \Phi_m(V)$ is an isomorphism and hence finite, by the projection formula, we have 
\[\deg_{L_1 + L_2}(\Phi_m(V)) = \deg_{\Phi_m^*(L_1) + \Phi_m^*(L_2)}(V).\]
Since $\Phi_m^*L_1 \simeq L$ and $\Phi_m^*L_2 \simeq L^{d^m}$, we have 
\[\deg_{(d^m+1)L}(\Phi_m(V)) = (d^m + 1)^{\dim V} \deg(V).\]
For $\h_{\Phi}$, recall that 
\[\h_{\Phi}(\Phi_m(V)) = \frac{\Phi_m(V) \cdot (\ovl{L}_1 + \ovl{L}_2)^{\dim V + 1}}{\deg(\Phi_m(V))}\]
where $\ovl{L}_i = \pi_i^* \ovl{L}_{\Phi}$ and $\ovl{L}_{\Phi}$ is the $\Phi$-invariant adelic line bundle satisfying $\Phi^*\ovl{L}_{\Phi} = d^m \ovl{L}_{\Phi}$. Then by the projection formula, the numerator is $(d^m + 1)^{\dim V + 1} \h_{\Phi}(\Phi_m(V))$ and so we obtain 
\[\h_{\Phi}(\Phi_m(V)) = (d^m + 1)\h_{\Phi}(V).\]
\end{proof}

We now need the following application of arithmetic B{\'e}zout \cite{BGS94}. For a point $x \in \bb{P}^1\left(\ovl{\bb{Q}}\right)$, we define its $\ell_2$-height as 
\[h_2\left(x\right) = \sum_{v \in M_K} n_v \log \left\lVert x\right\rVert _v\]
where $\left\lVert x\right\rVert _v = \max\left\{\left|x_0\right|_v,\left|x_1\right|_v\right\}$ for non-archimedean $v$ and is $\sqrt{\left|x_0\right|^2 + \left|x_1\right|^2}$ for archimedean $v$. We have the inequalities $h\left(x\right) \leq h_2\left(x\right) \leq h\left(x\right) + \frac{1}{2} \log 2$.

\begin{proposition} \label{ArithmeticBezout1}
Let $z_0 \in \bb{P}^1\left(\ovl{\bb{Q}}\right)$ and let $V \subseteq \left(\bb{P}^1\right)^k$ be an irreducible subvariety. Let $V_{z_0} = V \cap \left\{z_k = z_0\right\}$ where we represent a point on $\left(\bb{P}^1\right)^k$ as $\left(z_1,\ldots,z_k\right)$. Then 
\[\deg\left(V_{z_0}\right) \leq \deg V \text{ and } h_{\Ph}\left(V_{z_0}\right) \leq h_{\Ph}\left(V\right) + \left(\deg V\right) h_2\left(z_0\right)\]
where $h_{\Ph}$ denotes the Philippon height. 
\end{proposition}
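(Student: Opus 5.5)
Both bounds come from realizing $V_{z_0}$ as the intersection of $V$ with a divisor of multidegree $(0,\ldots,0,1)$ cut out by a single bihomogeneous linear form, and then applying the classical and arithmetic B\'ezout theorems (the latter from \cite{BGS94}, in Philippon's formulation \cite{Phi95}) inside the Segre embedding. Write $z_0=[a_0:a_1]$ and $z_k=[x_0:x_1]$. Then $\{z_k=z_0\}$ is the zero locus of the section $s=a_1x_0-a_0x_1$ of $\pi_k^*O(1)$. If $V\subseteq\{z_k=z_0\}$, then $V_{z_0}=V$ and both inequalities are trivial, since $h_2(z_0)\geq 0$. So assume $V\not\subseteq\{z_k=z_0\}$. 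Then $V_{z_0}=V\cap\divv(s)$ is a proper intersection of pure dimension $\dim V-1$, counted as a cycle with multiplicities, and it dominates its support.

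For the degree, I will compute with $L=\sum_i\pi_i^*O(1)$. We have
\[
\deg_L(V\cdot\divv(s))=\left(c_1(L)^{\dim V-1}\cdot c_1(\pi_k^*O(1))\cdot[V]\right).
\]
Since $\pi_k^*O(1)$ is nef and $L-\pi_k^*O(1)$ is nef (it is a sum of pullbacks of $O(1)$), this is at most $c_1(L)^{\dim V}\cdot[V]=\deg V$. Alternatively, one can note that $\pi_k^*O(1)$ is a sub-linear-system of $L$ in the Segre coordinates, multiply $s$ by a coordinate of the other factors, and apply ordinary B\'ezout. Restricting to the support of $V_{z_0}$ only decreases the degree.

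For the height, I will use the arithmetic B\'ezout inequality in the form: for an effective global section $s$ of a metrized line bundle $\ovl{M}$ with $V\not\subseteq\divv(s)$,
\[
h_{\ovl{L}}(V\cdot\divv(s))\leq h_{\ovl{L},\ovl{M}}(V)+\sum_v n_v\int_{V_v^{\an}}\log\|s\|_v^{-1}\, c_1(\ovl{L})^{\dim V}.
\]
Here the middle term is the mixed height with one copy of $\ovl{L}$ replaced by $\ovl{M}$. I take $\ovl{M}=\pi_k^*O(1)$ with the Fubini--Study metric at archimedean places and the max metric at finite places, which is compatible with the $\ell_2$ convention defining $h_{\Ph}$ via Segre. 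By Cauchy--Schwarz, $|a_1x_0-a_0x_1|_v\leq\|z_0\|_v\,\|x\|_v$ at every place, which gives
\[
\log\|s\|_v^{-1}\geq-\log\|z_0\|_v.
\]
Hence the integral term is at most $\deg V\cdot h_2(z_0)$; the sign is what produces the $+$ in the statement. The mixed height is bounded by $h_{\ovl{L}}(V)$ using the same nef/semipositive decomposition $\ovl{L}=\ovl{M}+\ovl{L'}$, where $\ovl{L'}$ is semipositive and has nonnegative height on every subvariety. Passing from the cycle to the reduced $V_{z_0}$ only drops nonnegative terms, since components have nonnegative Philippon height for these metrics.

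The main obstacle is matching normalizations: I have to ensure that the Philippon height of a subvariety of $(\bb{P}^1)^k\subseteq\bb{P}^N$ equals (or is bounded by) the arithmetic intersection height for $\ovl{L}$ with the product Fubini--Study metric, and that the mixed-height replacement holds without an additive constant. If exact equality fails, I would instead apply Philippon's B\'ezout theorem \cite{Phi95} directly in $\bb{P}^N$. This uses the fact that $V_{z_0}$ is cut out from $V$ by the linear forms $(a_1x_0-a_0x_1)\cdot m$, with $m$ ranging over Segre monomials of the other factors, and each such form has $\ell_2$-coefficient height $h_2(z_0)$.
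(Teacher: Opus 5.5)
Your argument is correct in substance but takes a different route from the paper; your fallback in the last paragraph is essentially the paper's proof. The paper works in $\bb{P}^{2^k-1}$. Under the Segre embedding, $\iota(\{z_k=z_0\})$ is cut out on the Segre variety by the $2^{k-1}$ hyperplanes $X_{J\cup\{k\}}=z_0X_J$, $J\subseteq\{1,\ldots,k-1\}$, each of height $h_2(z_0)$. The paper then quotes a Philippon-type B\'ezout inequality \cite[Proposition 3.3]{PW21} for intersecting with several hyperplanes when the dimension drops by at most one.

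You instead stay on $(\bb{P}^1)^k$ and cut by the single section $s$ of $\pi_k^*O(1)$. You then apply the Gillet--Soul\'e/BGS formula for $h(V\cdot\divv(s))$, and control the mixed height by nefness of $\ovl L-\ovl M$. The positivity you actually need there is that of the mixed number $\hat c_1(\ovl L)^{\dim V}\hat c_1(\ovl{L'})\cdot V$, rather than of heights of subvarieties; it holds because all metrized bundles involved are arithmetically nef.

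The paper's route is a one-line citation, but it needs a B\'ezout statement that tolerates an excess intersection: $2^{k-1}$ hyperplanes lower the dimension only by one. Yours needs only standard arithmetic intersection theory and keeps track of multiplicities. It also shows transparently why exactly one term $\deg V\cdot h_2(z_0)$ appears, with no additive constant.

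Two corrections. First, your displayed B\'ezout formula has the wrong sign. The identity is
\[
h_{\ovl L}(V\cdot\divv(s))=h_{\ovl L,\ldots,\ovl L,\ovl M}(V)-\sum_v n_v\int_{V_v^{\an}}\log\|s\|_v^{-1}\,c_1(\ovl L)^{\dim V}.
\]
As a check, for $V=\bb{P}^1$ and $s=x_0$ with the Fubini--Study metric this reads $0=\tfrac12-\tfrac12$. As you wrote it, the pointwise bound $\log\|s\|_v^{-1}\geq-\log\|z_0\|_v$ would only bound the integral term from below. With the correct sign it gives exactly the $+\deg V\cdot h_2(z_0)$ you want.

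Second, the normalization issue you flag is milder than you fear. The identity $\|\iota(x)\|_v=\prod_i\|(x_i,y_i)\|_v$ holds for both the $\ell_2$ and the max norm, so $\iota^*\ovl{O(1)}=\ovl L$ isometrically. All that remains is to use $h_{\Ph}$ in the normalization where it agrees with the Fubini--Study height of \cite{BGS94}.
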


\begin{proof}
The Segre embedding $\iota:\left(\bb{P}^1\right)^k\to\bb{P}^{2^k-1}$ corresponds to sending \[([x_1:y_1], [x_2:y_2], \ldots, [x_n,y_n]) \mapsto ( \prod_{i \in I} x_i \prod_{i \not \in I} y_i)_{I}\]
where we vary over all subsets $I$ of $\{1,2,\ldots,k\}$. By composing with an automorphism of $\bb{P}^1$, we may assume that $z_0 \not = \infty$. Then the hyperplane $\{z_k=z_0\}$ corresponds to $\{x_k = z_0 y_k\}$. Under the Segre embedding, if $J$ is any subset of $\{1,2,\ldots,k-1\}$< then we obtain the relation $X_{J \cup n} = z_0 X_J.$ Furthermore, if we intersect the image of $\left(\bb{P}^1\right)^k$ with $X_{J \cup \{k\}} = z_0 X_J$ over all $J$, we obtain back $x_k = z_0 y_k$. Hence $\iota(V_{z_0})$ can be viewed as $\iota(V)$ intersected with $2^{k-1}$ many hyperplanes in $\bb{P}^{2^k-1}$, each of height $h_2(z_0)$.
\par 
We now apply \cite[Proposition 3.3]{PW21}. Since $\dim V_{z_0} \geq \dim V - 1$, we obtain $\deg(V_{z_0}) \leq \deg V$ and $h_{\Ph}(V_{z_0}) \leq h_{\Ph}(V) + (\deg V)h(z_0)$ as desired.
\end{proof}

We are now ready to prove a dynamical height gap principle for non-preperiodic hypersurfaces $H \subseteq \left(\bb{P}^1\right)^n$. We define 
\[h_{\rat_d}\left(\Phi\right):= \sum_{i=1}^{n} h_{\rat_d}\left(f_i\right).\]

\begin{proposition} \label{DynamicalGap1}
Let $\Phi = \left(f_1,\ldots,f_n\right)$ be a split endomorphism of $\left(\bb{P}^1\right)^n$ over $\ovl{\bb{Q}}$ such that $\deg f_i = d \geq 2$ and each $f_i$ is non-exceptional. Let $H \subseteq \left(\bb{P}^1\right)^n$ be a hypersurface of degree $D$ that is not weakly $\Phi$-special. Then there exists $c = c\left(d,n,D\right) > 0$ such that 
\[\h_{\Phi}\left(H\right) \geq c \max\left\{h_{\rat_d}\left(\Phi\right),1\right\}.\]
\end{proposition}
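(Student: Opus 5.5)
We argue by induction on $n$, with the base case $n=2$ given by Theorem \ref{HeightGapCurve1}. For $n=2$ a hypersurface is a curve, and since each $f_i$ is non-exceptional, condition (3) of Definition \ref{def:WeaklySpecialCurves} cannot occur. Hence a curve that is not weakly $\Phi$-special is not weakly $(f_1,f_2)$-special, and Theorem \ref{HeightGapCurve1} gives the bound $c\max\{h_{\rat_d}(f_1),h_{\rat_d}(f_2),1\}$.

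For $n\ge 3$ we follow David--Philippon and reduce dimension by slicing. Write points as $(z_1,\ldots,z_n)$. Fix a coordinate, say $z_n$, and consider the fibres $H_{z_0}=H\cap\{z_n=z_0\}$, viewed as hypersurfaces in $(\bb{P}^1)^{n-1}$ for the split map $(f_1,\ldots,f_{n-1})$. The plan has three steps.

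\textbf{Step 1: many good fibres.} We first show that $H_{z_0}$ fails to be weakly special only for $z_0$ outside a proper closed set. Since $H$ dominates every coordinate and is not of the form $\pi^{-1}(W)$, a generic fibre still dominates coordinates $1,\ldots,n-1$ and is not a cylinder. Preperiodicity of fibres is the delicate condition. If $H_{z_0}$ were preperiodic for infinitely many $z_0$, then the periods and preperiods would be bounded in terms of $D$: comparing degrees of $\Phi^m(H_{z_0})$, the degree stays bounded only if the orbit is finite with controlled length. We then use the countable-union description of Proposition \ref{WeaklySpecial1} to conclude that all fibres are preperiodic, and from this deduce that $H$ itself is preperiodic or a cylinder. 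This gives a contradiction.

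\textbf{Step 2: choose small $z_0$ and apply induction.} Take $z_0$ to be a $f_n$-preperiodic point, or more generally a point with $\h_{f_n}(z_0)$ small, and $\ell_2$-height $h_2(z_0)\le C\max\{h_{\rat_d}(f_n),1\}$. Such points exist in abundance, since preperiodic points of $f_n$ have Weil height bounded by $O(h(f_n)+1)$, and we can choose one outside the exceptional set from Step 1. By induction, $\h(H_{z_0})\ge c'\max\{h_{\rat_d}(f_1,\ldots,f_{n-1}),1\}$. Doing this for each choice of omitted coordinate and taking the best bound recovers $\max h_{\rat_d}(\Phi)$.

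\textbf{Step 3: transfer the bound from the fibre to $H$.} Proposition \ref{ArithmeticBezout1} gives
\[h_{\Ph}(H_{z_0})\le h_{\Ph}(H)+D\,h_2(z_0).\]
Applied naively, the term $D\,h_2(z_0)$ swamps the estimate, so this is where the construction $\Phi_m$ enters. We replace $H$ by $\Phi_m(H)$ and slice by $z_0$ taken to be a preperiodic point. By Proposition \ref{Degrees1}, $\h$ scales by $(d^m+1)$, while the comparison error \eqref{eq: Compare1} and the slicing error $h_2(z_0)$ are fixed and do not grow with $m$. After normalizing by degree, and with $m$ chosen large in terms of $(n,d,D)$, the error becomes a small fraction of the main term. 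Combining this with the comparison \eqref{eq: Compare1}, which carries an error of size $O(\max\{h(\Phi),1\})$ that must be controlled uniformly using $h_{\Rat_d}\ll h_{\rat_d}$ after conjugation as in Section \ref{sec: Curves}, yields
\[\h_\Phi(H)\ge c\max\{h_{\rat_d}(\Phi),1\}.\]

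We expect the main obstacle to be Step 1 together with the uniformity in Step 3. We need a uniform, degree-controlled description of the bad set of slices, so that a good $z_0$ of small height exists, and the Philippon-height comparison constants must depend only on $h(\Phi)$. If the exceptional set is hard to control directly, the first thing to try is to pick $z_0$ among $f_n$-periodic points of large period, of which there are more than any degree bound allows to be bad.
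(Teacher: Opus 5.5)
Your overall architecture matches the paper: induction with Theorem \ref{HeightGapCurve1} as the base case, followed by slicing the graph embedding $\Phi_m$ with arithmetic B\'ezout. The argument breaks in Step 3, however.

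\textbf{Step 3 cannot work with one fibre.} Slice $\Phi_m(H)$ by one coordinate hyperplane $F$ of height $h_2(w)$, say $\{z_n=z_0\}$ or $\{z_{2n}=w\}$ with $w=f_n^m(z_0)$. Proposition \ref{ArithmeticBezout1} charges the error against $\deg\Phi_m(H)=(d^m+1)^{n-1}D$. The single fibre $\Phi_m(H_{z_0})$, however, has degree only $(d^m+1)^{n-2}\deg H_{z_0}$. Normalizing with Proposition \ref{Degrees1} and \eqref{eq: Compare1}, one fibre only yields
\[\h_\Phi(H_{z_0})\le\frac{D}{\deg H_{z_0}}\Bigl((d^m+1)\,\h_\Phi(H)+n\,c_1(\Phi)+h_2(w)\Bigr)+\frac{(n-1)\,c_1(\Phi)}{d^m+1}.\]
So the factor $d^m+1$ lands on $\h_\Phi(H)$, not on the errors. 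You are then no better off than with the naive slice. There $n\,c_1(\Phi)+h_2(z_0)$ is of the same order as $\max\{h_{\rat_d}(\Phi),1\}$, and it swamps the inductive bound $c'\max\{h_{\rat_d}(\Phi),1\}$, whose constant $c'$ is small.

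\textbf{The missing idea.} The single section $\Phi_m(H)\cap\{z_{2n}=1\}$, cut by a hyperplane of bounded height, is exactly the disjoint union of the $\Phi_m(H_z)$ over \emph{all} $z\in f_n^{-m}(1)$. One application of B\'ezout therefore bounds the \emph{sum} of their heights:
\[\frac{1}{(d^m+1)D}\sum_{f_n^m(z)=1}\h_\Phi(H_z)\le\h_\Phi(H)+\frac{c_1(\Phi)+\log 2}{d^m+1}.\]
Suppose all but a finite set $Z$ of these fibres satisfy the inductive bound. Then the left side is at least $\frac{d^m-|Z|}{(d^m+1)D}\cdot\frac{c'}{n}\max\{h_{\rat_d}(\Phi),1\}$. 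Letting $m\to\infty$ with $\Phi$ fixed finishes the proof. In that argument neither $c_1(\Phi)$ nor $|Z|$ has to be uniform, so the obstacles you anticipated disappear.

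\textbf{Step 1 also needs repair.} Your claim that preperiodic fibres have preperiod and period bounded in terms of $D$ is not justified. Moreover, you need more than one good $z_0$. What is required is that all but finitely many $z$ in the backward orbit $O^-_{f_n}(1)$ have some component of $\pi(H_z)$ that is not weakly $(f_1,\ldots,f_{n-1})$-special.

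Restricting to that orbit is what makes this provable. Along it $\h_{f_n}(z)=d^{-m}\h_{f_n}(1)\to0$. Hence infinitely many preperiodic $\pi(H_z)$ would give a Zariski dense set of points of arbitrarily small height on $H$. Since $H$ is not a cylinder, \cite[Theorem 2.2]{GNY18} together with \cite{MSW23} then forces every $f_i$ to be exceptional, contradicting the hypothesis. Cylinder or non-dominant components occurring for infinitely many $z$ are ruled out by taking the Zariski closure of those fibres, which would make $H$ itself a cylinder.
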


\begin{proof}[{\blue Proof of Theorem~\ref{IntroTheorem2}}]
We proceed by induction on $n$. {\blue When $n=2$, at least one map is non-exceptional, so the two definitions of weakly special agree and Theorem~\ref{HeightGapCurve1} applies. Suppose that $n\geq3$. After rearranging coordinates, let $h_{\rat_d}(f_1)$ be maximal and choose the last coordinate so that $\Phi'=(f_1,\ldots,f_{n-1})$ still contains a non-exceptional map. This is possible since $n\geq3$, and we have}
{\blue \[
h_{\rat_d}(\Phi')\geq h_{\rat_d}(f_1)\geq\frac1n h_{\rat_d}(\Phi).
\]} Let $\pi_n: H \to \bb{P}^1$ be the projection to the last coordinate. For each $z \in \bb{P}^1\left(\ovl{\bb{Q}}\right)$, we let $H_z$ denote the fiber $\pi_n^{-1}\left(z\right)$. Let $\pi: \left(\bb{P}^1\right)^n \to \left(\bb{P}^1\right)^{n-1}$ be the projection to the first $n-1$ coordinates. Then $\pi\left(H_z\right)$ is a union of hypersurfaces in $\left(\bb{P}^1\right)^{n-1}$. Let $O_{f_n}^{-}\left(1\right)$ denote the backward orbit of $1$, i.e. $\cup_{m=1}^{\infty} \left(f_n\right)^{-m}\left(1\right)$. {\blue We call a hypersurface a {\em cylinder}  if it is the inverse image of a hypersurface under a projection forgetting one coordinate.}

\begin{lemma} \label{Hypersurface1}
There exists only finitely many $z \in O_{f_n}^{-}\left(1\right)$ so that every irreducible component of $\pi\left(H_z\right)$ is weakly $\Phi'$-special for $\Phi'=(f_1,\ldots,f_{n-1})$.    
\end{lemma}

\begin{proof}
First, if there are only infinitely many $z \in O_{f_n}^{-}\left(1\right)$ so that $\pi\left(H_z\right)$ is preperiodic, then each such $H_z$ has a Zariski dense set of points with canonical height $\leq \h_{f_n}(z)$. In particular, it follows that $H$ has a Zariski dense set of small points. {\blue Since $H$ is not a cylinder, it projects dominantly onto every collection of $n-1$ coordinates. By \cite[Theorem 2.2]{GNY18}, together with \cite{MSW23}, every $f_i$ would then be exceptional, contradicting our hypothesis.} 
\par 
Next, we assume that we have infinitely many $z$ such that $(\pi')^{-1}(W_z)$ is a component of $\pi\left(H_z\right)$ for some projection $\pi': (\bb{P}^1)^{n-1} \to (\bb{P}^1)^{n-2}$. Let $Z$ be this set of $z$'s. Then $W_z$ is a hypersurface in $\left(\bb{P}^1\right)^{n-2}$ and after rearranging coordinates, we may write $H_z = \bb{P}^1 \times W_z \times \{z\}$. Let $\psi:H\to\left(\bb{P}^1\right)^{n-1}$ be the projection from the first coordinate. Then for $x \in W_z \times \{z\}$, we have $\psi^{-1}(x) = \bb{P}^1 \times \{ x \}$. 
\par 
Let $W$ be the Zariski closure of $\cup_{z \in Z} (W_z \times \{z\})$. Then $W \subseteq \psi(H)$ with $\dim W\geq n-2$. Note that for $w \in W$, we must have $\psi^{-1}(w) = \bb{P}^1 \times \{ w\}$ since it is true for a dense set of points. Then $\dim \psi^{-1}(W) \geq n-1$ and in particular we must have $\psi^{-1}(W) = H$ which contradicts $H$ being non-weakly special. 
\par 
Finally if there exists infinitely many $z$ such that a component of $\pi\left(H_z\right)$ does not project dominantly to one of the coordinates, after rearranging coordinates there must be infinitely many such $z$ so that a component of $\pi\left(H_z\right)$ is equal to $\left(\bb{P}^1\right)^{n-2} \times \left\{ w_z\right\}$ and so is of the form $\pi^{-1}_{n-1}\left(w_z\right)$. This is covered by the previous case and so we are done here too.   
\end{proof}

Now let $Z$ be the subset of $O_{f_n}^-\left(1\right)$ so that all irreducible component of $\pi\left(H_z\right)$ are weakly $\Phi'$-special for $z \in Z$. Since $\pi\left(H_z\right)$ is not weakly $\Phi'$-special for $z \not \in Z$, we may apply induction. Let $\Phi'=(f_1,\ldots,f_{n-1})$ so that $h_{\rat_d}\left(\Phi'\right) \geq \frac{1}{n} h_{\rat_d}\left(\Phi\right)$ since $h_{\rat_d}(f_1)$ is maximal. Then by induction, we have a constant $c' = c'\left(d,n-1,D\right)$ so that 
\begin{equation} \label{eq: Induction1}
\h_{\Phi'}\left(\pi\left(H_z\right)\right) \geq c' \max\left\{h_{\rat_d}\left(\Phi'\right),1\right\} \geq \frac{c'}{n} \max\left\{h_{\rat_d}\left(\Phi\right),1\right\}.
\end{equation}

Note that $\h_{\Phi}\left(H_z\right) \geq \h_{\Phi'}\left(\pi\left(H_z\right)\right)$ and so we have a lower bound $\h_{\Phi}\left(H_z\right) \geq \frac{c'}{n} \max\left\{h_{\rat_d}\left(\Phi\right),1\right\}$ for $z \not \in Z$.
\par 
For $m \geq 1$, we now consider $\Phi_m\left(H\right) \cap F$ for $F =  \left\{z_{2n} = 1\right\}$ where we let $z_1,\ldots,z_{2n}$ be the coordinates for $\left(\bb{P}^1\right)^{2n}$. By Proposition \ref{ArithmeticBezout1}, we have 
\[h_{\Ph}\left(\Phi_m\left(H\right) \cap F\right) \leq h_{\Ph}\left(\Phi_m\left(H\right)\right) + \left(\deg \Phi_m\left(H\right)\right)\left(\log 2\right).\]
On the other hand, for $z$ satisfying $f_n^m\left(z\right) = 1$, we have that $\Phi_m\left(H_z\right)$ correspond to a finite union of irreducible components of $\Phi_m\left(H\right) \cap F$. Furthermore different $z$'s correspond to disjoint components. Hence we have 
\[\sum_{f_n^m\left(z\right) = 1} h\left(\Phi_m\left(H_z\right)\right) \leq h\left(\Phi_m\left(H\right) \cap F\right) \leq h\left(\Phi_m\left(H\right)\right) + \left(\deg \Phi_m\left(H\right)\right) \left( \log 2\right)\]
We now change the Philippon  heights to normalized canonical heights using \eqref{eq: Compare1}. First, note that 
\[\deg\left(\Phi_m\left(H_z\right)\right) = \left(d^m + 1\right)^{\dim H - 1} \deg\left(H_z\right) \geq \left(d^m + 1\right)^{\dim H - 1}\]
and 
\[\deg\left(h\left(\Phi_m\left(H\right)\right)\right) = \left(d^m + 1\right)^{\dim H} \left(\deg H\right).\]
Hence 
\[\frac{1}{\deg \Phi_m\left(H\right)}\sum_{f^m_n\left(z\right) = 1} h_{\Ph}\left(\Phi_m\left(H_z\right)\right) \geq \frac{1}{\left(d^m + 1\right) \deg H}\sum_{f^m_n\left(z\right) = 1} \h_{\Ph}\left(\Phi_m\left(H_z\right)\right)\]
and so we obtain 
\[\frac{1}{\left(d^m + 1\right) \deg H} \sum_{f^m_n\left(z\right) = 1} \h\left(\Phi_m\left(H_z\right)\right) \leq \h\left(\Phi_m\left(H\right)\right) + \log 2.\]
We may then change from $\h\left(\Phi_m\left(H\right)\right)$ to $\h_{\Phi}\left(\Phi_m\left(H\right)\right)$ and occur a constant error $c_1 = c_1\left(\Phi\right)$ which depends only on $\Phi$ by \eqref{eq: Compare1}. Hence 
\[\frac{1}{\left(d^m + 1\right) \deg H} \sum_{f^m_n\left(z\right) = 1} \h_{\Phi}\left(\Phi_m\left(H_z\right)\right) \leq \h_{\Phi}\left(\Phi_m\left(H\right)\right) + \left( c_1 + \log 2\right).\]
Now since $\h_{\Phi}\left(\Phi_m\left(H\right)\right) = \left(d^m + 1\right) \h_{\Phi}\left(H\right)$ and similarly for $H_z$, we get
\[\frac{1}{\left(d^m + 1\right) \deg H} \sum_{f^m_n\left(z\right) = 1} \h_{\Phi}\left(H_z\right) \leq \h_{\Phi}\left(H\right) + \frac{1}{d^m + 1}\left(c_1 + \log 2\right).\]
Since $\h_{\Phi}\left(H_z\right) \geq \frac{c'}{n} \max\left\{h_{\rat_d}\left(\Phi\right),1\right\}$  for all $z \not \in Z$, we obtain 
\[\frac{d^m - \left|Z\right|}{\left(d^m + 1\right) \deg H} \frac{c'}{n} \max\left\{h_{\rat_d}\left(\Phi\right),1\right\} \leq \h_{\Phi}\left(H\right) + \frac{1}{d^m + 1}\left(c_1 + \log 2\right).\]
As $\left|Z\right|$ is finite, taking $M \to \infty$ gives us $\h_{\Phi}\left(H\right) \geq \frac{c'}{n \deg H} \max\left\{h_{\rat_d}\left(\Phi\right),1\right\}$ which completes the induction.

\end{proof}

Applying Theorem \ref{QuantitativeZhang1}, we immediately deduce the following corollary. 

\begin{corollary} \label{UniBogomolov1}
Let $\Phi = \left(f_1,\ldots,f_n\right)$ be a split endomorphism of $\left(\bb{P}^1\right)^n$ over $\ovl{\bb{Q}}$ such that $\deg f_i = d \geq 2$ {\blue and at least one $f_i$ is non-exceptional}. Let $H \subseteq \left(\bb{P}^1\right)^n$ be {\blue an irreducible hypersurface of degree $D$} that is not weakly $\Phi$-special. Then there exists $c = c\left(d,n,D\right) > 0$ and $E = E\left(d,n,D\right)$ such that 
\[\left\{x \in H\left(\ovl{\bb{Q}}\right) \mid \h_{\Phi}\left(x\right) \leq c \max\left\{h_{\rat_d}\left(\Phi\right),1\right\} \right\}\]
is contained in a proper Zariski closed subvariety $V$ of degree at most $E$.
\end{corollary}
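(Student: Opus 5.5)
The plan is to combine the hypersurface height gap, Theorem~\ref{IntroTheorem2} (whose proof was just completed by induction, Proposition~\ref{DynamicalGap1}), with the quantitative dynamical Zhang inequality, Theorem~\ref{QuantitativeZhang1}. We apply the latter to the polarized dynamical system $\left(\left(\bb{P}^1\right)^n,\Phi,L\right)$, with $L=\sum_i\pi_i^*O(1)$ and the Segre embedding into $\bb{P}^N$, $N=2^n-1$. First we arrange that $\Phi$ extends to an endomorphism of $\bb{P}^N$ of degree $d$: the map $\left(\left[x_i:y_i\right]\right)_i\mapsto\left(\left[F_i(x_i,y_i):G_i(x_i,y_i)\right]\right)_i$ induces the multihomogeneous map on Segre monomials $X_I\mapsto\prod_{i\in I}F_i\prod_{i\notin I}G_i$. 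Each of these is a polynomial of degree $d$ in the $X_J$ after choosing any monomial expression, and this extension is an endomorphism of $\bb{P}^N$ at least on a neighborhood of the Segre image; if a global extension is problematic, we would instead note that \cite[Theorem 1.1]{MY25} is stated for polarized systems on subvarieties of $\bb{P}^N$. In either case the Weil height $h(\Phi)$ of the coefficients satisfies $h(\Phi)\le A\max\{h_{\Rat_d}(\Phi),1\}$ for $A=A(n,d)$.

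The key step is to pass from $h_{\Rat_d}$ to $h_{\rat_d}$. As in the reductions at the start of Section~\ref{sec: Curves}, we conjugate each $f_i$ by $\vphi_i\in\GL_2(\ovl{\bb{Q}})$ and apply $(\vphi_1,\ldots,\vphi_n)$ to $H$. This preserves $\h_\Phi$ of points and subvarieties, preserves the degree of $H$ (automorphisms of $\left(\bb{P}^1\right)^n$ of this product form preserve multidegree), preserves non-weak-specialness, and maps proper Zariski closed subsets of degree $\le E$ to ones of the same degree. By \cite[Lemma 6.3.2]{Sil12} we may therefore assume $h_{\Rat_d}(f_i)\le A\max\{h_{\rat_d}(f_i),1\}$, and hence $\max\{h(\Phi),1\}\le A'\max\{h_{\rat_d}(\Phi),1\}$.

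Now Theorem~\ref{IntroTheorem2} gives $\h_\Phi(H)\ge c_0\max\{h_{\rat_d}(\Phi),1\}\ge (c_0/A')\max\{h(\Phi),1\}$, with $c_0=c_0(d,n,D)$. Apply Theorem~\ref{QuantitativeZhang1} with $\delta=c_0/A'$, noting that the Segre degree of $H$ is bounded in terms of $(n,D)$. This yields $\eps$ and $M$, depending only on $(n,d,D)$, such that the points $x\in H(\ovl{\bb{Q}})$ with $\h_\Phi(x)<\eps\,\h_\Phi(H)$ lie in a proper closed subvariety of degree at most $M$. Since $\eps\,\h_\Phi(H)\ge\eps c_0\max\{h_{\rat_d}(\Phi),1\}$, taking $c=\eps c_0/2$ and $E=M$ finishes the proof; the strict versus non-strict inequality is absorbed by the factor $1/2$.

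The main obstacle is the bookkeeping in the first step: making sure Theorem~\ref{QuantitativeZhang1} genuinely applies to $\Phi$ on $\left(\bb{P}^1\right)^n$ with constants depending only on $(n,d,D)$. This requires the extension to $\bb{P}^N$ and the comparison of $h(\Phi)$ with $h_{\Rat_d}(\Phi)$ to be uniform, which follows since the extension's coefficients are products of coefficients of the $f_i$.
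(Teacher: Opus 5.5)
Your proposal is correct and is essentially the paper's proof. The paper likewise conjugates so that \cite[Lemma 6.3.2]{Sil12} bounds $h(f_i)$ by a multiple of $\max\{h_{\rat_d}(f_i),1\}$, and then feeds the gap of Theorem~\ref{IntroTheorem2} into the quantitative Zhang inequality \cite[Theorem 1.1]{MY25} (Theorem~\ref{QuantitativeZhang1}).

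The paper takes the extension of $\Phi$ to $\bb{P}^N$ for granted in its setup of Theorem~\ref{QuantitativeZhang1}. Your monomial lift need not be base-point free off the Segre image, but adding generic bounded-height elements of the degree-$d$ part of the Segre ideal removes the base points without affecting your height comparison.
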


\begin{proof}
After conjugating $\Phi$, we may assume that $h\left(f_i\right) \leq M \max\left\{h_{\rat_d}\left(f_i\right),1\right\}$ for some constant $M > 0$ depending only on $d$. Then our corollary follows from applying \cite[Theorem 1.1]{MY25} along with {\blue Theorem \ref{IntroTheorem2}}.     
\end{proof}

We can now handle the general case of subvarieties $V \subseteq \left(\bb{P}^1\right)^n$.

\begin{proposition} \label{UniBogomolov2}
Let $\Phi = \left(f_1,\ldots,f_n\right)$ be a split endomorphism of $\left(\bb{P}^1\right)^n$ over $\ovl{\bb{Q}}$ such that $\deg f_i = d \geq 2$ {\blue and at most one $f_i$ is exceptional}. Let $V \subseteq \left(\bb{P}^1\right)^n$ be {\blue an irreducible subvariety of degree $D$} that is not preperiodic. Then there exists $E = E\left(d,n,D\right) > 0$ such that $\Prep\left(\Phi\right) \cap V\left(\ovl{\bb{Q}}\right)$ is contained in a proper Zariski closed subvariety $W$ of degree at most $E$.
\end{proposition}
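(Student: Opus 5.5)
We argue by induction on $\dim V$ and $n$, reducing to the hypersurface case via a chain of subvarieties, in the spirit of David--Philippon. Preperiodic points have canonical height zero. So it suffices to show that the set of points of $V(\ovl{\bb{Q}})$ of height at most $c\max\{h_{\rat_d}(\Phi),1\}$ lies in a proper Zariski closed $W$ whose degree is bounded in terms of $(d,n,D)$.

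First we reduce to the case where $V$ is not weakly $\Phi$-special. If $V$ does not project dominantly to some coordinate $i$, then $V\subseteq\{z_i=a\}$. If moreover $V$ contains a preperiodic point, then $a$ is preperiodic for $f_i$, and $V$ is identified with a subvariety of $(\bb{P}^1)^{n-1}$ of degree at most $D$, with preperiodic points corresponding. We then apply induction on $n$; here $\Phi'$ still has at most one exceptional coordinate. If $V=\pi^{-1}(W_0)$ is a cylinder, then $\Prep(\Phi)\cap V=\pi^{-1}(\Prep(\Phi')\cap W_0)$. In this case $W_0$ is not preperiodic, since otherwise $V$ would be preperiodic, and induction gives a bounded-degree $W'\subsetneq W_0$; we take $W=\pi^{-1}(W')$.

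Now let $V$ be not weakly special of dimension $r<n-1$. We choose a coordinate projection $p:(\bb{P}^1)^n\to(\bb{P}^1)^{r+1}$ such that $p(V)$ is a hypersurface and $p|_V$ is generically finite. Its degree is bounded by $\deg V$ up to a constant in $n$. If $p(V)$ is not weakly $\Phi_p$-special for the restricted split map $\Phi_p$, we apply Corollary \ref{UniBogomolov1} to it. Since at most one $f_i$ is exceptional, at least one coordinate of $\Phi_p$ is non-exceptional whenever $r+1\geq2$. This gives a closed $W_1\subsetneq p(V)$ of bounded degree containing all small points, and we take $W=p^{-1}(W_1)\cap V$, which is proper of bounded degree by Bézout.

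The main obstacle is when every such projection $p(V)$ is weakly special. Non-dominance onto a coordinate cannot happen, since $V$ projects dominantly to each coordinate. Cylinders must be excluded by choosing $p$ appropriately, using a dimension count: if all choices give cylinders, then $V$ itself should be a cylinder. This leaves the case where $p(V)$ is preperiodic for every choice of $p$. Then $V$ is contained in the intersection of the preimages $p^{-1}(p(V))$, whose components are preperiodic. I would try to show that $V$ is a component of this intersection, which forces $V$ to be preperiodic; this uses that the preimage of a preperiodic variety under a split coordinate projection is preperiodic. If that fails, I would instead replace $V$ by a component of the intersection of $p^{-1}(p(V))$ over two projections and iterate, tracking degrees through Bézout.

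When $r=n-1$ the hypersurface case applies directly. When $r=0$ the statement is trivial: take $W=\emptyset$ if $V$ is a non-preperiodic point. All constants depend only on $(d,n,D)$, since every degree arising is bounded via Bézout by a function of $D$ and $n$.
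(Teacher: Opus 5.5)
Your skeleton matches the paper's. You project $V$ to $r+1$ coordinates so that the image is a hypersurface, and apply Corollary \ref{UniBogomolov1} there, using (as you note) that a non-exceptional coordinate survives. You then pull back by B{\'e}zout, and treat cylinders and non-dominant $V$ by induction on $n$. The gap is the step everything rests on: that a non-preperiodic $V$ admits a coordinate projection $p$ with $p(V)$ a \emph{non-preperiodic} hypersurface. You only sketch this, with a fallback (``if that fails, I would instead\dots''), so it is not established; the paper takes it from the proof of \cite[Proposition 2.1]{GNY18}.

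Your plan to avoid cylinders also rests on a false claim, namely that if every admissible $p(V)$ is a cylinder then $V$ is one. Take $V=C\times C'\subseteq(\bb{P}^1)^2\times(\bb{P}^1)^2$ with $C,C'$ irreducible curves dominating both factors and $C$ non-preperiodic. Every projection forgetting one coordinate sends $V$ to $C\times\bb{P}^1$ or $\bb{P}^1\times C'$, yet $V$ is not weakly special, so your case analysis never handles it. Cylinders need not be avoided. If $r+1<n$, the induction hypothesis on $n$ applies directly to the non-preperiodic hypersurface $p(V)$, whatever its shape. A cylinder $V'\times\bb{P}^1$ with $V'$ non-preperiodic is handled by induction, as in your first reduction and in the paper.

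Your intersection idea does work once you fix the base. Choose $I$ with $|I|=r$ and $p_I|_V$ dominant (a transcendence basis of $\ovl{\bb{Q}}(V)$ among the coordinate functions). For $j\notin I$ put $H_j=p_{I\cup\{j\}}(V)$; each is a hypersurface, generically finite over $(\bb{P}^1)^I$. Suppose every $H_j$ is preperiodic. Since $\Phi$ is split and surjective, $p_{I\cup\{j\}}(\Phi^m(V))=\Phi^m(H_j)$, and $\Phi^m(V)$ still dominates $(\bb{P}^1)^I$. Set $Y_m=\bigcap_{j\notin I}p_{I\cup\{j\}}^{-1}(\Phi^m(H_j))$, which is finite over a dense open subset of $(\bb{P}^1)^I$. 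Hence the irreducible, dominant, $r$-dimensional $\Phi^m(V)\subseteq Y_m$ is an irreducible component of $Y_m$. But $Y_m$ takes only finitely many values as $m$ varies, so $\Phi^m(V)$ does too, and $V$ is preperiodic. This uses only the component dominating $(\bb{P}^1)^I$, and replaces your unproved assertion that every component of an intersection of preperiodic varieties is preperiodic.

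Finally, your opening reduction to small points is not valid for arbitrary non-preperiodic $V$. For example, $\bb{P}^1\times\{a\}$, with $a$ non-preperiodic for $f_2$ and of tiny canonical height, has a Zariski-dense set of small points. The reduction is available only for hypersurfaces that are not weakly special, which is the only place you actually use it.
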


\begin{proof}
We induct on $n$. Hence we may assume that $V$ projects dominantly to all its coordinates axes. Let $\dim V = r$. If $V$ is not preperiodic, then as explained in the proof of \cite[Proposition 2.1]{GNY18}, there exists $r+1$ many coordinates and a projection $\pi: \left(\bb{P}^1\right)^n \to \left(\bb{P}^1\right)^{r+1}$ such that the image $\pi\left(V\right)$ is a non-preperiodic hypersurface. Note that the degree of $\pi$ when restricted to $V$ is  bounded in terms of $D$. Then if $W'$ is a proper Zariski closed subvariety of $\pi\left(V\right)$ that contains all preperiodic points of $\pi\left(V\right)$ it follows that $W = \pi^{-1}(W')$ is a proper closed subvariety of $V$ that contains all preperiodic points of $V$. As $\deg W$ is bounded in terms of $\deg W'$ and $D$, it suffices to prove our Proposition for $\pi\left(V\right)$. 
\par 
After rearranging the coordinates, we may assume that $\pi$ is the projection to the first $r+1$ coordinates. Now if $\pi\left(V\right)$ is not weakly $\Phi'$-special for $\Phi'=(f_1,\ldots,f_{r+1})$, then Proposition \ref{UniBogomolov1} implies that we are done. Else $\pi\left(V\right)$ is weakly $\Phi$-special. Since it project dominantly to all coordinates, we are left with the case where $\pi(V) = \pi'^{-1}(V')$ for some projection $\pi': (\bb{P}^1)^{n-1} \to (\bb{P}^1)^{n-2}$. Then $\pi\left(V\right)$ is of the form $V' \times \bb{P}^1$. As $V'$ is not preperiodic, we are done by induction as desired.
\end{proof}

{\blue A simple induction gives uniform dynamical Manin--Mumford on $\left(\bb{P}^1\right)^n$ over $\ovl{\bb{Q}}$ when at most one coordinate map is exceptional.} 

\begin{corollary} \label{UniDMM1}
Let $\Phi = \left(f_1,\ldots,f_n\right)$ be a split endomorphism of $\left(\bb{P}^1\right)^n$ over $\ovl{\bb{Q}}$ such that $\deg f_i = d \geq 2$ {\blue and at most one $f_i$ is exceptional}. Let $V \subseteq \left(\bb{P}^1\right)^n$ be {\blue an irreducible subvariety of degree $D$}. Then there exists $N = N\left(d,n,D\right) > 0$ such that 
\[\ovl{\Prep\left(\Phi\right) \cap V\left(\ovl{\bb{Q}}\right)}^{\Zar} \text{ is a union of at most } N \text{ preperiodic irreducible subvarieties.}\]
\end{corollary}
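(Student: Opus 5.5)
We argue by induction on $\dim V$ (with $n$, $d$ fixed), proving the slightly stronger statement that for every irreducible $V$ of degree at most $D$, the set $\ovl{\Prep\left(\Phi\right) \cap V\left(\ovl{\bb{Q}}\right)}^{\Zar}$ has at most $N(d,n,D)$ irreducible components, each preperiodic, and each of degree bounded by some $E'(d,n,D)$. The degree bound is what makes the induction run. The base case $\dim V = 0$ is trivial. If $V$ is itself preperiodic, the closure is $V$ and we are done with $N = 1$.

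If $V$ is not preperiodic, Proposition \ref{UniBogomolov2} gives a proper closed subvariety $W \subsetneq V$ of degree at most $E = E(d,n,D)$ containing $\Prep\left(\Phi\right) \cap V\left(\ovl{\bb{Q}}\right)$. The number of irreducible components of $W$ is at most $\deg W \le E$, and each component $W_j$ has degree at most $E$ and dimension strictly less than $\dim V$. Since $\Prep\left(\Phi\right) \cap V = \bigcup_j \left(\Prep\left(\Phi\right) \cap W_j\right)$, we have
\[\ovl{\Prep\left(\Phi\right) \cap V}^{\Zar} = \bigcup_j \ovl{\Prep\left(\Phi\right) \cap W_j}^{\Zar}.\]
Applying the induction hypothesis to each $W_j$ (degree $\le E$, smaller dimension) bounds the number of components by $E \cdot N(d,n,E)$. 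Redundant components in the union only decrease this count. Each surviving component is a component of some $\ovl{\Prep\left(\Phi\right) \cap W_j}^{\Zar}$, hence preperiodic by induction. Its degree is bounded by the inductive degree bound for $W_j$, which gives the recursive definition of $E'$.

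Since $\dim V \le n$, the recursion has depth at most $n$, so $N$ and $E'$ depend only on $(d,n,D)$. The only point needing care is that the components produced are genuinely preperiodic irreducible subvarieties. This holds by induction, because each is either a preperiodic $W_j$ or arises from a lower-dimensional stage. I expect no real obstacle beyond tracking these degree bounds through the recursion, since the main input is Proposition \ref{UniBogomolov2}.
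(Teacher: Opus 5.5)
Your argument is correct and is essentially the paper's proof, which just says the corollary follows by induction from Proposition \ref{UniBogomolov2}; your induction on $\dim V$ supplies the details, using that $W$ has at most $\deg W \le E(d,n,D)$ components, each of degree at most $E(d,n,D)$. Two small remarks: the case where $V$ is preperiodic implicitly uses the standard fact (Fakhruddin's theorem) that preperiodic points are Zariski dense in a preperiodic subvariety, and your extra bound $E'$ on the degrees of the output components is harmless but unnecessary, since the induction only needs the hypothesis quantified over all degree bounds together with the bound on $\deg W$ from Proposition \ref{UniBogomolov2}.
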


\begin{proof}
This follows immediately by induction from Proposition \ref{UniBogomolov2}.    
\end{proof}

\section{Specialization from $\mathbb{C}$ to $\ovl{\bb{Q}}$} \label{sec: Specialization}
{\blue The main results of this section are Theorems~\ref{IntroTheorem1}, \ref{IntroUniformCurve1} and~\ref{IntroCommonPreperiodic1} over $\bb{C}$.} 

\begin{theorem} \label{UniFF1}
Let $k$ be an algebraically closed  field of finite transcendence degree and let $K = k\left(B\right)$ be the function field of a smooth projective curve over $B$. Assume that $k$ is of characteristic $0$. Let $\Phi = \left(f_1,\ldots,f_n\right)$ be a split endomorphism of $\left(\bb{P}^1\right)^n$ over $\ovl{K}$ such that $\deg f_i = d \geq 2$ {\blue and at most one $f_i$ is exceptional}. Let $V \subseteq \left(\bb{P}^1\right)^n$ be {\blue an irreducible subvariety of degree $D$}. Then there exists $N = N\left(d,n,D\right) > 0$ such that 
\[\ovl{\Prep\left(\Phi\right) \cap V\left(\ovl{K}\right)}^{\Zar} \text{ is a union of at most } N \text{ preperiodic irreducible subvarieties.}\]
\end{theorem}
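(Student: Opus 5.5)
We deduce Theorem~\ref{UniFF1} from the number-field statement, Corollary~\ref{UniDMM1}, by a specialization argument in the style of \cite{MY25}. The constant $N(d,n,D)$ of Corollary~\ref{UniDMM1} depends only on $(d,n,D)$. So the task is to find a specialization $\ovl{K}\to\ovl{\bb{Q}}$ that transports the configuration $(\Phi,V)$, together with its preperiodic components, without collapsing distinct components and without creating non-preperiodic ones.

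\textbf{Step 1: spreading out.} The data $(\Phi,V)$ and the finitely many irreducible components $Y_1,\ldots,Y_r$ of $\ovl{\Prep(\Phi)\cap V(\ovl{K})}^{\Zar}$ are defined over a finitely generated $\ovl{\bb{Q}}$-algebra $R$. Each $Y_j$ is preperiodic, since it contains a Zariski dense set of preperiodic points. By the dynamical Manin--Mumford theorem over characteristic-$0$ fields \cite{GNY18,GNY19,MSW23}, or by Proposition~\ref{UniBogomolov2} applied after specialization, this gives relations $\Phi^{m_j}(Y_j)=\Phi^{n_j}(Y_j)$, and these can be spread out over $S=\Spec R$. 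Shrinking $S$, we may assume the following hold for every $s\in S(\ovl{\bb{Q}})$:
\begin{itemize}
\item each $f_{i,s}$ has degree $d$;
\item the fibers $V_s$ and $Y_{j,s}$ are irreducible, of the same degrees, and the $Y_{j,s}$ are pairwise distinct;
\item the preperiodicity relations persist.
\end{itemize}

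\textbf{Step 2: non-exceptionality persists.} The exceptional locus in $\Rat_d$ consists of the Latt\`es, monomial and Chebyshev loci. It is a closed subset, up to the countably many rigid strata, and since $f_i$ is non-exceptional over $\ovl{K}$, its generic point avoids it. The difficulty is that the bad locus is only countable, not necessarily finite, so we cannot conclude simply by shrinking $S$. Instead we show that the set of $s\in S(\ovl{\bb{Q}})$ where some $f_{i,s}$ becomes exceptional is contained in a proper Zariski closed subset: exceptional maps of a fixed degree form finitely many families, namely Latt\`es families over the modular curve together with finitely many rigid conjugacy classes. Hence the set of good specializations is Zariski dense in $S$.

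\textbf{Step 3: the main obstacle.} We must ensure that every preperiodic point of $\Phi$ on $V$ lies, after specialization, in $\bigcup_j Y_{j,s}$. A priori the fiber $V_s$ could acquire extra preperiodic components.

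To handle this, argue by contradiction. Suppose the number of components over $\ovl{K}$ exceeds $N(d,n,D)$. Choose $N+1$ preperiodic points $x_0,\ldots,x_N$ over $\ovl{K}$, such that the Zariski closure of the preperiodic points already forces at least $N+1$ components. These points are defined over a finite extension and specialize to preperiodic points. Preperiodicity is a closed algebraic condition $\Phi^a(x)=\Phi^b(x)$, so it is preserved under specialization. For very general $s$, the statement that these points generate closures with at least $N+1$ irreducible components, each of bounded degree, also persists: it is an algebraic condition on finitely many points, and the corresponding incidence varieties (Chow varieties of bounded degree) are of finite type, so the number of components can be tested on a nonempty Zariski open subset. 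Now take $s$ in the intersection of this open subset with the dense set from Step 2. This yields a configuration over $\ovl{\bb{Q}}$ whose preperiodic closure in $V_s$ has more than $N(d,n,D)$ components, contradicting Corollary~\ref{UniDMM1}.

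I expect the delicate points to be making the "component count is lower semicontinuous under specialization" step precise, using Chow varieties, and handling the countable exceptional locus in Step 2. The latter I would treat with Proposition~\ref{WeaklySpecial1}-type countability combined with a Baire/Hilbert-irreducibility choice of $s$ outside countably many proper closed subsets, which is possible because $\ovl{\bb{Q}}$-points of positive-dimensional $S$ cannot all lie in countably many proper closed subsets defined over $\ovl{\bb{Q}}$. Since $k$ has finite transcendence degree, $S$ is indeed a variety over $\ovl{\bb{Q}}$, so these arguments apply.
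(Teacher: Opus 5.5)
Your proposal has a genuine gap at the core of Step~3. You need a specialization that does not decrease the component count, and you claim this count can be tested on a nonempty Zariski open set. That is false. The components of $\ovl{\Prep(\Phi_s)\cap V_s}^{\Zar}$ depend on \emph{all} preperiodic points of $\Phi_s$ on $V_s$, not only on the specializations of finitely many $x_i$; finitely many points cannot force positive-dimensional components anyway. A specialization can create new preperiodic subvarieties of $V_s$ that contain several of the $Y_{j,s}$ at once, and then the count drops. In the extreme case $V_s$ is itself $\Phi_s$-preperiodic and the count is $1$.

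The locus of such $s$ is a union over unbounded periods $(a,b)$ (and degrees) of closed subsets. Each of these is proper, but there are countably many of them, so the locus is not Zariski closed and no Chow-variety semicontinuity applies. Already for $z^2+t$, the point $0$ is preperiodic for infinitely many $t\in\ovl{\bb{Q}}$.

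Your proposed escape, choosing $s\in S(\ovl{\bb{Q}})$ outside countably many proper closed subsets, is impossible in general. The set $S(\ovl{\bb{Q}})$ is countable, so it is itself a countable union of points. A Baire argument needs an uncountable field such as $\bb{C}$, where the theorem is not yet available. Hilbert irreducibility does not help: $\ovl{\bb{Q}}$ is not Hilbertian, and even over a number field $F$ the set $S(F)$ is countable, while Hilbert irreducibility controls thin sets, not countable unions of closed sets. This is exactly the obstruction the paper flags when it notes that the preperiodic locus is a priori only a countable union of Zariski closed sets. So the contradiction with Corollary~\ref{UniDMM1} is never reached.

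The missing ingredient is arithmetic: bad specializations must be controlled by height. The paper therefore does not specialize components at all; it transports the \emph{height gap}.

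\begin{itemize}
\item In Proposition~\ref{DynamicalGapFF1}, Silverman's specialization theorem \cite[Proposition 6.2]{MY25} gives $\h_{\Phi_t}(\cal{H}_t)/h(t)\to\h_{\Phi}(H)$ and $h_{\rat_d}(\Phi_t)/h(t)\to h_{\rat_d}(\Phi)$.
\item Lemma~\ref{WeaklySpecialSpecialization1} shows that fibers of large height are not weakly special. For $n>2$ this uses \cite{GNY18,MSW23}; for $n=2$ it uses $\h_\Phi(H)=0$ together with \cite[Theorems 4.1 and 4.3]{MS25}.
\item Hence the number-field inequality, applied at fibers of large height, passes to the limit.
\item The function-field quantitative Zhang inequality \cite[Theorem 1.1]{MY25} then gives Proposition~\ref{UniBogomolovFF1}. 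The isotrivial case is handled by induction on transcendence degree.
\item Theorem~\ref{UniFF1} follows by the same induction as Corollary~\ref{UniDMM1}.
\end{itemize}

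A lower bound on heights survives the limit $h(t)\to\infty$; a component count does not.

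Separately, your Step~2 is simpler than you make it. In fixed degree the exceptional locus is Zariski closed: it consists of the closed flexible Latt\`es locus together with finitely many rigid Latt\`es, power and Chebyshev classes. So shrinking $S$ suffices there; that step is not where the problem lies.
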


To see why Theorem \ref{UniFF1} implies the situation over $\mathbb{C}$, we use the following proposition. 

\begin{proposition} \label{FunctionFieldCurve1}
Let $K$ be a {\blue finitely generated extension of $\ovl{\bb{Q}}$ of positive transcendence degree}. Then there exists $k \subset K$ such that $k$ is algebraically closed in $K$ and $K = k\left(B\right)$ for some geometrically irreducible projective curve $B$ over $k$.  
\end{proposition}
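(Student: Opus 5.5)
The plan is to take a transcendence basis and let $k$ be the algebraic closure, inside $K$, of the field generated by all but one of its elements. Set $t=\trdeg(K/\ovl{\bb{Q}})\ge 1$, choose a transcendence basis $x_1,\ldots,x_t$ of $K$ over $\ovl{\bb{Q}}$, and define
\[k=\{a\in K \mid a \text{ is algebraic over } \ovl{\bb{Q}}(x_1,\ldots,x_{t-1})\}.\]
By construction $k$ is a field and is algebraically closed in $K$: an element of $K$ algebraic over $k$ is algebraic over $\ovl{\bb{Q}}(x_1,\ldots,x_{t-1})$, so it already lies in $k$. Moreover $x_t$ is transcendental over $k$, since $k$ is algebraic over $\ovl{\bb{Q}}(x_1,\ldots,x_{t-1})$. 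Hence $\trdeg(K/k)=1$.

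Next I show that $K/k$ is finitely generated. Since $K$ is finitely generated over $\ovl{\bb{Q}}\subseteq k$, generators of $K$ over $\ovl{\bb{Q}}$ also generate $K$ over $k$. So $K$ is a finitely generated extension of $k$ of transcendence degree one. Because we are in characteristic $0$, it is automatically separable, hence separably generated.

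Then I produce the curve. Since $k$ is algebraically closed in $K$ and the extension is separable, $K$ is a regular extension of $k$. Standard function-field theory now gives a unique normal, hence smooth in characteristic $0$, projective curve $B$ over $k$ with function field $K$: take any affine model $\Spec k[y_1,\ldots,y_m]$ with $K=k(y_1,\ldots,y_m)$, then take its projective closure and normalize. Regularity of $K/k$ is exactly the condition that $B$ be geometrically irreducible, since $K\otimes_k \bar k$ is then a field. This gives $K=k(B)$.

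The only delicate point is this last step, namely checking geometric irreducibility. I will handle it with the criterion that a variety is geometrically integral iff its function field is a regular extension of the base field. That criterion applies here because $k$ is algebraically closed in $K$ and we are in characteristic $0$.

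I also flag a notational issue. Theorem \ref{UniFF1} asks for $k$ to be algebraically closed, but here $k$ is only algebraically closed in $K$ and need not be algebraically closed itself. In the application I would therefore pass to $\bar k$ and the curve $B_{\bar k}$, using geometric irreducibility. This is harmless because $\ovl{K}=\ovl{\bar k(B_{\bar k})}$.
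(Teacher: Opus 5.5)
Your proposal is correct and follows essentially the same route as the paper. Both take $k$ to be the algebraic closure in $K$ of $\ovl{\bb{Q}}(x_1,\ldots,x_{t-1})$, note that $K/k$ is finitely generated of transcendence degree one and regular (characteristic $0$), and deduce geometric irreducibility from regularity. The only difference is that you construct $B$ explicitly by projective closure and normalization, which also gives smoothness as used in Theorem \ref{UniFF1}, while the paper cites \cite[Lemma 1.4.10]{BG06}.
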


\begin{proof}
Let $\trdeg K = n$. Then we pick $n-1$ algebraically independent elements $x_1,\ldots,x_{n-1}$ and let $k = \ovl{\bb{Q}}(x_1,\ldots,x_{n-1})$. Replacing $k$ with its algebraic closure inside $K$, we obtain that $K$ is of transcendence degree $1$ over $k$. {\blue Since $K$ is finitely generated over $\ovl{\bb{Q}}$, it is finitely generated over $k$. Thus $K$ is a finite extension of $k(t)$ for any $t\in K$ transcendental over $k$.} It follows from \cite[Lemma 1.4.10]{BG06} that $K = k\left(B\right)$ for some irreducible projective curve $B$ over $k$. Since $k$ is algebraically closed in $K$ {\blue and the characteristic is zero, the extension $K/k$ is regular. Hence} $B$ is geometrically irreducible as desired.
\end{proof}

Our strategy to prove Theorem \ref{UniFF1} follows the deduction of the uniform geometric Bogomolov conjecture for characteristic $0$ from the case of number fields by Mavraki and the second author \cite{MY25}. If $\Phi = \left(f_1,\ldots,f_n\right)$ is an endomorphism over $\mathbb{C}$ and $V$ a subvariety of $\left(\bb{P}^1\right)^n$, then we may find some $K$ so that $K$ is finitely generated over $\ovl{\bb{Q}}$ for which $\Phi$ and $V$ are well-defined over. Using Proposition \ref{FunctionFieldCurve1}, we may write $K = k\left(B\right)$ for some smooth projective curve $B$ over $k$ with $k$ algebraically closed in $K$. We can extend by scalars to replace $k$ with $\ovl{k}$ to assume that $k$ is algebraically closed. Then Theorem \ref{UniFF1} implies the number of irreducible components of $\ovl{\Prep(\Phi) \cap V(\bb{C})}^{\Zar}$ is bounded above by $N$ as desired. 
\par 
To prove Theorem \ref{UniFF1}, it suffices to prove the analogue of Proposition \ref{UniBogomolov2} over $K = k\left(B\right)$. This will in turn follow from the analogue of {\blue Theorem \ref{IntroTheorem2}}, which we will state now. Note that as we have fixed ample height functions $h_{\Rat_d}$ and $h_{\rat_d}$, these naturally extend to height functions on $K$-points for all characteristic zero function fields. 

\begin{proposition} \label{DynamicalGapFF1}
Let $k$ be an algebraically closed field of finite transcendence degree and let $K = k\left(B\right)$ be the function field of a smooth projective curve over $B$. Assume that $k$ is of characteristic $0$. Let $\Phi = \left(f_1,\ldots,f_n\right)$ be a split endomorphism of $\left(\bb{P}^1\right)^n$ over $\ovl{K}$ such that $\deg f_i = d \geq 2$ {\blue and at least one $f_i$ is non-exceptional}. Let $H \subseteq \left(\bb{P}^1\right)^n$ be {\blue an irreducible hypersurface of degree $D$} that is not weakly $\Phi$-special. Then there exists $c = c\left(d,n,D\right) > 0$ such that 
\[\h_{\Phi}\left(H\right) \geq c h_{\rat_d}\left(\Phi\right).\]
\end{proposition}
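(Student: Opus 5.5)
We will mirror the number field argument of Sections \ref{sec: Curves} and \ref{sec: HigherDim}, now over the function field $K=k\left(B\right)$. Here the places of $K$ are the closed points of $B$. Each is non-archimedean with residue characteristic $0$, and the normalizations $n_v$ sum to a quantity governed by the genus and degree data of $B$.

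\textbf{Step 1: reduce to $n=2$.} The induction on $n$ in the proof of Theorem \ref{IntroTheorem2} uses only four inputs: Lemma \ref{Hypersurface1}, which is geometric and relies on \cite{GNY18, MSW23}, valid over any algebraically closed field of characteristic $0$; Proposition \ref{Degrees1}; the comparison \eqref{eq: Compare1}; and arithmetic B\'ezout (Proposition \ref{ArithmeticBezout1}). For function fields, the Philippon height and B\'ezout are available from \cite{MY25}, and in fact without archimedean error terms. So this induction transfers verbatim, and it suffices to prove the curve gap $\h_{(f,g)}(C)\geq c\, h_{\rat_d}(f,g)$ for non-weakly special $C$. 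There is no ``$\max\{\cdot,1\}$'' in the statement, because isotrivial data have height $0$ and the inequality is then trivial.

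\textbf{Step 2: the curve case.} Write $\h_{(f,g)}(C)\geq (f,g)_C/(d_1d_2(d_1+d_2))$ as in Proposition \ref{GlobalPairing1}. Here $(f,g)_C=\sum_v n_v\langle f,g\rangle_{C,v}$ is a sum over points of $B$, and every local field has residue characteristic $0$. Consequently Propositions \ref{UniformEnergy1} and \ref{UniformEnergy3} apply at every place with constants $N,c$. We need these constants to be independent of $K$. To get this, we rerun their degeneration proofs allowing the valued field to vary, exactly as in Proposition \ref{UniformEnergy2}. The hypothesis of Proposition \ref{DegenerationExistence2} holds trivially, since $\log^+|p|_v=0$ for $p\in\ovl{\bb{Q}}^\times\subseteq k$ and the values lie in residue characteristic $0$. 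Because $\lambda_{\partial Z_k,v}\geq 0$ and there are no bad places, we may take $b_2=0$ in \eqref{eq: UniCurve2}. The only ``Case 1'' of the proofs of Propositions \ref{HeightGapCurve2} and \ref{LattesGap1} occurs, and the truncation by $N$ only costs a term $c'N\sum_{v\in S}n_v$. I would handle that term by applying the uniform energy bounds in the form without the condition $\lambda_{\partial Z_k}\geq N$. That form is available because the constant $N$ is only needed to guarantee that the degeneration is non-trivially valued, and here this is already forced by $\Lyap(f_\omega)\geq\delta>0$ after rescaling.

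\textbf{Step 3: remaining inputs.} We also need the following.
\begin{enumerate}
\item The Lyapunov--height comparison of Theorem \ref{GlobalLyapunov1} over $K$. Ingram's bound $h_{\crit}\geq A h_{\rat_d}-B$ holds over function fields; non-isotriviality handles the McMullen input. The inequality from \cite{GOV20} is local and sums with the product formula.
\item The reductions \eqref{eq: Assumption1} and \eqref{eq: Assumption2}. These hold over function fields since they rest on Silverman's minimal-model bound and on \eqref{eq: Compare1}.
\item The exceptional cases. Latt\`es--Latt\`es is covered by geometric Bogomolov for products of elliptic curves (Gao--Ge--K\"uhne and \cite{MY25}). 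Monomial and Chebyshev maps are isotrivial up to conjugacy, and we would treat them by geometric Bogomolov for $\bb{G}_m^2$.
\end{enumerate}

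The bounded-height case (Proposition \ref{BoundedHeight1}) is replaced by the trivial observation that $h_{\rat_d}=0$ forces isotriviality. In that situation the inequality is to be proved with right-hand side $0$. This step is the main obstacle: when $h_{\rat_d}(\Phi)=0$ the claim is vacuous, but for small positive $h_{\rat_d}$ the degeneration argument needs $\lambda_{\partial Z_k,v}$ large at some place. I would resolve this by using the homogeneity of function field heights, passing to a base change of $B$, which scales all local heights. The uniform statement of Step 2, with no lower threshold on $\lambda_{\partial Z_k}$, then gives the linear bound directly.
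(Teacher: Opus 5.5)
Rerunning the number-field argument directly over $k(B)$ is a genuinely different route from the paper's, and its local part is sound. Every place has residue characteristic $0$, so only Case~1 occurs, and Propositions~\ref{UniformEnergy2} and~\ref{UniformEnergy4} apply with uniform constants and no threshold. The gap is global, and it sits exactly in the small-height regime you flag.

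In the number-field proof, \eqref{eq: UniCurve1} and $h_{\partial Z_k}(s)\geq a_i h_{V_i}(s)$ only hold once $h_{\rat_d}(f)$ is bounded below (indeed large). That lower bound is what absorbs the additive constants of three global inputs:
\begin{enumerate}
\item Ingram's comparison in Theorem~\ref{GlobalLyapunov1} (you yourself quote $h_{\crit}\geq Ah_{\rat_d}-B$);
\item Silverman's minimal-conjugate bound behind \eqref{eq: Assumption1};
\item the curve-height comparison \eqref{eq: CurveDifference}.
\end{enumerate}
Smaller heights are handled by Proposition~\ref{BoundedHeight1}, which needs an archimedean place. Over $k(B)$ you therefore need all three inputs with zero additive constant, and you do not show this.

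Base change of $B$ cannot supply it. Absolute heights are invariant under finite extensions. If you renormalize $|\cdot|_w$ by $\mathrm{ord}_w$ so that $\lambda_w=e_w\lambda_v$, the weights $n_w$ are divided by $[K':K]=\sum_{w\mid v}e_w$, so $\sum_{w\mid v}n_w\lambda_w=n_v\lambda_v$. Base change therefore only inflates individual local heights. That would cure a local threshold $\lambda_{\partial Z_k,v}\geq N$, but such a threshold was never the obstruction: it does not exist in residue characteristic $0$, and the local quantities are homogeneous under $|\cdot|\mapsto|\cdot|^t$ anyway.

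What is missing is one of two things. Either you prove constant-free function-field versions of the three inequalities above, which is plausible since all moduli data live over the constant field, but requires reopening their proofs. Or you find another source of homogeneity. The paper takes the second option via specialization:
\begin{enumerate}
\item It spreads $(\Phi,H)$ out over an open $U\subseteq B$.
\item It uses Lemma~\ref{WeaklySpecialSpecialization1} to ensure that fibres with $h(t)$ large are not weakly special.
\item It applies the $\ovl{\bb{Q}}$-bound $\h_{\Phi_t}(\mathcal{H}_t)\geq c\max\{h_{\rat_d}(\Phi_t),1\}$ and divides by $h(t)$, using Silverman's specialization theorem for both sides, so every additive constant disappears in the limit.
\item Higher transcendence degree is handled by induction.
\end{enumerate}

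A secondary point: Lemma~\ref{Hypersurface1} is not purely geometric, contrary to your Step~1. Its first step turns the fibres over $O^-_{f_n}(1)$ into a Zariski-dense set of small points on $H$. It then invokes the Bogomolov-type input \cite[Theorem 2.2]{GNY18}, which is proved over $\ovl{\bb{Q}}$. Over $k(B)$, with some coordinates possibly isotrivial, height-zero points need not be preperiodic, so this step does not transfer. You can repair it by slicing over $f_n^{-m}(z_0)$ for a suitable preperiodic $z_0$; the error $(c_1+h_2(z_0))/(d^m+1)$ still tends to $0$. You then use the Manin--Mumford form of that input, which is geometric.

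Finally, in the base case $n=2$ one of the two maps is non-exceptional. So the Latt\`es--Latt\`es and monomial/Chebyshev cases in your Step~3 never arise.
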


\begin{proof}
We will proceed by induction on $\trdeg K$. Let's first handle the case where $k=\ovl{\bb{Q}}$. We spread out $H$ as a flat family of hypersurfaces $\cal{H}_t$ over $(\bb{P}^1)^n \times U$. We may also assume that $\Phi$ spreads out as a family of endomorphisms $\Phi_t$ over $(\bb{P}^1)^n \times U$. {\blue Shrinking $U$, we may also assume that every $f_{i,t}$ has degree $d$ and that $f_{i,t}$ is non-exceptional whenever $f_i$ is non-exceptional.} We now apply Silverman's specialization theorem  \cite[Proposition 6.2]{MY25}. The statement in \cite{MY25} only applies to abelian varieties but the proof handles the dynamical setting too. We thus obtain 
\begin{equation} \label{eq: Specialization1}
\lim_{\substack{t \in U(\ovl{\bb{Q}}) \\ h(t) \to \infty }} \frac{\h_{\Phi_t}(\cal{H}_t)}{h(t)} = \h_{\Phi}(H).
\end{equation}
Now we may assume that not all $f_i$ are isotrivial as else $\h_{\rat_d}\left(\Phi\right) = 0$ and there is nothing to prove. By Lemma \ref{WeaklySpecialSpecialization1}, as $H$ is not weakly $\Phi$-special, for some constant $C > 0$, there are only finitely many $t\in U(\ovl{\bb{Q}})$ such that $\cal{H}_t$ is weakly $\Phi$-special and $h(t) > C$. For $\cal{H}_t$ not weakly special, we obtain 
\[\h_{\Phi_t}\left(\cal{H}_t\right) \geq c h_{\rat_d}\left(\Phi_t\right)\]
for $c > 0$ depending only on $\left(d,n,D\right)$. By Silverman's specialization again, we know that 
\[\lim_{\substack{t \in U(\ovl{\bb{Q}}) \\ h(t) \to \infty}} \frac{h_{\rat_d}(\Phi_t)}{h(t)} = h_{\rat_d}(\Phi)\]
and so \eqref{eq: Specialization1} implies that 
\[\h_{\Phi}\left(H\right) \geq c h_{\rat_d}\left(\Phi\right)\]
as desired. For the general case, we first replace $k$ with a finitely generated extension over $\ovl{\bb{Q}}$ for which $\Phi$ and $H$ are defined over. By Proposition \ref{FunctionFieldCurve1}, we may then write $k = k'\left(B'\right)$ for $k'$ algebraically closed in $k$ and $B'$ a smooth projective curve over $k'$. After extending scalars, we may assume that $k'$ is algebraically closed. Since $\trdeg\left(k\right) < \trdeg\left(K\right)$, by induction we know that Proposition \ref{DynamicalGapFF1} holds for $k = k'\left(B'\right)$. Then the same argument as in the case of $K = \ovl{\bb{Q}}\left(B\right)$ implies that Proposition \ref{DynamicalGapFF1} for $k$ implies Proposition \ref{DynamicalGapFF1} for $K = k\left(B\right)$ since \cite[Proposition 6.2]{MY25} works for any $K = k\left(B\right)$ where $B$ is a curve over $k$ and $k$ is any function field.
\end{proof}

\begin{lemma} \label{WeaklySpecialSpecialization1}
Assume that not all $f_i$ are isotrivial. If $H$ is not weakly $\Phi$-special, there does not exist a sequence $t_n \in U\left(k\right)$ with $h(t_n) \to \infty$ such that $\cal{H}_{t_n}$ is weakly $\Phi$-special. 
\end{lemma}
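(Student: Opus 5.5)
We argue by contradiction: suppose there is a sequence $t_n \in U(k)$ with $h(t_n)\to\infty$ such that each $\cal{H}_{t_n}$ is weakly $\Phi_{t_n}$-special. There are finitely many coordinate-wise ways to be weakly special, so after passing to a subsequence we may assume all $\cal{H}_{t_n}$ are weakly special in the same way. The conditions (1) and (3) are Zariski closed in $t$. If $\cal{H}_t$ fails to project dominantly to a fixed coordinate, or is a cylinder $\pi^{-1}(W)$ for a fixed projection $\pi$, then the set of such $t$ is closed in $U$. Since $h(t_n)\to\infty$, the $t_n$ are infinite and hence Zariski dense in the curve $U$. So the condition holds generically, i.e.\ for $H$ itself. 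This contradicts $H$ not being weakly $\Phi$-special. (For the cylinder condition, note that it says that the fibres of $\pi|_{\cal{H}_t}$ are full lines, which is a closed condition on the flat family of hypersurfaces.)

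The main case is (2): $\cal{H}_{t_n}$ is $\Phi_{t_n}$-preperiodic for all $n$. Here the difficulty is that the preperiodicity locus is only a countable union of closed sets $V_{a,b}=\{\Phi_t^a(\cal{H}_t)=\Phi_t^b(\cal{H}_t)\}$, as in Proposition \ref{WeaklySpecial1}. If infinitely many $t_n$ lie in a single $V_{a,b}$, then $V_{a,b}$ is all of $U$ and $H$ is preperiodic, a contradiction. So we may assume that the preperiod and period data $(a_n,b_n)$ are unbounded. I would then use heights to rule this out.

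Since $\cal{H}_{t_n}$ is preperiodic, $\h_{\Phi_{t_n}}(\cal{H}_{t_n})=0$. On the other hand, the specialization limit \eqref{eq: Specialization1} gives $\h_{\Phi_{t_n}}(\cal{H}_{t_n})/h(t_n)\to \h_\Phi(H)$, so this forces $\h_\Phi(H)=0$. Next, $H$ is a non-weakly-special hypersurface over the function field $K$, and not all $f_i$ are isotrivial. A function-field analogue of Theorem \ref{IntroTheorem2} in the geometric setting should then give $\h_\Phi(H)>0$. Alternatively, we can argue by equidistribution: $\h_\Phi(H)=0$ yields a Zariski dense set of small points, and by the function-field dynamical Bogomolov for split maps with a non-isotrivial coordinate (via the rigidity arguments of \cite{GNY18, MSW23} applied over $K$), $H$ is preperiodic or weakly special. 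Either way we get a contradiction.

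The main obstacle is exactly this last step. The plan should be arranged so that the induction on $\trdeg$ in Proposition \ref{DynamicalGapFF1} does not become circular. I would first try to establish $\h_\Phi(H)>0$ directly from the non-uniform dynamical Bogomolov over function fields. If that is unavailable, I would fall back on the specialization limit for $h_{\rat_d}$: since some $f_i$ is non-isotrivial, $h_{\rat_d}(f_{i,t_n})\to\infty$. Combined with a gap for $\cal{H}_{t_n}$ coming from the lower-transcendence-degree case, this would contradict $\h_{\Phi_{t_n}}(\cal{H}_{t_n})=0$ only if $\cal{H}_{t_n}$ were non-special. So the fallback only handles the case where that degenerates, and the preperiodic case genuinely needs the non-uniform result.
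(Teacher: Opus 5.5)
Your handling of conditions (1) and (3) is fine and agrees in spirit with the paper: it takes the Zariski closure of the cylinder fibres inside $\cal{H}$, and your closedness-plus-density argument on the curve $U$ does the same job. The gap is in the preperiodic case (2). You reduce it to the claim that $\h_\Phi(H)=0$ gives a contradiction over $K$, but you never establish that claim.

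\begin{itemize}
\item Your first option, a function-field analogue of Theorem~\ref{IntroTheorem2}, is circular. That analogue is Proposition~\ref{DynamicalGapFF1}, and its proof for $K=k(B)$ invokes this very lemma for the family over $U\subseteq B$.
\item Your second option, a function-field Bogomolov for hypersurfaces in $(\bb{P}^1)^n$ obtained ``via the rigidity arguments of \cite{GNY18, MSW23}'', is not an available result. Those papers concern preperiodic points over $\bb{C}$. When some coordinates are isotrivial, points of height zero on $H$ need not be preperiodic (for instance all of $\bb{P}^1(k)$ in an isotrivial coordinate). So you cannot simply reduce to dynamical Manin--Mumford.
\item As you note yourself, the fallback does not work either.
\end{itemize}

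The missing idea is to split according to $n$.

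For $n>2$, no heights are needed. Away from the cylinder case you have already excluded, a fibre $\cal{H}_t$ projects dominantly onto every collection of $n-1$ coordinates. If it were preperiodic, it would carry a Zariski dense set of preperiodic points. Then \cite[Theorem 2.2]{GNY18} together with \cite{MSW23} would force every $f_{i,t}$ to be exceptional. This is impossible, because after shrinking $U$ some $f_{i,t}$ stays non-exceptional. So case (2) simply does not occur when $n>2$.

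For $n=2$, your height argument is exactly the paper's: specialization gives $\h_\Phi(H)=0$. The input that closes it is \cite[Theorems 4.1 and 4.3]{MS25}, which hold over function fields of curves. They show that a curve of canonical height zero for $(f_1,f_2)$, with $f_1,f_2$ not both isotrivial, is $\Phi$-preperiodic. This contradicts the assumption that $H$ is not weakly special. Your preliminary reduction via the closed sets $V_{a,b}$ is correct but not needed.
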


\begin{proof}
Since $H$ projects dominantly to each coordinate, clearly $\cal{H}_t$ does so for $t$ in a Zariski open $V$ of $U$. Now if there is a projection $\pi: \left(\bb{P}^1\right)^n \to \left(\bb{P}^1\right)^{n-1}$ so that $\cal{H}_t = \pi^{-1}(W_t)$ for infinitely many $t$, then viewing $\cal{H}$ as a hypersurface inside $(\bb{P}^1)^n \times B$, we have $\cal{H}_t = \bb{P}^1 \times W_t \times \{ t \}$. Let $T$ be the set of such $t$'s and let $W$ be the Zariski closure of $\cup_{t \in T} \cal{H}_t$. Then $W \subseteq \cal{H}$ and $\dim W \geq n$. As we may assume that $\cal{H}$ is irreducible after shrinking $U$, it follows that $W = \cal{H}$. Then clearly $H$ is of the form $\bb{P}^1 \times W'$ for some $W'$ which contradicts $H$ being not weakly $\Phi$-special. 
\par 
{\blue If $n>2$, a preperiodic fiber $\cal{H}_t$ would have a Zariski dense set of preperiodic points. Since it projects dominantly onto every collection of $n-1$ coordinates, \cite[Theorem 2.2]{GNY18}, together with \cite{MSW23}, would force every $f_{i,t}$ to be exceptional. This contradicts the presence of a non-exceptional coordinate. Hence none of these fibers is weakly special.} If $n = 2$, then we will show that if $\cal{H}_t$ is preperiodic for $\Phi_t$ for infinitely many $t$, then $H$ itself is preperiodic for $\Phi$. {\blue Each $\cal{H}_{t_j}$ must then be $\Phi_{t_j}$-preperiodic, since at least one specialized map is non-exceptional and both projections of the curve are dominant.} By the analogue of \eqref{eq: Specialization1}, we obtain that $\h_{\Phi}(H) = 0$. We may now apply \cite[Theorems 4.1 and 4.3]{MS25}. Both these theorems work over general function fields of a curve. Since not both $f_1,f_2$ are isotrivial, we must have that $H$ is $\Phi$-preperiodic which contradicts $H$ being not weakly $\Phi$-special as desired.
\end{proof}

We can now deduce the analogue of Proposition \ref{UniBogomolov2}. 

\begin{proposition} \label{UniBogomolovFF1}
Let $k$ be an algebraically closed field of finite transcendence degree and let $K = k\left(B\right)$ be the function field of a smooth projective curve over $B$. Let $\Phi = \left(f_1,\ldots,f_n\right)$ be a split endomorphism of $\left(\bb{P}^1\right)^n$ over $\ovl{K}$ such that $\deg f_i = d \geq 2$ {\blue and at most one $f_i$ is exceptional}. Let $V \subseteq \left(\bb{P}^1\right)^n$ be {\blue an irreducible subvariety of degree $D$} that is not preperiodic. Then there exists $E = E\left(d,n,D\right) > 0$ such that $\Prep\left(\Phi\right) \cap V\left(\ovl{K}\right)$ is contained in a proper Zariski closed subvariety $W$ of degree at most $E$.
\end{proposition}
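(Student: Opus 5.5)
The plan is to follow the proof of Proposition \ref{UniBogomolov2} verbatim, now over $K = k(B)$. Proposition \ref{DynamicalGapFF1} replaces Theorem \ref{IntroTheorem2}, and a function-field version of the quantitative Zhang inequality replaces Theorem \ref{QuantitativeZhang1}. The argument is by induction on $n$.

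\textbf{Reduction to a hypersurface.} If $V$ fails to project dominantly onto some coordinate axis, then $V \subseteq \pi_i^{-1}(z)$ for some $z$. If $z$ is not preperiodic for $f_i$, then $V$ has no preperiodic points at all. If $z$ is preperiodic, we drop that coordinate and apply induction to the image of $V$ in $(\bb{P}^1)^{n-1}$; this image is non-preperiodic and has the same degree bound. So assume $V$ projects dominantly to every coordinate, and set $r = \dim V$. As in \cite[Proposition 2.1]{GNY18}, whose argument is purely algebro-geometric and valid over any algebraically closed field of characteristic $0$, non-preperiodicity of $V$ gives a coordinate projection $\pi : (\bb{P}^1)^n \to (\bb{P}^1)^{r+1}$ such that $\pi(V)$ is a non-preperiodic hypersurface of degree bounded in terms of $(n,D)$. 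If $W' \subseteq \pi(V)$ is a proper closed subset containing $\Prep(\Phi') \cap \pi(V)$, then $W = V \cap \pi^{-1}(W')$ is proper in $V$, contains $\Prep(\Phi) \cap V$, and satisfies $\deg W \le D \deg W'$ by B\'ezout. So it suffices to treat $H = \pi(V)$. If $H$ is weakly $\Phi'$-special, then, since it projects dominantly and is not preperiodic, it has the form $V' \times \bb{P}^1$ with $V'$ non-preperiodic, and induction on $n$ handles it. The remaining case is that $H$ is not weakly special.

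\textbf{Non-weakly-special hypersurface.} Here at least one $f_i$ among the $r+1$ chosen coordinates is non-exceptional, since at most one of all the $f_i$ is exceptional and $r+1 \ge 2$. We split according to isotriviality.

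\emph{Non-isotrivial case.} If $h_{\rat_d}(\Phi') > 0$, Proposition \ref{DynamicalGapFF1} gives $\h_{\Phi'}(H) \ge c\, h_{\rat_d}(\Phi')$. We then apply the quantitative Zhang inequality over function fields, \cite[Theorem 1.1]{MY25}, which holds for $K = k(B)$ with $k$ of characteristic $0$, after conjugating each $f_i$ so that $h(f_i) \le M \max\{h_{\rat_d}(f_i),1\}$. This puts all points of height $< \varepsilon\, \h_{\Phi'}(H)$ inside a subvariety of degree bounded by $E(d,n,D)$. Preperiodic points have height $0$, so they lie in this subvariety.

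\emph{Isotrivial case.} If $h_{\rat_d}(\Phi') = 0$, every $f_i$ is isotrivial and so, after conjugation over $\ovl{K}$, is defined over $k$. I would then split further.
\begin{itemize}
\item If $H$ is also defined over $k$, the statement becomes uniform dynamical Manin--Mumford over the field $k$. Since $k$ has finite transcendence degree, we can spread out and specialize, or induct on transcendence degree to reach Corollary \ref{UniDMM1}; this is the same reduction used in deducing Theorem \ref{UniFF1}.
\item If $H$ is not defined over $k$, then every preperiodic point of $\Phi'$ is defined over $k$, so the preperiodic points lie in $H \cap H^\sigma$ for a suitable specialization or Galois-type conjugate $H^\sigma \ne H$ with the same degree. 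That intersection is a proper subvariety of degree at most $D^2$.
\end{itemize}

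\textbf{Main obstacle.} The isotrivial case is where I expect the difficulty, specifically making the ``$H$ not defined over $k$'' argument rigorous with bounded degree. The first thing I would try is to choose two distinct points $b, b' \in B(k)$ at which $H$ has distinct specializations. Every $k$-point of $H$ lies in both specializations, so the preperiodic points lie in $H_b \cap H_{b'}$, a proper closed set of degree at most $D^2$. Two further issues need a check: that the equality-of-constants argument handles non-reduced specializations, and that Proposition \ref{DynamicalGapFF1} and \cite{MY25} are compatible with passing to $\ovl{K}$.
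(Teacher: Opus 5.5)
Your proposal is correct. It matches the paper's proof in everything except one subcase. The shared steps are the double induction on $n$ and on transcendence degree, the reduction to the non-preperiodic hypersurface $H=\pi(V)$, Proposition~\ref{DynamicalGapFF1} with \cite[Theorem 1.1]{MY25} when $\Phi'$ is not isotrivial, induction on $n$ when $H=V'\times\bb{P}^1$, and induction on transcendence degree when $H$ and the conjugated maps are defined over $k$.

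The subcase where you differ is $\Phi'$ isotrivial and $H$ not defined over $k$. The paper stays height-theoretic there: $\h_{\Phi'}$ becomes the standard Weil height, so $h(H)>0$, and it applies \cite[Theorem 1.1]{MY25} again. You use descent instead: preperiodic points of maps defined over the algebraically closed field $k$ are $k$-rational, and $k$-points of $H$ survive specialization. This works, and a single specialization is enough:
\begin{itemize}
\item If $H$ is cut out by a form over $L=k(B_L)$ and $b\in B_L(k)$, then $H(k)\subseteq H\cap H_b$.
\item $H\not\subseteq H_b$, because every irreducible component of $(H_b)_{\ovl{K}}$ is defined over $k$, while $H$ is irreducible of the same dimension and not defined over $k$.
\item So take $W'=H\cap H_b$, which is proper in $H$ with degree bounded by B\'ezout.
\end{itemize}
The properness that matters is properness inside $H$, not properness of $H_b\cap H_{b'}$ in the ambient space. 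The second point $b'$ and possible non-reducedness of $H_b$ are irrelevant, and your Galois variant $H\cap H^\sigma$ works equally well.

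Your route is elementary and gives an explicit degree bound. It also avoids two inputs the paper needs here. One is the quantitative Zhang inequality in the degenerate regime $h_{\rat_d}(\Phi')=0$, where $\h_{\Phi'}(H)$ can be an arbitrarily small positive number, so one must use the homogeneous function-field form of \cite[Theorem 1.1]{MY25}. The other is the fact that height zero forces $H$ to be defined over $k$. The paper's route keeps one method throughout and controls all small points, not only $k$-rational ones.

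One minor correction: in the constant case, the inductive input should be the present statement over $k=\ovl{k'(B')}$, ultimately Proposition~\ref{UniBogomolov2} over $\ovl{\bb{Q}}$. Corollary~\ref{UniDMM1} is not the right input, since it bounds the number of components but not their degrees.
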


\begin{proof}
We again proceed by induction on $n$ and on $\trdeg K$. {\blue The hypothesis that at most one coordinate map is exceptional is preserved under coordinate projections.} Hence we may assume that $V$ projects dominantly to all coordinates. As in Proposition \ref{UniBogomolov2}, since $V$ is not preperiodic, we may find a projection $\pi: \left(\bb{P}^1\right)^n \to \left(\bb{P}^1\right)^{r+1}$ where $r=\dim V$ such that the image $\pi\left(V\right)$ is a non-preperiodic hypersurface. After rearranging, we may assume that this is the first $r+1$ coordinates. Let's first say that $\left(f_1,\ldots,f_{r+1}\right)$ are not all isotrivial so that $h_{\rat_d}(\Phi) > 0$. Then if $\pi\left(V\right)$ is not weakly $\Phi$-special, applying Proposition \ref{DynamicalGapFF1} along with \cite[Theorem 1.1]{MY25} gives us the existence of a proper Zariski closed subvariety $W$ containing all points of canonical height $0$ with $\deg W$ bounded above by $E = E\left(d,n,D\right) > 0$. 
If $\pi\left(V\right)$ is weakly $\Phi$-special, then it is of the form $\bb{P}^1 \times W$ for some $W \subseteq (\bb{P}^1)^r$ after rearranging coordinates. We are then done by induction again. 
\par 
Now if $f_1,\ldots,f_{r+1}$ are all isotrivial, we may conjugate them so that each $f_i$ are defined over $k$. Then $\h_{\Phi'}$ for $\Phi'=(f_1,\ldots,f_{r+1})$ is simply the standard Weil height $h$. If $h\left(\pi\left(V\right)\right) > 0$, then applying \cite[Theorem 1.1]{MY25} again gives us our $W$ and $E$. Else if $h\left(\pi\left(V\right)\right) = 0$, then $\pi\left(V\right)$ must itself must be defined over $k$. Writing $k = k'\left(B'\right)$ with $B'$ smooth projective over $k'$ and $k'$ algebraically closed in $k$, we obtain our $W$ and $E$ by induction as $\trdeg k < \trdeg K$. This finishes our proof.   
\end{proof}

{\blue We recall the definition of special curves from \cite{MS25}.

\begin{definition}\label{def:SpecialCurves}
Let $f,g$ be rational maps of degree $d\geq2$ and let $C\subseteq\left(\bb{P}^1\right)^2$ be an irreducible curve over an algebraically closed field of characteristic $0$. We say that $C$ is $\left(f,g\right)$-special if it is $\left(f,g\right)$-preperiodic or if case~\textup{(3)} of Definition~\ref{def:WeaklySpecialCurves} holds with $H$ a torsion coset, i.e. a translate of a connected algebraic subgroup by a torsion point.
\end{definition}

\medskip

\proof[Proof of Theorem~\ref{IntroUniformCurve1}]
Suppose that $C$ is not $(f,g)$-special. If at least one of $f,g$ is non-exceptional, we can then apply Theorem~\ref{IntroTheorem1} with $n=2$.   A horizontal or vertical non-special curve contains no preperiodic points. We may therefore assume that both projections are dominant and both maps are exceptional.

Choose standard group presentations $\vphi_i:G_i\to\bb{P}^1$ of degrees $k_i\leq6$: the degrees are $1$ for power maps, $2$ for Chebyshev maps, and $2,3,4$ or $6$ for Latt{\`e}s maps; see \cite[Section 2 and Theorem 3.1]{Mil06}. Let $G:=G_1\times G_2$, $\vphi=(\vphi_1,\vphi_2)$ and $Y=\vphi^{-1}(C)$. The degree of $Y$ (in the standard compactification of $G$) is uniformly bounded by a number depending only on $D$.

Applying \cite[Theorem 1.1]{HLY26} to $Y$ and the rank-zero torsion subgroup $G_{\mathrm{tors}}$, we can bound $\#(Y\cap G_{\mathrm{tors}})$ only  in terms of $D$.

Every preperiodic point in $C\cap\operatorname{Im}\vphi$ lifts to a torsion point of $G$. Each $\vphi_i$ omits at most two points of $\bb{P}^1$, so $C\setminus\operatorname{Im}\vphi$ contains at most $2D$ points. Combining these bounds proves the theorem.
\endproof

\medskip

\proof[Proof of Theorem~\ref{IntroCommonPreperiodic1}]
Let $\Delta\subseteq(\bb{P}^1)^2$ be the diagonal.  If $\Prep(f)\neq\Prep(g)$, then \cite[Theorem 1.2]{BD11} implies that $\Prep(f)\cap\Prep(g)$ is finite. Thus $\Delta$ is not $(f,g)$-special.   Applying Theorem~\ref{IntroUniformCurve1} to $\Delta$, we have
\[
\left|\Prep(f)\cap\Prep(g)\right|
=\left|\Delta\cap\Prep(f,g)\right|
\leq B(d,2),
\]
which finishes the proof.
\endproof}

\begingroup
\emergencystretch=3em
\printbibliography
\endgroup
\end{document}